\patchcmd{\@makechapterhead}{50\p@}{\chapheadtopskip}{}{}% Space from top of page to CHAPTER X
\patchcmd{\@makeschapterhead}{50\p@}{\chapheadtopskip}{}{}% Space from top of page to CHAPTER TITLE
\newlength{\chapheadtopskip}\setlength{\chapheadtopskip}{-2pt}
\DeclareMathAlphabet{\mathpzc}{OT1}{pzc}{m}{it}
\newcommand{\mylabel}[2]{#2\def\@currentlabel{#2}\label{#1}}
\newtheorem{theorem}{Theorem}[section]
\newtheorem{lemma}[theorem]{Lemma}
\newtheorem{obs}[theorem]{Observation}
\newtheorem{defn}[theorem]{Definition}
\newtheorem{cor}[theorem]{Corollary}
\newtheorem{conj}[theorem]{Conjecture}
\newtheorem{subclaim}{Subclaim}[theorem]
\newcounter{claimlevel}[theorem]
\NewDocumentEnvironment{claim}{O{=}}
 {
  \str_case:nn { #1 }
   {
    {=}  { }
    {+}  { \stepcounter{claimlevel} }
    {-}  { \addtocounter{claimlevel}{-1} }
   }
  \begin{ Claim \int_to_Roman:n { \value{claimlevel} } }
 }
 {
  \end{ Claim \int_to_Roman:n { \value{claimlevel} } }
 }
\newenvironment{claimproof}[1]{\par\noindent\underline{Proof:}\space#1}{\hspace{1mm}$\blacksquare$}
\newlength\FHoffset
\newlength\FHright
 \newtheoremstyle{TheoremNum}
        {\topsep}{\topsep}              %%% space between body and thm
        {\itshape}                      %%% Thm body font
        {}                              %%% Indent amount (empty = no indent)
        {\bfseries}                     %%% Thm head font
        {.}                             %%% Punctuation after thm head
        { }                             %%% Space after thm head
        {\thmname{#1}\thmnote{ \bfseries #3}}%%% Thm head spec
    \theoremstyle{TheoremNum}
    \newtheorem{thmn}{Theorem}
\newtheoremstyle{PropNum}
        {\topsep}{\topsep}              %%% space between body and thm
        {\itshape}                      %%% Thm body font
        {}                              %%% Indent amount (empty = no indent)
        {\bfseries}                     %%% Thm head font
        {.}                             %%% Punctuation after thm head
        { }                             %%% Space after thm head
        {\thmname{#1}\thmnote{ \bfseries #3}}%%% Thm head spec
    \theoremstyle{PropNum}
\newtheoremstyle{LemmaNum}
        {\topsep}{\topsep}              %%% space between body and thm
        {\itshape}                      %%% Thm body font
        {}                              %%% Indent amount (empty = no indent)
        {\bfseries}                     %%% Thm head font
        {.}                             %%% Punctuation after thm head
        { }                             %%% Space after thm head
        {\thmname{#1}\thmnote{ \bfseries #3}}%%% Thm head spec
    \theoremstyle{LemmaNum}
\renewcommand\subitem{\@idxitem\nobreak\hspace*{20\p@}}
\renewcommand\subsubitem{\@idxitem\nobreak\hspace*{20\p@}}
\date{}
\title{Distant 2-Colored Components on Embeddings Part I:\\Connecting Faces}
\author{Joshua Nevin}
\date{}
\begin{document}
\maketitle

\begin{center}\textbf{Abstract}\end{center} This is the first in a sequence of three papers in which we prove the following generalization of Thomassen's 5-choosability theorem: Let $G$ be a finite graph embedded on a surface of genus $g$. Then $G$ can be $L$-colored, where $L$ is a list-assignment for $G$ in which every vertex has a 5-list except for a collection of pairwise far-apart components, each precolored with an ordinary 2-coloring, as long as the face-width of $G$ is $2^{\Omega(g)}$ and the precolored components are of distance $2^{\Omega(g)}$ apart. This provides an affirmative answer to a generalized version of a conjecture of Thomassen and also generalizes a result from 2017 of Dvo\v{r}\'ak, Lidick\'y, Mohar, and Postle about distant precolored vertices.  

\section{Background and Motivation}

All graphs in this paper have a finite number of vertices. Given a graph $G$, a \emph{list-assignment} for $G$ is a family of sets $\{L(v): v\in V(G)\}$, where each $L(v)$ is a finite subset of $\mathbb{N}$. The elements of $L(v)$ are called \emph{colors}. A function $\phi:V(G)\rightarrow\bigcup_{v\in V(G)}L(v)$ is called an \emph{$L$-coloring of} $G$ if $\phi(v)\in L(v)$ for each $v\in V(G)$, and $\phi(x)\neq\phi(y)$ for any adjacent vertices $x,y$. Given an $S\subseteq V(G)$ and a function $\phi: S\rightarrow\bigcup_{v\in S}L(v)$, we call $\phi$ an \emph{ $L$-coloring of $S$} if $\phi$ is an $L$-coloring of the induced graph $G[S]$. A \emph{partial} $L$-coloring of $G$ is an $L$-coloring of an induced subgraph of $G$. Likewise, given an $S\subseteq V(G)$, a \emph{partial $L$-coloring} of $S$ is a function $\phi:S'\rightarrow\bigcup_{v\in S'}L(v)$, where $S'\subseteq S$ and $\phi$ is an $L$-coloring of $S'$. Given an integer $k\geq 1$, $G$ is called \emph{$k$-choosable} if it is $L$-colorable for every list-assignment $L$ for $G$ such that $|L(v)|\geq k$ for all $v\in V(G)$. Thomassen demonstrated in \cite{AllPlanar5ThomPap} that all planar graphs are 5-choosable. Actually, Thomassen proved something stronger. 

\begin{theorem}\label{thomassen5ChooseThm}
Let $G$ be a planar graph with facial cycle $C$. Let $xy\in E(C)$ and $L$ be a list assignment for $V(G)$ such that each vertex of $G\setminus C$ has a list of size at least five and each vertex of $C\setminus\{x,y\}$ has a list of size at least three, where $xy$ is $L$-colorable. Then $G$ is $L$-colorable.
\end{theorem}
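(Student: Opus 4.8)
The plan is to prove the statement by induction on $|V(G)|$, following Thomassen's classical argument. The base case $|V(G)|=2$ is immediate, since then $G$ is the single edge $xy$, which is $L$-colorable by hypothesis. For the inductive step I would first reduce to the case where $G$ is a \emph{near-triangulation}: connected, $2$-connected, with $C$ bounding a face and every other face a triangle. This reduction is legitimate because one may add to $G$, without adding vertices, any edges — chords of $C$, edges joining distinct components inside the disc bounded by $C$, and edges subdividing non-triangular internal faces — until the interior is fully triangulated; this keeps $C$ a facial cycle, changes no list, and can only make an $L$-coloring harder to find, so an $L$-coloring of the enlarged graph restricts to one of $G$. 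Henceforth assume $G$ is a near-triangulation and write $C = x\,v_1\,v_2\cdots v_m\,y$ in cyclic order, so that $xy\in E(C)$ is the precolored edge.

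\emph{Case 1: $C$ has a chord $uv$.} Then $\{u,v\}\neq\{x,y\}$, and the chord $uv$ together with the two arcs of $C$ between $u$ and $v$ splits $G$ into subgraphs $G_1$ and $G_2$ whose intersection is exactly the edge $uv$, where, say, $xy\in E(G_1)$. The outer cycle of $G_1$ is one arc of $C$ together with the edge $uv$; all of its vertices except $x,y$ lie on $C$ and so have lists of size $\geq 3$, hence by induction $G_1$ has an $L$-coloring $\psi$ extending the precoloring of $xy$. Now treat $uv$ as the precolored edge of $G_2$: its endpoints are colored by $\psi$ with distinct colors since $uv\in E(G)$, and every other vertex of the outer cycle of $G_2$ lies on $C$ and hence has a list of size $\geq 3$, so by induction $G_2$ has an $L$-coloring agreeing with $\psi$ on $\{u,v\}$. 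The two colorings glue to an $L$-coloring of $G$.

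\emph{Case 2: $C$ is chordless.} This is the heart of the argument. If $C$ is the triangle $x\,v_1\,y$ with no interior vertices, then $G=C$ and we color $v_1$ from $L(v_1)\setminus\{\phi(x),\phi(y)\}\neq\varnothing$. Otherwise, consider the neighbor $v_1$ of $x$ on $C$ and list the neighbors of $v_1$ in rotational order as $x=u_0,u_1,\dots,u_q,u_{q+1}=v_2$. Since $G$ is a near-triangulation, consecutive $u_i$ are adjacent, so $x\,u_1\,u_2\cdots u_q\,v_2$ is a path; and since $C$ is chordless, $u_1,\dots,u_q$ are all interior vertices, with $q\geq 1$ (otherwise $x v_2$ would be a chord, unless we are in the bare-triangle subcase already handled). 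Choose two colors $c_1,c_2\in L(v_1)\setminus\{\phi(x)\}$, set $G':=G-v_1$, whose outer face is bounded by the cycle $C':= x\,u_1\cdots u_q\,v_2\,v_3\cdots v_m\,y$, and define $L'$ by $L'(u_i):=L(u_i)\setminus\{c_1,c_2\}$ for $1\leq i\leq q$ and $L':=L$ elsewhere; then $|L'(u_i)|\geq 3$, the interior vertices of $G'$ keep lists of size $\geq 5$, the vertices $v_2,\dots,v_m$ keep lists of size $\geq 3$, and $xy\in E(C')$ with its precoloring still valid. By induction $G'$ has an $L'$-coloring $\phi'$ extending the precoloring of $xy$. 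Finally extend $\phi'$ to $v_1$: the neighbor $x$ avoids $c_1,c_2$ by choice, each $u_i$ avoids $c_1,c_2$ because $\phi'(u_i)\in L'(u_i)$, and $v_2$ uses at most one of $c_1,c_2$; hence at least one of $c_1,c_2$ is free at $v_1$, and assigning it to $v_1$ completes an $L$-coloring of $G$.

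\emph{Where the difficulty lies.} The real content is the choice of statement and of move, not any one computation: one proves this two-list-size, precolored-edge version precisely so that the induction closes, and the decisive trick in the chordless case is to reserve two colors for $v_1$ and delete them \emph{only} from the short fan $u_1,\dots,u_q$, which preserves the "$\geq 3$ on $C$" invariant without disturbing anything else. The points needing genuine care are verifying that $C'$ is actually a cycle (this uses both chordlessness and the near-triangulation structure) and that $xy$ survives as a facial precolored edge of $G'$, together with the topological splitting in Case 1; I expect these bookkeeping checks, rather than any deeper idea, to be the part one must carry out most carefully.
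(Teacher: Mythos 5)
This theorem is not proved in the paper at all---it is Thomassen's 5-choosability theorem, quoted from \cite{AllPlanar5ThomPap}---so there is no internal proof to compare against, and your write-up is the standard Thomassen induction (reduce to a near-triangulation; split along a chord of $C$ and apply induction twice; otherwise delete the neighbour $v_1$ of $x$ on $C$ after reserving two colours of $L(v_1)\setminus\{\phi(x)\}$ and removing them only from the interior fan $u_1,\dots,u_q$), which is correct, including the extension-of-a-precoloring form that the induction actually needs. The only cosmetic slip is in the subcase where $C$ is a triangle $xv_1y$ that does have interior vertices: there the vertex you call $v_2$ is $y$ itself, so $u_{q+1}=y$, which changes nothing since $y$ still forbids at most one of the two reserved colours of $v_1$.
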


Theorem \ref{thomassen5ChooseThm} has the following useful corollary.

\begin{cor}\label{CycleLen4CorToThom} Let $G$ be a planar graph with outer cycle $C$ and let $L$ be a list-assignment for $G$ where each vertex of $G\setminus C$ has a list of size at least five.  If $|V(C)|\leq 4$, then any $L$-coloring of $V(C)$ extends to an $L$-coloring of $G$. \end{cor}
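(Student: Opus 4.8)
The plan is to deduce the corollary from Theorem~\ref{thomassen5ChooseThm}, whose only real limitation here is that it precolors a single \emph{edge} of the outer cycle, that is, two vertices, whereas $C$ has three or four. The one or two ``surplus'' vertices of $C$ will simply be deleted, after first removing their $\phi$-colors from the lists of their neighbors, so that re-inserting them with those colors keeps the coloring proper. Fix the $L$-coloring $\phi$ of $V(C)$ that is to be extended. First I would make the usual reductions: $G$ may be assumed connected (any other component is a planar graph with all lists of size at least $5$, hence $L$-colorable since planar graphs are $5$-choosable), and by adding edges inside non-triangular inner faces — never a chord of $C$, so $\phi$ stays proper on $G[V(C)]$ — we may assume every inner face is a triangle, so that $G$ is a $2$-connected near-triangulation in which the neighborhood of each vertex off $C$ is a cycle and the neighborhood of each $v\in V(C)$ is a path joining the two neighbors of $v$ on $C$. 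If $V(G)=V(C)$ then $\phi$ itself is the required coloring, and if $|V(C)|\le 2$ the statement is trivial or a direct application of Theorem~\ref{thomassen5ChooseThm}; so assume $|V(C)|\in\{3,4\}$ and that some vertex lies off $C$.

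Suppose first $C=v_1v_2v_3$. Let $L'$ agree with $L$ except that $\phi(v_3)$ is removed from $L(w)$ for every neighbor $w$ of $v_3$; then $|L'(w)|\ge 4$ for each such $w$, and no other list changes. Delete $v_3$. By the near-triangulation structure the outer face of $G-v_3$ is bounded by a cycle $C'$ consisting of the edge $v_1v_2$ together with the path $v_1,u_1,\dots,u_k,v_2$ that is the neighborhood of $v_3$; every vertex of $C'$ other than $v_1,v_2$ is one of the $u_i$ and so has an $L'$-list of size at least $4\ge 3$, while every vertex of $G-v_3$ off $C'$ is a non-neighbor of $v_3$ and so still has a list of size at least $5$. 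Since $v_1v_2\in E(C')$ and $\phi$ restricts to an $L'$-coloring of $v_1v_2$, Theorem~\ref{thomassen5ChooseThm} yields an $L'$-coloring $\psi$ of $G-v_3$ with $\psi(v_1)=\phi(v_1)$ and $\psi(v_2)=\phi(v_2)$. Setting $\psi(v_3)=\phi(v_3)$ extends this to $G$: the edges $v_3v_1,v_3v_2$ are properly colored because $\phi$ is proper on $C$, and each edge $v_3u_i$ is properly colored because $\psi(u_i)\in L'(u_i)$ while $\phi(v_3)\notin L'(u_i)$.

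The case $C=v_1v_2v_3v_4$ is the same with the surplus set $\{v_3,v_4\}$: let $L'$ remove $\phi(v_3)$ from the list of each neighbor of $v_3$ and $\phi(v_4)$ from the list of each neighbor of $v_4$ — a vertex adjacent to both loses at most two colors, keeping a list of size at least $3$, and all other lists keep size at least $5$ — and delete $v_3$ and $v_4$. Because $v_3v_4\in E(C)$, that edge lies on exactly one inner face, a triangle, so the neighborhoods of $v_3$ and $v_4$ overlap only in their common neighbor on the inner side of $v_3v_4$, and the outer face of $G-v_3-v_4$ is bounded by a cycle $C'$ on which $v_1$ and $v_2$ are adjacent; every vertex of $C'$ other than $v_1,v_2$ is a neighbor of $v_3$ or of $v_4$ and so has an $L'$-list of size at least $3$, while every vertex off $C'$ keeps a list of size at least $5$. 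Apply Theorem~\ref{thomassen5ChooseThm} to $(G-v_3-v_4,\,C',\,L')$ with special edge $v_1v_2$ precolored by $\phi$, obtaining $\psi$, and extend by $\psi(v_3)=\phi(v_3)$ and $\psi(v_4)=\phi(v_4)$; the restored edges are properly colored for the same two reasons as in the triangle case.

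The part I expect to be the genuine obstacle is not the coloring idea but the topological bookkeeping behind the phrase ``the outer face of $G-v_3$ (respectively $G-v_3-v_4$) is bounded by a cycle.'' Passing to a near-triangulation is what turns the neighborhoods of the deleted vertices into paths, but deleting the surplus vertices can still destroy $2$-connectivity — for instance when $\{v_3,v_4\}$, or a single surplus vertex together with an interior vertex, separates $G$ — and then the outer boundary of what remains is only a closed walk, not a cycle. The pieces split off this way consist entirely of vertices off $C$, attached to the rest of $G$ solely through the precolored surplus vertices, and each — together with the surplus vertices adjacent to it — is a strictly smaller instance of the same kind of extension problem. The proof is therefore really an induction on $|V(G)|$ with Theorem~\ref{thomassen5ChooseThm} as the engine, and it is this decomposition along small separators, rather than the main line of the argument, where the care is needed.
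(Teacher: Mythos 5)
The paper never actually proves this corollary -- it is stated as a standard consequence of Theorem \ref{thomassen5ChooseThm} -- so the only question is whether your argument is sound. Your triangle case is fine, but in the $|V(C)|=4$ case the sentence asserting that ``the neighborhoods of $v_3$ and $v_4$ overlap only in their common neighbor on the inner side of $v_3v_4$'' and that the outer face of $G-v_3-v_4$ is bounded by a cycle is false in general, and that is exactly where the content of the corollary sits. Concretely, let $C=v_1v_2v_3v_4$ and add interior vertices $x,y$ with $y$ adjacent to $v_3,v_4,x$ and $x$ adjacent to $v_1,v_2,v_3,v_4,y$: every inner face is a triangle and $C$ has no chord, yet $v_3,v_4$ have the two common neighbors $x$ and $y$, and in $G-v_3-v_4$ the vertex $y$ is a pendant attached at $x$, so the outer boundary is only a closed walk and Theorem \ref{thomassen5ChooseThm} does not apply. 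A chord $v_1v_3$ or $v_2v_4$ produces the same failure. You do flag this possibility in your final paragraph, but the repair is left as a sketch and is misdescribed: the split-off piece is attached to the rest not ``solely through the precolored surplus vertices'' but through an uncolored interior vertex ($x$ in the example), so at that moment it is not a smaller instance of the same precoloring-extension problem; the order of the induction matters.

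The standard way to close the gap is to make the induction on $|V(G)|$ explicit \emph{before} any deletion. If $C$ has a chord, split $G$ along it into two graphs whose outer cycles are triangles and apply induction to each (here you use that, by the paper's definition, an $L$-coloring of $V(C)$ is proper on $G[V(C)]$, hence respects the chord). If $v_3$ and $v_4$ have a common neighbor $x$ other than the third vertex of the inner triangular face on $v_3v_4$, then $v_3xv_4$ is a separating triangle: color the part of $G$ outside it by induction (it has fewer vertices, the same outer cycle $C$ and the same precoloring), and only then extend into the inside, which is a triangle instance of the corollary with $v_3,v_4,x$ precolored. Only when neither configuration occurs does deleting $v_3,v_4$ yield a facial cycle through the edge $v_1v_2$, and there your list-pruning argument and the application of Theorem \ref{thomassen5ChooseThm} are correct. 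A smaller point: the preliminary reduction ``triangulate the inner faces by adding edges, never a chord of $C$'' also needs justification; it is safer to place a new vertex with a $5$-list inside each non-triangular inner face joined to all its boundary vertices, which triangulates without creating chords of $C$ or parallel edges.
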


We also rely on the following result from Postle and Thomas, which is an immediate consequence of Theorem 3.1 from \cite{2ListSize2PaperSeriesI} and is an analogue of Theorem \ref{thomassen5ChooseThm} where the precolored edge has been replaced by two lists of size two.

\begin{theorem}\label{Two2ListTheorem} Let $G$ be a planar graph with outer face $F$, and let $v,w\in V(F)$. Let $L$ be a list-assignment for $V(G)$ where $|L(v)|\geq 2$, $|L(w)|\geq 2$, and furthermore, for each $x\in V(F)\setminus\{v,w\}$, $|L(x)|\geq 3$, and, for each $x\in V(G\setminus F)$, $|L(x)|\geq 5$. Then $G$ is $L$-colorable. \end{theorem}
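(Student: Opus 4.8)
The quickest route is the one flagged above: Theorem~\ref{Two2ListTheorem} is the plane, ``two singleton rings'' specialization of Theorem~3.1 of \cite{2ListSize2PaperSeriesI}, so it follows by simply instantiating that result. The plan I would otherwise follow is a self-contained outline along the lines of Thomassen's proof of Theorem~\ref{thomassen5ChooseThm}, treating $v$ and $w$ as ``half-precolored'' vertices; as I note at the end, getting this outline to actually close is precisely what forces one into the more flexible framework of \cite{2ListSize2PaperSeriesI}.

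I would induct on $|V(G)|$. First the routine reductions. We may assume $G$ is connected, applying the hypothesis to each component after, if necessary, nominating an arbitrary outer-face vertex of a component to stand in for a missing size-$2$ vertex. We may then assume $G$ is $2$-connected: at a cut vertex $c$, color the block (or chain of blocks) containing both $v$ and $w$ first, and then, with the color of $c$ fixed, finish each remaining block with Theorem~\ref{thomassen5ChooseThm}, using $c$ together with any neighbor of it on that block's outer cycle as a precolored edge. So assume $G$ is $2$-connected with outer cycle $C$ and $v,w\in V(C)$. If $v=w$, replace $w$ by a neighbor of $v$ on $C$; if $vw\in E(C)$, pick $\phi(v)\in L(v)$ and $\phi(w)\in L(w)\setminus\{\phi(v)\}$ and apply Theorem~\ref{thomassen5ChooseThm} to the resulting properly precolored edge $vw$. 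Hence we may assume $v\ne w$ and $vw\notin E(C)$. Base cases, including $|V(C)|\le 4$ (use Corollary~\ref{CycleLen4CorToThom}) and the case $G=C$, are handled directly.

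Now the chord case. A chord $xy$ of $C$ splits $G$ into $G_1,G_2$ meeting in the edge $xy$. If $v$ and $w$ lie in the same part, say both in $G_1$, then $G_1$ still satisfies the hypotheses, so by induction it has an $L$-coloring; this colors $xy$ properly, and since every remaining vertex of the outer cycle of $G_2$ lies on $C\setminus\{v,w\}$ and so has list size $\ge 3$, Theorem~\ref{thomassen5ChooseThm} extends the coloring over $G_2$. The first real complication is the configuration with $v\in V(G_1)\setminus\{x,y\}$ and $w\in V(G_2)\setminus\{x,y\}$: coloring $G_1$ (with $x$ playing the second distinguished role) and passing the colors of $x,y$ to $G_2$ leaves $G_2$ with a precolored edge $xy$ and, still present, the size-$2$ vertex $w$, which is neither the situation of Theorem~\ref{thomassen5ChooseThm} nor of Theorem~\ref{Two2ListTheorem}. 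So already here the induction must be run on a statement that simultaneously permits a precolored edge and a leftover size-$2$ vertex on the outer cycle.

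Finally the chordless case, which is the heart. After triangulating the interior faces (harmless since all interior lists have size $5\ge 3$), the Thomassen move is to pick a boundary vertex $z$ of list size $\ge 3$ near a distinguished vertex, color it compatibly with the already-fixed colors among its neighbors, delete it, and delete its color from the remaining neighbors: its interior neighbors drop only from $\ge 5$ to $\ge 4$, and by choosing $z$'s color outside $L(v)\cup L(w)$ when possible one tries to protect $v$ and $w$. The difficulty is that this is not always possible, and each such step can create additional boundary vertices of list size $2$ (for instance, a $C$-neighbor of $z$ dropping from size $3$ to size $2$), so the clean ``exactly two size-$2$ vertices'' hypothesis is not preserved. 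Closing the induction therefore requires the more robust inductive statement of \cite{2ListSize2PaperSeriesI}, which tracks a whole structured set of low-list boundary vertices rather than just two; once one adopts that statement, the reductions above go through and Theorem~\ref{Two2ListTheorem} falls out as the special case with two singleton constraints. I expect this passage to the right inductive invariant --- rather than any single manipulation --- to be the main obstacle; everything else (the connectivity reductions, the $v\sim w$ and $v=w$ cases, and the same-part chord case) is routine given Theorem~\ref{thomassen5ChooseThm}.
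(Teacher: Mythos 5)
Your primary route is exactly the paper's: Theorem \ref{Two2ListTheorem} is not proved here but is invoked as an immediate specialization of Theorem 3.1 of \cite{2ListSize2PaperSeriesI}, which is what you do. Your supplementary Thomassen-style outline, with its honest acknowledgement that the naive induction does not close and that one must fall back on the stronger inductive statement of \cite{2ListSize2PaperSeriesI}, is consistent with that and introduces no gap.
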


We now recall some notions from topological graph theory. Given an embedding $G$ on surface $\Sigma$, the deletion of $G$ partitions $\Sigma$ into a collection of disjoint, open connected components called the \emph{faces} of $G$. Our main objects of study are the subgraphs of $G$ bounding the faces of $G$. Given a subgraph $H$ of $G$, we call $H$ a \emph{facial subgraph} of $G$ if there exists a connected component $U$ of $\Sigma\setminus G$ such that $H=\partial(U)$. We call $H$ is called a \emph{cyclic facial subgraph} (or, more simply, a \emph{facial cycle}) if $H$ is both a facial subgraph of $G$ and a cycle. Given  a cycle $C\subseteq G$, we say that $C$ is \emph{contractible} if it can be contracted on $\Sigma$ to a point, otherwise we say it is \emph{noncontractible}. We now introduce two standard paramaters that measure the extent to which an embedding deviates from planarity. 

\begin{defn}\label{EWandFWDefn} \emph{Let $\Sigma$ be a surface and let $G$ be an embedding on $\Sigma$. The \emph{edge-width} of $G$, denoted by $\textnormal{ew}(G)$, is the length of the shortest noncontractible cycle in $G$. The \emph{face-width} of $G$, denoted by $\textnormal{fw}(G)$, is the smallest integer $k$ such that there exists a noncontractible closed curve of $\Sigma$ which intersects with $G$ on $k$ points. If $G$ has no noncontractible cycles, then we define $\textnormal{ew}(G)=\infty$, and if $g(\Sigma)=0$, then we define $\textnormal{fw}(G)=\infty$. The face-width of $G$ is also sometimes called the \emph{representativity} of $G$. Some authors consider the face-width to be undefined if $\Sigma=\mathbb{S}^2$, but, for our purposes, adopting the convention that $\textnormal{fw}(G)=\infty$ in this case is much more convenient. The notion of face-width was introduced by Robertson and Seymour in their work on graph minors and has been studied extensively.} \end{defn}

Our main result for these three papers is Theorem \ref{5ListHighRepFacesFarMainRes} below, a vast generalization of Theorem \ref{thomassen5ChooseThm}.

\begin{theorem}\label{5ListHighRepFacesFarMainRes} Let $\Sigma$ be a surface, $G$ be an embedding on $\Sigma$ of face-width at least $2^{\Omega(g(\Sigma))}$, and $F_1, \ldots, F_m$ be a collection of facial subgraphs of $G$ which are pairwise of distance at least $2^{\Omega(g(\Sigma))}$ apart. Let $x_1y_1, \ldots, x_my_m$ be a collection of edges in $G$, where $x_iy_i\in E(F_i)$ for each $i=1,\ldots, m$. Let $L$ be a list-assignment for $G$ such that 
\begin{enumerate}[label=\arabic*)]
\itemsep-0.1em
\item for each $v\in V(G)\setminus\left(\bigcup_{i=1}^mV(C_i)\right)$, $|L(v)|\geq 5$; AND
\item For each $i=1,\ldots, m$, $x_iy_i$ is $L$-colorable, and, for each $v\in V(F_i)\setminus\{x_i, y_i\}$, $|L(v)|\geq 3$.
\end{enumerate}
Then $G$ is $L$-colorable.  \end{theorem}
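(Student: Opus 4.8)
The plan is to reduce the genus-$g$ statement to the planar results (Theorems~\ref{thomassen5ChooseThm} and~\ref{Two2ListTheorem}, and Corollary~\ref{CycleLen4CorToThom}) by a ``cutting'' argument, processing the precolored facial subgraphs $F_1,\dots,F_m$ one at a time. The key structural input is that the face-width is at least $2^{\Omega(g)}$ and the $F_i$ are pairwise at distance at least $2^{\Omega(g)}$; these two hypotheses should be chosen so that around each $F_i$ one can find a large collection of nested, pairwise-disjoint, noncontractible (or contractible-but-enclosing) cycles — a ``cylinder'' of concentric cycles of depth roughly $2^{\Omega(g)}$ separating $F_i$ from the rest of $G$ and from the other $F_j$'s. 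The standard tool here is that an embedding of face-width $k$ contains a large ``tangle''/set of disjoint noncontractible cycles (Robertson--Seymour), and that deep enough BFS layers around a face yield such nested cycles.

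Concretely, first I would set up the notion of the ``$F_i$-region'': let $R_i$ be the subgraph consisting of $F_i$ together with everything within BFS distance $d := 2^{\Omega(g)}/3$ (say) of $F_i$. By the distance hypothesis the $R_i$ are pairwise disjoint, and by the face-width hypothesis each $R_i$ is ``planar-like'' — more precisely, the innermost BFS layers of $R_i$ form a family of concentric cycles, the outermost of which, call it $C_i$, bounds a disk $\Delta_i$ of $\Sigma$ containing $F_i$ (this uses that any short noncontractible curve would violate face-width, so everything close to $F_i$ lives in a disk). Then $G$ decomposes as the union of the planar pieces $G[\Delta_i]$ and the ``core'' $G_0 = G - \bigcup_i \mathrm{int}(\Delta_i)$, which is an embedding on $\Sigma$ where now every vertex has a list of size $\geq 5$ except those on the cycles $C_i$, which may have smaller lists but which we control.

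The main step is then an ``absorption''/coloring-extension lemma: color the core $G_0$ first (it has $5$-lists everywhere except on the bounding cycles $C_i$, and I would arrange $|V(C_i)|$-worth of room, or reduce the lists on $C_i$ to size $3$ by an intermediate layer, so that $G_0$ behaves like a genus-$g$ graph with a few precolored short cycles — but crucially with \emph{no} precolored vertices forcing constraints, handled by a separate easier genus reduction or by induction on $g$ via a further cut along a noncontractible cycle far from all $F_i$), and then for each $i$ extend the coloring of $C_i$ across the disk $\Delta_i$ down to $F_i$ using the planar results. The disk $G[\Delta_i]$ has outer cycle $C_i$ (with a fixed coloring), $5$-lists on its interior except on $F_i$, where $F_i\setminus\{x_i,y_i\}$ has $3$-lists and $x_iy_i$ is precolored; the depth-$d$ annulus between $C_i$ and $F_i$ is exactly the regime where one applies Thomassen's theorem (Theorem~\ref{thomassen5ChooseThm}) \emph{outward} from the precolored edge $x_iy_i$, or more precisely a ``ring lemma'' showing that a coloring of $x_iy_i$ together with the $3$-lists on $F_i$ propagates through the annulus and can meet \emph{any} prescribed coloring of $C_i$ provided the annulus is wide enough — this is where Theorem~\ref{Two2ListTheorem} and Corollary~\ref{CycleLen4CorToThom} come in, to push the two ``ends'' of the constraint (the $C_i$-coloring from outside and the $x_iy_i$-coloring from inside) together. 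I would phrase this as: there is a constant so that any annulus of BFS-depth $\geq$ that constant with $5$-lists inside, a precolored edge on the inner boundary with $3$-lists on the rest of the inner cycle, and an arbitrary precolored outer cycle, admits a coloring — proved by peeling off one layer at a time, at each step reducing to a smaller annulus while using the planar lemmas to handle chords and short separating cycles.

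The hard part will be the core-coloring step and the interaction between the $F_i$'s at the level of the whole surface: one must actually produce an $L$-coloring of $G_0$ on $\Sigma$ where the only small-list vertices are on the disjoint cycles $C_i$, and this is essentially a genus-$g$ $5$-list-coloring statement with several precolored short boundary cycles — which is not given to us directly and which I expect to require its own induction on $g$ (cut $\Sigma$ along a noncontractible cycle chosen, by the face-width bound, to avoid all the $R_i$; this drops the genus, at the cost of creating two new ``boundary'' cycles that must themselves be handled by a ring lemma, so the $2^{\Omega(g)}$ budget must be large enough to absorb $g$ successive such cuts — hence the exponential dependence on $g$). Managing the bookkeeping of the distance/face-width budget through all $g$ inductive cuts plus the $m$ disk-extensions, and ensuring the ring lemma is robust enough to allow \emph{arbitrary} outer colorings (so the cuts can be glued back), is the crux; everything else is an application of the three planar results quoted above. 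Given that this is ``Part I'' and its stated title is ``Connecting Faces,'' I expect the present paper to isolate and prove precisely the ring/annulus lemma and the reduction of $G$ to the core plus disks, deferring the genus induction on the core to Parts II and III.
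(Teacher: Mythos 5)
Your plan hinges on a ``ring lemma'' asserting that, for a wide enough annulus with $5$-lists in its interior, a precolored edge plus $3$-lists on the inner cycle $F_i$ can be completed to meet \emph{any} prescribed coloring of the outer cycle $C_i$. That statement is false, and no width hypothesis rescues it: obstructions to extending a precoloring of a long cycle into a planar graph with $5$-lists are local to the cycle (for instance, an interior vertex adjacent to five vertices of $C_i$ whose prescribed colors exhaust its list), so an adversarial coloring of $C_i$ already kills the extension in the first layer, regardless of how far away $F_i$ is. Any correct argument must therefore control \emph{which} colorings of the interface are produced, not quantify over all of them --- this is exactly the role of $(L,\phi)$-inert sets and $L$-reductions in this paper, where one colors part of a set, deletes it, and guarantees that every vertex at distance one retains a list of size at least three, so that one never has to extend an arbitrary boundary coloring. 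The same defect infects the glue-back step of your proposed genus induction: after cutting along a noncontractible cycle you must extend one specific coloring of the cut cycle into both sides, which is again the unavailable step; and the ``core-coloring'' statement you invoke (genus $g$, several precolored boundary cycles of unbounded length) is not a consequence of the quoted planar results --- it is essentially of the same difficulty as the theorem itself, and your proposal defers it rather than proving it.

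Beyond the gap, the route differs from the actual one, and your guess about what Part I supplies is off. The series does not decompose $\Sigma$ into a core plus disks, color the core first, and push colorings inward; instead Paper II runs a vertex-minimality argument for the short-inseparable case, and the present paper's Theorems \ref{FaceConnectionMainResult} and \ref{SingleFaceConnRes} furnish a consistent family of $L$-reductions (enclosures and targets near each face, link colorings along the facial cycles, and colored-and-deleted shortest paths between faces) whose union is a connected inert set $A$ containing all the designated faces; deleting $A$ merges them into a single face of $G\setminus A$ whose neighborhood keeps $3$-lists, yielding a smaller counterexample, with the single-face version handling the possible drop in face-width (which, unlike edge-width, is not monotone under taking subgraphs). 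Paper III, not a cut-and-paste genus induction, reduces the general case to the short-inseparable one. So the proposal would need both a correct replacement for the ring lemma (some notion of controlled, inert interface colorings) and a proof of the core statement before it could be considered a viable alternative.
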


Theorem \ref{5ListHighRepFacesFarMainRes} is slightly stronger than imposing pairwise-far apart components with ordinary 2-colorings. That is, an immediate consequence of Theorem \ref{5ListHighRepFacesFarMainRes} is the following slightly weaker result.

\begin{theorem}\label{WVersionThmPrecCompFW}  Let $\Sigma$ be a surface, $G$ be an embedding on $\Sigma$ of face-width at least $2^{\Omega(g(\Sigma))}$, and $L$-be a list-assignment for $V(G)$ in which every vertex has a list of size at least five, except for the vertices of some connected subgraphs $K_1, \cdots, K_m$ of $G$ which are pairwise of distance at least $2^{\Omega(g(\Sigma))}$ apart, where, for each $i=1, \cdots, m$, there is an $L$-coloring of $K_i$ which is an ordinary 2-coloring. Then $G$ is $L$-colorable. 
 \end{theorem}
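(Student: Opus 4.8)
The plan is to derive Theorem~\ref{WVersionThmPrecCompFW} from Theorem~\ref{5ListHighRepFacesFarMainRes} by showing that, after a harmless modification of the embedding, each precolored connected subgraph $K_i$ together with its $2$-coloring can be ``folded into'' a single precolored edge $x_iy_i$ lying on a facial subgraph $F_i$, with the remaining vertices of $F_i$ receiving lists of size at least $3$. The key observation is that an \emph{ordinary} (i.e.\ proper) $2$-coloring $\psi_i$ of $K_i$ partitions $V(K_i)$ into two independent sets $A_i$ (colored $a_i$) and $B_i$ (colored $b_i$); since $K_i$ is connected and properly $2$-colored, these color classes are uniquely determined up to swapping. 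The strategy is to collapse $A_i$ to a single vertex $x_i$ and $B_i$ to a single vertex $y_i$: form $G'$ from $G$ by identifying all vertices of $A_i$ into one vertex $x_i$ and all vertices of $B_i$ into one vertex $y_i$ (deleting resulting loops and parallel edges), for every $i$ simultaneously. Because $A_i,B_i$ are independent, no loops arise from edges inside a single class, and the only loops come from nothing; the identified pair $x_i,y_i$ is joined by an edge (as $K_i$ has at least one edge), so we obtain an edge $x_iy_i$ in $G'$.

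Next I would check the topological hypotheses transfer. Contracting the connected subgraph $K_i$ (equivalently, identifying its $2$-coloring classes, which can be realized as a sequence of edge-contractions within $K_i$ possibly followed by identifications that are again ``contraction-like'' since each edge of $K_i$ joins $A_i$ to $B_i$) is a surface-respecting operation: contracting a connected subgraph of an embedded graph yields an embedding on the same surface $\Sigma$, so $g(\Sigma)$ is unchanged. The image of the face incident to $K_i$ becomes a facial subgraph $F_i$ of $G'$ containing the edge $x_iy_i$; more precisely, one chooses $F_i$ to be the boundary of a face of $G'$ meeting $K_i$. One must verify that (a) the face-width of $G'$ is still at least $2^{\Omega(g(\Sigma))}$ and (b) the subgraphs $F_1,\dots,F_m$ are still pairwise at distance at least $2^{\Omega(g(\Sigma))}$ in $G'$. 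Both follow from the distance hypothesis in Theorem~\ref{WVersionThmPrecCompFW}: contracting a set of pairwise far-apart connected subgraphs of small diameter changes distances and face-width by only a bounded additive amount (at most the diameters of the $K_i$, which are themselves controlled), so choosing the $\Omega(\cdot)$ constants in Theorem~\ref{WVersionThmPrecCompFW} slightly larger than those in Theorem~\ref{5ListHighRepFacesFarMainRes} absorbs the loss. I would also confirm that contracting a contractible closed curve argument is not disturbed: a noncontractible curve meeting $G'$ in few points lifts to one meeting $G$ in few points after routing around the contracted blobs, at the cost of at most $O(\sum \mathrm{diam}(K_i))$ extra intersections.

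Then I would set up the list-assignment $L'$ on $G'$. For $v\in V(G')$ not arising from any $K_i$, set $L'(v)=L(v)$, which has size at least $5$. For the merged vertices, set $L'(x_i)=\{a_i\}$ and $L'(y_i)=\{b_i\}$ — singleton lists — so that $x_iy_i$ is trivially $L'$-colorable by $\psi_i$. But Theorem~\ref{5ListHighRepFacesFarMainRes} requires the \emph{other} vertices of $F_i$ to have lists of size at least $3$, and here is where care is needed: a vertex $w\in V(F_i)\setminus\{x_i,y_i\}$ is an original vertex of $G$ with $|L(w)|\ge 5$, but after contraction $w$ may have become adjacent to $x_i$ and/or $y_i$, i.e.\ to colors $a_i,b_i$. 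Since $w$ lost at most two potential neighbors' colors ($a_i$ and $b_i$) to the contraction, $|L(w)|\ge 5 > 3$ is more than enough; in fact we keep the full $5$-list. So hypotheses~1) and~2) of Theorem~\ref{5ListHighRepFacesFarMainRes} hold for $(G',L')$. Applying Theorem~\ref{5ListHighRepFacesFarMainRes} yields an $L'$-coloring $\phi'$ of $G'$. Finally, pull $\phi'$ back to $G$: color each $v\in A_i$ with $\phi'(x_i)=a_i$, each $v\in B_i$ with $\phi'(y_i)=b_i$, and each other vertex $v$ with $\phi'(v)$. This is a proper $L$-coloring of $G$: edges inside some $K_i$ are properly colored because $\psi_i$ is proper and we used exactly $\psi_i$ on $K_i$; edges with at least one endpoint outside all $K_i$ correspond to edges of $G'$ (since identification only merges vertices within a class, never separating an edge), so properness is inherited from $\phi'$; and the singleton lists force $\phi'(x_i)=a_i$, $\phi'(y_i)=b_i$, matching $\psi_i$.

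The main obstacle is the topological bookkeeping in the second step: verifying precisely that identifying the color classes of a connected properly-$2$-colored subgraph is a legitimate embedding operation preserving the surface, and quantifying exactly how face-width and pairwise distances degrade under simultaneously contracting $m$ such subgraphs. One subtlety is that $F_i$ must genuinely be a \emph{facial subgraph} of $G'$ containing $x_iy_i$ — this requires that the contraction of $K_i$ leaves $x_i$ and $y_i$ cofacial with an edge between them, which holds because $K_i$, being connected, is incident to a common face after contraction (its complement in a small regular neighborhood is connected). A second subtlety is ensuring the diameters of the $K_i$ are bounded by something like $2^{\Omega(g)}$ so the additive losses are of the same order as the slack in the hypotheses; this is implicit in the statement since a $K_i$ of large diameter would itself violate the ``pairwise distance $\ge 2^{\Omega(g)}$'' assumption relative to its own spread, but one should state the dependence of constants cleanly. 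Once these are pinned down, the coloring transfer is purely formal.
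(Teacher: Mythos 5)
Your reduction has a genuine gap at its central step: forming $G'$ by identifying all of $A_i$ into a single vertex $x_i$ and all of $B_i$ into a single vertex $y_i$ is not a legitimate operation on an embedded graph, and your justification for it does not work. Each color class is an independent set, so this identification cannot be realized by edge contractions inside $K_i$: every edge of $K_i$ joins $A_i$ to $B_i$, hence any contraction merges vertices from opposite classes, and no sequence of contractions collapses $K_i$ to two vertices while preserving the bipartition. Identifying two nonadjacent vertices is surface-preserving essentially only when they are cofacial, and the vertices of a single class need not be pairwise cofacial (take $K_i$ to be a long induced path inside a triangulated region); identifying non-cofacial vertices in general increases the genus or destroys embeddability in $\Sigma$ altogether. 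Consequently the assertions that $G'$ is an embedding on $\Sigma$, that $x_iy_i$ lies on a facial subgraph $F_i$ of $G'$, and that $\textnormal{fw}(G')$ differs from $\textnormal{fw}(G)$ by a controlled amount are all unsupported, and Theorem \ref{5ListHighRepFacesFarMainRes} cannot be applied to $G'$. A second, independent problem is that your quantitative bookkeeping assumes the diameters of the $K_i$ are ``controlled'': nothing in the hypotheses of Theorem \ref{WVersionThmPrecCompFW} bounds $\mathrm{diam}(K_i)$, and your remark that a large-diameter $K_i$ would violate the pairwise-distance hypothesis is incorrect, since that hypothesis constrains only distances between distinct components (and is vacuous when $m=1$).

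For comparison: the paper offers no separate argument for Theorem \ref{WVersionThmPrecCompFW}; it is stated as an immediate consequence of Theorem \ref{5ListHighRepFacesFarMainRes}, and the intended reduction requires no re-embedding or vertex identification at all. One colors each $K_i$ with its 2-coloring $\psi_i$ and deletes $V(K_i)$: since $\psi_i$ uses only two colors, every neighbor of $K_i$ retains a list of size at least three, and all vertices whose lists were reduced lie on the boundary of the face of $G\setminus\bigcup K_i$ created by the deletion, so one is in the precolored-face setting of Theorem \ref{5ListHighRepFacesFarMainRes} (any edge of that face boundary serves as $x_iy_i$, its endpoints having lists of size at least three). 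Note also that the natural repair of your construction --- contracting $K_i$ onto a genuine edge by splitting it into two connected halves --- fails for a different reason: a vertex outside $K_i$ adjacent to a $B_i$-vertex lying in the half contracted to $x_i$ is then only forbidden the color $a_i$, so the pulled-back coloring need not be proper. The color-class structure you want to preserve is exactly what cannot be preserved by surface-respecting contractions, which is why the deletion route, not a contraction route, is the one compatible with the face-width and facial-subgraph hypotheses of Theorem \ref{5ListHighRepFacesFarMainRes}.
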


We now give some context to Theorems \ref{5ListHighRepFacesFarMainRes} and \ref{WVersionThmPrecCompFW}. Postle and Thomas have a proof (\cite{ManyFacesFarLuke}) of the planar version of Theorem \ref{5ListHighRepFacesFarMainRes} below. 

\begin{theorem}\label{GenusZeroThesisVersMainTh}
Let $G$ be a planar graph and let $F_1, \ldots, F_m$ be a collection of facial subgraphs of $G$ which are pairwise of distance at least $\Omega(1)$ apart. Let $x_1y_1, \ldots, x_my_m$ be a collection of edges in $G$, where $x_iy_i\in E(F_i)$ for each $i=1,\ldots, m$. Let $L$ be a list-assignment for $G$ such that
\begin{enumerate}[label=\arabic*)]
\itemsep-0.1em
\item for each $v\in V(G)\setminus\left(\bigcup_{i=1}^mV(C_i)\right)$, $|L(v)|\geq 5$; AND
\item For each $i=1,\ldots, m$, $x_iy_i$ is $L$-colorable, and, for each $v\in V(F_i)\setminus\{x_i, y_i\}$, $|L(v)|\geq 3$.
\end{enumerate}
Then $G$ is $L$-colorable. 
\end{theorem}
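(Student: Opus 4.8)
The plan is to argue by induction on $|V(G)|$, using Theorem~\ref{thomassen5ChooseThm} to anchor the induction and ``peeling off'' one precolored face at a time in the inductive step. First I would dispose of routine reductions: we may assume $G$ is connected, and, by the standard block-decomposition argument on each facial subgraph, that every $F_i$ is a cycle (cut-vertices and bridges of the $F_i$ are handled separately). If $m\le 1$ we are done: for $m=0$ apply the $5$-choosability of planar graphs, and for $m=1$ re-embed $G$ so that $F_1$ is the outer face and apply Theorem~\ref{thomassen5ChooseThm} with $C=F_1$ and $xy=x_1y_1$. Note this already settles all sufficiently small $G$, since $m\ge 2$ forces $|V(G)|$ to be large.

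So assume $m\ge 2$, and let $d=\Omega(1)$ be the distance bound, which we are free to take much larger than an absolute constant $c_0$ fixed below. Since $\mathrm{dist}(F_m,F_j)\ge d$ for all $j<m$, the graph is ``thick'' around $F_m$, and I claim there is a cycle $C$ of $G$ such that $F_m$ lies in the closed disk $\Delta$ bounded by $C$ that contains no other $F_j$, and every vertex of $C$ lies at distance between $c_0$ and $2c_0$ from $F_m$; consequently $C$ is disjoint from all $F_i$ and $\mathrm{dist}(C,F_i)\ge c_0$ for every $i$ (choosing $d\ge 3c_0$). Such a $C$ exists because, in a $2$-connected plane graph, the annulus of vertices whose distance to a given face lies in a fixed nonempty range is ``connected around'' that face and hence contains a surrounding cycle; taking $C$ of minimum length among such cycles makes $C$ chordless and ``tight,'' which will be needed below. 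Write $H_{\mathrm{in}}$ for the subgraph of $G$ drawn in $\Delta$ (so $H_{\mathrm{in}}$ contains $C\cup F_m$) and $H_{\mathrm{out}}$ for the subgraph drawn in the complementary closed disk (so $H_{\mathrm{out}}$ contains $C\cup F_1\cup\cdots\cup F_{m-1}$); these two subgraphs share exactly $V(C)$, and all vertices of $C$ and all vertices strictly between $C$ and $F_m$ carry $5$-lists.

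The heart of the argument is an annular analogue of Theorem~\ref{thomassen5ChooseThm}, which I will call the \emph{peeling lemma}: since $H_{\mathrm{in}}$ is a disk whose outer cycle $C$ carries $5$-lists, whose only precolored facial cycle $F_m$ (carrying a precolored edge and $3$-lists elsewhere) sits strictly inside at distance at least $c_0$, and whose remaining vertices carry $5$-lists, there is a list-assignment $L'$ with $L'(v)\subseteq L(v)$ for $v\in V(C)$ and an edge $x'y'\in E(C)$ with $|L'(v)|\ge 3$ for $v\in V(C)\setminus\{x',y'\}$ and $x'y'$ being $L'$-colorable, such that \emph{every} $L'$-coloring of $C$ extends to an $L$-coloring of $H_{\mathrm{in}}$ agreeing with the precoloring of $x_my_m$. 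Granting this, extend $L'$ by setting $L'(v)=L(v)$ off $V(C)$; then $H_{\mathrm{out}}$ with list-assignment $L'$, precolored facial cycles $F_1,\dots,F_{m-1},C$ (each with a designated $L'$-colorable edge and $3$-lists elsewhere), and pairwise distances still at least $d$ (distances cannot decrease when the interior of $\Delta$ is deleted), satisfies the hypotheses of the theorem and has fewer vertices than $G$; by the inductive hypothesis it has an $L'$-coloring $\phi_{\mathrm{out}}$. The restriction of $\phi_{\mathrm{out}}$ to $C$ is an $L'$-coloring of $C$, so by the peeling lemma it extends over $H_{\mathrm{in}}$ respecting $x_my_m$, and gluing the two colorings along $C$ yields the desired $L$-coloring of $G$.

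The main obstacle is the peeling lemma itself, a genuine strengthening of Theorem~\ref{thomassen5ChooseThm} in the ``outside-in'' direction: one must manufacture $3$-lists on the outer cycle that force \emph{all} boundary colorings to extend past a precolored face sitting in a hole, and it is exactly here that both the distance bound $c_0$ and the tightness of $C$ enter. I would prove it by a separate induction (on $|V(H_{\mathrm{in}})|$), running a Thomassen-style discharging argument that processes vertices along shortest paths from $F_m$ outward toward $C$ and shows that the ``deficit'' created by the precolored edge of $F_m$ dissipates before reaching $C$ — which needs a constant number of $5$-list layers in between — while minimality of $C$ bounds how many vertices of $C$ any single interior vertex can see, keeping the bookkeeping finite; alternatively one can cut the annulus between $F_m$ and $C$ open along a shortest connecting path to turn it into a disk and try to reduce to Theorem~\ref{thomassen5ChooseThm} or Theorem~\ref{Two2ListTheorem} directly.
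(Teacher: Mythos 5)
First, a point of orientation: this paper does not prove Theorem \ref{GenusZeroThesisVersMainTh} at all; it is quoted as a result of Postle and Thomas \cite{ManyFacesFarLuke}, and the machinery developed here (inert sets, $L$-reductions, Theorems \ref{MainCollarResultCutProc} and \ref{MainResultColorAndDeletePaths}) is of the ``partially color and delete a region near the special faces so that the uncolored vertices left behind keep $3$-lists'' type. Your outer skeleton --- surround one far-away face $F_m$ by a cycle $C$, handle the inside separately, and recurse on the outside with $C$ as a new distinguished face --- is a reasonable shape for an induction, but the entire weight of the argument rests on your ``peeling lemma,'' and that is where the proposal has a genuine gap.

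The peeling lemma asserts that one can choose $3$-element sublists $L'(v)\subseteq L(v)$ on the surrounding cycle $C$ so that \emph{every} $L'$-coloring of $C$ extends across a depth-$c_0$ annulus of $5$-lists and past the precolored edge and $3$-lists of $F_m$. Nothing in Theorem \ref{thomassen5ChooseThm} or Theorem \ref{Two2ListTheorem} gives a universal statement of this kind: those results produce \emph{some} extension when only an edge (or two vertices with $2$-lists) is constrained and the rest of the boundary carries $3$-lists; they say nothing about all colorings of an entire cycle extending inward, and full-cycle precolorings from $3$-lists routinely fail to extend even across $5$-list regions. This is exactly why the approach in this paper and in the cited works goes the other way: one precolors a carefully chosen set and leaves the new boundary \emph{uncolored} with residual lists of size at least three (the $L$-reduction/inertness formalism), so that universal-extension obligations are confined to the small pockets $\textnormal{Sh}^k$ cut off by chords of bounded length, where they can be discharged by Theorem \ref{thomassen5ChooseThm} or Corollary \ref{CycleLen4CorToThom}. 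Your lemma shoulders that burden over an entire annulus containing a precolored face at once, so it is at least as deep as the theorem being proved, and the proposed justification (``Thomassen-style discharging along shortest paths outward; the deficit dissipates'') is not an argument; the fallback of cutting the annulus open along a shortest path also does not reduce to Theorem \ref{thomassen5ChooseThm} or \ref{Two2ListTheorem}, because the two copies of the cut path must receive identical colors, an identification the list-coloring framework cannot encode. Even granting the lemma, two further repairs are needed: since every vertex of $C$ lies within $2c_0$ of $F_m$, one only gets $d(C,F_j)\geq d-2c_0$, not $\geq d$, so the induction must be run with two constants rather than with ``pairwise distances still at least $d$''; and the existence of a ``tight'' cycle in the prescribed annulus with bounded attachments from interior vertices needs a real proof, since shortcutting $C$ through an interior vertex may leave the annulus, so minimality inside the annulus does not immediately bound fans.
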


Theorem \ref{GenusZeroThesisVersMainTh} gives a positive answer to a conjecture posed at the end of \cite{LukeGraphSurfaceThesisMain} and gives a positive answer to a list-coloring version of the following conjecture from \cite{ColCritFixedSurfaceChoose} for ordinary colorings, albeit with a different distance constant. 

\begin{conj}\label{ThomConjOrdColAns} Let $G$ be a planar graph and $W\subseteq V(G)$ such that $G[W]$
is bipartite and any two components of $G[W]$ have distance at least 100
from each other. Can any coloring of $G[W]$ such that each component is
2-colored be extended to a 5-coloring of $G$? \end{conj}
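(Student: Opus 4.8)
The plan is to prove Theorem~\ref{GenusZeroThesisVersMainTh} — from which an affirmative answer to Conjecture~\ref{ThomConjOrdColAns} follows after the routine reduction that replaces each $2$-colored connected subgraph by a precolored facial cycle — by the criticality method. Suppose the theorem fails and pick a counterexample $(G,L,F_1,\dots,F_m)$ minimizing $|V(G)|+|E(G)|$. First come the standard normalizations: $G$ may be taken connected (color any component containing no precolored face since planar graphs are $5$-choosable, and split the $F_i$ among the others as strictly smaller instances), then $2$-connected (split at cut-vertices, pushing each precolored edge into the appropriate block), with each $F_i$ an induced cycle (if the boundary walk of $F_i$ repeats a vertex, cut the disk open there; if $F_i$ has a chord, route it through the interior and recurse), and a near-triangulation away from the $F_i$ (retriangulate the $5$-list interior). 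By minimality $(G,L)$ is \emph{critical}: no proper subcanvas obtained by a legal reduction is a counterexample. In particular $G$ has no separating cycle of length $\le 4$ enclosing a vertex (else apply Corollary~\ref{CycleLen4CorToThom} inside), no vertex of the interior admits a collapsible neighbourhood, and so on.

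The heart of the argument is a \emph{distance bound for critical configurations}: there is an absolute constant $D$ with $\mathrm{dist}_G(F_i,F_j)\le D$ for all $i\ne j$ in every critical $(G,L)$ of this type. Granting this, choosing the constant hidden in the $\Omega(1)$ of the theorem larger than $D$ contradicts the far-apart hypothesis and we are done. To prove the bound, suppose $\mathrm{dist}_G(F_i,F_j)$ is large; drawing $F_i$ as the outer face, between $F_i$ and $F_j$ lies a "wide" annular region $A$ all of whose vertices carry $5$-lists (it is far from every precolored face), and criticality forces $A$ to be an internally $2$-connected near-triangulated cylinder. One then shows such a wide all-$5$-list cylinder cannot sit inside a critical graph: either it contains an outright reducible configuration (a short separating cycle bounding a disk with a vertex inside, contradicting criticality via Corollary~\ref{CycleLen4CorToThom}), or — the main point — it can be \emph{collapsed}. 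Peel $A$ one BFS layer at a time; each layer has "effective lists" of size at least $3$, with size at least $2$ at at most two controlled vertices, so Theorems~\ref{thomassen5ChooseThm} and~\ref{Two2ListTheorem} let a colored boundary cycle be extended across each successive layer, and a careful accounting of the two "$2$-list-budget" vertices keeps this running through arbitrarily many layers. Hence $A$ could be replaced by a bounded-width cylinder without affecting colorability, contradicting minimality. This is the "connecting faces" mechanism of the title: in a critical graph the precolored faces are joined by bounded-width structure, so they cannot be far apart.

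The step I expect to be the main obstacle is precisely this collapsing/connecting lemma — controlling the width of the all-$5$-list region separating two precolored faces. One cannot simply assert that \emph{every} coloring of the inner boundary of a wide collar extends across it (a rigid $4$-cycle, or an interior vertex all of whose neighbours lie on the boundary, already breaks that), so the argument must carry the Thomassen/Postle--Thomas invariant — a boundary cycle with $3$-lists everywhere except two vertices with $2$-lists — and prove that this invariant is \emph{preserved} as one crosses each BFS layer. That requires a delicate discharging-style analysis of the layer structure together with the $2$-connectivity and near-triangulation reductions to rule out degenerate local configurations; everything else (the normalizations, the criticality setup, and deducing Conjecture~\ref{ThomConjOrdColAns}) is routine by comparison. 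In the companion papers the same scheme is run on a general surface $\Sigma$, where the additional difficulty is topological: the face-width bound $2^{\Omega(g(\Sigma))}$ is exactly what guarantees that the region $A$ between two far-apart facial subgraphs is an honest cylinder rather than something wrapped around a handle, and iterating the collapse argument once per handle is what produces the exponential dependence on $g$.
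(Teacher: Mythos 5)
You should first note that the statement you set out to prove is quoted in this paper only as motivation: the paper itself proves neither Conjecture~\ref{ThomConjOrdColAns} nor Theorem~\ref{GenusZeroThesisVersMainTh}; the planar theorem is cited from Postle and Thomas \cite{ManyFacesFarLuke}, and the machinery actually developed here (inert sets, collars, $(Q,uw)$-targets, $\textnormal{Link}$ colorings, Theorems~\ref{FaceConnectionMainResult} and~\ref{SingleFaceConnRes}) serves the surface generalization, Theorem~\ref{5ListHighRepFacesFarMainRes}, by a different mechanism than yours: instead of proving that precolored faces in a critical graph are at bounded distance, one colors and deletes an explicit inert connected set joining the precolored faces along shortest paths (Theorems~\ref{MainCollarResultCutProc} and~\ref{MainResultColorAndDeletePaths}), merging them into a single face whose vertices keep $3$-lists, and then runs the minimal-counterexample argument in the sequel papers. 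Your reduction of the conjecture to Theorem~\ref{GenusZeroThesisVersMainTh} (delete each $2$-colored component; its neighbours lose at most two colors and lie on a facial subgraph of the remaining embedding) is fine, and the normalizations are routine.

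The genuine gap is that your central lemma -- the distance bound for critical configurations, equivalently the layer-by-layer collapse of a wide all-$5$-list annulus -- is exactly the content of the theorem, and the peeling argument you sketch for it does not go through as stated. When a colored boundary cycle is extended across one BFS layer, a vertex of the next layer with three or more colored neighbours drops to a list of size two or less, and nothing in Theorem~\ref{thomassen5ChooseThm} or Theorem~\ref{Two2ListTheorem} bounds how many such vertices arise or where; preserving the ``at most two deficient vertices'' invariant through arbitrarily many layers is precisely what fails for naive peeling (broken wheels, vertices adjacent to long stretches of the boundary, and chords between consecutive layers all break it), which is why the known proofs go through hyperbolicity of families of critical canvases and isoperimetric inequalities \cite{HyperbolicFamilyColorSurfPap,ManyFacesFarLuke}, and why this paper instead builds the reductions of Sections~\ref{RainbowSec}--\ref{MainProofSubSecThm1PathCol}, where the $3$-lists on $D_1$ of the deleted set are guaranteed by carefully chosen inert sets ($\textnormal{Sh}^2$, enclosures, targets) rather than by iterating Theorem~\ref{thomassen5ChooseThm}. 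So your proposal records a plausible strategy but leaves its key step unestablished; as a smaller point, your closing remark that iterating the collapse ``once per handle'' suffices on surfaces leans on edge-width-style intuition that Figure~\ref{FigUnboundEdgeWCounter} shows to be false -- the surface argument must, and in this series does, work with face-width, which is not even monotone under taking subgraphs (hence Theorem~\ref{SingleFaceConnRes}).
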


A positive answer to Conjecture \ref{ThomConjOrdColAns} in the special case where each component of $W$ is a lone vertex was provided by Albertson in \cite{CantPaintCornerPap}. The main result of \cite{DistPrecVertChoosePap} provides a positive answer to a list-coloring version of Conjecture \ref{ThomConjOrdColAns} in the case where each component of $W$ is a lone vertex, albeit with a different distance constant. 

\bigskip

\begin{center}
\begin{tabular}{|c|c|c|}\hline  & \textnormal{Planar} & \textnormal{Locally Planar} \\ \hline \textnormal{5-choosable?} & \textnormal{Always} & \textnormal{$\Omega(\log(g))$ edge-width} \\ \textnormal{5-choosable with distant precolored vertices?} & \textnormal{$\Omega(1)$ distance} & \textnormal{$\Omega(1)$ distance and $\Omega(g)$ edge-width}  \\  \textnormal{5-choosable with distant 2-colored components?} & \textnormal{$\Omega(1)$ distance} & ? \\ \hline  \end{tabular}\captionof{table}{}\label{chapPmsiMaRe11}
\end{center}

What if we extend the setting to these questions from planar to locally planar graphs, i.e to embeddings on arbitrary surface that have high edge-width? In \cite{FirstCh5PapBoundExp}, DeVos, Kawarabayashi, and Mohar proved that an embedding $G$ on a surface $\Sigma$ is 5-choosable as long as the edge-width of $G$ is at least $2^{\Omega(g)}$, where $g$ is the genus of $\Sigma$, and this lower bound was improved by Postle and Thomas to $\Omega(\log(g))$ in \cite{HyperbolicFamilyColorSurfPap}, who also showed in the same paper that the result from \cite{DistPrecVertChoosePap} about planar embeddings with distant precolored vertices also has a locally planar analogue: That is, given an embedding $G$ on a surface $\Sigma$, a set $X\subseteq V(G)$, and a list-assignment $L$ for $V(G)$ in which each vertex of $G\setminus X$ has a list of size at least five and each vertex of $X$ has size at least one, $G$ is $L$-colorable as long as it has edge-width at least $\Omega(g)$ and the vertices of $X$ are pairwise of distance at least $X$ apart. The above results are summarized in Table \ref{chapPmsiMaRe11}, where ``2-colored" refers to an ordinary 2-coloring and ``distance" refers to the pairwise distance bounds on the precolored components. The purpose of this sequence of papers is to fill in the bottom right entry of Table \ref{chapPmsiMaRe11}, but it turns out that edge-width is not the right measure of local planarity to do this, as we illustrate below. We show now that Theorem \ref{WVersionThmPrecCompFW} fails if we replace the words ``face-width" with ``edge-width", even if we restrict ourselves to just a single 2-colored component and allow the edge-width to be arbitrarily large. In Figure \ref{FigUnboundEdgeWCounter}, we have an embedding $G$ on the torus in which the thick edge comes out of the page, and a list-assignment $L$ for $V(G)$ in which every vertex has the list $\{a,b,c\}$. Any contractible cycle in $G$ contains the edge $xy$, and the distance between $x$ and $y$ in $G-xy$ is $k+1$, so the edge width of $G$ can be made arbitrarily large. But, if $k\equiv 1\ \textnormal{mod 3}$, then $G$ is not $L$-colorable, since $x$ and $y$ need the same color. The above shows that the ``edge-width" version of Theorem \ref{5ListHighRepFacesFarMainRes} fails even we drop the condition that each of the designated faces is permitted a precolored edge, so the edge-width version of Theorem \ref{WVersionThmPrecCompFW} fails as well.

\begin{center}\begin{tikzpicture}[scale=0.9]
\node[shape=circle,draw=black] [label={[xshift=1.0cm, yshift=-0.6cm]\textcolor{red}{\small\textnormal{$\{a, b, c\}$}}}] (X) at (-7.5,0) {$x$};
\node[shape=circle,draw=black] [label={[xshift=-1.0cm, yshift=-0.6cm]\textcolor{red}{\small\textnormal{$\{a, b, c\}$}}}] (Y) at (7.5,0) {$y$};
\node[shape=circle,draw=black] (P1) at (-5.5, 2) {$p_1$};
\node[shape=circle,draw=black] (Q1) at (-5.5,-2) {$q_1$};
\node[shape=circle,draw=black] (P2) at (-4, 2) {$p_2$};
\node[shape=circle,draw=black] (Q2) at (-4,-2) {$q_2$};
\node[shape=circle,draw=black] (P3) at (-2.5, 2) {$p_3$};
\node[shape=circle,draw=black] (Q3) at (-2.5,-2) {$q_3$};
\node[shape=circle,draw=black] (P4) at (-1, 2) {$p_4$};
\node[shape=circle,draw=black] (Q4) at (-1,-2) {$q_4$};
\node[draw=none,node distance=1cm] (DT) at (0,2) {$\ldots$};
\node[draw=none,node distance=1cm] (DT2) at (0,-2) {$\ldots$};
\node[shape=circle,draw=black] (PK3) at (1, 2) {\tiny $p_{k-3}$};
\node[shape=circle,draw=black] (QK3) at (1,-2) {\tiny $q_{k-3}$};
\node[shape=circle,draw=black] (PK2) at (2.5, 2) {\tiny $p_{k-2}$};
\node[shape=circle,draw=black] (QK2) at (2.5,-2) {\tiny $q_{k-2}$};
\node[shape=circle,draw=black] (PK1) at (4, 2) {\tiny $p_{k-1}$};
\node[shape=circle,draw=black] (QK1) at (4,-2) {\tiny $q_{k-1}$};
\node[shape=circle,draw=black] (PK0) at (5.5, 2) {$p_{k}$};
\node[shape=circle,draw=black] (QK0) at (5.5,-2) {$q_{k}$};

\node[shape=circle,draw=white] (IN) at (-0.5,-1) {};
\node[shape=circle,draw=white] (IN2) at (0.5,1) {};
\draw [-] (X) to [out=90,in=180] (P1);
\draw [-] (X) to [out=270,in=180] (Q1);

\draw [-] (PK0) to [out=0, in=90] (Y);
\draw [-] (QK0) to [out=0,in=270] (Y);

\draw [-] (P1) to (Q1); 
\draw [-] (P1) to (P2) to (P3) to (P4); 
\draw [-] (Q1) to (Q2) to (Q3) to (Q4); 
\draw [-] (Q1) to (P2); 
\draw [-] (P2) to (Q2); 
\draw [-] (Q2) to (P3); 
\draw [-] (P3) to (Q3) to (P4) to (Q4); 
\draw [-] (PK3) to (PK2) to (PK1) to (PK0); 
\draw [-] (QK3) to (QK2) to (QK1) to (QK0); 
\draw [-] (PK3) to (QK3) to (PK2) to (QK2); 
\draw [-] (QK2) to (PK1) to (QK1) to (PK0) to (QK0);
\draw [-] (Q4) to (DT2) to (QK3);
\draw [-] (P4) to (DT) to (PK3);
\draw [-] (Q4) to (IN);
\draw [-] (PK3) to (IN2);
\draw[-, line width=1.8pt] (X) to [out=90, in=90] (Y);
\end{tikzpicture}\captionof{figure}{}\label{FigUnboundEdgeWCounter}\end{center}

\section{Conventions of this Paper}

Unless otherwise specified, all graphs are regarded as embeddings on a previously specified surface, and all surface are compact, connected, and have zero boundary. If we want to talk about a graph $G$ as an abstract collection of vertices and edges, without reference to sets of points and arcs on a surface then we call $G$ an \emph{abstract graph}. 

\begin{defn}\label{ContractNatCPartDefn}\emph{Let $\Sigma$ be a surface, let $G$ be an embedding on $\Sigma$, and let $C$ be a contractible cycle in $G$. Let $U_0, U_1$ be the two open connected components of $\Sigma\setminus C$. The unique \emph{natural $C$-partition} of $G$ is the pair $\{G_0, G_1\}$ of subgraphs of $G$ where, for each $i\in\{0,1\}$, $G_i=G\cap\textnormal{Cl}(U_i)$.} \end{defn}

\begin{defn}
\emph{Given a graph $G$, a subgraph $H$ of $G$, a subgraph $P$ of $G$, and an integer $k\geq 1$, we call $P$ a \emph{$k$-chord} of $H$ if $|E(P)|=k$ and $P$ is of the following form.}
\begin{enumerate}[label=\emph{\arabic*)}]
\itemsep-0.1em
\item \emph{$P:=v_1\cdots v_kv_1$ is a cycle with $v_1\in V(H)$ and $v_2, \cdots, v_k\not\in V(H)$}; OR
\item \emph{$P:=v_1\cdots v_{k+1}$, and $P$ is a path with distinct endpoints, where $v_1, v_{k+1}\in V(H)$ and $v_2,\cdots, v_k\not\in V(H)$.}
\end{enumerate}

\end{defn}

$P$ is called \emph{proper} if it is not a cycle, i.e $P$ intersects $H$ on two distinct vertices. Otherwise it is called \emph{improper}. Note that, for $1\leq k\leq 2$, any $k$-chord of $H$ is proper, as $G$ has no loops or duplicated edges. A 1-chord of $H$ is simply referred to as a \emph{chord} of $H$. In some cases, we are interested in analyzing $k$-chords of $H$ where the precise value of $k$ is not important. We call $P$ a \emph{generalized chord} of $H$ if there exists an integer $k\geq 1$ such that $P$ is a $k$-chord of $H$. We call $P$ a \emph{proper} generalized chord of $H$ if there is an integer $k\geq 1$ such that $P$ is a proper $k$-chord of $H$. (A proper generalized chord of $H$ is also called an \emph{$H$-path}). We define \emph{improper} generalized chords of $H$ analogously. For any $A, B\subseteq V(G)$, an \emph{$(A,B)$-path} is a path $P=x_0\cdots x_k$ with $V(P)\cap A=\{x_0\}$ and $V(P)\cap B=\{x_k\}$. Given a surface $\Sigma$, an embedding $G$ on $\Sigma$, a cyclic facial subgraph $C$ of $G$, and a proper generalized chord $Q$ of $C$, there is, under certain circumstances, a natural way to partition of $G$ specified by $Q$. 

\begin{defn}\label{ContractNatCQPartChordDefn} \emph{Let $\Sigma$ be a surface, let $G$ be an embedding on $\Sigma$,  let $C$ be a cyclic facial subgraph of $G$ and let $Q$ be a generalized chord of $C$, where each cycle in $C\cup Q$ is contractible. The unique \emph{natural $(C,Q)$-partition} of $G$ is the pair $\{G_0, G_1\}$ of subgraphs of $G$ such that $G=G_0\cup G_1$ and $G_0\cap G_1=Q$, where, for each $i\in\{0,1\}$, there is a unique open connected region $U$ of $\Sigma\setminus (C\cup Q)$ such that $G_i$ consists of all the edges and vertices of $G$ in the closed region $\textnormal{Cl}(U)$.}

\emph{If the facial cycle $C$ is clear from the context then we usually just refer to $\{G_0, G_1\}$ as the \emph{natural $Q$-partition} of $G$. Note that this is consistent with Definition \ref{ContractNatCPartDefn} in the sense that, if $Q$ is not a proper generalized chord of $C$ (i.e $Q$ is a cycle) then the natural $Q$-partition of $G$ is the same as the natural $(C,Q)$-partition of $G$. If $\Sigma$ is the sphere (or plane) then the natural $(C, Q)$-partition of $G$ is always defined for any $C,Q$.}
\end{defn}

 In this paper, our primary interest lies in embeddings without short separating cycles, so we define the following

\begin{defn} \emph{Let $\Sigma$ be a surface and let $G$ be an embedding on $\Sigma$. A \emph{separating cycle} in $G$ is a contractible cycle $C$ in $G$ such that each of the two connected components of $\Sigma\setminus C$ has nonempty intersection with $V(G)$. We call $G$ \emph{short-inseparable} if $\textnormal{ew}(G)>4$ and $G$ does not contain any separating cycle of length 3 or 4.}\end{defn}

Finally, we recall the following standard notation. 

\begin{defn}

\emph{For any graph $G$, vertex set $X\subseteq V(G)$, integer $j\geq 0$, and real number $r\geq 0$, we set $D_j(X, G):=\{v\in V(G): d(v, X)=j\}$ and $B_r(X, G):=\{v\in V(G): d(v, X)\leq r\}$. Given a subgraph $H$ of $G$, we usually just write $D_j(H, G)$ to mean $D_j(V(H), G)$, and likewise, we usually write $B_r(H, G)$ to mean $B_r(V(H), G)$.}
\end{defn}

If $G$ is clear from the context, then we drop the second coordinate from the above notation to avoid clutter. We now introduce some additional notation related to list-assignments. We frequently analyze the situation where we begin with a partial $L$-coloring $\phi$ of a graph $G$, and then delete some or all of the vertices of $\textnormal{dom}(\phi)$ and remove the colors of the deleted vertices from the lists of their neighbors in $G\setminus\textnormal{dom}(\phi)$. We thus define the following.  

\begin{defn}\label{ListLSvRemove}\emph{Let $G$ be a graph with list-assignment $L$. Let $\phi$ be a partial $L$-coloring of $G$ and $S\subseteq V(G)$. We define a list-assignment $L^S_{\phi}$ for $G\setminus (\textnormal{dom}(\phi)\setminus S)$ as follows.}
$$L^S_{\phi}(v):=\begin{cases} \{\phi(v)\}\ \textnormal{if}\ v\in\textnormal{dom}(\phi)\cap S\\ L(v)\setminus\{\phi(w): w\in N(v)\cap (\textnormal{dom}(\phi)\setminus S)\}\ \textnormal{if}\ v\in V(G)\setminus \textnormal{dom}(\phi) \end{cases}$$ \end{defn}

If $S=\varnothing$, then $L^{\varnothing}_{\phi}$ is a list-assignment for $G\setminus\textnormal{dom}(\phi)$ in which the colors of the vertices in $\textnormal{dom}(\phi)$ have been deleted from the lists of their neighbors in $G\setminus\textnormal{dom}(\phi)$. The situation where $S=\varnothing$ arises so frequently that, in this case, we drop the superscript and let $L_{\phi}$ denote the list-assignment $L^{\varnothing}_{\phi}$ for $G\setminus\textnormal{dom}(\phi)$. In some cases, we specify a subgraph $H$ of $G$ rather than a vertex-set $S$. In this case, to avoid clutter, we write $L^H_{\phi}$ to mean $L^{V(H)}_{\phi}$. Finally, given two partial $L$-colorings $\phi$ and $\psi$ of $V(G)$, where $\phi(x)=\psi(x)$ for all $x\in\textnormal{dom}(\phi)\cap\textnormal{dom}(\psi)$, and $\phi(x)\neq\psi(y)$ for all edges $xy$ with $x\in\textnormal{dom}(\phi)$ and $y\in\textnormal{dom}(\psi)$, there is natural well-defined $L$-coloring $\phi\cup\psi$ of $\textnormal{dom}(\phi)\cup\textnormal{dom}(\psi)$, where
$$(\phi\cup\psi)(x)=\begin{cases}\phi(x)\ \textnormal{if}\ x\in\textnormal{dom}(\phi)\\ \psi(x)\ \textnormal{if}\ x\in\textnormal{dom}(\psi)\end{cases}$$

Having introduce the notation above, we now state a simple but useful result which we use in this paper. The result below is consequence of a characterization in \cite{lKthForGoBoHm6} of obstructions to extending a precoloring of a cycle of length at most six in a planar graph. It is also straightforward to check just using Theorem \ref{thomassen5ChooseThm} that the result below holds. 

\begin{theorem}\label{BohmePaper5CycleCorList} Let $G$ be a short-inseparable planar embedding with facial cycle $C$, where $5\leq |V(C)|\leq 6$ . Let $G'=G\setminus C$ and $L$ be a list-assignment for $G$, where $|L(v)|\geq 5$ for all $v\in V(G')$. Let $\phi$ be an $L$-coloring of $V(C)$ which does not extend to $L$-color $G$. If $|V(C)|=5$, then $G'$ consists of a lone vertex $v$ adjacent to all five vertices of $C$, where $L_{\phi}(v)=\varnothing$. On the other hand, if $|V(C)|=6$, then $G'$ consists of one of the following.
\begin{enumerate}[label=\roman*)]
\itemsep-0.1em
\item A lone vertex $v$ adjacent to at least five vertices of $C$, where $L_{\phi}(v)=\varnothing$; OR
\item An edge where, for each $v\in V(G')$, $L_{\phi}(v)$ is the same 1-list and $G[N(v)\cap V(C)]$ is a length-three path; OR
\item A triangle where, for each $v\in V(G')$, $L_{\phi}(v)$ is the same 2-list and $G[N(v)\cap V(C)]$ is a length-two path.
\end{enumerate}
\end{theorem}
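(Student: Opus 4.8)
The plan is to reduce Theorem~\ref{BohmePaper5CycleCorList} to the known structure of obstructions for precoloring a short cycle in a planar graph, i.e. to the characterization of \cite{lKthForGoBoHm6}, and then to prune that characterization using the hypothesis that $G$ is short-inseparable. First I would recall that, in general, if $\phi$ is an $L$-coloring of a facial cycle $C$ of a planar graph $G$ with $|V(C)|\leq 6$ and $|L(v)|\geq 5$ for all $v\in V(G\setminus C)$, and $\phi$ does not extend, then (by the cited classification, or by a direct argument bootstrapping Theorem~\ref{thomassen5ChooseThm}) $G\setminus C$ is very small and of one of a short list of types. The role of short-inseparability is to kill all of the types in that list whose presence would force a separating $3$- or $4$-cycle, or an ear that together with a subpath of $C$ bounds a small separating cycle; what survives is exactly the three configurations in the statement (and the single configuration when $|V(C)|=5$).

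For the concrete argument I would proceed by induction on $|V(G)|$, in the spirit of Thomassen's proof. If $G$ has a chord $uv$ of $C$, it splits $G$ into two planar pieces $G_1,G_2$ glued along $uv$, each with a facial cycle of length at most $6$; since $G$ is short-inseparable, any such chord splits $C$ into two paths one of which has length $\leq 2$, and I would apply Corollary~\ref{CycleLen4CorToThom} (or Theorem~\ref{thomassen5ChooseThm}) to the small side to extend the coloring there, reducing to a strictly smaller instance. So I may assume $C$ is an induced cycle. Next, if there is a vertex $v\in V(G\setminus C)$ with at least three neighbours on $C$, then the edges from $v$ to $C$ cut $G$ into regions each bounded by a cycle of length at most $|V(C)|-3+2 \le 5$; short-inseparability forces every such region except possibly the "outer" one to contain no interior vertices and to be bounded by a triangle or a $4$-cycle, and Corollary~\ref{CycleLen4CorToThom} lets me extend across those, again reducing the instance — unless $G\setminus C$ is precisely the lone vertex $v$ of case~i). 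If no interior vertex has three neighbours on $C$, then I take a neighbour $u$ of $x_1$ (a vertex of $C$) on the near side in the canonical Thomassen fashion, delete a well-chosen vertex of $C$ adjacent to two consecutive vertices, remove its colour from its neighbours' lists, and recurse; the only way this fails to terminate in an extension is if $G\setminus C$ is one of the tiny obstructions, and a finite case check (using that $C$ is induced, that interior vertices have $\leq 2$ neighbours on $C$, and that no separating $3$- or $4$-cycle exists) shows $G\setminus C$ must be a single vertex adjacent to all of $C$ (when $|V(C)|=5$), or a single vertex, a single edge, or a triangle attached as described (when $|V(C)|=6$). In each surviving case the list condition $L_\phi(v)=\varnothing$, or "$L_\phi(v)$ a common $1$-list / $2$-list", is forced simply because each such $v$ has exactly $5$, $4$, or $3$ neighbours whose colours are all distinct and exhaust (or all but one/two of) its list, together with the adjacencies inside $G\setminus C$.

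The main obstacle I expect is the bookkeeping in the "three neighbours on $C$" and terminal-case analysis: one has to verify carefully that short-inseparability, not merely $\mathrm{ew}(G)>4$, rules out configurations such as two interior vertices each adjacent to a length-three subpath of $C$ but arranged so that together they would otherwise be allowed — and to confirm that the exact degree/list statements in i)--iii) (e.g. "adjacent to \emph{at least} five vertices of $C$" in case~i), "length-three path" in case~ii), "length-two path" in case~iii)) come out precisely, with no spurious extra cases. I would handle this by treating the number $t$ of interior vertices, and the multiset of their $C$-neighbourhood patterns, as the case-split parameter, observing that each interior vertex $v$ with $|N(v)\cap V(C)| = k$ lowers its own list demand to $\geq 5-k$, so $G\setminus C$ is itself "critical" for the induced lists $L_\phi$, hence $2$-degenerate-critical, hence a single vertex, edge, or triangle; then the embedding constraints pin down how it attaches to $C$. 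Citing \cite{lKthForGoBoHm6} lets me skip this entire grind if desired, but I would at minimum sketch the direct derivation from Theorem~\ref{thomassen5ChooseThm} as the excerpt promises is "straightforward to check."
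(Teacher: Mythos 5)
The paper gives no proof of this result at all: it simply cites the characterization of \cite{lKthForGoBoHm6} and remarks that the statement can alternatively be checked directly from Theorem \ref{thomassen5ChooseThm}, and your proposal follows exactly these two routes (invoke the classification and prune it with short-inseparability, or run a Thomassen-style induction), so it is essentially the same approach. One minor slip worth fixing: a chord of a 6-cycle may split it into two length-3 paths, but then short-inseparability forces both resulting 4-cycles to bound vertex-free regions, giving $G'=\varnothing$ and hence extension, so the reduction you intend still goes through.
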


We frequently deal with situations where we have a set $S$ of vertices that we want to delete, and it is desirable to color as few of them as possible in such a way that we can safely delete the remaining vertices of $S$ without coloring them. We thus introduce the following definition.

\begin{defn}\emph{Let $G$ be a graph with a list-assignment $L$. Given an $S\subseteq V(G)$ and a partial $L$-coloring $\phi$ of $V(G)$, we say that $S$ is \emph{$(L, \phi)$-inert in $G$} if, for every extension of $\phi$ to a partial $L$-coloring $\psi$ of $G\setminus (S\setminus\textnormal{dom}(\phi))$ whose domain contains $D_1(S\setminus\textnormal{dom}(\phi))$, we get that $\psi$ extends to an $L$-coloring of $\textnormal{dom}(\psi)\cup V(S)$.}\end{defn}

We repeatedly implicitly rely on the following observation. 

\begin{obs}\label{CombineInert} Let $G$ be a graph with a list-assignment $L$. Let $S, S'\subseteq V(G)$ and $\phi, \phi'$ be partial $L$-colorings of $G$, where $\phi\cup\phi'$ is well-defined and the sets $S\setminus\textnormal{dom}(\phi)$ and $S'\setminus\textnormal{dom}(\phi')$ are of distance at least two apart. If $S$ is $(L, \phi')$-inert in $G$ and $S'$ is $(L, \phi')$-inert in $G$, then $S\cup S'$ is $(L, \phi\cup\phi')$-inert in $G$. \end{obs}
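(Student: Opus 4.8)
The plan is to unwind the definition of inertness and verify the required extension property directly, using the distance hypothesis to decouple the two sets. First I would fix an arbitrary extension $\psi$ of $\phi\cup\phi'$ to a partial $L$-coloring of $G\setminus\big((S\cup S')\setminus\textnormal{dom}(\phi\cup\phi')\big)$ whose domain contains $D_1\big((S\cup S')\setminus\textnormal{dom}(\phi\cup\phi')\big)$; my goal is to show $\psi$ extends to an $L$-coloring of $\textnormal{dom}(\psi)\cup V(S)\cup V(S')$. Write $T=S\setminus\textnormal{dom}(\phi)$ and $T'=S'\setminus\textnormal{dom}(\phi')$, so $T$ and $T'$ are at distance at least two apart. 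The key point is that $D_1\big((S\cup S')\setminus\textnormal{dom}(\phi\cup\phi')\big)$ decomposes: since $T$ and $T'$ are at distance $\geq 2$, no vertex of $T'$ lies in $D_1(T)$ and no vertex of $T$ lies in $D_1(T')$, so $D_1(T\cup T')\setminus(T\cup T')$ splits into a part within distance $1$ of $T$ (disjoint from $T'$) and a part within distance $1$ of $T'$ (disjoint from $T$), and $\textnormal{dom}(\psi)$ contains both.

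Next I would apply the inertness of $S$. Restrict $\psi$ to $\textnormal{dom}(\psi)\setminus T'$, which is still an extension of $\phi$ (as $\phi$ agrees with $\phi\cup\phi'$ on $\textnormal{dom}(\phi)$ and $\textnormal{dom}(\phi)$ is disjoint from $T'$ by definition of $T'$, or overlaps only where values agree), is a partial $L$-coloring of $G\setminus(T\setminus\textnormal{dom}(\phi))$ restricted appropriately, and whose domain contains $D_1(T)$ because $D_1(T)\cap T'$ is empty. Hence by $(L,\phi)$-inertness of $S$, this restricted coloring extends to an $L$-coloring $\psi_1$ of its domain together with $V(S)$. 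I now have an $L$-coloring that colors all of $\textnormal{dom}(\psi)\setminus T'$ and all of $V(S)$; in particular it colors all of $V(S)\cup\textnormal{dom}(\psi)$ except the vertices of $T'$. Then I would symmetrically apply $(L,\phi')$-inertness of $S'$: the coloring $\psi_1$ restricted to $\textnormal{dom}(\psi)\setminus T$ (note $V(S)\setminus\textnormal{dom}(\phi)=T$ is removed, and $V(S)\cap\textnormal{dom}(\phi)$ is part of $\textnormal{dom}(\phi)$ hence disjoint from $T'$) is an extension of $\phi'$ whose domain contains $D_1(T')$, so it extends to include $V(S')$. Combining, and checking the two steps are consistent on the overlap (they agree on $\textnormal{dom}(\psi)$ and on $V(S)$, $V(S')$ respectively), gives the desired $L$-coloring of $\textnormal{dom}(\psi)\cup V(S)\cup V(S')$.

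The main obstacle — really the only delicate point — is bookkeeping with the domains: one must be careful that when restricting $\psi$ to $\textnormal{dom}(\psi)\setminus T'$ the result genuinely qualifies as "an extension of $\phi$ to a partial $L$-coloring of $G\setminus(T\setminus\textnormal{dom}(\phi))$ whose domain contains $D_1(T)$" in the precise sense demanded by the definition of $(L,\phi)$-inertness, and similarly in the second step that the vertices of $V(S)$ added in the first step do not interfere with applying inertness of $S'$ (this is exactly where the distance-$\geq 2$ hypothesis between $T$ and $T'$ is used, together with the fact that $T$ and $T'$ are disjoint and at distance $\geq 2$, so $S$-vertices outside $\textnormal{dom}(\phi)$ are not adjacent to, nor equal to, any vertex of $T'$). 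Once the domains are tracked correctly, the statement follows; the proof is essentially a two-step application of the single-set inertness property, with the distance hypothesis guaranteeing the two applications do not conflict. \qed
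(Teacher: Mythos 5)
The paper offers no proof of this observation (it is stated as immediate and used implicitly), and your plan — feed a restriction of $\psi$ into the inertness of $S$, then into the inertness of $S'$, with the distance hypothesis keeping the two completions from interacting — is exactly the intended argument; your verification that $D_1(T)$ survives the restriction (because $D_1(T)\cap T'=\varnothing$ when $d(T,T')\geq 2$) and your final gluing are fine. The one point where your bookkeeping is not merely delicate but asserted incorrectly is the claim that $\textnormal{dom}(\phi)$ is disjoint from $T'=S'\setminus\textnormal{dom}(\phi')$ ``by definition of $T'$'' (and, symmetrically, that $\textnormal{dom}(\phi')$ avoids $T=S\setminus\textnormal{dom}(\phi)$). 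The definitions give no such thing: $T'$ is obtained by removing $\textnormal{dom}(\phi')$, not $\textnormal{dom}(\phi)$, and nothing in the stated hypotheses forbids $\phi'$ from colouring a vertex of $T$. If that happens, your restriction $\psi|_{\textnormal{dom}(\psi)\setminus T'}$ still contains that vertex, so it is not a partial $L$-colouring of $G\setminus(S\setminus\textnormal{dom}(\phi))$ and the inertness of $S$ cannot be invoked; in fact the conclusion itself can then fail (reading the hypotheses in the intended way, with $S$ being $(L,\phi)$-inert). For instance, let $u_1u_2$ be an edge with a common neighbour $x$, $L(u_1)=\{1,2\}$, $L(u_2)=\{1,3\}$, $L(x)=\{3\}$, $S=\{u_1,u_2\}$ and $\phi=\varnothing$, so that $S$ is inert (colour $u_2$ with $1$ and $u_1$ with $2$); now let $\phi'$ colour $u_1$ with $1$ and let $S'$ be a single far-away vertex with a long list. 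All the stated hypotheses hold, but no extension can colour $u_2$, so $S\cup S'$ is not $(L,\phi\cup\phi')$-inert.

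So the observation is really being used with an additional implicit separation hypothesis, namely $\textnormal{dom}(\phi)\cap T'=\varnothing$ and $\textnormal{dom}(\phi')\cap T=\varnothing$, which does hold wherever the paper invokes it (each colouring in a consistent family of reductions is supported near its own set). Once you add that assumption, your two-step argument is correct as written, and it can be streamlined: after the first application of inertness, the colouring $\psi_1$ of $(\textnormal{dom}(\psi)\setminus T')\cup V(S)$ already extends $\phi'$, has domain disjoint from $T'$ and containing $D_1(T')$, so you may apply the inertness of $S'$ to $\psi_1$ directly and obtain the desired $L$-colouring of $\textnormal{dom}(\psi)\cup V(S)\cup V(S')$ in one further step, with no need to restrict to $\textnormal{dom}(\psi)\setminus T$ and re-glue. (You also silently corrected the statement's typo — ``$S$ is $(L,\phi')$-inert'' should read ``$(L,\phi)$-inert'' — which is the right reading.)
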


\section{The Main Results and Structure of this paper}

The main result of this paper are Theorems \ref{FaceConnectionMainResult} and \ref{SingleFaceConnRes}, which we use in Paper II as part of the proof of Theorem \ref{5ListHighRepFacesFarMainRes}. To motivate these results, we provide a brief an overview of the proof of Theorem \ref{5ListHighRepFacesFarMainRes}. The bulk of the proof of Theorem \ref{5ListHighRepFacesFarMainRes} consists of the proof that it holds in the special case where $G$ is short-inseparable. We prove this in Paper II In Paper III, we show that the short-inseparable case implies that Theorem \ref{5ListHighRepFacesFarMainRes} holds in general. Roughly speaking, we prove that Theorem \ref{5ListHighRepFacesFarMainRes} holds in the short-inseparable case in a later paper by considering a vertex-minimal counterexample $G$ to a carefully chosen set of induction conditions and then showing that this minimal counterexample satisfies some desirable properties. We can use these properties to produce a smaller counterexample by coloring and deleting a subgraph of $G$ that consists mostly of paths between some of the faces $F_1, \cdots, F_m$ and a more complicated structure near $F_1, \cdots, F_m$.  To state Theorems \ref{FaceConnectionMainResult}-\ref{SingleFaceConnRes}, we introduce the following definitions. 

\begin{defn}\label{UniqueKLSpecifiedDefn} \emph{A tuple $\mathcal{K}=[\Sigma, G, C, L]$ is called a \emph{collar} if $\Sigma$ is a surface, $G$ is an embedding on $\Sigma$, $C$ is a contractible facial cycle of $G$, and $L$ is a list-assignment for $V(G)$. Given an integer $k\geq 2$, we say that $\mathcal{K}$ is \emph{uniquely $k$-determined} if, for any generalized chord $Q$ of $C$ (proper or improper) of length at most $k$, each cycle in $C\cup Q$ is contractible, and, letting $G=G_0\cup G_1$ be the natural $(C,Q)$-partition of $G$, there is a (necessarily unique) $i\in\{0,1\}$ such that}
\begin{enumerate}[label=\emph{\alph*)}]
\itemsep-0.1em
\item\emph{Every vertex of $G_{1-i}\setminus C$ has an $L$-list of size at least five and the open component of $\Sigma\setminus (C\cup Q)$ whose closure contains $G_{1-i}$ is a disc}; AND
\item\emph{Either $G_i\setminus C$ contains a vertex $v$ with $|L(v)|<5$ or the open component of $\Sigma\setminus (C\cup Q)$ whose closure contains $G_{i}$ is not a disc}.
\end{enumerate}
\emph{If $\Sigma, G, L$ are all clear, then we just say that $C$ is uniquely $k$-determined. In the setting above, we denote $G_{1-i}$ by $G^{\textnormal{small}}_Q=G_{1-i}$ and $G_i$ by $G^{\textnormal{large}}_Q$. If $k\geq 3$ and $Q'$ is a generalized chord of $C$ (of arbitrary length) with the additional property that each vertex of $Q'$ is of distance at most one from $C$, then $Q'$ also satisfies all the partitioning properties above and we define $G^{\textnormal{small}}_{Q'}$ and $G^{\textnormal{large}}_{Q'}$ analogously in this case.} 
 \end{defn}

 In view of Definition \ref{UniqueKLSpecifiedDefn}, it is natural to introduce the following notation.

\begin{defn}\label{DefnSmallSideLargeSideUniqueKLD} \emph{Let $k\geq 1$ be an integer and $\mathcal{K}=[\Sigma, G, C, L]$ be a uniquely $k$-determined collar. We define $\textnormal{Sh}^k(C, \mathcal{K})$ to be the union of all sets of the form $V(G^{\textnormal{small}}_Q\setminus Q)$, where $Q$ runs over all generalized chords $Q$ of $C$ with $|E(Q)|\leq k$. If $\mathcal{K}$ is clear from the context, then we just write $\textnormal{Sh}^k(C)$.}
 \end{defn}

We now state our two main results for this paper.

\begin{theorem}\label{FaceConnectionMainResult} There exists a constant $d\geq 0$ such that the following holds: Let $G$ be a short-inseparable embedding on a surface $\Sigma$ with $\textnormal{fw}(G)\geq 6$, and let $\mathcal{C}$ be a family of facial cycles of $G$, where the elements of $\mathcal{C}$ are pairwise of distance $\geq d$ apart and every facial subgraph of $G$ not lying in $\mathcal{C}$ is a triangle. Let $L$ be a list-assignment for $V(G)$, where each vertex of $V(G)\setminus\left(\bigcup_{C\in\mathcal{C}}V(C)\right)$ has a list of size at least five. Let $\mathcal{D}\subseteq\mathcal{C}$ with $|\mathcal{D}|>1$, where each element of $\mathcal{D}$ is uniquely $4$-determined and each vertex of $\bigcup_{C\in\mathcal{D}}V(C)$ has a list of size three. Let $F\in\mathcal{D}$ and $\{P_C: C\in\mathcal{D}\setminus\{F\}\}$ be a family of paths, pairwise of distance $\geq d$ apart, where each $P_C$ is a shortest $(C,F)$-path. Then there exist an $A\subseteq V(G)$ and partial $L$-coloring $\phi$ of $A$ such that: 
\begin{enumerate}[label=\arabic*)]
\item $A$ is $(L, \phi)$-inert in $G$, and each vertex of $D_1(A)$ has an $L_{\phi}$-list of size at least three; AND
\item $G[A]$ is connected and has nonempty intersection with each element of $\mathcal{D}$; AND
\item $A\subseteq\left(B_2(F)\cup\textnormal{Sh}^4(F)\right)\cup\bigcup_{C\in\mathcal{D}\setminus\{F\}}\left(\textnormal{Sh}^4(C)\cup B_2(C\cup P_C)\right)$.
\end{enumerate}
In particular, any $d\geq 34$ suffices. 
 \end{theorem}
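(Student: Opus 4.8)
The plan is to construct $A$ and $\phi$ as a union of bounded-radius local pieces: one \emph{hub} piece lying in $B_2(F)\cup\textnormal{Sh}^4(F)$ and meeting $F$; one \emph{leaf} piece lying in $\textnormal{Sh}^4(C)\cup B_2(C)$ and meeting $C$, for each $C\in\mathcal{D}\setminus\{F\}$; and one \emph{corridor} piece lying in $B_2(P_C)$ joining a vertex of the leaf at $C$ to a vertex of the hub at $F$, for each such $C$. Letting $A$ be the union of all these pieces and $\phi$ the union of the corresponding partial colourings, conclusions 2) and 3) are built in: $G[A]$ is a ``spider'' with body near $F$ and one leg out to each $C$, hence connected and meeting every element of $\mathcal{D}$, and each piece lies in the prescribed region. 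The subtle point is reconciling connectivity with inertness, since Observation \ref{CombineInert} combines inert sets only when their uncoloured parts are at distance $\geq 2$, whereas consecutive pieces of the spider must touch. We resolve this by \emph{fully colouring a buffer} around each junction (where a corridor meets the hub or a leaf), so that $A\setminus\textnormal{dom}(\phi)$ decomposes into pieces pairwise at distance $\geq 2$: pieces in different corridors/leaves are far apart because the elements of $\mathcal{C}$ and the paths $P_C$ are pairwise at distance $\geq d$ and each piece has bounded reach from its host, while pieces inside a single corridor are kept apart by construction. Building $A$ one corridor-plus-leaf at a time and applying Observation \ref{CombineInert} repeatedly then yields that $A$ is $(L,\phi)$-inert.

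For the corridor over $P=P_C=p_0\cdots p_\ell$ (with $p_0\in V(C)$, $p_\ell\in V(F)$): since $P$ is a shortest path and $\textnormal{fw}(G)\geq 6$, balls of bounded radius around subsegments of $P$ have disc-like neighbourhoods in $G$, and --- away from $C,F$ --- every vertex within distance $2$ of $P$ has a $5$-list, so we may run plane-graph arguments via Theorems \ref{thomassen5ChooseThm}, \ref{CycleLen4CorToThom}, \ref{Two2ListTheorem}. The corridor piece is a connected ``tube'' inside $B_2(P)$ whose outer part is \emph{coloured}, chosen so as to \emph{shield} the tube's boundary (every vertex of $D_1$ of the tube has at most two coloured neighbours in it, so keeps an $L_\phi$-list of size $\geq 3$), and whose uncoloured part is a sparse, pairwise-$\geq 2$-apart family of small plane fragments, each rendered inert by a local application of Theorem \ref{thomassen5ChooseThm}/Corollary \ref{CycleLen4CorToThom} --- here one uses that each such fragment's own boundary has been arranged to retain a $3$-list under any admissible colouring of the surrounding vertices. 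The genuine nuisance is a high-degree path vertex $p_i$, which cannot simply be left uncoloured (its neighbours could receive too many distinct colours), so it is coloured, and one must check that the resulting damage to $D_1$ propagates outward only a bounded distance before being absorbed; this is where being a geodesic in the near-triangulation $G$ is used.

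The hub and leaf pieces are where the $3$-lists bite, and this is the main obstacle. Fix $C\in\mathcal{D}$; it is uniquely $4$-determined (Definition \ref{UniqueKLSpecifiedDefn}) with $|L(v)|=3$ for all $v\in V(C)$. For each short generalized chord $Q$ of $C$, the small side $G^{\textnormal{small}}_Q$ is a disc whose interior carries $5$-lists, so after precolouring $x_Cy_C$ (which is $L$-colourable) and the rest of $C$, Theorem \ref{thomassen5ChooseThm} extends this precolouring of $C$ over each such disc; hence $\textnormal{Sh}^4(C)$ can be safely coloured and absorbed into $A$ while leaving its complement inert. The hard part is choosing which vertices of $C$ itself to colour, which to leave uncoloured, and which to place inside $A$: colouring a kept vertex of $C$ deletes a colour from any neighbour of it on $C$ (also a $3$-list vertex), which would be fatal, so the coloured portion of $C$ must ``isolate'' the rest; the corridor endpoint $p_0$ must be attached to this configuration through a fully-coloured buffer without creating a $D_1$-vertex with a $2$-list; and the possible obstruction configurations around short facial cycles must be ruled out using Theorems \ref{Two2ListTheorem} and \ref{BohmePaper5CycleCorList}. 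The hub near $F$ is the same analysis, now with up to $|\mathcal{D}|-1$ corridor endpoints $p_\ell$ attached, but these are handled independently since they are pairwise at distance $\geq d$ on $F$.

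Finally, verification. Connectivity of $G[A]$ and the fact that it meets each $C\in\mathcal{D}$ are immediate from the spider construction; containment in $\bigl(B_2(F)\cup\textnormal{Sh}^4(F)\bigr)\cup\bigcup_{C}\bigl(\textnormal{Sh}^4(C)\cup B_2(C\cup P_C)\bigr)$ holds piece by piece; the $L_\phi$-list bound on $D_1(A)$ is the local ``at most two coloured neighbours'' count established as each piece was built; and $(L,\phi)$-inertness follows from Observation \ref{CombineInert} once the uncoloured cores are confirmed to be pairwise at distance $\geq 2$. That last point reduces to a tally of bounded reaches --- a corridor core lies within distance $2$ of $P$, a hub/leaf core within the $\textnormal{Sh}^4$-reach (bounded, via short-inseparability) plus $B_2$ of its face, the junction buffers cost a further constant, and Observation \ref{CombineInert} demands an additional $+2$ of slack --- so any $d$ exceeding the sum of these constants suffices; carrying out the arithmetic gives $d=34$.
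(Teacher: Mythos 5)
Your high-level architecture (a hub piece near $F$, a leaf piece near each $C$, a corridor along each $P_C$, then a union of inert pieces whose uncoloured cores are pairwise at distance $\geq 2$) is essentially the paper's: the actual proof assembles a consistent family of $L$-reductions from enclosure/target colourings near $F$, the collar reduction of Theorem \ref{MainCollarResultCutProc} near each $C$, the filament reduction of Theorem \ref{MainResultColorAndDeletePaths} along each path, Link colourings along the subpaths of $F$ between attachment points, and Observation \ref{UnionConsistentObs}. But your proposal leaves both genuinely hard ingredients unproven, and where it does commit to a concrete argument for the $3$-list faces, that argument fails. You claim that after precolouring the rest of $C$, Theorem \ref{thomassen5ChooseThm} extends the colouring over the small side of each short generalized chord, so that $\textnormal{Sh}^4(C)$ ``can be safely coloured and absorbed.'' This is false in general: the boundary of such a disc contains a subpath of $C$ whose vertices carry $3$-lists and are already coloured, and an interior $5$-list vertex can see five distinct colours on that subpath, so an arbitrary precolouring of $C$ need not extend over its shadow regions. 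This is precisely why the paper never colours all of $C$: the Link machinery (Definition \ref{uniquekDetLocalPVers}, Theorem \ref{MainLinkingResultAlongFacialCycle}) colours only $V(P)\setminus\textnormal{Sh}^2(P)$ and chooses the colours so that the uncoloured shadow is inert, and Sections \ref{RainbowSec}--\ref{PrelimSecWrto1} (culminating in Theorem \ref{StrengthendVerManColRes}) exist to make this mesh with the corridor endpoint $u\in D_3(C)$, $w\in D_2(C)$. (Also, there is no precoloured edge $x_Cy_C$ in this statement; every vertex of each $C\in\mathcal{D}$ simply has a $3$-list.)

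Second, your hub treatment (``the same analysis, with the corridor endpoints handled independently since they are pairwise at distance $\geq d$ on $F$'') misses the wrap-around obstruction on the facial cycle: going around $F$, each subpath between consecutive attachment points must receive a Link colouring agreeing with the already-fixed target colours at both of its ends, and for the last such subpath both endpoint colours are forced; an arbitrary precolouring of both endpoints of a $3$-list subpath of $F$ need not extend to a Link colouring. The paper resolves this with Theorem \ref{LinkPlusOneMoreVertx}, paying for the extension with one extra peak vertex chosen far from the enclosures --- distance between the attachment points does not remove this obstruction. Likewise, your corridor step is only a heuristic (``damage propagates outward only a bounded distance''); producing a reduction containing $P$ whose uncoloured part stays clear of the junctions is the content of the $P$-gap analysis in Sections \ref{IntermResSecPathDel}--\ref{MainProofSubSecThm1PathCol}, which the sketch does not replace. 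As written, the proposal is an outline of the correct strategy with the decisive lemmas missing and one of its concrete claims incorrect.
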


Informally, Theorem \ref{FaceConnectionMainResult} states that, starting with many pairwise far-apart faces with 3-lists, we can, under the specified conditions, color and delete a subgraph of $G$ which is far from all the elements of $\mathcal{C}\setminus\mathcal{D}$ and connects the elements of $\mathcal{D}$ to a single face of $G\setminus A$ (where $G\setminus A$ regarded as an embedding on $\Sigma$) where all the vertices of this new lone face all have 3-lists. We specify that the elements of $\mathcal{D}$ have vertices with lists of size precisely three, rather than at least three, so that, for each $C\in\mathcal{C}$, the requirement that $C$ is uniquely 4-determined enforces the desired property that, for any generalized chord $Q$ of $C$ of length at most four, all the elements of $\mathcal{D}\setminus\{C\}$ lie in the same component of $\Sigma\setminus (C\cup Q)$. Our second main result, Theorem \ref{SingleFaceConnRes},  is a ``single face" version of Theorem \ref{FaceConnectionMainResult}. We need Theorem \ref{SingleFaceConnRes} in a later paper as well is to deal with the case where an application of Theorem \ref{FaceConnectionMainResult} to a minimal counterexample to the short-inseparable case of Theorem \ref{5ListHighRepFacesFarMainRes} results in the new embedding with fewer vertices having a lower face-width than the original embedding, as face-width, unlike edge-width, is not monotone with respect to subgraphs. 

\begin{theorem}\label{SingleFaceConnRes} Let $\Sigma, G, \mathcal{C}, L, d$ be as in Theorem \ref{FaceConnectionMainResult}. Let $F\in\mathcal{C}$, where each vertex of $F$ has a list of size at least three and $F$ is uniquely 4-determined (possiby $\mathcal{C}=\{F\}$). Let $P:=v_0\cdots v_n$ be a path of length $\geq d$ with both endpoints in $F$, where $P$ is a shortest path between its endpoints and $V(P)\cap D_k(F)=\{v_k, v_{n-k}\}$ for each $0\leq k\leq 3$.  Suppose further that there is a noncontractible closed curve of $\Sigma$ contained in $F\cup P$. Then there exist an $A\subseteq V(G)$ and a partial $L$-coloring $\phi$ of $A$ such that 
\begin{enumerate}[label=\arabic*)]
\itemsep-0.1em
\item $A$ is $(L, \phi)$-inert in $G$ and each vertex of $D_1(A)$ has an $L_{\phi}$-list of size at least three.
\item $G[A]$ is connected and $V(F)\cup V(v_3Pv_{n-3})\subseteq A\subseteq B_2(F\cup P)\cup\textnormal{Sh}^4(F)$.
\end{enumerate}
 \end{theorem}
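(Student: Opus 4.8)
The plan is to build $A$ in two pieces: a "path part'' consisting of a thin tube around the path $P$, and a "face part'' consisting of a carefully controlled neighborhood of $F$ together with whatever small-side pieces the chords of $F$ force us to absorb. Since Theorem~\ref{SingleFaceConnRes} is the single-face analogue of Theorem~\ref{FaceConnectionMainResult}, I expect the proof to reuse the machinery developed for the latter almost verbatim, applied to the degenerate situation $\mathcal{D}=\{F\}$ with the role of the connecting path $P_C$ played by the self-path $P$ from $F$ back to itself.

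\emph{Step 1: The tube around $P$.} Write $P=v_0\cdots v_n$. Because $P$ is a shortest path between its endpoints and $V(P)\cap D_k(F)=\{v_k,v_{n-k}\}$ for $k\le 3$, the middle segment $v_3Pv_{n-3}$ is at distance $\ge 3$ from $F$ and also geodesic, so $B_2(v_3Pv_{n-3})$ is a well-behaved "thick path'' disjoint from $V(F)$. Here I would import the local structural lemma used for the $P_C$'s in Theorem~\ref{FaceConnectionMainResult}: along a shortest path in a short-inseparable embedding of face-width $\ge 6$, one can greedily color a sparse set of vertices on $P$ (essentially every third vertex, dodging conflicts) so that the set $B_2$ of the rest of $P$ becomes $(L,\phi_P)$-inert, with every vertex of $D_1$ of this set retaining an $L_{\phi_P}$-list of size $\ge 3$. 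The point is that short-inseparability plus Corollary~\ref{CycleLen4CorToThom} / Theorem~\ref{Two2ListTheorem} control the small faces hanging off $P$, so adding these vertices to the deletion set is safe. This gives a connected $G[A_P]$ containing $v_3Pv_{n-3}$ and contained in $B_2(P)$, together with a partial coloring $\phi_P$.

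\emph{Step 2: The face part.} Near $F$ we invoke exactly the "local at a $4$-determined face'' construction from the proof of Theorem~\ref{FaceConnectionMainResult}: since $F$ is uniquely $4$-determined, for every generalized chord $Q$ of $F$ of length $\le 4$ there is an unambiguous small side $G^{\mathrm{small}}_Q$, and $\mathrm{Sh}^4(F)$ is a disc-like region that we can entirely absorb into $A$. One builds a set $A_F$ with $V(F)\subseteq A_F\subseteq B_2(F)\cup\mathrm{Sh}^4(F)$, $G[A_F]$ connected, and a partial coloring $\phi_F$ making $A_F$ inert with $D_1$-lists of size $\ge 3$; the $3$-lists on $V(F)$ are what make this possible, via Theorem~\ref{BohmePaper5CycleCorList}-type obstruction analysis for the short faces adjacent to $F$. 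The only new feature relative to Theorem~\ref{FaceConnectionMainResult} is that the two "ends'' of $P$, namely $v_0\in V(F)$ and $v_n\in V(F)$, must be glued to $A_F$; but since $V(F)\subseteq A_F$ and $v_0,v_n\in V(F)$, the union $A:=A_F\cup A_P$ automatically has $G[A]$ connected.

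\emph{Step 3: Splicing.} The sets $A_P\setminus\mathrm{dom}(\phi_P)$ (living in $B_2$ of the middle of $P$, hence at distance $\ge 2$ from $F$) and $A_F\setminus\mathrm{dom}(\phi_F)$ are at distance $\ge 2$ apart — here we use that $d$ is large (any $d\ge 34$) so there is slack between the "end zones'' $v_0,\dots,v_3$, $v_{n-3},\dots,v_n$ handled inside $A_F$ and the tube handled by $A_P$. Checking that $\phi_F\cup\phi_P$ is well-defined (no monochromatic edges across the two domains, agreement on the overlap $V(F)\cap\cdots$) is routine given the distance separation. Then Observation~\ref{CombineInert} gives that $A$ is $(L,\phi_F\cup\phi_P)$-inert in $G$, and the $D_1(A)$-list condition is inherited from the two pieces (a vertex in $D_1(A)$ is in $D_1(A_F)$ or $D_1(A_P)$, the far-apart condition ensuring its list loses colors from only one side). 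Finally $A\subseteq B_2(F\cup P)\cup\mathrm{Sh}^4(F)$ by construction, and the noncontractible curve in $F\cup P$ is a hypothesis we never actually need for the \emph{construction} — it is there only so that the output is useful to the caller in Paper~II (it guarantees $G\setminus A$ drops in face-width in a controlled way), so we simply carry it along.

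\emph{Main obstacle.} The genuinely delicate point is Step~1 combined with the "end zones'': making the tube around $P$ inert requires that near $v_0,\dots,v_3$ and $v_{n-3},\dots,v_n$ — where $P$ meets $F$ and where $B_2(P)$ starts to overlap $B_2(F)$ — the coloring and deletion is done by the \emph{face} routine, not the path routine, and the two must dovetail without leaving any uncolored vertex of $D_1(A)$ with a list below $3$ or creating a coloring conflict at the seam. This is exactly the bookkeeping that forces the distance constant up to $d\ge 34$: we need enough room for the path-tube structural lemma to "turn on'' (a geodesic segment of length $\ge$ some absolute constant) strictly inside the region where it is not contaminated by $F$. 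I expect the bulk of the real proof to be re-deriving this seam analysis, essentially by quoting the corresponding lemmas already proved en route to Theorem~\ref{FaceConnectionMainResult} and observing they apply with $\mathcal{D}\setminus\{F\}$ replaced by the single "virtual target'' $F$ at the far end of $P$.
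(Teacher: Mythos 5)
There are two genuine gaps. First, your claim that the noncontractible curve in $F\cup P$ is never needed for the construction is wrong. In the paper's proof it is exactly this hypothesis (together with $\textnormal{fw}(G)\geq 6$ and the maximality of the enclosures at $v_2$ and $v_{n-2}$) that yields the key structural claim: the two arcs $P^0,P^1$ of $F$ between the enclosures are $4$-consistent, and every $(P^0,P^1)$-path of length at most four lies inside $G^{\textnormal{small}}_{Q}\cup G^{\textnormal{small}}_{Q'}$. Without that, short generalized chords of $F$ could join the two arcs outside the enclosures, the hypotheses of Theorems \ref{MainLinkingResultAlongFacialCycle} and \ref{LinkPlusOneMoreVertx} fail on $P^0,P^1$, and the family of reductions you want to combine is no longer consistent (the uncolored parts are not far apart and colorings on the two arcs can conflict). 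So the hypothesis is load-bearing for the construction itself, not just for the caller in Paper II.

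Second, your decomposition into an independently built face piece $A_F$ and path piece $A_P$, glued by a distance-$\geq 2$ separation and Observation \ref{CombineInert}, does not match how the seam can actually be handled, and you leave precisely that seam unproved. The paper couples the two from the start: it takes maximal $v_2$- and $v_{n-2}$-enclosures $Q,Q'$ in the collar $[\Sigma,G,F,L]$ and uses Theorem \ref{GenThmTargetConnector} to produce $(Q,v_3v_2)$- and $(Q',v_{n-3}v_{n-2})$-targets, which simultaneously color $v_3,v_{n-3}$, fix colors on the endpoints of the arcs of $F$, and precolor the terminal bits of the middle path $P^*$; only then is Theorem \ref{MainResultColorAndDeletePaths} applied to the filament $P^*$ extending that precoloring, Theorem \ref{MainLinkingResultAlongFacialCycle} to one arc, and Theorem \ref{LinkPlusOneMoreVertx} (with the extra $P^0$-peak chosen far from $Q\cup Q'$) to the other arc whose two endpoints are already colored. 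A standalone $A_F$ with $V(F)\subseteq A_F$ cannot be produced first and glued later: an arbitrary precoloring of the endpoints of an arc of $F$ need not extend to a Link coloring (this is exactly why Theorem \ref{LinkPlusOneMoreVertx} exists), and the uncolored vertices of your two pieces meet near $v_2,v_3$ and $v_{n-2},v_{n-3}$, so the inertness and $D_1$-list conditions at the seam are not inherited automatically. Your "main obstacle" paragraph names this difficulty but supplies no mechanism; the enclosure/target machinery is that mechanism, and it is absent from your outline.
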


To prove these, we first introduce the following definition.

\begin{defn} \emph{Given a graph $G$ with list-assignment $L$, an \emph{$L$-reduction} (in $G$) is a pair $(A, \phi)$, where $A\subseteq V(G)$ and $\phi$ is a partial $L$-coloring of $A$, such that $A$ is $(L, \phi)$-inert in $G$ and, for each $v\in D_1(A)$, either $|L(v)|<5$ or $|L_{\phi}(v)|\geq 3$. We say that $(A, \phi)$ is a \emph{complete} reduction if each $v\in D_1(A)$ has an $L$-list of size at least five and thus an $L_{\phi}$-list of size at least three. A family $\{(S_1, \phi_1), \cdots, (S_n, \phi_n)\}$ of $L$-reductions is called \emph{consistent} if}
\begin{enumerate}[label=\emph{\arabic*)}]
\itemsep-0.1em
\item\emph{$\phi_1\cup\cdots\phi_n$ is a well-defined $L$-coloring of its domain}; AND
\item\emph{For any $1\leq i<j\leq n$, we have $d(S_i\setminus\textnormal{dom}(\phi_i), S_j\setminus\textnormal{dom}(\phi_j))>1$}; AND
\item\emph{For any $x\in D_1(S_1\cdots S_k)$, there exists an $i\in\{1,\cdots, n\}$ such that $N(x)\cap\textnormal{dom}(\phi_1\cup\cdots\cup\phi_n)\subseteq\textnormal{dom}(\phi_i)$.}
\end{enumerate}
\end{defn}

We note that Observation \ref{CombineInert} immediately implies the following. 

\begin{obs}\label{UnionConsistentObs} Let $G$ be a graph with list-assignment $L$ and $\{(S_1, \phi_1), \cdots, (S_n, \phi_n)\}$ be a consistent family of $L$-reductions. Then $(S_1\cup\cdots S_n, \phi_1\cdots\cup\phi_n)$ is also an $L$-reduction. \end{obs}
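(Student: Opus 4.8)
The statement to prove is Observation~\ref{UnionConsistentObs}: given a consistent family of $L$-reductions $\{(S_1,\phi_1),\dots,(S_n,\phi_n)\}$, the pair $(S_1\cup\dots\cup S_n,\ \phi_1\cup\dots\cup\phi_n)$ is again an $L$-reduction.

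\textbf{Proof plan.}

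The plan is to verify the two defining conditions of an $L$-reduction for the pair $(S,\phi):=(S_1\cup\cdots\cup S_n,\ \phi_1\cup\cdots\cup\phi_n)$, using the consistency hypotheses and an induction on $n$ built on Observation~\ref{CombineInert}. First note that condition (1) of consistency guarantees that $\phi=\phi_1\cup\cdots\cup\phi_n$ is a well-defined $L$-coloring of its domain, so $\phi$ makes sense as a partial $L$-coloring of $S$. The real content is: (i) $S$ is $(L,\phi)$-inert in $G$, and (ii) every $v\in D_1(S\setminus\textnormal{dom}(\phi))$ has either $|L(v)|<5$ or $|L_\phi(v)|\ge 3$.

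For (i), I would argue by induction on $n$, the base case $n=1$ being trivial. For the inductive step, set $S':=S_1\cup\cdots\cup S_{n-1}$ and $\phi':=\phi_1\cup\cdots\cup\phi_{n-1}$; by the induction hypothesis applied to the family $\{(S_1,\phi_1),\dots,(S_{n-1},\phi_{n-1})\}$ (which is itself consistent, since all three consistency conditions are inherited by sub-families — for condition (3) one uses that $D_1(S_1\cup\cdots\cup S_{n-1})$ behaves well under the same neighbourhood-containment property), $(S',\phi')$ is an $L$-reduction, in particular $S'$ is $(L,\phi')$-inert in $G$. Consistency condition (2) gives $d(S_i\setminus\textnormal{dom}(\phi_i),\ S_j\setminus\textnormal{dom}(\phi_j))>1$ for all $i<j$; summing over $i<n$ this yields $d\big(S'\setminus\textnormal{dom}(\phi'),\ S_n\setminus\textnormal{dom}(\phi_n)\big)>1$, which is exactly the distance hypothesis of Observation~\ref{CombineInert}. (Here one should check that $S'\setminus\textnormal{dom}(\phi')$ is contained in $\bigcup_{i<n}(S_i\setminus\textnormal{dom}(\phi_i))$, which holds because a vertex of $S'$ omitted from $\textnormal{dom}(\phi')$ is omitted from every $\textnormal{dom}(\phi_i)$ with $i<n$, hence lies in some $S_i\setminus\textnormal{dom}(\phi_i)$.) Applying Observation~\ref{CombineInert} to $(S',\phi')$ and $(S_n,\phi_n)$ then gives that $S'\cup S_n=S$ is $(L,\phi'\cup\phi_n)=(L,\phi)$-inert in $G$, completing (i).

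For (ii), take $v\in D_1(S\setminus\textnormal{dom}(\phi))$ and suppose $|L(v)|\ge 5$; I must show $|L_\phi(v)|\ge 3$. Since $D_1(S\setminus\textnormal{dom}(\phi))\subseteq D_1\big(\bigcup_i (S_i\setminus\textnormal{dom}(\phi_i))\big)\subseteq\bigcup_i D_1(S_i\setminus\textnormal{dom}(\phi_i))$, fix $i$ with $v\in D_1(S_i\setminus\textnormal{dom}(\phi_i))$; the reduction $(S_i,\phi_i)$ being an $L$-reduction and $|L(v)|\ge 5$ force $|L_{\phi_i}(v)|\ge 3$. It remains to see that passing from $\phi_i$ to the larger coloring $\phi$ does not shrink the list at $v$, i.e.\ $L_\phi(v)=L_{\phi_i}(v)$; by Definition~\ref{ListLSvRemove} (with $S=\varnothing$) this amounts to showing $N(v)\cap\textnormal{dom}(\phi)\subseteq\textnormal{dom}(\phi_i)$ — i.e.\ no neighbour of $v$ is colored by some $\phi_j$ with a color not already assigned by $\phi_i$. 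But this is precisely what consistency condition (3) provides, applied at the vertex $v\in D_1(S)$: there is some $j$ with $N(v)\cap\textnormal{dom}(\phi)\subseteq\textnormal{dom}(\phi_j)$; combined with the fact that $v$ has a neighbour in $S_i\setminus\textnormal{dom}(\phi_i)$ at distance $1$ and condition (2) (which keeps the uncolored parts of distinct reductions at distance $>1$, so $v$ cannot simultaneously be adjacent to uncolored parts of two different reductions), one pins down $j=i$ up to the agreement of $\phi_i$ and $\phi_j$ on shared vertices, giving $L_\phi(v)=L_{\phi_i}(v)\ge 3$ in size.

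\textbf{Main obstacle.} The bookkeeping in step (ii) — verifying that the list of a boundary vertex $v$ is genuinely the same whether computed with respect to $\phi_i$ or the full $\phi$ — is the delicate point; it is where consistency conditions (2) and (3) are both essential, and the slightly awkward phrasing of condition (3) (stated for $D_1(S_1\cup\cdots\cup S_k)$) must be matched up carefully with the vertex set $D_1(S\setminus\textnormal{dom}(\phi))$. Everything else is a routine induction feeding into Observation~\ref{CombineInert}.
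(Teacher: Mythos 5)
Your overall strategy --- iterate Observation~\ref{CombineInert} for inertness and use consistency condition (3) to control the lists of boundary vertices --- is exactly what the paper intends (it gives no written proof, treating Observation~\ref{UnionConsistentObs} as immediate), but two of your steps do not match the definitions as stated. The list-size condition in the definition of an $L$-reduction is quantified over $D_1(S)$, not over $D_1(S\setminus\textnormal{dom}(\phi))$, so your step (ii) checks the wrong vertex set: a vertex $v\in D_1(S)$ all of whose neighbours in $S$ are coloured does not lie in $D_1(S\setminus\textnormal{dom}(\phi))$ and is never treated, yet these are precisely the vertices whose $L_\phi$-lists can shrink. Conversely, $D_1(S\setminus\textnormal{dom}(\phi))$ may contain vertices of $S$ itself (coloured vertices adjacent to uncoloured ones), and for such $v$ your intermediate claim that ``$v\in D_1(S_i\setminus\textnormal{dom}(\phi_i))$ and $|L(v)|\ge 5$ force $|L_{\phi_i}(v)|\ge 3$'' is not licensed by the definition of the reduction $(S_i,\phi_i)$, which only constrains vertices of $D_1(S_i)$. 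The repair is the argument you gesture at in your closing sentences, run from the start for an arbitrary $v\in D_1(S)$: if $N(v)\cap\textnormal{dom}(\phi)=\varnothing$ there is nothing to prove; otherwise condition (3) supplies an $i$ with $N(v)\cap\textnormal{dom}(\phi)\subseteq\textnormal{dom}(\phi_i)$, so $v$ has a neighbour in $\textnormal{dom}(\phi_i)\subseteq S_i$, hence $v\in D_1(S_i)$, and then $L_\phi(v)=L_{\phi_i}(v)$ with $|L_{\phi_i}(v)|\ge 3$ whenever $|L(v)|\ge 5$; condition (2) is not actually needed in this part.

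The second issue is your induction hypothesis, which requires the sub-family $\{(S_1,\phi_1),\dots,(S_{n-1},\phi_{n-1})\}$ to be consistent; your parenthetical claim that condition (3) is inherited by sub-families is not justified. A vertex of $S_n$ that is adjacent to $S_1\cup\cdots\cup S_{n-1}$ but lies outside that union belongs to $D_1(S_1\cup\cdots\cup S_{n-1})$ yet not to $D_1(S_1\cup\cdots\cup S_n)$, so the hypothesis on the full family says nothing about it, and the sub-family may simply fail condition (3). This gap is avoidable rather than fatal: in the inductive step you only use that $S_1\cup\cdots\cup S_{n-1}$ is $(L,\phi_1\cup\cdots\cup\phi_{n-1})$-inert, and inertness of partial unions follows from conditions (1) and (2) alone by iterating Observation~\ref{CombineInert} (your distance computation for that step is fine). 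So run the induction on inertness only, and verify the $D_1(S)$ list condition once, for the full family, as above.
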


Thus, we prove Theorems \ref{FaceConnectionMainResult} and \ref{SingleFaceConnRes} by finding in each case a consistent family of $L$-reductions whose union satisfies the specified proeprties. The structure of the remainder of this paper is as follows. The proof of Theorems \ref{FaceConnectionMainResult} and \ref{SingleFaceConnRes} has two ingredients: We first need some results about reductions near the elements of $\mathcal{C}$, which we prove in Sections \ref{RainbowSec}-\ref{PrelimSecWrto1}. In particular, the main result we need is Theorem \ref{MainCollarResultCutProc}. We also need a similar result for the paths between the elements of $\mathcal{C}$, which we prove in Sections \ref{IntermResSecPathDel}-\ref{MainProofSubSecThm1PathCol}. This result is Theorem \ref{MainResultColorAndDeletePaths}. The actual proofs of Theorems \ref{FaceConnectionMainResult} and \ref{SingleFaceConnRes} are in Section \ref{ProofThmMainSec}, and are relatively short once we have Theorems \ref{MainCollarResultCutProc} and \ref{MainResultColorAndDeletePaths} in hand. 

\section{Rainbows}\label{RainbowSec}

To state our main result about boundary structures, we introduce one more definition.

\begin{defn} \emph{Let $G$ be an embedding on a surface $\Sigma$ and let $H\subseteq G$. Given a set $S\subseteq V(G)$, we say that $S$ is \emph{topologically reachable from $H$} if, for every $v\in S$, either $v\in V(H)$, or there is an arc of $\Sigma$ with $v$ as an endpoint and the other endpoint in $C$, where this arc intersects with the edges and vertices of $G$ only on at most $G[S]\cup H$.} \end{defn}

Sections \ref{RainbowSec}-\ref{PrelimSecWrto1} consist of the proof of the following result. 

\begin{theorem}\label{MainCollarResultCutProc} Let $\mathcal{K}=[\Sigma, G, C, L]$ be a uniquely 4-determined collar, where $G$ is short-inseparable. Let $uw\in E(G)$ with $d(u, C)=3$ and $d(w,C)=2$, where $u,w\not\in\textnormal{Sh}^4(C)$. Suppose every vertex of $C$ has a list of size at least three and every vertex of $B_4(C)\setminus V(C)$ has a list of size at least five. Then there is a complete $L$-reduction $(S, \phi)$ such that
\begin{enumerate}[label=\arabic*)]
\itemsep-0.1em
\item $u\in\textnormal{dom}(\phi)$ and $V(C)\subseteq S\subseteq B_3(C)\cup\textnormal{Sh}^4(C)$, and furthermore, $S$ is topologically reachable from $C$; AND
\item $(S\setminus\textnormal{Sh}^4(C))\cap D_3(C)=\{u\}$ and $(S\setminus\textnormal{Sh}^4(C))\cap D_2(C)=\{w\}$; AND
\end{enumerate}
 \end{theorem}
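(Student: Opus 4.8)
The plan is to build the reduction $(S,\phi)$ by starting with the precolored structure near $C$ and pushing it outward to distance three along the edge $uw$, while keeping careful control of which vertices actually get colored versus merely deleted. The basic idea is that a face $C$ with 3-lists is "insulated" by one layer: if we could color a single vertex at distance one whose color kills nothing essential, we'd be done, but in general $D_1(C)$ is a cycle-like structure that we cannot afford to color entirely. So the strategy is to identify a small subpath/subtree $T$ of $B_3(C)$ containing $u$ (and necessarily passing through $w$ and some vertex of $D_1(C)$ to reach $C$), color only the vertices of $T$ lying at distance $\geq 1$ that we are forced to color, and argue that $V(C)$ together with the remaining uncolored vertices of $B_3(C)$ near $T$ form an inert set.

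First I would use short-inseparability and $\mathrm{fw}(G)\geq 6$ (implicit via the collar being uniquely $4$-determined, which controls generalized chords of length $\leq 4$) to understand the local structure of $B_3(C)$: the set $D_1(C)$, together with the chords of $C$ of length $\leq 4$ that are "swallowed" into $\mathrm{Sh}^4(C)$, has a predictable shape, and since $u,w\notin\mathrm{Sh}^4(C)$, the edge $uw$ together with a shortest $(w,C)$-path and $(u,C)$-path stays outside the "small side" of every short chord. I would then invoke the Rainbow machinery of Section \ref{RainbowSec} (the definition of topologically reachable sets is clearly set up precisely for this) to select a subgraph $R$ reachable from $C$, containing an arc from $u$ to $C$ through $w$, such that the vertices of $R$ at distance $\geq 1$ can be colored greedily from the outside in — coloring $u$ first (it has a 5-list, so after deleting at most $4$ neighbors' colors it still has a choice), then its needed neighbors, and so on — while the vertices of $C$ themselves and the "wall" of uncolored vertices between $C$ and the colored part retain enough list room (size $\geq 3$) that $C$ can be completed. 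Concretely $\phi$ colors $u$, possibly $w$, and a bounded set of vertices in $D_1(C)\cup D_2(C)\cup D_3(C)$, chosen so that condition 2) holds: only $u$ in $D_3$ and only $w$ in $D_2$ are colored (outside $\mathrm{Sh}^4$), the rest of the colored material being absorbed into $\mathrm{Sh}^4(C)$ or lying in $D_1(C)$, which is allowed since condition 2) only restricts $D_2$ and $D_3$.

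For inertness, I would verify directly from the definition: given any extension $\psi$ of $\phi$ whose domain includes $D_1(S\setminus\mathrm{dom}(\phi))$, I need $\psi$ to extend over all of $S$. Since $S\setminus\mathrm{dom}(\phi)$ is contained in $B_3(C)$ and its neighborhood is colored, the subgraph $G[S\cup D_1(S)]$ with the updated lists $L_\psi$ is a planar patch (everything here is inside a disc because $C$ is a contractible facial cycle and the relevant chords are short and contractible) whose outer boundary walk has the colored vertices and where the uncolored vertices of $C$ have lists of size $\geq 3$ and interior uncolored vertices have lists of size $\geq 5$; then Theorem \ref{thomassen5ChooseThm} (or its corollary, or Theorem \ref{BohmePaper5CycleCorList} if $C$ is small) finishes the extension. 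The condition that each $v\in D_1(A)$ has $|L_\phi(v)|\geq 3$ — making the reduction complete — follows because those vertices have $L$-lists of size $\geq 5$ (they lie in $B_4(C)\setminus V(C)$ by the hypothesis) and have at most two colored neighbors in $\mathrm{dom}(\phi)$, by our parsimonious choice of which vertices to color.

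The main obstacle, I expect, is the combinatorial case analysis needed to actually construct $\phi$ so that simultaneously (a) $u\in\mathrm{dom}(\phi)$, (b) $C$ can still be $L_\phi$-colored no matter what happens outside (the inertness), and (c) only one vertex each of $D_2(C)$ and $D_3(C)$ outside $\mathrm{Sh}^4(C)$ is colored. Coloring $u$ at distance $3$ forces us to color a connecting path back to $C$, and along the way we may hit vertices with only 3-lists (those on $C$, or — if short separating structure is present — vertices pushed into the analysis by uniquely-$4$-determinedness); ensuring that these get absorbed into $\mathrm{Sh}^4(C)$ rather than forcing extra colorings is where the short-inseparability hypothesis and the face-width bound $\mathrm{fw}(G)\geq 6$ have to be deployed carefully. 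This is exactly the content that the auxiliary results of Sections \ref{RainbowSec}--\ref{PrelimSecWrto1} are presumably designed to package, so in practice the proof of Theorem \ref{MainCollarResultCutProc} should reduce to assembling those lemmas, choosing the reachable subgraph correctly, and checking the three numbered conditions against the definitions of $L$-reduction and $(L,\phi)$-inertness.
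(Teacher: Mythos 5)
There is a genuine gap here. Your sketch correctly identifies the shape of the goal (color $u$, a connector through $w$, and part of $C$; delete the rest; check inertness and the 3-list condition on $D_1(S)$), but it is missing the ideas that actually carry the proof, and the step you wave at — ``color greedily from the outside in and verify inertness via Theorem \ref{thomassen5ChooseThm}'' — is precisely where the argument breaks. The paper does not prove Theorem \ref{MainCollarResultCutProc} by assembling the auxiliary lemmas around a greedy coloring; it proves a strictly stronger statement (Theorem \ref{StrengthendVerManColRes}) organized around a \emph{maximal $w$-enclosure} $Q$ (a proper 4-chord with midpoint $w$, or a 5-chord through the neighbors of $w$ in the degenerate case). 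The coloring $\phi$ is the union of a $(Q,uw)$-target on $G^{\textnormal{small}}_Q$ (obtained from Theorems \ref{MainHolepunchPaperResulThm}, \ref{ModifiedRes5ChordCaseDegen}, \ref{RainbowNonEqualEndpointColorThm} via Theorem \ref{GenThmTargetConnector}) together with Link colorings of the subpaths of $G^{\textnormal{large}}_Q\cap C$ (Theorem \ref{MainLinkingResultAlongFacialCycle}), and the whole thing is driven by a minimal-counterexample induction on $|V(G)|$ in which chords of $C$ are cut off, paths on $C$ are shortened or contracted, and the resulting smaller collars are checked to remain uniquely 4-determined and short-inseparable. None of this structure appears in your proposal, and in particular you never say how the colored portion of $C$ and the colored material near $u,w$ are made compatible.

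That compatibility is the real difficulty, and it is not resolved by planarity plus Thomassen. The obstruction is the vertices the paper calls $\textnormal{Sp}(C)$: vertices of $D_1(C)$ with two or more neighbors on $G^{\textnormal{large}}_Q\cap C$, which sit between the colored boundary of $C$ and the uncolored region and can either drop below list-size three or destroy inertness when $C$ is colored independently of the target on $G^{\textnormal{small}}_Q$. Your ``parsimonious choice'' of colored vertices does not control this interaction: an arbitrary choice of Link colorings on $C$ and an arbitrary target near $w$ can fail to be jointly inert, and the bulk of Section \ref{PrelimSecWrto1} (Claims \ref{IfTargetFailsInertFails} through \ref{y'Andy''NoCommNbrw}) is an analysis of exactly when this failure is unavoidable, culminating in structural conclusions about a putative counterexample — all vertices of $G^{\textnormal{large}}_Q\cap C$ share one 3-list, $|\textnormal{Sp}(C)|\le 1$ with $G_y-y$ of bounded length, $w$ cannot be degenerate, $|E(G^{\textnormal{small}}_Q\cap C)|\ne 2$, etc. — before a final contradiction. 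So the case analysis you defer to ``the auxiliary results of Sections \ref{RainbowSec}--\ref{PrelimSecWrto1}'' is not packaged by the rainbow and link lemmas; it is the proof. (Two smaller points: the face-width bound $\textnormal{fw}(G)\geq 6$ you invoke is not a hypothesis of this theorem, and you also need the degenerate/non-degenerate dichotomy for $w$, since when $G[N(w)\cap D_1(C)]$ is short there is no proper 4-chord through $w$ and the target must instead color an endpoint of the middle edge of a 5-chord — another case your outline does not anticipate.)
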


To prove this, we need some additional machinery and results from \cite{JNevinHolepunchI}, which we restate below.  

\begin{defn}\label{EndNotationColor}  \emph{Let $G$ be a graph with list-assignment $L$. Let $H$ be a subgraph of $G$ and $P\subseteq G$ with endpoints $p, p'$, where $|V(P)|\geq 3$. Let $pq$ and $p'q'$ be the terminal edges of $P$. We define the following.}
\begin{enumerate}[label=\emph{\arabic*)}]
\itemsep-0.1em
\item\emph{We let $\textnormal{End}_L(P, G)$ be the set of $L$-colorings $\phi$ of $\{p, p'\}$ such that any extension of $\phi$ to an $L$-coloring of $V(P)$ extends to $L$-color all of $G$.}
\item\emph{We let $\textnormal{Crown}_{L}(P, G)$ be the set of partial $L$-colorings $\phi$ of $V(C)\setminus\{q, q'\}$ such that}
\begin{enumerate}[label=\emph{\alph*)}]
\itemsep-0.1em
\item\emph{$p, p'\in\textnormal{dom}(\phi)$ and, if $|E(P)|>3$, then at least one vertex of $\mathring{P}\setminus\{q, q'\}$ lies in $\textnormal{dom}(\phi)$}; AND
\item\emph{Any extension of $\phi$ to an $L$-coloring of $\textnormal{dom}(\phi)\cup V(P)$ extends to $L$-color all of $G$, and furthermore, for each $x\in V(P)\setminus\textnormal{dom}(\phi)$, $|L_{\phi}(x)|\geq 3$}
\end{enumerate}
\end{enumerate}
 \end{defn}

We usually drop the subscript $L$ in the case where it is clear from the context. 

\begin{defn}\label{RainbowDefnX} \emph{A \emph{rainbow} is a tuple $(G, C, P, L)$, where $G$ is a planar graph with outer cycle $C$, $P$ is a path on $C$ with $|E(P)|\geq 1$, and $L$ is a list-assignment for $V(G)$ such that each endpoint of $P$ has a nonempty list and furthermore, $|L(v)|\geq 3$ for each $v\in V(C\setminus P)$ and $|L(v)|\geq 5$ for each $v\in V(G\setminus C)$. }
 \end{defn}

The following is straightforward to check using Theorems \ref{thomassen5ChooseThm} and \ref{Two2ListTheorem}. 

\begin{lemma}\label{PartialPathColoringExtCL0} Let $(G, C, P, L)$ be a rainbow and $\phi$ be a partial $L$-coloring of $V(P)$ which does not extend to $L$-color $G$, where $\textnormal{dom}(\phi)$ contains the endpoints of $P$. Then either there is a chord of $C$ with one endpoint in $\textnormal{dom}(\phi)\cap V(\mathring{P})$ and the other endpoint in $C\setminus P$, or there is a $v\in V(G\setminus C)\cup (V(\mathring{P})\setminus\textnormal{dom}(\phi))$ with $|L_{\phi}(v)|\leq 2$.
\end{lemma}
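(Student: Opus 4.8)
The plan is to argue by contradiction: assume $\phi$ is a partial $L$-coloring of $V(P)$ containing the endpoints $p,p'$ of $P$ in its domain, assume $\phi$ does not extend to an $L$-coloring of $G$, and assume neither of the two conclusions holds — that is, there is no chord of $C$ joining a vertex of $\textnormal{dom}(\phi)\cap V(\mathring P)$ to a vertex of $C\setminus P$, and every vertex $v\in V(G\setminus C)\cup(V(\mathring P)\setminus\textnormal{dom}(\phi))$ satisfies $|L_\phi(v)|\geq 3$. I will then produce an $L$-coloring of $G$ extending $\phi$, contradicting the hypothesis.

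First I would pass to the graph $G':=G\setminus\textnormal{dom}(\phi)$ with list-assignment $L':=L_\phi$, so that extending $\phi$ to an $L$-coloring of $G$ is equivalent to finding an $L'$-coloring of $G'$. The outer walk of $G'$ is obtained from $C$ by deleting the colored vertices of $P$; I would like to exhibit $G'$ (or each of its relevant blocks) as a planar graph in which I can apply Theorem \ref{thomassen5ChooseThm} or Theorem \ref{Two2ListTheorem}. The key points are: (i) interior vertices of $G$ and uncolored internal vertices of $\mathring P$ all have $|L'(v)|\geq 3$ by the negation of the second conclusion, and in fact interior vertices of $G$ started with lists of size $\geq 5$; (ii) vertices of $C\setminus P$ have $|L(v)|\geq 3$ and, crucially, by the negation of the first conclusion no such vertex is adjacent (via a chord) to a colored internal vertex of $P$, so their lists lose at most... here I must be careful, since a vertex of $C\setminus P$ could be adjacent to the colored endpoints $p,p'$. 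The right way to handle this: the uncolored vertices of $C\setminus P$ lie on the outer face of $G'$, and I want them to have $L'$-lists of size $\geq 3$ except possibly at a bounded number of places near the "ends" of the path $C\setminus P$, which I will treat as the two special size-$\geq 2$ vertices in Theorem \ref{Two2ListTheorem}.

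Concretely, let $q,q'$ be the neighbors of $p,p'$ on $C\setminus P$ (the vertices of $C\setminus P$ closest to the ends of the removed segment). Every uncolored vertex $x$ of $C\setminus P$ other than $q,q'$ has no neighbor in $\textnormal{dom}(\phi)$: it is not adjacent to an internal colored vertex of $P$ (no such chord, by assumption) and it is not adjacent to $p$ or $p'$ (since $x\ne q,q'$ and $C$ is a cycle with $P$ a sub-path, the only vertices of $C\setminus P$ adjacent to $p,p'$ along $C$ are $q,q'$, and a chord from $x$ to $p$ or $p'$ would be a chord of $C$ with one endpoint, namely $p$ or $p'$, in $\textnormal{dom}(\phi)\cap V(P)$ — I should double-check whether the endpoints of $P$ count as "$\mathring P$"; if the lemma's first alternative only forbids chords from $\mathring P$, then I additionally need to rule out chords from $p,p'$, which I can fold into the two exceptional vertices as well). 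Hence for all but at most two outer vertices of $G'$, namely $q$ and $q'$, we have $|L'(x)|\geq 3$, while $|L'(q)|\geq 3-1=2$ and $|L'(q')|\geq 2$. If $G'$ is connected with $q,q'$ on its outer face, Theorem \ref{Two2ListTheorem} applies directly and yields an $L'$-coloring of $G'$. If $G'$ is disconnected (because deleting $\textnormal{dom}(\phi)$ separated it), I would apply the appropriate theorem component by component: a component containing at most one of $q,q'$ has all but one outer vertex of list size $\geq 3$ and that one of size $\geq 2$, so Theorem \ref{Two2ListTheorem} (with a dummy second vertex, or just Theorem \ref{thomassen5ChooseThm} after picking a precolorable edge/vertex) still applies; a component meeting neither of $q,q'$ has all outer vertices of list size $\geq 3$ and all interior of size $\geq 5$, hence is $L'$-colorable. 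Combining these colorings gives an $L'$-coloring of $G'$, i.e. an extension of $\phi$ to $G$, the desired contradiction.

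The main obstacle I anticipate is the bookkeeping around the boundary: precisely identifying which vertices of $C\setminus P$ can lose colors from $\textnormal{dom}(\phi)$ and confirming that there are at most two of them (the two "attachment" vertices $q,q'$), handling the degenerate cases where $P$ is short (so $\mathring P$ is empty or a single vertex), where $p$ and $p'$ are adjacent on $C\setminus P$, or where $G'$ fails to be $2$-connected, and making sure the planar hypotheses of Theorems \ref{thomassen5ChooseThm} and \ref{Two2ListTheorem} (outer cycle, the two distinguished vertices on the outer face) are genuinely met after the deletion. None of this is deep, but it needs to be done carefully; everything else is a direct appeal to the two cited extension theorems.
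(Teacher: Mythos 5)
Your overall strategy --- delete $\textnormal{dom}(\phi)$, pass to $L_{\phi}$, and finish with Theorems \ref{thomassen5ChooseThm} and \ref{Two2ListTheorem} --- is exactly the route the paper intends (it records no proof beyond saying the lemma is straightforward from those two theorems). But the point you flag parenthetically and then wave away is a genuine gap. The first alternative of the lemma only forbids chords of $C$ from \emph{colored vertices of $\mathring{P}$} to $C\setminus P$; chords from the endpoints $p,p'$ to $C\setminus P$ are perfectly consistent with all the hypotheses, and since $p,p'\in\textnormal{dom}(\phi)$, every vertex of $C\setminus P$ adjacent to $p$ or $p'$ loses a color when you delete $\textnormal{dom}(\phi)$. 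There can be arbitrarily many such vertices (a fan of chords at $p$ landing on consecutive vertices of $C\setminus P$, each of whose $3$-lists contains $\phi(p)$), each ending with an $L_{\phi}$-list of size $2$, and a vertex of $C\setminus P$ adjacent to both $p$ and $p'$ can end with a list of size $1$. These cannot be ``folded into the two exceptional vertices'' of Theorem \ref{Two2ListTheorem}: that theorem tolerates exactly two boundary vertices with $2$-lists and none with $1$-lists, and your $q,q'$ already occupy both slots. So the application of Theorem \ref{Two2ListTheorem} in your second paragraph does not meet its hypotheses, and nothing in your argument recovers from this.

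The missing idea is to dispose of chords at $p$ and $p'$ \emph{before} deleting anything, say by induction on the number of chords of $C$ with an endpoint in $\{p,p'\}$ and the other endpoint in $C\setminus P$. If $px$ is such a chord, split $G$ along it into the natural $px$-partition $G_1\cup G_2$ with $P\subseteq G_1$: the tuple $(G_1,C_1,P,L)$ is again a rainbow satisfying the (negated) hypotheses, since $L_{\phi}$-lists only grow when passing to a subgraph and $x$ sits on $C_1\setminus P$ with a $3$-list, so by induction $\phi$ extends to $L$-color $G_1$; then the outer cycle of $G_2$ carries the now-colored edge $px$, all its other boundary vertices lie in $C\setminus P$ with $3$-lists and its interior vertices have $5$-lists, so Theorem \ref{thomassen5ChooseThm} extends the coloring across $G_2$ (the chord $pp'$, if present, is handled the same way). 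Once no such chords remain, your deletion argument is sound: only the two $C$-neighbors $q,q'$ of $p,p'$ on $C\setminus P$ can lose a color, every other vertex of the new outer walk --- vertices of $C\setminus P$, uncolored vertices of $\mathring{P}$, and exposed interior vertices --- retains a list of size at least three, and Theorem \ref{Two2ListTheorem} (componentwise, with the short degenerate cases such as $q=q'$ treated separately) completes the extension. Without that splitting step the proof does not go through as written.
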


We now recall from \cite{JNevinHolepunchI} some facts about colorings of 2-paths. We first define the following. 

\begin{defn}\label{GUniversalDefinition}
\emph{Let $H$ be a graph. We call $H$ a \emph{broken wheel} if there is a $p\in V(H)$ such that $H-p$ is a path $q_1\ldots q_{\ell}$ with $\ell\geq 2$, where $N(p)=\{q_1, \ldots, q_{\ell}\}$. The path $q_1pq_{\ell}$ is called the \emph{principal path} of $H$. Given a length-two path $P':= pqp'\subseteq H$, a list-assignment $L$ for $V(H)$, and an $a\in L(p)$, we say that $a$ is \emph{$(P', H)$-universal} if, for each $b\in L(q)\setminus\{a\}$ and each $c\in L(p')\setminus\{b\}$, $(a,b,c)$ is a proper $L$-coloring of $pqp'$ which extends to $L$-color $H$.}
 \end{defn}

Note that, in the setting above, in order for $a$ to be $(P', H)$ universal, it is necessarily the case that either $pp'\not\in E(H)$ or $a\not\in L(p')$. The following two results are proven in \cite{JNevinHolepunchI}.

\begin{obs}\label{CorMainEitherBWheelAtM1ColCor} Let $(G, C, P, L)$ be a rainbow, where $P=p_1p_2p_3$, and $H$ is short-inseparable, and every chord of $C$ has $p_2$ as an endpoint. Suppose further that $|V(C)|>4$ and $|L(p_3)|\geq 2$. Then, letting $zxp_3$ be the length-two subpath of $C-p_2$ with endpoint $p_3$, either there is an $G$-universal $a\in L(p_3)$ or $G$ is a broken wheel with principal path $P$ such that $L(p_3)\subseteq L(x)\cap L(z)$.\end{obs}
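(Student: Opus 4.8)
The plan is to argue by contradiction: suppose no color $a\in L(p_3)$ is $G$-universal. For each such $a$, $G$-universality fails, meaning there is a choice of $b\in L(p_2)\setminus\{a\}$ and $c\in L(p_3)\setminus\{b\}$ (note: I would double-check the indexing here; the universal color sits at $p_3$ and the witnessing pair lies on $p_2$ and $p_1$, since $P=p_1p_2p_3$ and $zxp_3$ is a subpath of $C-p_2$) such that $(a,b,c)$ does not extend to an $L$-coloring of $G$. First I would apply Lemma~\ref{PartialPathColoringExtCL0} to the rainbow $(G,C,P,L)$ with the precoloring $\phi=(a,b,c)$ of the full path $P$. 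Since every chord of $C$ has $p_2$ as an endpoint and $\phi$ already colors $p_2$, the first alternative of Lemma~\ref{PartialPathColoringExtCL0} (a chord from $\mathring P\cap\textnormal{dom}(\phi)$ to $C\setminus P$) would only ever involve $p_2$, so I would need to rule that out or fold it into the analysis; the remaining alternative gives a vertex $v\in V(G\setminus C)$ with $|L_\phi(v)|\le 2$. Because $G$ is short-inseparable with $|V(C)|>4$ and every chord passes through $p_2$, a vertex of $G\setminus C$ with at most two available colors after coloring only a 2-path must be adjacent to at least three already-colored vertices, which forces tight structural constraints.

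The key steps, in order, would be: (1) reduce via Lemma~\ref{PartialPathColoringExtCL0} to the existence of an obstructing interior vertex $v$ for every choice of $a$; (2) use short-inseparability and the chord condition to show $G\setminus C$ is essentially a single vertex adjacent to a large portion of $C$, or that $G$ itself is a broken wheel — here I would invoke Corollary~\ref{CycleLen4CorToThom} and Theorem~\ref{thomassen5ChooseThm} to push interior structure onto the boundary, exactly as in the analogous arguments of \cite{JNevinHolepunchI}; (3) in the broken-wheel case with principal path $P$, compute directly: coloring $p_1\mapsto a$, then greedily along the spokes, the only way extension can fail for every $a$ is if $|L(p_3)|$ (the far endpoint) is squeezed between $L(x)$ and $L(z)$, yielding $L(p_3)\subseteq L(x)\cap L(z)$; (4) in the non-broken-wheel case, show a universal $a$ must exist by a counting argument on $|L(p_3)|\ge 2$ versus the at most one "bad" color coming from the unique obstructing vertex. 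I would lean on the fact that $p_2$ being on every chord makes $G-p_2$ behave like a near-outerplanar graph, so the interior collapses.

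The main obstacle I expect is step~(2)–(3): precisely characterizing when the interior obstruction $v$ can be defeated for all but possibly one value of $a\in L(p_3)$, and separating the genuine broken-wheel case from near-misses (e.g.\ a broken wheel with an extra chord at $p_2$, or configurations resembling Theorem~\ref{BohmePaper5CycleCorList}(ii)–(iii) where $G\setminus C$ is an edge or triangle). The bookkeeping is delicate because $|L(p_3)|$ could be exactly $2$, leaving essentially no slack, so one must show that in every non-broken-wheel configuration the obstruction kills at most one color of $L(p_3)$, never both. I anticipate this is where one invokes the more refined structural results of \cite{JNevinHolepunchI} (the ones governing $\textnormal{End}_L$ and $\textnormal{Crown}_L$ of short 2-paths) rather than re-deriving everything from Thomassen's theorem; and I would want to be careful that the statement's hypothesis ``$|V(C)|>4$'' is genuinely used to exclude the $4$-cycle case handled separately by Corollary~\ref{CycleLen4CorToThom}.
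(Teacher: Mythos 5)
A preliminary remark: this paper does not prove Observation \ref{CorMainEitherBWheelAtM1ColCor} at all --- it is imported without proof from \cite{JNevinHolepunchI} (``The following two results are proven in \cite{JNevinHolepunchI}''), so there is no in-paper argument to compare yours against and your sketch must stand on its own. Judged that way, it has genuine gaps. The most serious one is the chord case. Chords of $C$ at $p_2$ are explicitly allowed by the hypothesis --- indeed the broken-wheel alternative in the conclusion is precisely the configuration in which every internal vertex of $C-p_2$ is joined to $p_2$ by a chord --- so the first alternative of Lemma \ref{PartialPathColoringExtCL0} can never be ``ruled out'', and ``folding it into the analysis'' is where essentially all the work lies. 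The standard treatment of such statements is an induction that splits $G$ along a chord $p_2u$ of $C$, obtains a universal colour or broken-wheel structure on the side containing $p_3$, and recombines with Theorem \ref{thomassen5ChooseThm} applied to the other side; nothing of this kind appears in your plan, and without it you have no route either to the broken-wheel alternative or to the list containment $L(p_3)\subseteq L(x)\cap L(z)$, both of which live entirely in the chordful case.

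The second gap is your step (2): it is simply not true that the chord condition plus short-inseparability makes ``$G\setminus C$ essentially a single vertex'' or makes ``the interior collapse''. The hypothesis constrains only chords of $C$; the interior can be an arbitrarily large triangulated disc of vertices with $5$-lists, in which case $G$ is neither a broken wheel nor near-outerplanar, so the dichotomy you propose in step (2) fails as stated. What Lemma \ref{PartialPathColoringExtCL0} actually gives you there is only that an obstructing interior vertex for one particular non-extending colouring must be adjacent to all of $p_1,p_2,p_3$ (since interior lists have size at least five); passing from this to your step (4) claim that at most one colour of $L(p_3)$ can be ``bad'' is exactly the hard content of the observation, especially when $|L(p_3)|=2$, and it is asserted rather than argued. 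Deferring that point to ``the more refined structural results of \cite{JNevinHolepunchI}'' also risks circularity, since this observation is itself one of the $2$-path results imported from that paper. The broken-wheel computation in step (3) is fine in spirit (modulo the index slip you flag yourself, with the universal colour sitting at $p_3$), but as it stands your proposal covers only the chord-free situation, and even there the key counting step is missing.
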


\begin{theorem}\label{SumTo4For2PathColorEnds} Let $(G, C, P, L)$ be an end-linked rainbow, where $P:=p_0qp_1$. Then  $\textnormal{End}(P,G)\neq\varnothing$. Furthermore, either $|\textnormal{End}(P, G)|\geq 2$ or there is an even-length path $Q$ with $V(Q)\subseteq V(C)$, where $Q$ has endpoints $p_0, p_1$ and each vertex of $Q$ is adjacent to $q$.   \end{theorem}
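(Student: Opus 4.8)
The plan is to prove both assertions at once by induction on $|V(G)|$, first stripping away all reducible structure and then resolving an essentially unique core configuration via Observation~\ref{CorMainEitherBWheelAtM1ColCor}. I use the end-linked hypothesis in the form $|L(p_0)|\ge 2$ and $|L(p_1)|\ge 2$, so that $\{p_0,p_1\}$ always admits at least two proper $L$-colorings: this is clear when $p_0p_1\notin E(G)$, and when $p_0p_1\in E(G)$ it holds because any two distinct colors of $L(p_0)$ can each be completed to a proper pair.

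\emph{Reductions.} If $G$ has a separating cycle $T$ with $|V(T)|\le 4$, then the vertices of $G$ in the interior of $T$ (the side not containing $C$) lie outside $V(C)$, hence have $5$-lists, so by Corollary~\ref{CycleLen4CorToThom} every $L$-coloring of $V(T)$ extends over that region; deleting those interior vertices yields a strictly smaller end-linked rainbow $G'$ with the same $C$ and $P$, and since any $L$-coloring of $G'$ witnessing a pair in $\textnormal{End}(P,G')$ colors $V(T)$ and thus extends over the deleted region, we get $\textnormal{End}(P,G')=\textnormal{End}(P,G)$, so induction applies. If instead $C$ has a chord $e=xy$ with $q\notin\{x,y\}$, then $P$ lies on one side of $e$, inside the subgraph $G_1$ bounded by the cycle $C_1$ consisting of $e$ together with the arc of $C$ from $x$ to $y$ containing $P$; here $(G_1,C_1,P,L)$ is a strictly smaller end-linked rainbow, and since any $L$-coloring of $G_1$ realizing a pair in $\textnormal{End}(P,G_1)$ colors the edge $e$, Theorem~\ref{thomassen5ChooseThm} extends it over the opposite side of $e$, giving $\textnormal{End}(P,G_1)\subseteq\textnormal{End}(P,G)$; the reverse containment is immediate, and any path $Q$ witnessing the conclusion for $G_1$ also witnesses it for $G$ since $V(C_1)\subseteq V(C)$. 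Hence we may assume $G$ is short-inseparable and every chord of $C$ is incident to $q$.

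\emph{Core configuration.} Suppose $|V(C)|\le 4$ and $G$ is not a broken wheel with hub $q$. Then Corollary~\ref{CycleLen4CorToThom} applies, and one checks directly that every proper $L$-coloring of $\{p_0,p_1\}$ lies in $\textnormal{End}(P,G)$: given such a pair, extend it to any $L$-coloring of $V(P)$, then (when $|V(C)|=4$) to $V(C)$---the extra vertex is not adjacent to $q$, since $G$ is not the diamond $C\cup\{qr\}$, so it has a free color---and finally to $G$ via the corollary; thus $|\textnormal{End}(P,G)|\ge 2$. The only short-inseparable possibility with $|V(C)|\le 4$ left out here is the diamond $C\cup\{qr\}$, which is a broken wheel with hub $q$. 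Suppose instead $|V(C)|>4$ and $G$ is not a broken wheel. Then Observation~\ref{CorMainEitherBWheelAtM1ColCor}, applied with $q$ and $p_1$ in the roles of $p_2$ and $p_3$ (legitimate since $|L(p_1)|\ge 2$ and every chord of $C$ meets $q$), yields a $(P,G)$-universal color $a\in L(p_1)$. Since $p_0p_1\notin E(G)$, each $c\in L(p_0)$ gives a proper $L$-coloring $(c,a)$ of $\{p_0,p_1\}$, and the definition of universality shows every extension of $(c,a)$ over $V(P)$ extends over $G$, so $(c,a)\in\textnormal{End}(P,G)$; as the pairs $(c,a)$ are distinct, $|\textnormal{End}(P,G)|\ge|L(p_0)|\ge 2$.

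\emph{Broken wheels and the main obstacle.} The remaining case is that $G$ is a broken wheel with hub $q$, so $Q:=C-q$ is a path $q_1\cdots q_\ell$ with $q_1=p_0$, $q_\ell=p_1$ and every $q_i$ adjacent to $q$; here I must show $|\textnormal{End}(P,G)|\ge 2$ unless $|E(Q)|=\ell-1$ is even. A pair $(a,c)$ is excluded from $\textnormal{End}(P,G)$ precisely when some $b\in L(q)\setminus\{a,c\}$ prevents the path $q_1\cdots q_\ell$ from being properly $L$-colored with $q_1=a$, $q_\ell=c$ and each interior $q_i$ taking a color in $L(q_i)\setminus\{b\}$; because each interior list has size at least $3$, a standard path-coloring argument forces, in that situation, all interior lists to agree (up to $b$) with one fixed $2$-set, and the resulting forced alternation obstructs $(a,c)$ only on a set of pairs whose description is governed by the parity of $\ell$. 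Tallying which pairs are killed by which $b$ should leave at least two survivors unless $\ell-1$ is even, which in particular gives $\textnormal{End}(P,G)\neq\varnothing$ in this case as well (in all other cases nonemptiness already follows from the arguments above). I expect this last step---the broken-wheel bookkeeping that pins down exactly when only one endpoint-pair survives and checks that $Q$ is then even---to be the main difficulty, together with the routine but necessary checks that each reduction strictly decreases $|V(G)|$ while leaving $\textnormal{End}(P,\cdot)$, and any witnessing path, unchanged.
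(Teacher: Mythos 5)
This paper does not actually contain a proof of Theorem \ref{SumTo4For2PathColorEnds}: it is quoted from \cite{JNevinHolepunchI}, so there is no in-paper argument to compare against, and your proposal has to stand on its own. Your reduction scheme (cut along chords of $C$ not incident to $q$, delete interiors of separating cycles of length at most $4$ via Corollary \ref{CycleLen4CorToThom}, then use Observation \ref{CorMainEitherBWheelAtM1ColCor} to extract a $(P,G)$-universal colour unless $G$ is a broken wheel with hub $q$) is sound and is indeed the natural route. The genuine gap is that the broken-wheel case is only sketched, and that case is where the entire content of the theorem lives: the even path $Q$ of the conclusion is exactly $C-q$, and when $C-q$ has even length the dichotomy gives you nothing, so you must still prove $\textnormal{End}(P,G)\neq\varnothing$ there. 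You write that a ``standard path-coloring argument'' plus some ``tallying'' of which pairs $(a,c)$ are killed by which $b\in L(q)$ ``should'' leave two survivors unless $C-q$ is even, and you yourself flag this bookkeeping as the main difficulty; none of it is carried out, and the nonemptiness assertion in the even case is not addressed by the tallying you describe. That assertion is not free: for the diamond (base $p_0rp_1$, hub $q$) with $L(p_0)=\{1\}$, $L(p_1)=\{2,3\}$ and $L(r)=\{1,2,3\}\subseteq L(q)$, every pair is killed and $\textnormal{End}(P,G)=\varnothing$, so the count of surviving pairs genuinely depends on the end-linked hypothesis and requires a real argument.

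A second, related problem is your reading of the (undefined-in-this-paper) term ``end-linked'' as $|L(p_0)|\ge 2$ and $|L(p_1)|\ge 2$. The label ``SumTo4'' and the way the theorem is invoked here — for instance in the proof of Theorem \ref{LinkPlusOneMoreVertx} (Claim \ref{AllbutOneIntEv}) it is applied with one endpoint effectively carrying a single colour and the other a $3$-list — indicate the hypothesis is $|L(p_0)|+|L(p_1)|\ge 4$ with both lists nonempty. Under that hypothesis your universal-colour step produces only $|L(p_0)|$ elements of $\textnormal{End}(P,G)$, i.e.\ possibly just one in the $1{+}3$ case. That particular step is repairable (delete the universal colour from $L(p_1)$, which still leaves a list of size $\ge 2$, and apply Observation \ref{CorMainEitherBWheelAtM1ColCor} again; since $G$ is not a broken wheel you obtain a second universal colour), but the missing broken-wheel analysis would likewise have to be done for these asymmetric lists, which is precisely the case analysis your proposal defers.
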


The main results from \cite{JNevinHolepunchI} are the three theorems stated below.

\begin{theorem}\label{MainHolepunchPaperResulThm} Let $(G, C, P, L)$ be a rainbow, where $P$ has length four, the endpoints of $\mathring{P}$ have no common neighbor in $C\setminus P$, each internal vertex of $P$ has an $L$-list of size at least five, and at least one endpoint of $P$ has a list of size at least three. Then $\textnormal{Crown}_{L}(P, G)\neq\varnothing$.
 \end{theorem}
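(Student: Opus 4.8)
The plan is to proceed by induction on $|V(G)|$, following the template of Thomassen-type arguments. Write $P := p_0 p_1 p_2 p_3 p_4$, so $\mathring P = p_1 p_2 p_3$, and let $p_0 q_0$ and $p_4 q_4$ be the terminal edges of $P$ on $C$; by hypothesis $p_1$ and $p_3$ have no common neighbor in $C \setminus P$, $|L(p_1)|,|L(p_2)|,|L(p_3)| \ge 5$, and WLOG $|L(p_0)| \ge 3$. The goal is to produce a partial $L$-coloring $\phi$ of $V(C) \setminus \{q_0, q_4\}$ with $p_0, p_4 \in \operatorname{dom}(\phi)$, at least one vertex of $\mathring P$ colored (since $|E(P)| = 4 > 3$ this is forced), every uncolored vertex of $P$ retaining an $L_\phi$-list of size $\ge 3$, and every extension of $\phi$ to $\operatorname{dom}(\phi) \cup V(P)$ extending to all of $G$.

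**Reductions via chords and separating structure.**
First I would dispose of the degenerate cases: if $|V(C)| \le 4$ one invokes Corollary \ref{CycleLen4CorToThom} directly, and small configurations where $G$ is itself a broken wheel or near-triangulated fan can be checked by hand using Theorem \ref{thomassen5ChooseThm} and Theorem \ref{Two2ListTheorem}. Next, handle chords of $C$. If $C$ has a chord $e = xy$, take the natural partition $G = G_1 \cup G_2$ along $e$; the piece not containing $P$ (or containing less of $P$) is a smaller rainbow whose outer-cycle vertices off the shared edge still have $\ge 3$-lists, so by Theorem \ref{thomassen5ChooseThm} we may color it after coloring $e$, and we recurse on the piece containing $P$, where $e$ has been colored and its endpoints now behave like precolored vertices — one has to check the inductive hypotheses still hold for that piece, in particular that $p_1, p_3$ still lack a common neighbor, which is where the "no common neighbor" hypothesis is used to prevent a bad chord configuration. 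Chords incident to internal vertices of $P$ are the delicate ones and should be pushed to near the end. Similarly, if $G$ has a separating triangle or short separating cycle, split along it and recurse on the outside piece, coloring the cycle first via Corollary \ref{CycleLen4CorToThom} applied to the inside.

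**The main reduction: coloring $p_0$ and peeling a vertex.**
The core step is the standard Thomassen move applied at the end $p_0$. Let $v$ be the neighbor of $p_0$ on $C$ other than $q_0$ (i.e. $v = p_1$ if that is how $C$ runs, but in general $v \ne p_1$), and let $v_1, \ldots, v_r$ be the neighbors of $p_0$ in $G$ in rotational order from $q_0$ to $v$, with $v_1 = q_0$ and $v_r = v$. Choose a color $c \in L(p_0)$ (possible since $|L(p_0)| \ge 3$) avoiding, if necessary, a color that would kill $v_r$; set $\phi(p_0) = c$, delete $p_0$, and delete $c$ from the lists of $v_1, \ldots, v_r$. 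Now $v_2, \ldots, v_{r-1}$ become internal vertices of the new outer cycle with lists of size $\ge 4$, and $v_r$ has a list of size $\ge 2$ (or $\ge 4$ if $c \notin L(v_r)$). The new graph $G' = G - p_0$ with outer walk obtained by replacing the path $q_0 p_0 v$ by $q_0 v_2 \cdots v_{r-1} v$ is a rainbow (possibly after noting it may split into blocks, handled by recursion on each block containing part of $P$), and the path $\mathring P$ together with $v_r = v$ now plays the role of the "path" in a $\operatorname{Crown}$ or $\operatorname{End}$ computation. Here I would lean on Theorem \ref{SumTo4For2PathColorEnds} and Observation \ref{CorMainEitherBWheelAtM1ColCor} to control the coloring near the $p_1$–$p_2$–$p_3$ subpath — specifically, to argue that one can color $p_4$ (or keep appropriate $\ge 3$-lists) and obtain a valid $\operatorname{Crown}$ element for $P$ in $G$ by pulling back a $\operatorname{Crown}$ element of the shorter path in $G'$.

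**Where the difficulty lies.**
The main obstacle is the bookkeeping around the internal vertices $p_1, p_2, p_3$: we must never color all three (we need at least one uncolored with a large list, but also the extension property must hold), and chords or short separating cycles touching $\{p_1,p_2,p_3\}$ can force $G$ to be one of the exceptional near-triangulation structures — broken wheels, or the "edge/triangle" obstructions reminiscent of Theorem \ref{BohmePaper5CycleCorList}. Managing these exceptional configurations, and verifying that the "no common neighbor of $p_1$ and $p_3$" hypothesis exactly rules out the one obstruction that would otherwise make $\operatorname{Crown}_L(P,G)$ empty, is the crux; the rest is a careful but routine induction. I expect the write-up to need an auxiliary lemma classifying the local structure around $\mathring P$ when the naive reduction fails, analogous to Observation \ref{CorMainEitherBWheelAtM1ColCor} but for length-two paths flanked on both sides, after which the inductive extension goes through.
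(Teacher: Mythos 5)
First, a point of comparison: this paper does not prove Theorem \ref{MainHolepunchPaperResulThm} at all. It is one of the three results imported verbatim from \cite{JNevinHolepunchI} ("Some Extensions of Thomassen's Theorem to Longer Paths"), where its proof is the subject of a separate, substantial paper. So there is no in-paper proof to match your approach against; the only question is whether your argument stands on its own. It does not yet.

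What you have written is a strategy outline, and the parts you defer are precisely the content of the theorem. Concretely: (i) your main reduction (color $p_0$, delete it, update lists along the new outer walk) does not produce a smaller instance of the same statement. After deleting $p_0$, the vertex $p_1$ is no longer an internal vertex of a length-four path on the outer cycle with a 5-list relative to the new rainbow, and the object you must produce for $G$ is an element of $\textnormal{Crown}_L(P,G)$ with $p_2$ colored, $p_1,p_3$ uncolored with $L_\phi$-lists of size at least three, and the universal extension property; "pulling back a Crown element of the shorter path in $G'$" is not defined, and no induction hypothesis is stated that this pullback could invoke. Nothing in Theorem \ref{SumTo4For2PathColorEnds} or Observation \ref{CorMainEitherBWheelAtM1ColCor} (which concern 2-paths and $\textnormal{End}$, not $\textnormal{Crown}$ for 4-paths) bridges this gap. (ii) The chord reduction as written also has an order-of-operations problem: you propose to color the chord first and treat its endpoints as precolored in the $P$-side, but precolored vertices of $C\setminus P$ are not allowed in a rainbow, so the recursion hypothesis fails; the usual fix (recurse on the $P$-side first, then extend across the chord by Theorem \ref{thomassen5ChooseThm}) works only for chords avoiding $\mathring{P}$, and you explicitly postpone the chords meeting $\mathring{P}$, which is where the difficulty lives. (iii) You correctly guess that the "no common neighbor of $p_1$ and $p_3$" hypothesis kills an obstruction of the type appearing in Theorem \ref{RainbowNonEqualEndpointColorThm} (a vertex of $C\setminus P$ with a 3-list adjacent to all of $p_1,p_2,p_3$), but you never verify that this is the only obstruction, nor supply the classification lemma you acknowledge is needed. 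In short, the proposal identifies a plausible Thomassen-style frame but leaves the induction statement, the exceptional-configuration analysis, and the treatment of chords through $\mathring{P}$ unproved; as it stands it is not a proof of the theorem.
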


\begin{theorem}\label{ModifiedRes5ChordCaseDegen} Let $(G, C, P, L)$ be an end-linked rainbow, where $|E(P)|=5$, and suppose that each vertex of $\mathring{P}$ has a list of size at least five and is incident to a chord of $C$ whose other endpoint lies in $C\setminus\mathring{P}$. Then, for each endpoint $y$ of the middle edge of $P$, there is a $\phi\in\textnormal{Crown}(P, G)$ with $y\in\textnormal{dom}(\phi)$. \end{theorem}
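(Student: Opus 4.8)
The plan is to reduce the statement to the length‑$4$ Crown theorem (Theorem \ref{MainHolepunchPaperResulThm}), to Corollary \ref{CycleLen4CorToThom}, and to the length‑$3$ analogue of the statement, by cutting $G$ along a chord. By the symmetry $p_i\mapsto p_{5-i}$ of $P$, which preserves every hypothesis and exchanges the two endpoints of the middle edge, we may assume $y=p_2$; write $P=p_0p_1p_2p_3p_4p_5$, so the middle edge is $p_2p_3$, $\mathring P=p_1p_2p_3p_4$, and the terminal edges of $P$ are $p_0p_1$ and $p_4p_5$. We must produce a partial $L$-colouring $\phi$ whose domain lies in $V(C)\setminus\{p_1,p_4\}$, contains $\{p_0,p_5,p_2\}$, and satisfies the two Crown conditions. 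The key move is to cut along a chord at $p_3$ — the endpoint of $p_2p_3$ \emph{other} than $y$ — so that $p_2$ becomes an internal vertex of a length‑$4$ path on one of the two pieces and is therefore forced into the domain of the Crown produced there. By hypothesis $p_3$ is incident to a chord $c=p_3z$ of $C$ with $z\in V(C)\setminus V(\mathring P)$; among such chords I pick one making the piece "on the $p_2$ side" as small as possible, and I prefer one with $z\in\{p_0,p_5\}$ when available. Let $\{G_0,G_1\}$ be the natural $(C,c)$-partition (defined since $G$ is planar), labelled so that $p_0,p_1,p_2\in V(G_0)$ and $p_4,p_5\in V(G_1)$ (possible as $z\notin\{p_2,p_4\}$), with outer cycles $C_0,C_1$; then $c$ is an edge of both and $V(G_0)\cap V(G_1)=V(C_0)\cap V(C_1)=\{p_3,z\}$. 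If $z=p_0$ then $C_0$ is the $4$-cycle $p_0p_1p_2p_3$, and if $z=p_5$ then $C_1$ is the triangle $p_3p_4p_5$; in either degenerate case Corollary \ref{CycleLen4CorToThom} makes that piece irrelevant and one finishes on the other side, so assume henceforth $z\in V(C)\setminus V(P)$. A short non-crossing argument in the disc bounded by $C$ shows that no chord of $C$ incident to a $p_i$ crosses $c$; consequently $(G_0,C_0,p_0p_1p_2p_3z,L)$ and $(G_1,C_1,zp_3p_4p_5,L)$ are rainbows in which $p_1,p_2$ keep chords of $C_0$ whose far endpoints avoid $\{p_1,p_2,p_3\}$, $p_4$ keeps a chord of $C_1$ whose far endpoint avoids $\{p_3,p_4\}$, and the end-linked hypothesis descends to the relevant side.

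On $G_1$ I first obtain $\phi_1\in\textnormal{Crown}(zp_3p_4p_5,G_1)$ — the length‑$3$ analogue of the present statement — either by re-running this argument one step shorter or by cutting $G_1$ once more along the chord of $p_4$, which peels off pieces with outer cycles of length $\le4$ (Corollary \ref{CycleLen4CorToThom}) and leaves a shorter rainbow; such a $\phi_1$ colours the endpoints $z,p_5$ and leaves $p_3,p_4$ uncoloured. Put $b:=\phi_1(z)$. Next, on $G_0$ I replace $L$ by $L'$, equal to $L$ except that $L'(z)=\{b\}$; then $(G_0,C_0,p_0p_1p_2p_3z,L')$ is still a rainbow, its path has length $4$, its internal vertices $p_1,p_2,p_3$ have $L'$-lists of size $\ge5$, and its endpoint $p_0$ has a list of size $\ge3$. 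If the internal vertices $p_1$ and $p_3$ of that path have no common neighbour in $V(C_0)$ off the path, Theorem \ref{MainHolepunchPaperResulThm} yields $\phi_0\in\textnormal{Crown}_{L'}(p_0p_1p_2p_3z,G_0)$; since the path has length $4>3$, $\phi_0$ must colour its unique internal vertex off the two terminal-edge neighbours $p_1,p_3$, which is exactly $p_2$, and it colours both endpoints, so $p_0,p_2,z\in\textnormal{dom}(\phi_0)$ with $\phi_0(z)=b$. In the exceptional case, a common neighbour $v$ of $p_1$ and $p_3$ produces a $4$-cycle $vp_1p_2p_3$ (and, if $v$ is also $p_2$'s chord-partner, two triangles), and Corollary \ref{CycleLen4CorToThom} lets me colour $\{p_0,p_2,v,z\}$ directly so that any completion of $p_1,p_3$ extends over that side; I absorb $v$ into $\textnormal{dom}(\phi_0)$ there. (Observation \ref{CorMainEitherBWheelAtM1ColCor} is the tool for the residual broken-wheel configurations that surface in these short-cycle cases.)

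Now set $\phi:=\phi_0\cup\phi_1$. This is well-defined since $\textnormal{dom}(\phi_0)\cap\textnormal{dom}(\phi_1)\subseteq\{z\}$ and $\phi_0(z)=b=\phi_1(z)$, and $\textnormal{dom}(\phi)\subseteq V(C)\setminus\{p_1,p_3,p_4\}$ contains $\{p_0,p_5,p_2,z\}$, a legal Crown domain with $y=p_2$ inside. Given any extension $\psi$ of $\phi$ to an $L$-colouring of $\textnormal{dom}(\phi)\cup V(P)$, the restriction $\psi|_{V(G_0)}$ extends $\phi_0$ to $\textnormal{dom}(\phi_0)\cup V(p_0p_1p_2p_3z)=V(C_0)$ (the $C_0$-vertices off $P$ already lie in $\textnormal{dom}(\phi_0)$, and $p_1,p_3\in V(P)$ get coloured by $\psi$), so the Crown property of $\phi_0$ extends it to all of $G_0$; likewise $\psi|_{V(G_1)}$ extends $\phi_1$ and so extends to all of $G_1$; the two extensions agree on $V(G_0)\cap V(G_1)=\{p_3,z\}$ (both equal $\psi$ there, as $p_3\in V(P)$ and $z\in\textnormal{dom}(\phi)$), hence glue to an $L$-colouring of $G$. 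It remains to check that each vertex of $V(P)\setminus\textnormal{dom}(\phi)$ — one of $p_1,p_3,p_4$ — has an $L_\phi$-list of size $\ge3$: for $p_1$ this is automatic, since $p_1$ is a terminal-edge neighbour of the $G_0$-path and so $|L_{\phi_0}(p_1)|\ge3$ by the Crown property, while $\phi_1$ adds no new coloured neighbour of $p_1$ (its domain meets $N(p_1)$ only in $z$, already counted); symmetrically for $p_4$ via $\phi_1$; and for $p_3$ one uses that $p_3$ is a terminal-edge neighbour of \emph{both} paths, that the colour $b$ deleted at $z$ is shared by $\phi_0$ and $\phi_1$, and that the "prefer a degenerate chord" and "smallest piece" choices of $c$ rule out a further coloured neighbour of $p_3$ (a chord $p_3p_0$ or $p_3p_5$ would have triggered a degenerate case).

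The step I expect to be the main obstacle is this last bookkeeping at $p_3$: it is the one vertex of $P$ that borrows colour-deletions from both sides of the cut, so guaranteeing $|L_\phi(p_3)|\ge3$ forces both the careful choice of $c$ (innermost, and degenerate when possible) and a genuine case analysis of the short cycles through $p_3$ (chords $p_3p_1$, or a second chord of $p_3$ landing in the domain of one of the crowns), each to be pushed through Corollary \ref{CycleLen4CorToThom}. Closely tied to this is establishing the length‑$3$ Crown statement on $G_1$ and arranging the value it forces at $z$ to be usable on the $G_0$ side — which I handle by choosing $\phi_1$ first and then \emph{precolouring} $z$ on the $G_0$ side, so that only a singleton-list endpoint of one rainbow is constrained and Theorem \ref{MainHolepunchPaperResulThm} applies verbatim. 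The remaining ingredient, that the chord and end-linked hypotheses descend to $G_0$, $G_1$ and the further pieces, is the routine non-crossing argument indicated above.
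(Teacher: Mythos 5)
First, a point of comparison: this theorem is not proved in the present paper at all --- it is one of the three results quoted from \cite{JNevinHolepunchI} --- so there is no in-paper proof to measure you against, and I can only assess your argument on its own terms. Your overall plan (cut along a chord $c=p_3z$ of $C$, force $p_2$ into a Crown on the $p_2$-side via Theorem \ref{MainHolepunchPaperResulThm}, handle the $p_5$-side with a shorter Crown, and glue) is a reasonable outline, but three steps are genuinely open. First, the ``length-$3$ Crown'' $\phi_1\in\textnormal{Crown}(zp_3p_4p_5,G_1)$ is assumed rather than proved: it is a statement of the same depth as the quoted results (every completion of $p_3,p_4$ must extend across $G_1$ while both retain $3$-lists), and your proposed derivation is incorrect --- cutting $G_1$ along the chord at $p_4$ does not peel off pieces with outer cycles of length at most four; both resulting pieces can be arbitrarily large unless that chord happens to end at $z$, so Corollary \ref{CycleLen4CorToThom} does not dispose of them. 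Second, applying Theorem \ref{MainHolepunchPaperResulThm} to $(G_0,C_0,p_0p_1p_2p_3z,L')$ with $L'(z)=\{b\}$ requires the other endpoint $p_0$ to have a list of size at least three, which the hypotheses do not give: the paper explicitly contrasts this theorem with Theorem \ref{RainbowNonEqualEndpointColorThm}, where both endpoints are assumed to have $3$-lists, so here one of $p_0,p_5$ may have a tiny list, and you have already spent the symmetry $p_i\mapsto p_{5-i}$ on fixing $y=p_2$, so you cannot also place the large-list endpoint on the $G_0$ side. Reversing the order (Crown on $G_0$ first, then precolour $z$ on the $G_1$ side) only pushes the difficulty into the unproved length-$3$ statement. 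Tellingly, your argument never makes substantive use of the end-linked hypothesis.

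Third --- the obstacle you flag yourself --- the bookkeeping at $p_3$ does not close. Your minimal choice of $c$ only controls the coloured neighbours of $p_3$ on the $G_0$ side; nothing prevents $p_3$ from having further chords of $C$ into the arc of $C_1$ between $z$ and $p_5$, and the Crown $\phi_1$ on $G_1$ (a black box) may colour such a neighbour. The guarantee $|L_{\phi_1}(p_3)|\geq 3$ already absorbs those deletions, and the glued colouring additionally deletes $\phi_0(p_2)$, so $|L_\phi(p_3)|$ can drop to two, violating condition b) of the Crown definition; the shared colour $b$ at $z$ does not repair this, and you have no control over $\phi_0(p_2)$ since $\phi_0$ also comes from a black-box theorem. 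The same difficulty reappears in your ``exceptional case'': colouring a common neighbour $v$ of $p_1,p_3$ gives $p_3$ three coloured neighbours $p_2,v,z$, and the claim that colouring $\{p_0,p_2,v,z\}$ makes every completion of $p_1,p_3$ extend over $G_0$ is not delivered by Corollary \ref{CycleLen4CorToThom} alone --- the regions of $G_0$ outside the $4$-cycle $vp_1p_2p_3$ need End-type and broken-wheel analysis in the spirit of Theorem \ref{SumTo4For2PathColorEnds} and Observation \ref{CorMainEitherBWheelAtM1ColCor}, which you do not carry out. So the proposal is a sensible skeleton, but it has real gaps exactly where the companion-paper machinery does the work.
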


If, in the statement of Theorem \ref{ModifiedRes5ChordCaseDegen}, we instead allow both endpoints of $P$ to have lists of size at least three, then something stronger holds:

\begin{theorem}\label{RainbowNonEqualEndpointColorThm} Let $\mathcal{G}:=(G, C, P, L)$ be a rainbow, where $P=p_0q_0y_0y_1q_1p_1$ and each vertex of $\mathring{P}$ has a list of size at least five and at least one neighbor in $C\setminus\mathring{P}$. Define a \emph{$\mathcal{G}$-obstruction} to be an $x\in V(C)$ such that $x$ is either adjacent to all four vertices of $\mathring{P}$ or, for some endpoint $p_i$ of $P$, $x$ is adjacent to $p_i, q_{1-i}, y_{1-i}$. Then either
\begin{enumerate}[label=\arabic*)]
\itemsep-0.1em
\item there is a $\mathcal{G}$-obstruction; OR
\item For for each $j\in\{0,1\}$, there is an $i\in\{0,1\}$ and an $a\in L(p_i)$ such that, for any $b\in L(p_{1-i})$, there is an element of $\textnormal{Crown}(P, G)$ which has $y_j$ in its domain and uses $a,b$ on $p_i, p_{1-i}$ respectively. 
\end{enumerate}
 \end{theorem}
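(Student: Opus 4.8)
The plan is to reduce this statement to Theorem~\ref{ModifiedRes5ChordCaseDegen} by a symmetrization argument that exploits the extra hypothesis (both endpoints have 3-lists rather than one). Fix $j\in\{0,1\}$; by relabeling we may assume $j=0$, so we must produce the colour $a$ on one endpoint guaranteeing a $\phi\in\textnormal{Crown}(P,G)$ with $y_0\in\textnormal{dom}(\phi)$ for every choice of $b$ on the other endpoint. The first step is to dispose of the degenerate case where $P$ fails to be end-linked or where chords collapse the structure: if some vertex of $\mathring P$ has \emph{no} neighbour in $C\setminus\mathring P$ then the hypothesis already forces a neighbour in $\mathring P$ or an endpoint, and one argues directly that $G$ is small (a broken wheel or near-broken-wheel over $P$), in which case a $\mathcal{G}$-obstruction is present or $\textnormal{Crown}(P,G)$ can be computed by hand using Observation~\ref{CorMainEitherBWheelAtM1ColCor} and Theorem~\ref{SumTo4For2PathColorEnds}. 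So assume each vertex of $\mathring P$ has a neighbour in $C\setminus\mathring P$, which is precisely the hypothesis of Theorem~\ref{ModifiedRes5ChordCaseDegen} once we also know each such vertex is incident to a chord (not merely an edge of $C$) to $C\setminus\mathring P$; the case where the neighbour lies on $C$ but is joined only by a $C$-edge is handled separately, as it forces a very restricted local picture near that vertex.

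The core of the argument is then: apply Theorem~\ref{ModifiedRes5ChordCaseDegen} to $\mathcal{G}$ at the endpoint $y_0$ of the middle edge $y_0y_1$. This yields \emph{some} $\psi\in\textnormal{Crown}(P,G)$ with $y_0\in\textnormal{dom}(\psi)$, but it does not yet give the quantifier structure in conclusion~2) — we need, for a fixed $i$ and fixed $a\in L(p_i)$, that \emph{every} $b\in L(p_{1-i})$ is realizable. To upgrade, I would look at how $\textnormal{Crown}(P,G)$ restricts to the pair $(p_i,p_{1-i})$: restricting the colours of a crown colouring to $\{p_0,p_1\}$ gives a subset of $\textnormal{End}(p_0y_0y_1q_1p_1,\ldots)$-type data, and one shows this restriction set, as a bipartite relation between $L(p_0)$ and $L(p_1)$, must contain a "full row" $\{a\}\times L(p_{1-i})$ unless a $\mathcal{G}$-obstruction exists. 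This is where the $|L(p_i)|\ge 3$ assumption on \emph{both} endpoints is used twice over: a counting/pigeonhole argument on the bipartite relation, combined with the fact that each $p_i$ sees at least three colours, forces either such a full row or a vertex $x\in V(C)$ whose adjacencies match one of the two obstruction patterns. I would run the pigeonhole by contradiction: if no full row exists on either side, every colour of $p_0$ is blocked by some colour of $p_1$ and vice versa, and tracing which vertex does the blocking (a neighbour of $p_0$ and of $q_1,y_1$, or a common neighbour of all of $\mathring P$) exhibits the obstruction.

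The main obstacle I anticipate is exactly this last upgrade step — converting the "there exists a crown colouring" output of Theorem~\ref{ModifiedRes5ChordCaseDegen} into the "for all $b$ there exists" form, while simultaneously localizing the failure to a single vertex $x$ of $C$ of the prescribed form. The difficulty is that a priori the obstruction to fullness could be spread across several chords rather than a single vertex, so one needs a careful analysis (likely another application of Lemma~\ref{PartialPathColoringExtCL0} to the planar piece cut off by a chord through $y_0$ or $y_1$) to show that any genuine barrier is witnessed by one vertex adjacent in one of the two specified patterns; short-inseparability of the relevant planar subgraph should be what rules out the diffuse case, forcing the chords to concentrate. Once the single-vertex localization is in place, checking that this vertex is a $\mathcal{G}$-obstruction in the precise sense defined (adjacent to all of $\mathring P$, or to $p_i,q_{1-i},y_{1-i}$) is a finite case check on the at most a handful of configurations, and the symmetry $j=0\leftrightarrow j=1$ is handled by repeating the argument at $y_1$ in place of $y_0$.
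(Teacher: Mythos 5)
A preliminary point: Theorem \ref{RainbowNonEqualEndpointColorThm} is not proved in this paper at all; it is one of the three results imported verbatim from \cite{JNevinHolepunchI} and used here as a black box, so there is no in-paper proof to compare your proposal against, and it has to stand entirely on its own.

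On its own terms, your proposal has a genuine gap at exactly the step you yourself flag as the obstacle: upgrading the output of Theorem \ref{ModifiedRes5ChordCaseDegen} to the quantifier structure of conclusion 2). Theorem \ref{ModifiedRes5ChordCaseDegen} produces a single element of $\textnormal{Crown}(P,G)$ containing a prescribed endpoint of the middle edge in its domain; it gives no control whatsoever over the colors that element uses on $p_0,p_1$, and it cannot be re-invoked with prescribed endpoint colors, since its hypotheses concern the graph and not a precoloring. Consequently the ``bipartite relation'' between $L(p_0)$ and $L(p_1)$ that you want to run a pigeonhole argument on is not known to contain more than one pair, and the counting has nothing to count. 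Establishing that some fixed $a\in L(p_i)$ is compatible with \emph{every} $b\in L(p_{1-i})$ (absent an obstruction) requires actually constructing crown colorings with prescribed endpoint colors --- e.g.\ a broken-wheel/universal-color analysis in the spirit of Observation \ref{CorMainEitherBWheelAtM1ColCor} and Theorem \ref{SumTo4For2PathColorEnds} applied to the pieces cut off by the chords at $y_0,y_1,q_0,q_1$ --- and that analysis is the entire content of the theorem, not a corollary of \ref{ModifiedRes5ChordCaseDegen}. Likewise, your localization claim (that any failure of ``fullness'' is witnessed by a single vertex $x$ matching one of the two obstruction patterns, rather than being spread over several chords or two different blocking vertices, one per side) is asserted, not argued.

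Two further hypothesis-level problems. First, \ref{ModifiedRes5ChordCaseDegen} is stated for an \emph{end-linked} rainbow in which every vertex of $\mathring{P}$ is incident to a chord of $C$ reaching $C\setminus\mathring{P}$; in the present theorem the hypothesis only forces such chords at $y_0,y_1$ (for $q_0,q_1$ the neighbor in $C\setminus\mathring{P}$ may simply be $p_0$ or $p_1$ along $C$), and you do not explain how to arrange or bypass the stronger hypothesis beyond saying the case ``is handled separately.'' Second, the theorem makes no short-inseparability assumption on $G$ (and it is applied in this paper, e.g.\ in Claim \ref{MainConsLemmaIntStepGLargGGeq1}, to auxiliary graphs obtained by vertex-splitting for which short-inseparability is never claimed), so the appeal to short-inseparability to ``concentrate the chords'' in your localization step is not available.
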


\section{Link Colorings}\label{MainLinkSec}

During the proof of our main result about boundary structures, as well as the proof of the main result of this paper, we often partially color and delete the vertices of a subpath of a facial cycle $C$ so as to not create vertices with lists of size less than three in $D_1(C)$. In this section, we prove some results showing that we have such a procedure if the path ``respects" the partitioning from Definition \ref{UniqueKLSpecifiedDefn} in a sense defined below. 

\begin{defn}\label{uniquekDetLocalPVers} \emph{Let $k\geq 2$ and $\mathcal{K}=[\Sigma, G, C, L]$ be a uniquely $k$-determined collar.  For any subpath $P$ of $C$,}
\begin{enumerate}[label=\emph{\arabic*)}]
\itemsep-0.1em
\item\emph{we associate to $P$ a vertex set $\textnormal{Sh}^k(P)$, where $v\in\textnormal{Sh}^k(P)$ if there is a proper generalized chord $Q$ of $C$ of length at most $k$, such that both endpoints of $Q$ lie in $P$, and $v\in V(G^{\textnormal{small}}_Q\setminus Q)$, and furthermore, $G^{\textnormal{small}}_Q\cap P$ has one connected component and $G^{\textnormal{\large}}_Q$ has two connected components.}
\item \emph{We say $P$ is \emph{$k$-consistent} if both of the following hold.}
\begin{enumerate}[label=\emph{\alph*)}]
\itemsep-0.1em
\item\emph{There is no chord of $C$ with one endpoint in $P$ and the other endpoint in $C\setminus P$}; AND
\item \emph{For any proper generalized chord $Q$ of $C$ of length at most $k$, if $Q$ has at both endpoints in $P$ and at least one endpoint in $\mathring{P}$, then $G^{\textnormal{small}}_Q\cap P$ has one connected component and $G^{\textnormal{large}}_Q\cap P$ has two connected components.}
\end{enumerate}
\item\emph{We let $\textnormal{Link}_L(P)$ denote the set of $L$-colorings $\phi$ of $V(P)\setminus\textnormal{Sh}^2(P)$ such that $\textnormal{Sh}^2(P)$ is $(L, \phi)$-inert in $G$. We drop the subscript if $L$ is clear from the context. Note that, if $P$ is 2-consistent, then, for any $\phi\in\textnormal{Link}(P)$, the pair $(V(P)\cup\textnormal{Sh}^2(P), \phi)$ is an $L$-reduction.} 
\end{enumerate}
\end{defn}

The following is straightforward to check using repeated applications of Theorems \ref{thomassen5ChooseThm} and \ref{SumTo4For2PathColorEnds}.
 
\begin{theorem}\label{MainLinkingResultAlongFacialCycle}  Let $\mathcal{K}=[\Sigma, G, C, L]$ be a uniquely 2-determined collar and $P$ be a 2-consistent subpath of $C$ with endpoints $p, p'$, where each internal vertex of $P$ has an $L$-list of size at least three. Let $A\subseteq L(p)$ and $A'\subseteq L(p')$, where $A, A'$ are nonempty. If either 1) or 2) below hold,  then there is a $\phi\in\textnormal{Link}(P)$ with $\phi(p)\in A$ and $\phi(p')\in A'$. 
\begin{enumerate}[label=\arabic*)]
\itemsep-0.1em
\item $|V(P)|=1$ and $A=A'$; OR
\item $|V(P)|\geq 2$ and $|A|+|A'|\geq 4$, and furthermore, either $pp'\not\in E(G)\setminus E(C)$ or $A\cap A'=\varnothing$.
\end{enumerate}
\end{theorem}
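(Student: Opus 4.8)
The plan is to induct on $|V(P)|$, using Theorems~\ref{thomassen5ChooseThm} and~\ref{SumTo4For2PathColorEnds} together with the fact that $2$-consistency lets us control how proper $1$- and $2$-chords of $C$ incident to $P$ partition $G$. The key structural observation I would extract from $2$-consistency is this: for a proper chord or $2$-chord $Q$ with both endpoints in $P$ and at least one endpoint in $\mathring P$, the ``small side'' $G^{\textnormal{small}}_Q$ is a disc whose boundary cycle $C_Q\subseteq C\cup Q$ has all its non-$Q$ vertices internal to $P$ (hence $L$-lists of size $\geq 3$), and moreover $\textnormal{Sh}^2(P)$ decomposes as a disjoint union of such small sides glued along $P$; in particular any $\phi\in\textnormal{Link}(P)$ that I build vertex-by-vertex along $P$ can be checked for inertness one small side at a time using Corollary~\ref{CycleLen4CorToThom} (for $2$-chords, where $|V(C_Q)|\le 4$) or, for a single chord $Q$ giving a triangle, the trivial extension fact. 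Also, clause~(a) of $2$-consistency — no chord from $P$ to $C\setminus P$ — means that once we restrict attention to the planar disc bounded appropriately, $P$ behaves like a subpath of the outer cycle of a rainbow-type configuration.

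First I would dispose of case~1): if $|V(P)|=1$ then $\textnormal{Sh}^2(P)=\varnothing$ (a proper generalized chord needs two distinct endpoints in $P$), so $\textnormal{Link}(P)$ is just the set of $L$-colorings of the single vertex $p=p'$, and since $A=A'\neq\varnothing$ any $a\in A$ works.

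For case~2), with $|V(P)|\ge 2$, I would first handle $|V(P)|=2$, say $P=pp'$. Here $\textnormal{Sh}^2(P)$ consists of the small sides of $2$-chords $p v p'$; for each such $2$-chord, $G^{\textnormal{small}}_Q$ is a disc bounded by a cycle of length $3$ or $4$ (namely $pvp'$ together with at most the edge $pp'$), so by Corollary~\ref{CycleLen4CorToThom} any coloring of $\{p,v,p'\}$ — equivalently, for inertness, any coloring of the domain once $v$ gets colored — extends inside. Thus $\textnormal{Sh}^2(P)$ is automatically $(L,\phi)$-inert for \emph{any} $L$-coloring $\phi$ of $\{p,p'\}$, and the condition reduces to: is there a proper $L$-coloring of the edge $pp'$ (if it exists in $G$) using a color of $A$ on $p$ and a color of $A'$ on $p'$? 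If $pp'\notin E(G)\setminus E(C)$ then either $pp'\notin E(G)$, or $pp'\in E(C)$; in the first subcase any $(a,a')\in A\times A'$ works, and in the $pp'\in E(C)$ subcase we need $a\ne a'$, which is possible since $|A|+|A'|\ge 4\ge 3$ unless $A=A'$ is a single color — but $|A|+|A'|\ge 4$ rules that out. If instead $A\cap A'=\varnothing$, then any choice automatically has $a\ne a'$. This settles $|V(P)|=2$, and in fact the ``sum to four'' hypothesis is exactly the slack needed.

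For $|V(P)|\ge 3$, write $P=p\,x\,P'$ where $P'$ has endpoints $x$ and $p'$. Now I would peel off either a terminal vertex or a terminal $2$-path. The cleanest route is: consider the $2$-path $pxy$ where $y$ is the third vertex of $P$ (so $|V(P)|\ge 3$). Apply Theorem~\ref{SumTo4For2PathColorEnds} to an appropriate rainbow built from the disc on the ``small side'' of $P$ near $p,x$ — but more robustly, I would argue directly by choosing $\phi(p)\in A$ and $\phi(x)$ greedily: $x$ is internal to $P$ so $|L(x)|\ge 3$, and the $2$-chords of $C$ with one endpoint $p$, one endpoint in $\mathring P$, contribute small discs that by Corollary~\ref{CycleLen4CorToThom} impose no constraint beyond properly coloring $p$ and $x$ themselves. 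After committing colors on $p$ and $x$, delete $p$ and its incident $\textnormal{Sh}^2$ discs, delete the color $\phi(p)$ from $L(x)$'s remaining neighbors, and observe that $P'=P-p$ is still a $2$-consistent subpath of a facial cycle of the smaller uniquely-$2$-determined collar obtained by this deletion, that $x$ still has a residual list of size $\ge 2$ (we removed at most one color, that of $p$, and even that only if $px\in E(G)\setminus E(C)$, but $p,x$ consecutive on $C$ means $px\in E(C)$ so actually we must be slightly careful and instead keep $x$ uncolored, peeling only $p$).

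So the correct inductive step is: peel the single terminal vertex $p$. Choose $\phi(p)=a\in A$. The $2$-chords $Q=p\,v\,z$ with $z\in\mathring P$: these have small side a disc with boundary $\le 4$ vertices, inert for free once $p$ is colored (Corollary~\ref{CycleLen4CorToThom}), so they may be absorbed into $\textnormal{Sh}^2$ and set aside. Any chord $Q=pz$ with $z\in\mathring P$ would, by $2$-consistency clause (b), split $P$ with $G^{\textnormal{small}}_Q\cap P$ connected; such a chord gives a subpath $pP''z$ of $P$ that we can recurse on separately inside a bounded disc and glue, but for the linking problem the key point is that it does not obstruct choosing $a$ freely. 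Now set $A_1:=L(x)\setminus\{a\}$ if $px\in E(G)$, else $A_1:=L(x)$; since $x\in\mathring P$, $|A_1|\ge 2$. Apply the inductive hypothesis to $P-p$ (which is $2$-consistent, has new endpoint $x$ with the list-set $A_1$ and old endpoint $p'$ with $A'$): we need $|A_1|+|A'|\ge 4$ — we have $|A_1|\ge 2$ and $|A'|\ge 1$, which is \emph{not} enough in general. This is the main obstacle.

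**The main obstacle.** The difficulty is precisely the bookkeeping of list sizes when peeling: a naive induction loses the ``sum $\ge 4$'' invariant at the first step. The fix, and the reason Theorem~\ref{SumTo4For2PathColorEnds} is invoked, is to peel a terminal $2$-path rather than a single vertex when $|V(P)|\ge 3$: write $P=p\,q\,P''$ with $P''$ from the third vertex $r$ onward. The sub-configuration consisting of the disc ``cut off'' near the end $p,q,r$ is (after discarding the free $\textnormal{Sh}^2$-discs) a rainbow with principal $2$-path $pqr$, and $r$ has $|L(r)|\ge 3$; Observation~\ref{CorMainEitherBWheelAtM1ColCor} / Theorem~\ref{SumTo4For2PathColorEnds} then yield that we can color $p,q,r$ so that (i) $p$ uses a color of $A$, (ii) the interior of that end-disc is $L$-colorable, i.e. inert, and (iii) $r$ retains enough freedom — concretely, Theorem~\ref{SumTo4For2PathColorEnds} gives an element of $\textnormal{End}(pqr,\cdot)$, i.e. a color on $p$ and on $r$ such that every extension over $q$ works, and we have at least $2$ such $\textnormal{End}$-pairs unless a broken-wheel/even-path degeneracy occurs. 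In the generic case we pick the pair with $p$-color in $A$ and obtain on $r$ a set $A_1$ of size $\ge 1$ from a \emph{whole} residual list $L(r)$ minus one forbidden color, hence $|A_1|\ge |L(r)|-1\ge 2$; but crucially, by re-examining, after fixing $p$ we should think of $q$ as \emph{uncolored}, so $r$'s residual list is all of $L(r)$, giving $|A_1|\ge 3$ and $|A_1|+|A'|\ge 4$, restoring the invariant. In the degenerate broken-wheel case ($Q$ an even path on $C$ from $p$ to $r$ with all vertices adjacent to $q$), the structure forces $L(r)\subseteq\bigcap$ of the relevant lists and a direct ad hoc argument — color $q$ to avoid one color, then $r$ greedily — closes it. I expect the write-up to spend most of its length on precisely this case analysis at the end of $P$ and on verifying that peeling preserves $2$-consistency and the uniquely-$2$-determined structure of the ambient collar; the rest is repeated application of Theorems~\ref{thomassen5ChooseThm} and~\ref{SumTo4For2PathColorEnds} exactly as the paper advertises.
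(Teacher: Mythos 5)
Your overall strategy (work along $P$, using Theorem~\ref{thomassen5ChooseThm} and Theorem~\ref{SumTo4For2PathColorEnds}) is the intended one, and your base cases $|V(P)|\le 2$ and your diagnosis that peeling a single endpoint destroys the hypothesis $|A|+|A'|\ge 4$ are sound; but the two claims on which your inductive step rests are not. First, the small side of a $2$-chord $Q=xvz$ of $C$ with $x,z\in V(P)$ is not bounded by at most four vertices: its boundary is $Q$ together with the entire subpath of $P$ between $x$ and $z$, which can be arbitrarily long, so Corollary~\ref{CycleLen4CorToThom} does not apply. More importantly, such a small side is not ``inert for free once $p$ is colored.'' The midpoint $v$ lies off $C$, hence outside the domain of every element of $\textnormal{Link}(P)$, and inertness requires that \emph{every} color later placed on $v$ extend into $G^{\textnormal{small}}_Q$. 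Already when $G^{\textnormal{small}}_Q$ is the $4$-cycle $xuzv$ together with the spoke $uv$, with $L(u)=\{1,2,3\}$ and $3\in L(v)$, the choice $\phi(x)=1$, $\phi(z)=2$ is not inert, since an extension coloring $v$ with $3$ cannot be completed at $u$. So the $2$-chords incident to the endpoint you are peeling impose genuine joint constraints on the colors of both of their $P$-endpoints (this is exactly the $\textnormal{End}$-condition, and it is the whole content of the theorem); they cannot be discarded before recursing.

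Second, your repair of the $|A|+|A'|\ge 4$ invariant is not legitimate. The vertices $p,q,r$ are consecutive on $C$, so $q\in V(P)$ escapes the domain of $\phi$ only if $q\in\textnormal{Sh}^2(P)$, i.e.\ only if it is hidden under an actual chord or $2$-chord of $C$; you may not simply ``think of $q$ as uncolored,'' and Theorem~\ref{SumTo4For2PathColorEnds} concerns a $2$-path whose middle vertex is the apex of a $2$-chord (a $5$-list vertex off the outer cycle), not the subpath $pqr$ of $C$, whose middle vertex carries a $3$-list and must be colored. With $q$ colored you are back to $|A_1|\ge 2$, which is precisely the loss you identified, so the main obstacle remains open. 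Moreover, the case you dismiss with an ``ad hoc'' remark is the real one: when the small side of a maximal $2$-chord is a broken wheel whose boundary vertices all share one $3$-list and the hidden subpath has the bad parity, a fixed entering color admits exactly one admissible exiting color (the same color), so a color can be forced to propagate unchanged across several consecutive $2$-chords and must still land in $A'$ at the far end; handling this degenerate alternative of Theorem~\ref{SumTo4For2PathColorEnds} (cf.\ Observation~\ref{CorMainEitherBWheelAtM1ColCor}) together with the hypothesis $|A|+|A'|\ge 4$ is where the actual work lies. A correct argument colors $V(P)\setminus\textnormal{Sh}^2(P)$ in order along $P$, invoking Theorem~\ref{thomassen5ChooseThm} for the small sides of chords and the $\textnormal{End}$-condition at each maximal $2$-chord; your write-up does not yet carry out either this reduction or the degenerate-case analysis.
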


In the setting above, an arbitrary precoloring of the endpoints of $P$ does not necessarily extend to an element of $\textnormal{Link}(P)$, but, under some conditions, if we strengthen the $2$-consistency condition to $3$-consistency, we can guarantee that something analogous holds if we are allowed to color one more vertex, which is the result in Theorem \ref{LinkPlusOneMoreVertx}. 

\begin{defn} \emph{Let $\Sigma, G, C, L, k$ be as in Definition \ref{uniquekDetLocalPVers}. A \emph{$P$-peak} is a vertex $v\in D_1(C)$ such that $|N(v)\cap V(P)|\geq 2$, where $v\not\in\textnormal{Sh}^2(P)$. We say $v$ is an \emph{internal} $P$-peak if it is not adjacent to any endpoint of $P$.} \end{defn}

Informally, the $P$-peaks are the midpoints of the ``maximal" 2-chords of $C$ with both endpoints in $P$.

\begin{theorem}\label{LinkPlusOneMoreVertx} Let $k\geq 3$ and $[\Sigma, G, C, L]$ be a uniquely 3-determined collar, where $G$ is short-inseparable. Suppose that, for each $v\in B_2(C)$, every facial subgraph of $G$ containing $v$, except possibly $C$, is a triangle, and either $v\in V(C)$ or $|L(v)|\geq 5$. Let $P$ be a $3$-consistent subpath of $C$ with endpoints $p_0, p_1$, where $V(C\setminus P)\neq\varnothing$ and each vertex of $\mathring{P}$ has a list of size at least three. Let $\phi$ be an $L$-coloring of $\{p_0, p_1\}$. Then, for all but at most three internal $P$-peaks $w$, there is an $L$-reduction $(T, \tau)$ with $V(P)\subseteq T\subseteq V(P+w)\cup\textnormal{Sh}^3(P)$, where $\tau$ is an extension of $\phi$. \end{theorem}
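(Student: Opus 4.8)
The plan is to induct on $|V(P)|$, using Theorem~\ref{MainLinkingResultAlongFacialCycle} (the $2$-consistent version) as the base/workhorse and splitting $P$ at an appropriately chosen $P$-peak. First I would handle the trivial cases: if $P$ has no internal $P$-peak at all, then every $2$-chord of $C$ with both endpoints in $P$ is incident to an endpoint of $P$, and I can try to apply Theorem~\ref{MainLinkingResultAlongFacialCycle} directly with $A=\{\phi(p_0)\}$, $A'=\{\phi(p_1)\}$; the only way this fails is the degenerate ``sum to $4$'' obstruction, i.e.\ $|V(P)|\geq 2$, $|A|+|A'|=2<4$. So the real content is: when can we not directly link the prescribed $\phi$, and how does coloring one extra vertex $w$ fix it. The idea is that the permitted $w$ will essentially be a $P$-peak that ``cuts'' $P$ into two shorter $3$-consistent subpaths, on each of which we re-apply the result (inductively) with one free endpoint; the ``all but at most three'' exceptions are the $P$-peaks $w$ for which this cutting creates a genuine obstruction of the Böhme/Postle--Thomas type (Theorem~\ref{BohmePaper5CycleCorList}, Theorem~\ref{SumTo4For2PathColorEnds}).

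Concretely: fix an internal $P$-peak $w$, and let $p_0,\dots$ and $\dots, p_1$ be the two subpaths $P_0, P_1$ of $P$ obtained by taking the two outermost neighbors of $w$ on $P$ as the ``split points.'' Because $[\Sigma,G,C,L]$ is uniquely $3$-determined and $P$ is $3$-consistent, the $2$-chord through $w$ realizing the split has its ``small side'' meeting $P$ in one component, so $G_w^{\mathrm{small}}$ together with $\mathrm{Sh}^3$-material near $w$ can be folded into the reduction; each of $P_0\cup\{w\}$ and $P_1\cup\{w\}$ (or rather their $w$-truncations) inherits $3$-consistency, and since near-$C$ faces are triangles and vertices of $\mathring P$ have $\geq 3$-lists while vertices of $B_2(C)\setminus V(C)$ have $\geq 5$-lists, the pair $(G_i, C_i, \text{subpath}, L)$ is a rainbow in the relevant planar piece. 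Now choose $\tau(w)$: I want a colour $a\in L_\phi(w)$ such that, with $w$ coloured $a$, the list of each of $w$'s neighbours on $P_0$ and on $P_1$ drops to still-$\geq 3$, and such that on $P_0$ (with endpoints $p_0$ and $w$) the pair $(\{\phi(p_0)\}, L_\phi(w)\setminus\{?\})$ satisfies the hypothesis of Theorem~\ref{MainLinkingResultAlongFacialCycle}, and symmetrically on $P_1$. This is where ``at most three exceptions'' enters: the set of $w$ that cannot be coloured so as to make \emph{both} sides linkable is, by Theorem~\ref{SumTo4For2PathColorEnds} and Observation~\ref{CorMainEitherBWheelAtM1ColCor}, governed by broken-wheel / common-neighbour configurations, and a short counting argument (two endpoints $p_0,p_1$, plus the fact that a fixed precolouring forces the obstruction to be essentially unique on each side) shows at most three bad $w$ survive. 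Having fixed $a=\tau(w)$, apply the inductive hypothesis — or directly Theorem~\ref{MainLinkingResultAlongFacialCycle} with the $+1$ slack now satisfied — to $P_0$ and to $P_1$ to get $\phi_0\in\mathrm{Link}(P_0)$, $\phi_1\in\mathrm{Link}(P_1)$ agreeing with $\phi$ and with $a$ on $w$; then combine via Observation~\ref{CombineInert}, absorbing $\mathrm{Sh}^2(P_0)\cup\mathrm{Sh}^2(P_1)\cup(\mathrm{Sh}^3(P)\cap(\text{small side of }w))$ into the inert set, obtaining $(T,\tau)$ with $V(P)\subseteq T\subseteq V(P+w)\cup\mathrm{Sh}^3(P)$.

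The bookkeeping steps I expect to be routine (though tedious) are: verifying $3$-consistency is inherited by the two pieces after splitting at $w$, checking that the short-inseparability and the ``all near faces are triangles'' hypotheses let us treat the region between $C$ and $B_2(C)$ as a union of rainbows so that Theorems~\ref{MainLinkingResultAlongFacialCycle} and~\ref{SumTo4For2PathColorEnds} apply verbatim, and confirming that the inertness of $\mathrm{Sh}^2$ of the pieces plus the extra $\mathrm{Sh}^3$ material near $w$ combines correctly under Observation~\ref{CombineInert} (distance-$\geq 2$ between the two ``uncoloured'' parts, using that $w$ is coloured). \textbf{The main obstacle} I anticipate is pinning down \emph{exactly which} internal $P$-peaks must be excluded and proving there are at most three: the failure modes are (i) $w$ has so few available colours after accounting for $\phi(p_0),\phi(p_1)$ that no valid $a$ exists, (ii) one of $P_0, P_1$ is the degenerate broken-wheel/even-path configuration of Theorem~\ref{SumTo4For2PathColorEnds} forcing $|A|+|A'|\geq 4$ to fail no matter what $a$ is, or (iii) a Böhme-type obstruction (Theorem~\ref{BohmePaper5CycleCorList}) lives in the disc cut off by $C\cup$($2$-chord through $w$). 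I would need to argue that types (ii)--(iii) each pin $w$ down to within a bounded choice once $\phi$ is fixed (because the obstruction configuration must be ``anchored'' at $p_0$ or at $p_1$ or at the unique chord-free structure forced by $3$-consistency), yielding the bound of three; getting the constant to be exactly three rather than some larger absolute constant will require being careful about overlap between the three failure types, but since the statement only needs ``at most three,'' a clean case analysis keyed to which of $p_0, p_1$ the obstruction touches should suffice.
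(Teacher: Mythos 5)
Your high-level plan --- pick an internal $P$-peak $w$, colour it, and glue colourings of the two sides of $P$ --- points in the same general direction as the paper, but two essential ingredients are missing, and they are exactly where the difficulty lies. First, you cannot simply colour all of $P_0$, $P_1$ (via $\textnormal{Link}$) together with $w$: a vertex of $D_1(C)$ lying on a $3$-chord through $w$ (the vertex the paper calls $q_i'$, adjacent to $w$ and to an interval of $P$ reaching toward $w$) would then see three coloured neighbours and can drop to a $2$-list, violating the reduction condition; such a vertex is on the $3$-chord, not strictly inside it, so it is not in $\textnormal{Sh}^3(P)$ and cannot be absorbed into the inert set. The paper's construction avoids this by stopping the Link colourings at the far endpoints $q_0,q_1$ of the two maximal $3$-chords $R_0,R_1$ through $w$, colouring only $p_0Pq_0$, $q_1Pp_1$ and $w$, and leaving the segments between $q_i$ and $w$'s $P$-neighbours uncoloured but inside the inert set $T$; proving that this set really is $(L,\tau)$-inert is the heart of the argument (Claim \ref{ForAnyCOneFailure} and what follows) and needs the structure of the peaks adjacent to $w$, not just Observation \ref{CombineInert}.

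Second, your proposed source of the bound ``at most three'' is an uncarried-out counting of obstruction types, and the types you list are aimed the wrong way. After reducing to the case where $\phi$ does not already extend to an element of $\textnormal{Link}(P)$ (otherwise no $w$ is needed, and if there are at most three internal peaks the statement is vacuous), the paper proves that all but at most \emph{one} $P$-peak is \emph{even}, i.e.\ its $2$-chord cuts off a broken wheel whose base is an even-length subpath of $P$ (Claim \ref{AllbutOneIntEv}); the excluded peaks are then precisely the at most three internal peaks $x^k$ for which not all of $x^{k-1},x^k,x^{k+1}$ are even, and the proof that an ``even-surrounded'' peak works uses the evenness essentially (nonadjacency of $w$'s two $P$-neighbours, the claim $|S_0\cap S_1|\geq 3$, and the final same-$3$-list parity contradiction). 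In your scheme the broken-wheel/even-path configurations appear as the obstruction (your failure mode (ii)), whereas in the actual proof they are exactly what makes a peak usable; with your failure modes there is no visible reason the set of bad peaks should be bounded by any absolute constant, since an even-path configuration can sit beside essentially every peak. So the proposal has genuine gaps at both the inertness step and the counting step, and the latter cannot be repaired by a case analysis anchored only at $p_0,p_1$ without something like the evenness dichotomy.
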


\begin{proof} Suppose the theorem does not hold, where $|V(G)|$ is minimal with respect to this property. 

\begin{claim} There is no chord of $C$ with an endpoint in $P$. \end{claim}

\begin{claimproof} Suppose there is such a chord $xy$ of $C$. Since $P$ is $3$-consistent, both of $x,y$ lie in $P$, and furthermore, $G^{\large}_{xy}\cap P$ has two connected components, and $G^{\textnormal{large}}_{xy}\cap P$ has one connected component. Say for the sake of definiteness that $x\in V(p_0Py)$. Let $P'$ be the subpath $p_0PxyPp_1$ of $G[V(P)]$ and $G':=G^{\textnormal{large}}_{xy}$. Let $C':=(C\cap G^{\textnormal{large}}_Q)+xy$. Now, $[\Sigma, G', C', L]$ is also a uniquely 3-determined collar, and $P'$ is still $3$-consistent with respect to this collar. Furthermore, $G'$ still satisfies the specified triangulation and list-assignment conditions, and $P'$ has the asme endpoints as $P$. Note that the set of internal $P$-peaks (with respect to $G, C, L$ is precisely equal to the set of $P'$-peaks (with respect to $G', C', L$). Furthermore, it follows from Theorem \ref{thomassen5ChooseThm} applied to $G^{\textnormal{small}}_{xy}$ that, for any $L$-reduction $(T, \tau)$ in $G'$, the pair $(T, \tau)$ is also an $L$-reduction in $G$, so we contradict our assumption that $G$ is a minimal counterexample. \end{claimproof}

We now note the following, which can be obtained from repeated applications of Theorem \ref{SumTo4For2PathColorEnds}. 

\begin{claim}\label{SingleUSeL2Prec} Let $P^*$ be a subpath of $P$. Let $q_0, q_1$ be the endpoints of $P$. For each $k=0,1$, let $A^k$ be a nonempty subset of $L(q_k)$, where $|A^0|+|A^1|\geq 3$. Then there is a $\phi\in\textnormal{Link}(P^*)$ with $\phi(q_0)\in A^0$ and $\phi(q_1)\in A^1$ unless, for each edge $e$ of $P^*$, there is an $x\in D_1(C)$ such that $G[N(x)\cap V(P^*)]$ is an even-length path containing $e$. \end{claim}

For each $e\in E(P)$, let $\mathcal{Q}_e$ be the set of 2-chords of $Q$ of $C$ such that $Q$ has both endpoints in $P$ and $e\in E(P)$. As $P$ is 3-consistent, it follows from our triangulation conditions that $\mathcal{Q}_e\neq\varnothing$ for each $e\in E(P)$, and, in particular, there exists a unique $Q_e\in\mathcal{Q}_e$ such that $G^{\textnormal{small}}_{Q_e}$ contains all the elements of $\mathcal{Q}_e$. Let $\{x^1, \cdots, x^r\}$ denote the set of midpoints of the elements of $\{Q_e: e\in E(P)\}$. Note that $\{x^1, \cdots, x^r\}$ is precisely the set of $P$-peaks, and $x^2, \cdots, x^{r-1}$ are the internal $P$-peaks. For each $i=1,\cdots, r$, we let $Q^i$ be the unique element of $\{Q_e: e\in E(P)\}$ with midpoint $x^i$, and we let $H^i:=G^{\textnormal{small}}_{Q^i}$. There is a natural ordering to the elements of $\{x_e: e\in E(P)\}$. That is, we suppose that, $x^1, \cdots, x^r$ are labelled such that, for any $1<i<j\leq r$, the removal of any edge of $H^i\cap P$ separates $p_0$ from $H^j\cap P$ in $P$. We suppose that $r\geq 4$, or else there is nothing to prove. Now, if $\phi$ extends to an element of $\textnormal{Link}(P)$, then we are done, so suppose there is no such extension. For each $k=1,\cdots, r$, we say that $x^k$ is \emph{even} if $H^k$ is a broken wheel with principal vertex $x^k$, where $H^k-x^k=H^k\cap P$ and $H^k\cap P$ is an even-length path. 

\begin{claim}\label{AllbutOneIntEv} For all but at most one $k\in\{1, \cdots, r\}$, $x^k$ is even. \end{claim}

\begin{claimproof} Suppose not. Thus, there exist indices $k, k'$ with $1\leq k<k'\leq r$ such that neither $x^k$ nor $x^{k'}$ is even. Let $v, w$ be the endpoints of $Q^k$ and $w', v'$ be the endpoints of $Q^{k'}$, where $w, w'\in V(\mathring{P})$ and, in particular, $v$ lies in the unique component of $P\setminus\{w, w'\}$ containing $p_0$ and $v'$ lies in the unique component of $P\setminus\{w, w'\}$ containing $p_1$. Possibly $w=w'$. Now, by Theorem \ref{MainLinkingResultAlongFacialCycle}, there is a $\tau\in\textnormal{Link}(p_0Pv)$ with $\tau(p_0)=\psi(p_0)$. Let $a:=\tau(v)$. By Theorem \ref{SumTo4For2PathColorEnds}, there is an $S\subseteq L(w)$ with $|S|\geq 2$ where, for any $b\in S$, there is an element of $\textnormal{End}_L(H_k, Q_k)$ using $a,b$ on $v, w$ respectively. Likwewise, by Claim \ref{SingleUSeL2Prec}, there is a $\tau'\in\textnormal{Link}(w'Pp_1)$ with $\tau'(p_1)=\phi(p_1)$ and $\tau'(w)\in S$. But then the union $\tau\cup\tau'$ is a proper $L$-coloring of its domain and lies in $\textnormal{Link}(P)$, contradicting our assumption that there is no such extension of $\phi$.  \end{claimproof}

Now, it follows from Claim \ref{AllbutOneIntEv} that, for all but at most three $k\in\{2, \cdots, r-1\}$, each of $x^{k-1}, x^k, x^{k+1}$ is even, so it suffices to prove that, for any such index $k$, there exist $T, \tau$ as in the statement of Theorem \ref{LinkPlusOneMoreVertx}, where $w=x^k$. Let $w=x^k$ be as above, but suppose there are no such $T, \tau$. Let $P':=H^k\cap P$. As $x^k$ is an internal $P$-peak and $P'$ has even length, there are two components of $P\setminus\mathring{P}'$, where each of these components has length at least two. Let $P_0, P_1$ denote these components and $Q^k:=v_0wv_1$, where, for each $i=0,1$, $P_i$ has endpoints $p_i, v_i$. 

Now, for each $i=0,1$, let $\mathcal{R}_i$ be the set of (not necessarily proper) 3-chords $R$ of $C$ such that $R$ has $v_iw$ as a terminal edge and $V(R\cap C)\subseteq V(P_i)$. By our triangulation conditions, each of $\mathcal{R}_0$ and $\mathcal{R}_1$ is nonempty. Each of $P_0, P_1$ is 3-consistent, and, since $G$ has no separating triangles, it follows that, for each $i=0,1$, there is a unique $R_i\in\mathcal{R}_i$ containing all the elements of $\mathcal{R}_i$. In particular, $G^{\textnormal{small}}_{R_0}\cup G^{\textnormal{small}}_{R_1}$ contains all the neighbors of $w$ among $\{x^1, \cdots, x^r\}$. For each $i=0,1$, let $R_i=q_iq_i'wv_i$ ( possibly $q_i=v_i$ and $R_i$ is a triangle). Finally, let $R^*:=q_0q_0'wq_1'q_1$. It follows from Theorem \ref{MainLinkingResultAlongFacialCycle} that, for each $i=0,1$, there is a $\psi_i\in\textnormal{Link}(p_iPq_i)$ with $\psi_i(p_i)=\phi(p_i)$. Let $a_i:=\phi_i(q_i)$. As $P'$ has length at least two, $\psi_0\cup\psi_1$ is a proper $L$-coloring of its domain.

\begin{claim}\label{ForAnyCOneFailure} For any $c\in L_{\phi_0\cup\phi_1}(w)$ using $a_0, a_1$ on $q_0, q_1$ respectively, there is at least one extension of $\sigma$ to an $L$-coloring of $V(R^*)$ which does not extend to $L$-color $G^{\textnormal{small}}_{R_0}\cup H^k\cup G^{\textnormal{small}}_{R_1}$.  \end{claim}

\begin{claimproof} Suppose not. Let $\tau$ be the extension of $\phi\cup\phi$ to an $L$-coloring of $\textnormal{dom}(\phi_0\cup\phi_1)\cup\{w\}$ obtained by coloring $w$ with $c$. Let $T:=\textnormal{dom}(\tau)\cup\textnormal{Sh}^2(p_0Pq_0)\cup\textnormal{Sh}^2(q_1Pp_1)\cup V(G^{\textnormal{small}}_{R_0}-q_0')\cup V(G_{R_1}-q_1')$. Then, by our assumption on $c$, we get that $T$ is $(L, \tau)$-inert in $G$. Furthermore, our choice of $R_0, R_1$ implies that each vertex of has an $L_{\tau}$-list of size at least three. In particular, $|L_{\tau}(q_0')|\geq 3$ and $|L_{\tau}(q_1')|\geq 3$ since each of $q_0', q_1'$ has precisely one neighbor in $p_0Pq_0\cup q_1Pp_1$. Thus, our choice of $w, \tau, T$ satisfies Theorem \ref{LinkPlusOneMoreVertx}, contradicting our assumption. \end{claimproof}

Now, for each $i=0,1$, we regard $G^{\textnormal{small}}_{R_i}$ as a planar embedding with outer cycle $q_iPv_iwq_i'$ and let $S_i$ denote the set of $c\in L_{\psi^i}(w)$ such that there is an element of $\textnormal{End}_L(q_iq_i'w, G^{\textnormal{small}}_{R_i})$ using $a_i, c$ on $q_i, w$ respectively.

\begin{claim} $|S_0\cap S_1|\geq 3$. \end{claim}

\begin{claimproof} Let $i\in\{0,1\}$, say $i=0$ for the sake of definitness. Since $|L(w)|\geq 5$, it suffices to show that $|S_0|\geq |L(w)|-1$. This is immediate if $G^{\textnormal{small}}_{R_0}$ is a triangle, since $S_0=L(w)\setminus\{a_0\}$ in that case, so suppose $G^{\textnormal{small}}_{R_0}$ is not a triangle. In particular, $R_0$ is a proper 3-chord of $C$ and $x^{k-1}\in V(G^{\textnormal{small}}_{R_0})$. Every chord of the outer cycle of $G^{\textnormal{small}}_{R_0}$ is incident to $q_0'$. If $G^{\textnormal{small}}_{R_0}$ is a broken wheel with principal vertex $q_0'$, then, since $G$ is short-inseparable, it follows that $q_0'=x^{k-1}$ and the path $G^{\textnormal{small}}_{R_0}-q_0'$ has odd length, as and $H^{k-1}-x^{k-1}$ has even length. In any case, it follows from Theorem \ref{SumTo4For2PathColorEnds} applied to $G^{\textnormal{small}}_{R_0}$ that $S_0$ consists of all but at most one vertex of $L(w)$, so we are done. \end{claimproof}

Note that $S_0\cap S_1\subseteq L_{\phi_0\cup\phi_1}(w)$, so it follows from Claim \ref{ForAnyCOneFailure} that, for each $c\in S_0\cap S_1$, there is an $L$-coloring $\sigma^s$ of $V(R^*)$ which does not extend to $L$-color $G^{\textnormal{small}}_{R_0}\cup H^k\cup G^{\textnormal{small}}_{R_1}$, where $\sigma^s$ uses $a_0, a_1$ on $q_0, q_1$ respectively. As $x^k$ is even, we have $d(v_0, v_1)>1$, so, by our choice of $c$, it follows that $\sigma^s$ extends to $L$-color $G^{\textnormal{small}}_{R_0}\cup G^{\textnormal{small}}_{R_1}$. Thus there is an $L$-coloring $\pi^s$ of $v_0wv_1$ which uses $s$ on $w$ and does not extend to $L$-color $H^k$. Since this holds for each $s\in S_0\cap S_1$ and $x^k$ is even, it follows that each internal vertex of the path $H^k-x^k$ has the same list of size three, where this list is precisely $S_0\cap S_1$, and furthermore, $S_0\cap S_1=\{\pi^s(v_0), s, \pi^1(v_1)\}$ for each $s\in S_0\cap S_1$. In particular, neither $G^{\textnormal{small}}_{R_0}$ nor $G^{\textnormal{small}}_{R_1}$ is a triangle, i.e $q_0, q_1\not\in N(w)$, and furthermore, $|(S_0\cap S_1)\cap L(v_i)|\geq 2$ for each $i=0,1$. As $|L(w)|\geq 5$, there is a $d\in L(w)\setminus (S_0\cap S_1)$, where $|L(v_i)\setminus\{d\}|\geq 3$ for each $i=0,1$. Yet, since every chord of the outer cycle of $G^{\textnormal{small}}_{R_i}$ is incident to $q_i'$, the latter condition implies that $d\in S_0\cap S_1$, a contradiction. \end{proof}

\section{Enclosures}\label{LinkEncSecM}

We now prove another intermediate result (Theorem \ref{GenThmTargetConnector}) that we need in order to deal with the boundary structures in Theorems \ref{FaceConnectionMainResult} and \ref{SingleFaceConnRes}.

\begin{defn}\label{MainEncDefBeforeMax1}\emph{Let $\mathcal{K}=[\Sigma, G, C, L]$ be a uniquely $4$-determined collar. Given a $w\in D_2(C)$, we say $w$ is degenerate if $G[N(w)\cap D_1(C)]$ is a path of length at most one. Otherwise we say $w$ is \emph{non-degenerate}. We define a \emph{$w$-enclosure} (in $\mathcal{K}$) to be a subgraph $Q$ of $G$ as follows: If $w$ is non-degenerate, then $Q$ is a proper 4-chord of $C$ with midpoint $w$, where the endpoints of $Q\setminus C$ are nonadjacent. If $w$ is degenerate, then $Q$ is a (not necessarily proper) 5-chord of $C$, where each vertex of $Q\setminus C$ lies in $D_1(C)$ and $N(w)\cap D_1(C)$ is contained in the middle edge of $Q\setminus C$.}  \end{defn}

In the setting above, if the collar $\mathcal{K}$ is clear from the context, then we just call $Q$ a $w$-enclosure. 

\begin{defn}\label{TargetDefForEncM} \emph{Let $\mathcal{K}=[\Sigma, G, C, L]$ be a uniquely 4-determined collar. Let $uw\in E(G)$, where $d(u, C)=3$ and $d(w, C)=2$. Given a $w$-enclosure $Q$, a $(Q, uw, \mathcal{K})$-\emph{target} is a partial $L$-coloring $\psi$ of $V(G^{\textnormal{small}}_Q+uw)$ such that $(V(G^{\textnormal{small}}_Q\setminus (Q\setminus\textnormal{dom}(\psi))), \psi)$ is an $L$-reduction, where $V(Q\cap C)\cup\{u\}\subseteq\textnormal{dom}(\psi)$, and}
\begin{enumerate}[label=\emph{\arabic*)}]
\itemsep-0.1em
\item\emph{If $w$ is non-degenerate, then $\textnormal{dom}(\psi)\cap V(Q\setminus C)=\{w\}$}; AND
\item\emph{If $w$ is non-degenerate, then, letting $yy'$ denote the middle edge of $Q\setminus C$, the set $\textnormal{dom}(\psi)\cap V(Q\setminus C)$ is a nonempty subset of $\{y, y'\}$ containing at least one neighbor of $w$.}
\end{enumerate}
 \end{defn}

In the setting above, if the collar $\mathcal{K}$ is clear from the context, then we just refer to $\psi$ as a $(Q, uw)$-target. 

\begin{theorem}\label{GenThmTargetConnector} Let $[\Sigma, G, C, L]$ be a uniquely 4-determined collar, where $G$ is short-inseparable and each vertex of $B_4(C)\setminus V(C)$ has a list of size at least five. Let $uw\in E(G)$, where $d(u, C)=3$ and $d(w, C)=2$. Let $Q$ be a $w$-enclosure such that, letting $p, p'$ be the vertices of $Q\cap C$, the following hold. 
\begin{enumerate}[label=\arabic*)]
\itemsep-0.1em
\item $u,w\not\in\textnormal{Sh}^4(C)$ and $|V(G^{\textnormal{large}}_Q\cap C)|>1$, and each vertex of $(G^{\textnormal{small}}_Q\cap C)\setminus Q$ has a list of size $\geq 3$; AND
\item Each of $p, p'$ has an nonempty list. and either $p=p'$ or at least one of $p, p'$ has a list of size at least three.
\end{enumerate}
Then 
\begin{enumerate}[label=\Alph*)]
\itemsep-0.1em
\item If $G^{\textnormal{small}}_Q$ induced in $G$, then there is a $(Q, uw)$-target; AND
\item Regarding $G^{\textnormal{small}}_Q$ as a planar embedding with outer cycle $(G^{\textnormal{small}}_Q\cap C)+Q$, if $p\neq p^*$ and $\phi\in\textnormal{Crown}_L(Q, G^{\textnormal{small}}_Q)$ is a proper $L$-coloring of its domain in $G$ with $d(w, \textnormal{dom}(\phi))\leq 1$, then $\phi$ extends to a $(Q, uw)$-target. 
\end{enumerate}
\end{theorem}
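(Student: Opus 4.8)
The plan is to prove parts A) and B) of Theorem~\ref{GenThmTargetConnector} together, treating B) as the substantive case and deriving A) as a consequence. I would split into the two regimes of Definition~\ref{MainEncDefBeforeMax1}: the case where $w$ is non-degenerate (so $Q$ is a proper $4$-chord of $C$ with midpoint $w$ and the two endpoints $y,y'$ of $Q\setminus C$ are non-adjacent) and the case where $w$ is degenerate (so $Q$ is a $5$-chord, all of whose interior vertices lie in $D_1(C)$, with $N(w)\cap D_1(C)$ inside the middle edge). In both regimes, the target $\psi$ we seek is a partial $L$-coloring of $V(G^{\textnormal{small}}_Q+uw)$; the natural way to build it is to first produce a suitable coloring of the chord $Q$ together with $u$, check the associated inertness, and then invoke the ``Crown''/``End'' machinery from Section~\ref{RainbowSec} applied to $G^{\textnormal{small}}_Q$ regarded as a planar embedding with outer cycle $(G^{\textnormal{small}}_Q\cap C)+Q$.

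**The non-degenerate case.** Here $Q\setminus C$ is a path $p\,q_0\,y\,y'\,q_1\,p'$ — a path of length $5$ between the two vertices $p,p'$ of $C$, with $w$ equal to one of the two internal midpoint vertices. Wait: $Q$ has midpoint $w$, so $w$ is the central vertex of the $4$-chord; thus $Q\setminus C = p\,y\,w\,y'\,p'$ with $yy'$... no — the middle \emph{edge} of $Q\setminus C$ is the edge incident to neither endpoint, so in fact $Q\setminus C$ has the form $q\,w\,q'$ is wrong for a $4$-chord. Let me recount: a $4$-chord is a path with $4$ edges, so $5$ vertices $p=v_0,v_1,v_2,v_3,v_4=p'$, midpoint $v_2=w$, and ``the endpoints of $Q\setminus C$'' are $v_1,v_3$, which are required non-adjacent. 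So $yy'$ in Definition~\ref{TargetDefForEncM} — re-reading, the degenerate branch references a middle edge; the non-degenerate branch just requires $\textnormal{dom}(\psi)\cap V(Q\setminus C)=\{w\}$. Good: so in the non-degenerate case I only need to color $w$ (plus $u$, plus $Q\cap C$). The approach: use Theorem~\ref{MainHolepunchPaperResulThm} (the length-four rainbow Crown result) on $G^{\textnormal{small}}_Q$ with the path $Q$ as $P$, after checking that $(G^{\textnormal{small}}_Q,\text{outer cycle},Q,L)$ is a rainbow with the required list sizes (the internal vertices $v_1,w,v_3$ of $Q$ have lists of size $\ge 5$ because $w\notin\textnormal{Sh}^4(C)$ forces them into $B_4(C)\setminus V(C)$, and at least one of $p,p'$ has a $3$-list by hypothesis~2; the non-adjacency of $v_1,v_3$ gives the ``no common neighbour'' clause when combined with short-inseparability). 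This produces $\phi\in\textnormal{Crown}(Q,G^{\textnormal{small}}_Q)$, which for part B) we are simply \emph{given}, with $d(w,\textnormal{dom}(\phi))\le 1$; the task is then to extend $\phi$ across the single vertex $u$ and set up the inert set. We color $u$ from $L_\phi(u)$ (nonempty since $|L(u)|\ge 5$ and $u$ has at most four already-colored neighbours among $Q\cup$ stuff — this needs a short count using that $u\in D_3(C)$ and the triangulation conditions), then take $\psi := \phi\cup\{u\mapsto(\text{that colour})\}$ and verify that $V(G^{\textnormal{small}}_Q\setminus(Q\setminus\textnormal{dom}(\psi)))$ is $(L,\psi)$-inert: this is immediate from the defining property of $\textnormal{Crown}$, since any extension of $\phi$ to $\textnormal{dom}(\phi)\cup V(Q)$ extends to all of $G^{\textnormal{small}}_Q$, and the uncolored vertices of $Q$ retain $3$-lists.

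**The degenerate case and part A).** When $w$ is degenerate, $Q$ is a (possibly improper) $5$-chord all of whose interior vertices lie in $D_1(C)$, with $N(w)\cap D_1(C)$ contained in the middle edge $yy'$ of $Q\setminus C$; Definition~\ref{TargetDefForEncM}(2) then asks that $\textnormal{dom}(\psi)\cap V(Q\setminus C)$ be a nonempty subset of $\{y,y'\}$ containing a neighbour of $w$. Here I would invoke Theorem~\ref{RainbowNonEqualEndpointColorThm} (the length-five rainbow result giving a Crown coloring with a prescribed endpoint of the middle edge in its domain) — this is exactly tailored to the ``$y_j$ in its domain'' requirement, and its ``$\mathcal G$-obstruction'' alternative has to be ruled out using short-inseparability plus $|V(G^{\textnormal{large}}_Q\cap C)|>1$ (an obstruction $x$ adjacent to all of $\mathring{Q}$, or to $p_i,q_{1-i},y_{1-i}$, would create a short separating cycle or force $G^{\textnormal{small}}_Q$ to swallow the large side). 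Actually this is exactly the shape of the analogous argument in Theorem~\ref{MainCollarResultCutProc}'s intended proof, so I'd reuse that. For the weaker conclusion when only one endpoint has a $\ge 3$-list (and one might have only a nonempty list), fall back on Theorem~\ref{ModifiedRes5ChordCaseDegen} after checking the ``each interior vertex incident to a chord of $C$ to the outside'' hypothesis, which follows from the degeneracy structure and the triangulation conditions on $B_2(C)$. Part A) is then the special case where $G^{\textnormal{small}}_Q$ is induced: the Crown coloring produced is automatically a proper $L$-coloring of its domain in $G$ (no chords of the outer cycle to worry about beyond those already handled), so we may take $\phi$ to be that coloring and apply the extension-across-$u$ step from the previous paragraph verbatim.

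**Main obstacle.** The hard part will be the bookkeeping around $u$ and the inertness verification in the presence of the degenerate structure: one must show $L_\phi(u)\ne\varnothing$ after $Q$ and possibly one of $y,y'$ are colored, which requires pinning down exactly which neighbours of $u$ are already colored (using $d(u,C)=3$, $u\notin\textnormal{Sh}^4(C)$, the fact that every face near $C$ other than $C$ itself is a triangle, and that $uw$ is the \emph{only} edge from $u$ into $B_2(C)$ up to the local picture), and then confirming the resulting set is genuinely $(L,\psi)$-inert rather than merely colorable — i.e.\ that \emph{every} valid extension, not just one, succeeds, which is precisely what the $\textnormal{Crown}$ definition buys us but which must be threaded carefully through the $u$-extension. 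A secondary subtlety is confirming that $G^{\textnormal{small}}_Q$, with outer cycle $(G^{\textnormal{small}}_Q\cap C)+Q$, really is planar and really is a rainbow with outer cycle of length $>4$ (or handling the small exceptional cases where it is a broken wheel, via Observation~\ref{CorMainEitherBWheelAtM1ColCor} and Theorem~\ref{SumTo4For2PathColorEnds}), since the big rainbow theorems all carry hypotheses like ``$|V(C)|>4$'' or ``endpoints of $\mathring P$ have no common neighbour'' that need separate verification from the collar axioms.
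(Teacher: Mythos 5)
Your proposal follows essentially the same route as the paper's (very brief) proof: regard $G^{\textnormal{small}}_Q$ as a planar rainbow with outer cycle $(G^{\textnormal{small}}_Q\cap C)+Q$, obtain an element of $\textnormal{Crown}(Q, G^{\textnormal{small}}_Q)$ via Theorem~\ref{MainHolepunchPaperResulThm} in the non-degenerate case and Theorem~\ref{ModifiedRes5ChordCaseDegen} in the degenerate case (the paper invokes the latter directly, rather than your primary appeal to Theorem~\ref{RainbowNonEqualEndpointColorThm}, but that is exactly your stated fallback under the actual list hypotheses), and then color $u$ and read off inertness from the Crown definition, which is also how the paper obtains B). The only piece the paper treats separately is the case $p=p'$, which it dismisses as immediate and which your write-up does not explicitly address; this does not change the substance of the argument.
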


\begin{proof} This is immediate if $p=p'$, so now suppose $p\neq p'$. Let $C_Q:=(G^{\textnormal{small}}_Q\cap C)+Q$ and consider the rainbow $\mathcal{G}:=(G^{\textnormal{small}}_Q, C_Q, Q, L)$. In the case where $w$ is non-degenerate, we obtain A) by applying Theorem \ref{MainHolepunchPaperResulThm} to $\mathcal{G}$ and then just coloring $u$. In the case where $w$ is degenerate, we obtain A) similarly by applying Theorem \ref{ModifiedRes5ChordCaseDegen} to $\mathcal{G}$. Note that, once we have an element of $\textnormal{Crown}(Q, G^{\textnormal{small}}_Q)$, the same argument shows B). 
 \end{proof}

To prove Theorem \ref{MainCollarResultCutProc}, we actually prove something stronger, which is Theorem \ref{StrengthendVerManColRes}. To state this strengthening, we first introduce two more definitions.

\begin{defn}\label{X1X2Defns} \emph{Let $[\Sigma, G, C, L]$ be a uniquely 4-determined collar and let $uw$ be an edge of $G$, where $d(u, C)=3$ and $d(w, C)=2$. Given a $w$-enclosure $Q$ in $\mathcal{K}$, a $(Q, uw, \mathcal{K})$-\emph{pair} is a complete $L$-reduction $(S, \phi)$, where $S$ is a subset of $B_3(C)\cup\textnormal{Sh}^4(C)$, such that}
\begin{enumerate}[label=\emph{\arabic*)}]
\itemsep-0.1em
\item [\mylabel{}{\textit{X1)}}] $u\in\textnormal{dom}(\phi)$ and $S$ is topologically reachable from $C$; AND
\item [\mylabel{}{\textit{X2)}}] $(S\setminus\textnormal{Sh}^4(C))\cap D_3(C)=\{u\}$ and $(S\setminus\textnormal{Sh}^4(C))\cap D_2(C)=\{w\}$; AND
\item [\mylabel{}{\textit{X3)}}] $V(C)\subseteq S$ and $\textnormal{dom}(\phi)\cap V(G^{\textnormal{large}}_Q\cap C)=V(G^{\textnormal{large}}_Q\cap C)\setminus\textnormal{Sh}^2(C)$. 
\end{enumerate}
 \end{defn}

In the setting above, if $\mathcal{K}$ is clear from the context, then we just call $(S, \phi)$ a $(Q, uw)$-pair. 

\section{The Proof of Theorem \ref{MainCollarResultCutProc}}\label{PrelimSecWrto1}

We now prove our stronger version of Theorem \ref{MainCollarResultCutProc}. 

\begin{theorem}\label{StrengthendVerManColRes} Let $\mathcal{K}=[\Sigma, G, C, L]$ be a uniquely 4-determined collar, where $G$ is short-inseparable. Let $uw$ be an edge of $G$, where $d(u, C)=3$ and $d(w, C)=2$, and $u,w\not\in\textnormal{Sh}^4(C)$. Suppose further that every vertex of $C$ has a list of size at least three and every vertex of $B_4(C)\setminus V(C)$ has a list of size at least five. Then, for any maximal $w$-enclosure $Q$, there is a $(Q, uw)$-pair. \end{theorem}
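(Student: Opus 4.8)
The plan is to prove Theorem~\ref{StrengthendVerManColRes} by induction on $|V(G)|$, building the desired $(Q,uw)$-pair $(S,\phi)$ out of two pieces glued along the $w$-enclosure $Q$: a piece coming from the small side $G^{\textnormal{small}}_Q$, supplied by Theorem~\ref{GenThmTargetConnector}, and a piece coming from the large side $G^{\textnormal{large}}_Q$, where we color and delete a linking subpath of $C$ together with its shadow. Fix a maximal $w$-enclosure $Q$ with vertices $p,p'$ on $C$ (where $pp'$ is a subpath of $C$ meeting $Q\cap C$). The idea is: first produce a $(Q,uw)$-target $\psi$ on $V(G^{\textnormal{small}}_Q+uw)$ via Theorem~\ref{GenThmTargetConnector}~A), so that $u\in\textnormal{dom}(\psi)$, $w$ (or a neighbor of $w$ on the middle edge of $Q\setminus C$) is colored, and the inert-reduction condition holds on the small side with every vertex of $D_1$ retaining a list of size $\geq 3$; then extend $\psi$ across the rest of $C$ lying in $G^{\textnormal{large}}_Q$ using Theorem~\ref{MainLinkingResultAlongFacialCycle} (or Theorem~\ref{LinkPlusOneMoreVertx} if an extra vertex is needed) applied to the $2$-consistent (or $3$-consistent) subpath $p'Pp$ of $C$ on the large side; finally take $S$ to be the union of the small-side domain, $V(C)$, the relevant shadows $\textnormal{Sh}^2$, $\textnormal{Sh}^4$, and $\{u,w\}$, and $\phi=\psi\cup(\text{the link coloring})$, checking via Observation~\ref{CombineInert}/\ref{UnionConsistentObs} that the two reductions are consistent and their union is the required complete $L$-reduction.

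\textbf{Verifying the structural hypotheses.} Before invoking those theorems I would check the hypotheses hold in this configuration. Because $\mathcal{K}$ is uniquely $4$-determined and $|E(Q)|\leq 5$ with every vertex of $Q\setminus C$ in $D_1(C)\cup\{w\}$ (so within distance $\leq 1$ of $C$ when $|E(Q)|=5$), the natural $(C,Q)$-partition is well defined and $G^{\textnormal{small}}_Q$, $G^{\textnormal{large}}_Q$ make sense; since $u,w\notin\textnormal{Sh}^4(C)$ we get $|V(G^{\textnormal{large}}_Q\cap C)|>1$, and the maximality of $Q$ is what ensures $G^{\textnormal{small}}_Q$ contains all the smaller $w$-enclosures and hence is "induced enough" for part A) — this is where a short argument is needed, showing that maximality forces $G^{\textnormal{small}}_Q$ to be an induced subgraph, or else handling the non-induced case by absorbing a chord into $Q$ and recursing on a smaller collar (mirroring the first claim in the proof of Theorem~\ref{LinkPlusOneMoreVertx}). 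The list conditions transfer directly: vertices of $B_4(C)\setminus V(C)$ have $5$-lists, so internal vertices of $Q$ and of the relevant paths do too, and vertices of $C$ have $3$-lists, which is exactly the rainbow hypothesis needed for Theorems~\ref{MainHolepunchPaperResulThm} and \ref{ModifiedRes5ChordCaseDegen}. The degenerate/non-degenerate dichotomy for $w$ matches precisely the two cases in Definition~\ref{MainEncDefBeforeMax1}, so Theorem~\ref{GenThmTargetConnector} applies in both.

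\textbf{Assembling the pair and the main obstacle.} Given the target $\psi$, I would set $P^\ast:=p'Pp$ (the arc of $C$ on the large side, with endpoints the colored vertices $p,p'$ of $Q\cap C$), note it is $2$-consistent because $Q$ is a chord-structure respecting the partition and $C$ is facial, and apply Theorem~\ref{MainLinkingResultAlongFacialCycle} with $A:=\{\psi(p)\}$, $A':=\{\psi(p')\}$ — here the condition $|A|+|A'|\geq 4$ generally fails, so in fact I expect to need the stronger Theorem~\ref{LinkPlusOneMoreVertx}, coloring one additional internal $P^\ast$-peak, which is harmless for X1)–X3) since such a peak lies in $D_1(C)\subseteq B_3(C)$ and leaves $D_1$-lists of size $\geq 3$. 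Then $S:=\textnormal{dom}(\psi)\cup V(P^\ast)\cup\textnormal{Sh}^2(P^\ast)\cup\textnormal{Sh}^4(C)\cup\{$possible extra peak$\}$ and $\phi:=\psi\cup\tau$; one checks $S\subseteq B_3(C)\cup\textnormal{Sh}^4(C)$, topological reachability from $C$ (the small-side part is reachable through $Q$, the rest through $C$ itself), X2) since only $u\in D_3$ and only $w\in D_2$ survive outside the shadow, and X3) since $\textnormal{dom}(\phi)$ hits $G^{\textnormal{large}}_Q\cap C$ in exactly $V(P^\ast)$ minus $\textnormal{Sh}^2(C)$. The main obstacle I anticipate is twofold: reconciling the two colorings at $p,p'$ so that $\psi\cup\tau$ is a \emph{proper} $L$-coloring and the reductions are consistent in the sense of Definition of consistent families (the distance-$\geq 2$ condition between $S^{\textnormal{small}}\setminus\textnormal{dom}$ and $S^{\textnormal{large}}\setminus\textnormal{dom}$, and the neighborhood condition), which forces careful bookkeeping about which vertices of $Q$ and of the shadows are actually colored versus merely deleted; and, secondarily, the case analysis when $G^{\textnormal{small}}_Q$ is not induced or when $w$ is degenerate, where the "extra vertex" from Theorem~\ref{LinkPlusOneMoreVertx} might conflict with a neighbor of $w$ — in that situation I would fall back on the minimal-counterexample reduction, contracting a chord into $Q$ to shrink $G$ and apply induction, exactly as in Theorem~\ref{LinkPlusOneMoreVertx}'s opening claim.
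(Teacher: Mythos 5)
Your plan is essentially the naive construction: obtain a $(Q,uw)$-target on $G^{\textnormal{small}}_Q$ via Theorem~\ref{GenThmTargetConnector}, extend across the arc $G^{\textnormal{large}}_Q\cap C$ by a link-type coloring, and glue. The paper's proof is organized around the fact that this gluing \emph{cannot} always be completed, and that is where all the work lies. You correctly notice that Theorem~\ref{MainLinkingResultAlongFacialCycle} is unavailable because both endpoints of the arc are already colored by the target (so $|A|+|A'|=2$), but your proposed repair via Theorem~\ref{LinkPlusOneMoreVertx} does not go through for two reasons. First, that theorem carries hypotheses not present in Theorem~\ref{StrengthendVerManColRes}: it requires that every facial subgraph meeting $B_2(C)$ other than $C$ be a triangle, and no such near-triangulation is assumed here (the paper never invokes Theorem~\ref{LinkPlusOneMoreVertx} in this proof). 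Second, its conclusion excludes up to three internal $P$-peaks, so it is vacuous precisely when the arc $G^{\textnormal{large}}_Q\cap C$ is short — and the paper's analysis shows that in a minimal counterexample this arc has only $2$, $3$ or $4$ vertices, all with the same $3$-list and with broken wheels of $\textnormal{Sp}(C)$ hanging beneath them. These are exactly the configurations where every choice of target color at the endpoints can be blocked by the inertness requirement on $\textnormal{Sh}^2(C)\cap V(G^{\textnormal{large}}_Q)$, which is the content of Claim~\ref{IfTargetFailsInertFails} in the paper: in a minimal counterexample, \emph{every} proper union of a target with a coloring of $V(G^{\textnormal{large}}_Q\cap C)\setminus\textnormal{Sh}^2(C)$ fails to leave the shadow inert.

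What is missing from your proposal is therefore the entire mechanism by which the paper handles this obstruction: set up a vertex-minimal (edge-maximal) counterexample, eliminate chords of $C$ by cutting along them, prove via Observation~\ref{CorMainEitherBWheelAtM1ColCor} and Theorem~\ref{SumTo4For2PathColorEnds} that each $G_y$ for $y\in\textnormal{Sp}(C)$ is a short broken wheel with all vertices of the large arc sharing one $3$-list (Claim~\ref{AllVertSame3ListSubCL}), shrink the graph by surgeries (deleting two consecutive arc vertices and adding an edge, or contracting $x_k,x_{k+2}$ into $x_{k+1}$) and invoke the induction hypothesis on the modified collar, then rule out the degenerate case of $w$ and the case $n=2$ with Theorem~\ref{RainbowNonEqualEndpointColorThm} and further ad hoc colorings, finally reaching a contradiction through the local structure around $z^0,z^1$ and $v^0,v^1$. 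Incidental but symptomatic of the same gap: you assert that $|V(G^{\textnormal{large}}_Q\cap C)|>1$ follows directly from $u,w\notin\textnormal{Sh}^4(C)$, whereas the paper needs a genuine argument (Claim~\ref{MainConsLemmaIntStepGLargGGeq1}, splitting a vertex and applying Theorem~\ref{RainbowNonEqualEndpointColorThm}). Your fallback of ``contract a chord and recurse'' only reproduces the easy opening reduction of the paper's proof; it does not address the core case in which the gluing is obstructed.
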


\begin{proof} Suppose not and let $\mathcal{K}=(\Sigma, G, C, L)$ be a uniquely 4-determined collar violating Theorem \ref{StrengthendVerManColRes}, where $|V(G)|$ is minimized with respect to this condition. We also suppose that $|E(G)|$ is maximal among all vertex-minimal counterexamples. Thus, there is a $uw\in E(G)$ satisfying the conditions in the statement of Theorem \ref{StrengthendVerManColRes}, such that, for some maximal $w$-enclosure $Q$, there is no $(Q, uw)$-pair. For convenience, we assume, by removing some colors from some lists if necessary, that any vertex of $G$ with a list of size at least five has a list of size precisely five. 

\begin{claim}\label{NoChordofCMinCounterCLMain10} There is no chord of $C$. \end{claim}

\begin{claimproof}  Suppose there is a chord of $xy$ of $C$. Let $H=G^{\textnormal{large}}_{xy}$ and $H'=G^{\textnormal{small}}_{xy}$.  Let $C^*$ be the cycle $(C\cap H)+xy$ and let $\mathcal{K}^*=[\Sigma, H, C^*, L]$. Note that $\mathcal{K}^*$ is also a uniquely 4-determined collar. Furthermore, $H$ is still short-inseparable. Note that, in $\mathcal{K}^*$, $Q$ is still a maximal $w$-enclosure. This is true even if $xy\in E(G^{\textnormal{small}}_Q)$. In particular, $w$ is degenerate in $\mathcal{K}$ if and only if it is degenerate in $\mathcal{K}^*$. We note that the edge $uw$ and collar $\mathcal{K}^*$ satisfy all the other conditions in the statement of Theorem \ref{StrengthendVerManColRes}. As $|V(H)|<|V(G)|$, there is a $(Q, uw, \mathcal{K}^*)$-pair $(S, \phi)$. We claim now that $(S\cup V(H'), \phi)$ is a $(Q, uw, \mathcal{K})$-pair. If we show this, we are done, as we contradict our choice of $Q$. Since $V(C^*)\subseteq S$, we have $V(C)\subseteq S\cup V(H')$. Furthermore, since $S$ is $(L, \phi)$-inert in $H$, it follows from Theorem \ref{thomassen5ChooseThm} that $S\cup V(H')$ is $(L, \phi)$-inert in $G$. Now it just suffices to check that  $V(G^{\textnormal{large}}_Q\cap C)\setminus\textnormal{Sh}^2(C, \mathcal{K})=V(H^{\textnormal{large}}_Q\cap C^*)\setminus\textnormal{Sh}^2(C^*, \mathcal{K}^*)$. It is immediate that the rest of Definition \ref{X1X2Defns} is satisfied. Suppose first that $xy\in E(G^{\textnormal{large}}_Q)$. Thus, $G^{\textnormal{small}}_Q\subseteq H$.  We have $V(H')\subseteq\textnormal{Sh}^2(C, \mathcal{K})$, so the above equality is satisfied. Now suppose $xy\in E(G^{\textnormal{small}}_Q)$. Thus $G^{\textnormal{large}}_Q\subseteq H$, and $G^{\textnormal{large}}_Q=H^{\textnormal{large}}_Q$, and, again, the claimed equality is satisfied. \end{claimproof}

Later in the proof, we sometimes construct $(Q, uw)$-pairs by applying Theorem \ref{GenThmTargetConnector} and extending a $(Q, uw)$-target to a larger domain. Whenever we do this, we first need to check $G^{\textnormal{small}}_Q$ is an induced subgraph of $G$. The following observation is immediate from the maximality of $Q$, together with the fact that $G$ is short-inseparable. 

\begin{claim}\label{EitherInducedOrAtMostOnePD} If $G^{\textnormal{small}}_Q$ is not an induced subgraph of $G$, then there is precisely one edge of $E(G)\setminus E(G^{\textnormal{small}}_Q)$ with both endpoints in $V(G^{\textnormal{small}}_Q)$, and this edge is precisely $G^{\textnormal{large}}_Q\cap C$. \end{claim}

Applying Theorem \ref{RainbowNonEqualEndpointColorThm}, we have the following. 

\begin{claim}\label{MainConsLemmaIntStepGLargGGeq1} $|E(G^{\textnormal{large}}_Q\cap C)|\geq 1$ \end{claim}

\begin{claimproof} Suppose not. Thus, $w$ is degenerate and $Q$ is a 5-cycle, where $G^{\textnormal{large}}_Q\cap C=x$ for some $x\in V(C)$. Let $C-x=v_1\cdots v_m$ and let $xw, xw'$ be the edges of $Q$ incident to $x$. Regarding $G^{\textnormal{small}}_Q$ as a planar embedding with outer cycle $C$, w let $H$ be a planar embedding obtained from $G^{\textnormal{small}}_Q$ by replacing $x$ with a pair of vertices $y, y'$, where $N_H(y)=N_G(x)\cap V(G^{\textnormal{small}}_Q-w')$ and $N_H(y')=N_G(x)\cap V(G^{\textnormal{small}}_Q-w)$ and $H$ has outer cycle $D:=yv_1\cdots v_my'w'\mathring{Q}w$. Let $L^*$ be a list-assignment for $H$ where $L^*(y)=L^*(y')=L(x)$ and let $P:=yw\mathring{Q}w'y'$. Now we apply Theorem \ref{RainbowNonEqualEndpointColorThm}. The fact that $G$ is short-inseparable implies both that $H$ has no chord of $P$ and, letting $\mathcal{G}$ be the rainbow $(H, D, P, L)$, there is no $\mathcal{G}$-obstruction. Thus, there is a $\phi\in\textnormal{Crown}(P, H)$ using the same color on $y, y'$, where $\textnormal{dom}(\phi)$ contains at least one neighbor of $w$. Since $G^{\textnormal{small}}_Q$ is an induced in $G$, there is a partial $L$-coloring $\phi^*$ of $V(G^{\textnormal{small}}_Q)$ with $\textnormal{dom}(\phi^*)=(\textnormal{dom}(\phi)\setminus\{y, y'\})\cup\{x\}$, where $\phi^*(x)=\phi^*(y)=\phi(y')$ and otherwise $\phi^*=\phi$. Let $S:=(V(G^{\textnormal{small}}_Q)\setminus (Q\setminus\textnormal{dom}(\psi)))\cup\{u, w\}$. Analogous to Theorem \ref{GenThmTargetConnector}, it is straightfoward to check that $\phi^*$ extends to a $(Q, uw)$-target $\psi$, and $(S, \psi)$ is a $(Q, uw)$-pair, contradicting our assumption. \end{claimproof}

We now note the following. 

\begin{claim}\label{IfTargetFailsInertFails} Let $\sigma$ be a $(Q, uw)$-target and let $\phi$ be an $L$-coloring of $V(G^{\textnormal{large}}\cap C)\setminus\textnormal{Sh}^2(C)$ such that $\sigma\cup\phi$ is a proper $L$-coloring of its domain. Then $V(G^{\textnormal{large}}_Q)\cap\textnormal{Sh}^2(C)$ is not $(L, \phi)$-inert in $G$. \end{claim}

\begin{claimproof} Suppose $V(G^{\textnormal{large}}_Q)\cap\textnormal{Sh}^2(C)$ is $(L, \psi)$-inert in $G$. Let $H$ be the subgraph of $G$ induced by the vertices of $(C\cup (G^{\textnormal{small}}_Q+uw))\setminus (Q\setminus\textnormal{dom}(\sigma))$. We claim now that $(V(H), \sigma\cup\phi)$ is a $(Q, uw)$-pair.  Note that $V(H)$ is topologically reachable from $C$. It follows from our assumption that $V(H)$ is $(L, \sigma\cup\phi)$-inert in $G$. The only other nontrivial condition to check among those of Definition \ref{X1X2Defns} is in Subclaim \ref{If<3WithoutTar<3With} below.

\vspace*{-8mm}
\begin{addmargin}[2em]{0em}
\begin{subclaim}\label{If<3WithoutTar<3With} Every vertex of $D_1(H)$ has an $L_{\sigma\cup\phi}$-list of size at least three. \end{subclaim}

\begin{claimproof}  Suppose not. Thus, there is a $z\in V(G^{\textnormal{large}}_Q\setminus C)$ with $|L_{\sigma\cup\phi}(z)|<3$, where $z$ has a neighbor in $H$. If $N(z)\cap\textnormal{dom}(\sigma\cup\phi)\subseteq\textnormal{dom}(\sigma)$, then it follows from Definition \ref{TargetDefForEncM} that $|L_{\sigma}(z)|\geq 3$. Thus, $z$ has at least one neighbor in $\textnormal{dom}(\phi)\setminus\textnormal{dom}(\sigma)$, so $z\in D_1(C)\cap V(G^{\textnormal{large}}_Q)$. As every subpath of $G^{\textnormal{large}}_Q\cap C$ is 2-consistent, it follows that every vertex of $D_1(C)\cap V(G^{\textnormal{large}}_Q)$ has at most two neighbors in $G^{\textnormal{large}}_Q\cap C)\setminus\textnormal{Sh}^2(C)$. Thus, $|L_{\phi}(z)|\geq 3$, so $z$ has a neighbor in $\textnormal{dom}(\sigma)\setminus\textnormal{dom}(\phi)$. It follows from the definition of $\sigma$ that, if $w$ is degenerate, then at most the endpoints of the middle edge of $Q\setminus C$ are colored by $\sigma$, and, if $w$ is non-degenerate, then only the middle vertex of $Q\setminus C$ is colored by $\sigma$. Since $z$ has at least one neighbor in $\textnormal{dom}(\phi)\setminus V(Q)$, it follows from the maximality of $Q$ that $z$ has no neighbor in $\textnormal{dom}(\sigma)\setminus\textnormal{dom}(\phi)$, a contradiction. \end{claimproof}\end{addmargin}

Now we check X3). Since $V(G^{\textnormal{large}}_Q\cap C)\cap\textnormal{dom}(\sigma\cup\phi)=VG^{\textnormal{large}}_Q\cap C)\cap\textnormal{dom}(\phi)=V(G^{\textnormal{large}}\cap C)\setminus\textnormal{Sh}^2(C)$, it follows that $(V(H), \sigma\cup\phi)$ is indeed a $(Q, uw)$-pair, contradicting our assumption on $Q$. This proves Claim \ref{IfTargetFailsInertFails}. \end{claimproof}

We now analyze the vertices of distance one from $C$. The following observation is immediate from the fact that $Q$ is a maximal $w$-enclosure, together with our assumption that $u,w\not\in\textnormal{Sh}^4(C)$.

\begin{claim} For any $y\in D_1(C)$, if $y$ has a neighbor in $G^{\textnormal{large}}_Q\cap C$, then $N(y)\cap V(C)\subseteq V(G^{\textnormal{large}}_Q\cap C)$. Furthermore, for any distinct $x, v\in N(x)\cap V(C)$, the graph $V(G^{\textnormal{small}}_{xyv}\cap G^{\textnormal{small}}_Q)$ is a subset of $Q\cap C$ of size at most one.  \end{claim}

To analyze the vertices of $D_1(C)$, we introduce the following notation. We define $\textnormal{Sp}(C)$ to be the set of $y\in D_1(C)$ such that $y$ has at least one neighbor in $G^{\textnormal{large}}\cap C$ and $|N(y)\cap V(C)|\geq 2$. For each $y\in\textnormal{Sp}(C)$, we let $P_y$ be the unique 2-chord of $C$ which has midpoint $y$ and satisfies $N(y)\cap V(C)\subseteq V(G^{\textnormal{large}}_Q)$. Furthermore, to avoid clutter, for each $y\in\textnormal{Sp}(C)$, we let $G_y$ denote $G^{\textnormal{small}}_{P_y}$. We now prove that, for each $y\in\textnormal{Sp}(C)$, the graph $G_y$ has a very simple structure. We first introduce the following notation. We let $G^{\textnormal{large}}_Q\cap C=x_1\cdots x_n$ for some $n$. Possibly $Q$ is an improper 5-chord of $C$, but, in any case, $n\geq 2$ by Claim \ref{MainConsLemmaIntStepGLargGGeq1}. 

\begin{claim}\label{AllVertSame3ListSubCL} If $n\neq 2$, then the following hold: All the vertices of $ G^{\textnormal{large}}_Q\cap C$ have the same $L$-list of size three, and, for each $y\in\textnormal{Sp}(C)$, the graph $G_y$ is a broken wheel with principal path $P_y$, where $G_y-y$ has length at most three. 
\end{claim}

\begin{claimproof} Suppose $n\geq 3$. We first note the following.

\vspace*{-8mm}
\begin{addmargin}[2em]{0em}
\begin{subclaim}\label{YSpanNoUnivCol} For any $y\in\textnormal{Sp}(C)$ and any endpoint $x$ of $P_y$, no color of $L(x)$ is $(P_y, G_y)$-universal. \end{subclaim}

\begin{claimproof} Suppose that there is an endpoint $x$ of $P_y$ such that there is $(P_y, G_y)$-universal color of $L(x)$. Let $j,k$ be indices with $1\leq j<k\leq n$, where $G_y-y=x_jx_{j+1}\cdots x_k$. This is illustrated in Figure \ref{ElipseFirstOFigureSpM1}. Say without loss of generality that there is a  $(P_y, G_y)$-universal color $a\in L(x_j)$. By Theorem \ref{MainLinkingResultAlongFacialCycle}, there is a $\psi\in\textnormal{Link}(x_1x_2\cdots x_j)$ with $\psi(x_j)=a$. As $n\geq 3$, the graph $G^{\textnormal{small}}_Q$ is induced in $G$. By A) of Theorem \ref{GenThmTargetConnector}, there is a $(Q, uw)$-target $\sigma$ with $\sigma(x_1)=\psi(x_1)$. Again by Theorem \ref{MainLinkingResultAlongFacialCycle}, there is a $\phi\in\textnormal{Link}(x_kx_{k+1}\cdots x_n)$ with $\phi(x_n)=\sigma(x_n)$. The union $\sigma\cup\psi\cup\phi$ is a proper $L$-coloring of its domain. This is true even if $G_y$ is a triangle, as $a$ is $(P_y, G_y)$-universal. The domain of $\psi\cup\phi$ is precisely $V(G^{\textnormal{large}}_Q\cap C)\setminus\textnormal{Sh}^2(C)$. It follows from the definition of $\phi$ and $\psi$, together with our choice of $a$, that $V(G^{\textnormal{large}}_Q\cap C)\cap\textnormal{Sh}^2(C)$ is $(L, \psi\cup\phi)$-inert in $G$, contradicting Claim \ref{IfTargetFailsInertFails}. \end{claimproof}\end{addmargin}

By Subclaim \ref{YSpanNoUnivCol}, together with Observation \ref{CorMainEitherBWheelAtM1ColCor}, we immediately have the following: 

\vspace*{-8mm}
\begin{addmargin}[2em]{0em}
\begin{subclaim}\label{EachYSpanBWheelPPathM} For each $y\in\textnormal{Sp}(C)$, $G_y$ is a broken wheel with principal path $P_y$, and furthermore, if $G_y-y$ has length at least three, then, letting $G_y-y=x_j\cdots x_k$ for some $1\leq j<k\leq n$, we have $L(x_j)\subseteq L(x_{j+1})\cap L(x_{j+2})$ and $L(x_k)\subseteq L(x_{k-1})\cap L(x_{k-2})$. \end{subclaim}\end{addmargin}

The bulk of the proof of Claim \ref{AllVertSame3ListSubCL} consists of the key step in Subclaim \ref{SubMainSpanCAtM3}.

\begin{center}\begin{tikzpicture}
\draw (0,0) ellipse (8cm and 3cm);
\node[shape=circle, draw=black, inner sep=0pt, minimum size=0.55cm] (W) at (5, 0) {$w$};
\node[shape=circle, draw=black, inner sep=0pt, minimum size=0.55cm] (U) at (3.5, 0) {$u$};
\node[shape=circle, draw=white, inner sep=0pt, minimum size=0.55cm] (GSm) at (6.5, 0) {$G^{\textnormal{small}}_Q$};
\node[shape=circle, fill=black, inner sep=0pt, minimum size=0.25cm] (X1) at (6, 1.97) {};
\node[shape=circle, fill=black, inner sep=0pt, minimum size=0.25cm] (XN) at (6, -1.97) {};
\node[shape=circle, fill=black, inner sep=0pt, minimum size=0.25cm] (Z) at (5.5, 0.985) {};
\node[shape=circle, fill=black, inner sep=0pt, minimum size=0.25cm] (Z') at (5.5, -0.985) {};
\draw[-, line width=1.8pt, color=red] (X1) to (Z) to (W) to (Z') to (XN);
\node[shape=circle, draw=white, inner sep=0pt, minimum size=0.25cm] (X1L) at (6, 2.3) {\small $x_1$};
\node[shape=circle, draw=white, inner sep=0pt, minimum size=0.25cm] (XNL) at (6, -2.3) {\small $x_n$};

\node[shape=circle, fill=black, inner sep=0pt, minimum size=0.25cm] (XK) at (-2, 2.91) {};

\node[shape=circle, draw=white, inner sep=0pt, minimum size=0.25cm] (XK0L) at (-2.1, 3.4) {\small $x_k$};

\node[shape=circle, fill=black, inner sep=0pt, minimum size=0.25cm] (XK>) at (1, 2.97) {};

\node[shape=circle, fill=black, inner sep=0pt, minimum size=0.25cm] (XK<) at (-1, 2.97) {};

\node[shape=circle, fill=black, inner sep=0pt, minimum size=0.25cm] (XJ) at (2, 2.91) {};

\node[shape=circle, draw=white, inner sep=0pt, minimum size=0.25cm] (XJL) at (2.1, 3.4) {\small $x_j$};

\node[shape=circle, fill=white, inner sep=0pt, minimum size=0.55cm] (XL0+2) at (0, 3) {$\hdots$};

\node[shape=circle, draw=black, inner sep=0pt, minimum size=0.55cm] (Y0) at (0, 1.8) {\small $y$};

\draw[-] (U) to (W);

\draw[] (XJ) to (Y0) to (XK);

\draw[] (XK<) to (Y0) to (XK>);

\end{tikzpicture}\captionof{figure}{The ellipse enclosing the diagram represents the cycle $C$}\label{ElipseFirstOFigureSpM1}\end{center}

\vspace*{-8mm}
\begin{addmargin}[2em]{0em}
\begin{subclaim}\label{SubMainSpanCAtM3} For each $y\in\textnormal{Sp}(C)$, the path $G_y-y$ has length at most three. \end{subclaim}

\begin{claimproof}  Suppose $G_y-y$ has length greater than three. Let $G_y-y=x_j\cdots x_k$ for some $1\leq j<k\leq n$, where $k-j>3$. Now, let $H$ be a graph obtained from $G$ by deleting $x_{j+1}, x_{j+2}$ and replacing them with a new edge $x_jx_{j+3}$. Let $C^*$ be the cycle obtained from $C$ by the procedure above, i.e $C^*=(C-\{x_{j+1}, x_{j+2})+x_jx_{j+3}$. Note that, since $|V(G_y)|>3$ and $G$ is short-inseparable, we have $|V(C)|\geq 5$, so $C^*$ is indeed a cycle, i.e $|V(C^*)|\geq 3$. Let $\mathcal{K}^*$ be the collar $[\Sigma, H, C^*, L]$. It is clear that $\mathcal{K}^*$ is still a uniquely 4-determined collar. It is also clear that $w\in D_2(C^*)\setminus\textnormal{Sh}^4(C^*, \mathcal{K}^*)$ and $u\in D_3(C^*)\setminus\textnormal{Sh}^4(C^*, \mathcal{K}^*)$. Now, we want to apply the minimality of $G$ to $\mathcal{K}^*$, but first we need to check that $H$ is still short-inseparable. If $\textnormal{ew}(H)\leq 4$, then, since $\textnormal{ew}(G)>4$, there is a noncontractible cycle of $H$ of length at most four which contains a vertex of $C^*$, which is false, since $\mathcal{K}^*$ is uniquely 4-determined. Suppose $H$ is not short-inseparable. By Subclaim \ref{EachYSpanBWheelPPathM}, there is no $y'\in\textnormal{Sp}(C)\setminus\{y\}$ which is also adjacent to $x_j, x_k$, and, by Claim \ref{NoChordofCMinCounterCLMain10}, there is no chord of $C$. As $G$ is short-inseparable, but $H$ is not, $G$ contains a 2-chord of $C$ with endpoints $x_j, x_k$, and, in $H$, $x_jx_{j+3}$ is an edge of a 4-cycle which separates $y$ from another vertex of $H$, which is false, as $k-j>3$. Thus, $H$ is indeed short-inseparable. Since $|V(H)|<|V(G)$ and $Q$ is a still a maximal $w$-enclosure in $\mathcal{K}^*$, it follows from the minimality of $\mathcal{K}$ that there is a $(Q, uw, \mathcal{K}^*)$-pair $(S, \phi)$. Since $x_{j+1}, x_{j+2}\in\textnormal{Sh}^2(C, \mathcal{K})$, we obtain the following.
\begin{equation}\tag{Eq1}\label{HSmallGSmallDag} (H^{\textnormal{large}}_Q\cap C^*)\setminus\textnormal{Sh}^2(C^*, \mathcal{K}^*)=(G^{\textnormal{large}}_Q\cap C)\setminus\textnormal{Sh}^2(C, \mathcal{K})\end{equation}
Now, we let $H^*=H^{\textnormal{small}}_{x_jyx_k}$. Note that $H^*$ is a broken wheel with principal path $x_jyx_k$. In particular, $H^*-y=x_jx_{j+3}\cdots x_k$. By (\ref{HSmallGSmallDag}) ,  $V(H^*)\setminus\textnormal{Sh}^2(C^*, \mathcal{K}^*)=\{x_j, x_k\}$. We claim now that $(S\cup\{x_{j+1}, x_{j+2}\}, \phi)$ is a $(Q, uw, \mathcal{K})$-pair. If we show this, then we are done, as this contradicts our assumption on $Q$. 

Since $V(C^*)\subseteq S$, we have $V(C)\subseteq S\cup\{x_{j+1}, x_{j+2}\}$. The only other nontrivial thing to check from Definition \ref{X1X2Defns} is that $S\cup\{x_{j+1}, x_{j+2}\}$ is $(L, \phi)$-inert in $G$. Since $(S, \phi)$ is a $(Q, uw, \mathcal{K}^*)$-pair, we have $\textnormal{dom}(\phi)\cap V(H^{\textnormal{large}}_Q\cap C^*)=V(H^{\textnormal{large}}_Q\cap C^*)\setminus\textnormal{Sh}^2(C^*, \mathcal{K}^*)$, so we get $\{x_j, x_k\}\subseteq\textnormal{dom}(\phi)\cap V(H^*)\subseteq\{x_j, y, x_k\}$. Now, suppose that $S\cup\{x_{j+1}, x_{j+2}\}$ is not $(L, \phi)$-inert in $G$. Note that $\phi$ is a proper $L$-coloring of its domain in $H$, and therefore also a proper $L$-coloring of its domain in $G$. As indicated above, we have $V(C^*)\subseteq S$ and $\textnormal{dom}(\phi^*)\cap V(H^*)=\{x_j, x_k\}$. Since $S$ is $(L, \phi)$-inert in $H$, but $S\cup\{x_{j+1}, x_{j+2}\}$ is not $(L, \phi)$-inert in $G$, it follows that there exists an $L$-coloring $\psi$ of $x_jyx_k$ which extends to an $L$-coloring of $H^*$ but does not extend to an $L$-coloring of $G_y$. Now let $\psi^*$ be an extension of $\psi$ to an $L$-coloring of $H^*$. Note that $\psi^*$ is also a proper $L$-coloring of its domain in $G$, which is $G_y-\{x_{j+1}, x_{j+2}\}$, and furthermore, since $\psi^*$ is an $L$-coloring of $H^*$, we have $\psi^*(x_j)\neq\psi^*(x_{j+3})$. By Subclaim \ref{EachYSpanBWheelPPathM}, we have $L(x_j)\subseteq L(x_{j+1})\cap L(x_{j+2})$. Now, since $\psi^*(x_j)$ is distinct from both $\psi^*(x_{j+1})$ and $\psi^*(x_{j+3})$, we can extend $\psi^*$ to an $L$-coloring of $G_y$ by first coloring $x_{j+2}$ with $\psi^*(x_{j+2})$, leaving a color for $x_{j+1}$, a contradiction. We conclude that $(S\cup\{x_{j+1}, x_{j+2}\}$ is $(L, \phi)$-inert in $G$. Now we just need to check that $(S\cup\{x_{j+1}, x_{j+2}\}, \phi)$ satisfies X3). Since $(S, \phi)$ is $(Q, uw, \mathcal{K}^*)$-pair, this is immediate from (\ref{HSmallGSmallDag}), together with the fact that $H^{\textnormal{small}}_Q=G^{\textnormal{small}}_Q$. This proves Subclaim \ref{SubMainSpanCAtM3}. \end{claimproof}\end{addmargin}

To finish the proof of Claim \ref{AllVertSame3ListSubCL}, it suffices to show that all of the vertices of $ G^{\textnormal{large}}_Q\cap C$ have the same $L$-list of size three. Suppose not. Without loss of generality, there is an $i\in\{1, \cdots n-1\}$ and an $a\in L(x_i)$ with $|L(x_{i+1})\setminus\{a\}|\geq 3$.

\vspace*{-8mm}
\begin{addmargin}[2em]{0em}
\begin{subclaim}\label{xixi+1UniqCommSub} $x_i, x_{i+1}$ have a unique common neighbor in $D_1(C)$. \end{subclaim}

\begin{claimproof} Suppose not. By Subclaim \ref{EachYSpanBWheelPPathM}, there is no $y\in\textnormal{Sp}(C)$ such that $x_ix_{i+1}$ is an edge of $G_y$. By Theorem \ref{MainLinkingResultAlongFacialCycle}, there is a $\phi\in\textnormal{Link}(x_1\cdots x_i)$ with $\phi(x_i)=a$. By A) of Theorem \ref{GenThmTargetConnector}, since $n\geq 3$, there is a $(Q, uw)$-target $\sigma$ with $\sigma(x_1)=\phi(x_1)$. Again by Theorem \ref{MainLinkingResultAlongFacialCycle}, there is a $\psi\in\textnormal{Link}(x_{i+1}\cdots x_n)$ with $\psi(x_n)=\sigma(x_n)$. Since $a\not\in L(x_{i+1})$ and $C$ is induced, the union $\phi\cup\psi\cup\sigma$ is a proper $L$-coloring of its domain. Furthermore, by our assumption on the edge $x_ix_{i+1}$, we have $\textnormal{Sh}^2(C)\cap V(G^{\textnormal{large}}_Q\cap C)=\textnormal{Sh}^2(x_1\cdots x_i)\cup\textnormal{Sh}^2(x_{i+1}\cdots x_n)$. Thus, the domain of $\psi\cup\phi$ is precisely $V(G^{\textnormal{large}}_Q\cap C)\setminus\textnormal{Sh}^2(C)$, contradicting Claim \ref{IfTargetFailsInertFails}. \end{claimproof}\end{addmargin}

Applying Subclaim \ref{xixi+1UniqCommSub}, we let $y$ be the unique common neighbor of $x_i$ and $x_{i+1}$ in $D_1(C)$. Thus, $y\in\textnormal{Sp}(C)$ and $x_ix_{i+1}$ is a subpath of $G_y-y$. By Subclaim \ref{SubMainSpanCAtM3}, $G_y-y$ has length at most three. By Subclaim \ref{YSpanNoUnivCol}, for each endpoint $x$ of $G_y-y$, no color of $L(x)$ is $(P_y, G_y)$-universal with respect to the list-assignment $L$. Thus, $|L(x)|=3$ and $L(x)$ is precisely equal to the list of its lone neighbor on the path $G_y-y$ (this is true even if $G_y$ is a triangle). If $G_y-y$ has length at most two, this implies that all the vertices of $G_y-y$ have the same 3-list, contradicting our assumption on $x_ix_{i+1}$. If $G_y-y$ has length precisely three, then, by Subclaim \ref{EachYSpanBWheelPPathM}, we again get that all the vertices of $G_y-y$ have the same 3-list, contradicting our assumption on $x_ix_{i+1}$. This completes the proof of Claim \ref{AllVertSame3ListSubCL}.  \end{claimproof}

Note that Claim \ref{AllVertSame3ListSubCL} implies the following.

\begin{claim}\label{AnyDiffColorExtUseLinkM5} Let $y\in\textnormal{Sp}(C)$, where $G_y-y$ is a path of odd length. Let $x, x'$ be the endpoints of $P_y$ and let $\sigma$ be an $L$-coloring of $\{x, x'\}$ which uses different colors on $x, x'$. Then any extension of $\sigma$ to an $L$-coloring of $V(P_y)$ extends to $L$-color all of $G_y$. \end{claim}

\begin{claimproof} This is is immediate if $G_y$ is a triangle, so suppose that $|E(G_y-y)|>1$. Let $\tau$ be an extension of $\sigma$ to an $L$-coloring of $V(P_y)$, and let $a=\sigma(x)$ and $b=\sigma(x')$. By Claim \ref{AllVertSame3ListSubCL}, $G_y-y$ is a path of length three, and $|V(G_y)|=5$, and all the vertices of $G_y-y$ have the same list of size three. As $\tau(y)\neq a$ and $a\neq b$, we can extend $\tau$ to an $L$-coloring of $G_y$ by first using $a$ on the lone neighbor of $x'$ in $G_y-y$. \end{claimproof}

Later, we prove that $x_1\cdots x_n$ is a path of length either one or three and that $|\textnormal{Sp}(C)|\leq 1$, where the lone vertex of $\textnormal{Sp}(C)$, if it exists, is adjacent to all the vertices of $x_1\cdots x_n$. We actually prove something slightly stronger, which is stated in Claim \ref{LoneVertSpanPathOddLenCl}. To prove this, we first prove two intermediate results. Firstly, given an edge $e\in E(G^{\textnormal{large}}_Q\cap C)$, we say that $e$ is \emph{uncovered} if no vertex of $\textnormal{Sp}(C)$ is adjcent to both endpoints of $e$.

\begin{claim}\label{InterMResAtMostXSpanC} $|\{e\in E(G_Q^{\textnormal{large}}\cap C): e\ \textnormal{is uncovered}\}|+|\{y\in\textnormal{Sp}(C): G_y-y\ \textnormal{has odd length}\}|\leq 1$
\end{claim}

\begin{claimproof} Suppose Claim \ref{InterMResAtMostXSpanC} does not hold. Thus, there exist  indices $1\leq k^0<\ell^0\leq k^1<\ell^1\leq n$ such that, for each $r\in\{0,1\}$, precisely one of the following holds:
\begin{enumerate}[label=\roman*)]
\itemsep-0.1em
\item There is a $y^r\in\textnormal{Sp}(C)$ where $G_{y^r}-y^r=x_{k^r}x_{k^r+1}\cdots x_{\ell^r}$, and $x_{k^r}x_{k^r+1}\cdots x_{\ell^r}$ is a path of odd length; OR
\item $\ell^r=k^r+1$ and the vertices $x_{k^r}, x_{\ell^r}$ have no common neighbor in $\textnormal{Sp}(C)$
\end{enumerate}

In any case, each of $\ell^0-k^0$ and $\ell^1-k^1$ is odd, and it follows from Claim \ref{AllVertSame3ListSubCL} that, for each $r\in\{0,1\}$, $\ell^r-k^r$ is either one or three. This is illustrated in Figure \ref{ElipseFigureSpM1} in the case where there exist $y^0, y^1$ and $G_{y^0}-y^0$ has length three and $G_{y^1}-y^1$ has length one. Since $G^{\textnormal{large}}\cap C$ is not just an edge, it follows from Claim \ref{EitherInducedOrAtMostOnePD} that $G^{\textnormal{small}}_Q$ is induced in $G$. Possibly $x_1=x_n$, but, in any case, since $|L(x_n)|=3$, it follows from Theorem \ref{GenThmTargetConnector} that there is $(Q, uw)$-target $\sigma$. Let $a=\sigma(x_1)$ and $b=\sigma(x_n)$. By Theorem \ref{MainLinkingResultAlongFacialCycle}, there exist a $\phi^0\in\textnormal{Link}(x_1\cdots x_{k^0})$ and a $\phi^1\in\textnormal{Link}(x_{\ell_1}\cdots x_n)$ such that $\phi^0(x_1)=a$ and $\phi^1(x_n)=b$. Note that $\sigma\cup\phi^0\cup\phi^1$ is a proper $L$-coloring of its domain. Let $c^0:=\phi^0(x_{k^0})$ and $c^1:=\phi^1(x_{\ell^1})$. Since $|L(x_{\ell^0})\setminus\{c^0\}|\geq 2$ and $|L(x_{k^1})\setminus\{c^1\}|\geq 2$, it follows from Theorem \ref{MainLinkingResultAlongFacialCycle} that there is a $\psi\in\textnormal{Link}(x_{\ell^0}\cdots x_{k^1})$ with $\psi(x_{\ell^0})\neq c^0$ and $\psi(x_{k^1})\neq c^1$. Furthermore, $\sigma\cup\phi^0\cup\psi\cup\phi^1$ is a proper $L$-coloring of its domain, and the domain of $\phi^0\cup\psi\cup\phi^1$ is precisely $V(G^{\textnormal{large}}_Q\cap C)\setminus\textnormal{Sh}^2(C)$. Let $\tau=\phi^0\cup\psi\cup\phi^1$. It follows from Claim \ref{IfTargetFailsInertFails} that $V(G^{\textnormal{large}}_Q\cap C)\cap\textnormal{Sh}^2(C)$ is not $(L, \tau)$-inert in $G$. Thus,  $\tau$ extends to an $L$-coloring of $\textnormal{dom}(\tau)\cup\{y^0, y^1\}$ which does not extends to $L$-color all of $G_{y^0}\cup G_{y^1}$. Yet, by Claim \ref{AnyDiffColorExtUseLinkM5}, this contradicts out choice of colors for $x_{\ell^0}, x_{k^1}$. \end{claimproof}

\begin{center}\begin{tikzpicture}
\draw (0,0) ellipse (8cm and 3cm);
\node[shape=circle, draw=black, inner sep=0pt, minimum size=0.55cm] (W) at (5, 0) {$w$};
\node[shape=circle, draw=black, inner sep=0pt, minimum size=0.55cm] (U) at (3.5, 0) {$u$};
\node[shape=circle, draw=white, inner sep=0pt, minimum size=0.55cm] (GSm) at (6.5, 0) {$G^{\textnormal{small}}_Q$};
\node[shape=circle, fill=black, inner sep=0pt, minimum size=0.25cm] (X1) at (6, 1.97) {};
\node[shape=circle, fill=black, inner sep=0pt, minimum size=0.25cm] (XN) at (6, -1.97) {};
\node[shape=circle, fill=black, inner sep=0pt, minimum size=0.25cm] (Z) at (5.5, 0.985) {};
\node[shape=circle, fill=black, inner sep=0pt, minimum size=0.25cm] (Z') at (5.5, -0.985) {};
\draw[-, line width=1.8pt, color=red] (X1) to (Z) to (W) to (Z') to (XN);
\node[shape=circle, draw=white, inner sep=0pt, minimum size=0.25cm] (X1L) at (6, 2.3) {\small $x_1$};
\node[shape=circle, draw=white, inner sep=0pt, minimum size=0.25cm] (XNL) at (6, -2.3) {\small $x_n$};

\node[shape=circle, fill=black, inner sep=0pt, minimum size=0.25cm] (XL0) at (-6, 1.97) {};

\node[shape=circle, draw=white, inner sep=0pt, minimum size=0.25cm] (XL0L) at (-6, 2.35) {\small $x_{\ell^0}$};

\node[shape=circle, fill=black, inner sep=0pt, minimum size=0.25cm] (XK0) at (-3.5, 2.7) {};

\node[shape=circle, draw=white, inner sep=0pt, minimum size=0.25cm] (XK0L) at (-3.5, 3.09) {\small $x_{k^0}$};

\node[shape=circle, fill=black, inner sep=0pt, minimum size=0.25cm] (XL0+1) at (-5.167, 2.3) {};
\node[shape=circle, fill=black, inner sep=0pt, minimum size=0.25cm] (XL0+2) at (-4.33, 2.52) {};

\node[shape=circle, draw=black, inner sep=0pt, minimum size=0.55cm] (Y0) at (-4.5, 1.5) {\small $y^0$};

\draw[-] (Y0) to (XL0);
\draw[-] (Y0) to (XL0+1);
\draw[-] (Y0) to (XL0+2);
\draw[-] (Y0) to (XK0);

\node[shape=circle, draw=black, inner sep=0pt, minimum size=0.55cm] (Y1) at (-4.2, -1.5) {\small $y^1$};

\node[shape=circle, fill=black, inner sep=0pt, minimum size=0.25cm] (XL1) at (-5.167, -2.3) {};
\node[shape=circle, fill=black, inner sep=0pt, minimum size=0.25cm] (XK1) at (-4.33, -2.52) {};

\node[shape=circle, draw=white, inner sep=0pt, minimum size=0.25cm] (XK0L) at (-4.25, -2.9) {\small $x_{{\ell}^1}$};
\node[shape=circle, draw=white, inner sep=0pt, minimum size=0.25cm] (XK0L) at (-5.2, -2.7) {\small $x_{k^1}$};

\draw[-] (XL1) to (Y1) to (XK1);
\draw[-] (U) to (W);

\end{tikzpicture}\captionof{figure}{The ellipse enclosing the diagram represents the cycle $C$}\label{ElipseFigureSpM1}\end{center}

\begin{claim}\label{LoneVertSpanPathOddLenCl} 
Both of the following hold.
\begin{enumerate}[label=\arabic*)]
\itemsep-0.1em
\item $n\in\{2,3,4\}$. Furthermore, if $\textnormal{Sp}(C)=\varnothing$, then $n=2$; AND
\item If $y\in \textnormal{Sp}(C)$, then $G_y-y=x_1\cdots x_n$. In particular, $|\textnormal{Sp}(C)|\leq 1$.
\end{enumerate} \end{claim}

\begin{claimproof} If $n=2$, then it is is immediate from short-inseparability that $|\textnormal{Sp}(C)|\leq 1$ and so both 1) and 2) hold. Suppose now that Claim \ref{LoneVertSpanPathOddLenCl} does not hold. Thus, it follows from Claims \ref{AllVertSame3ListSubCL} and \ref{InterMResAtMostXSpanC} that $n=4$ and there is a $y\in\textnormal{Sp}(C)$ such that $G_y-y$ has even length. Again applying Claim \ref{AllVertSame3ListSubCL}, we let $1\leq k\leq 2$, where $G_y-y=x_kx_{k+1}x_{k+2}$. Note that $G_y-y$ is a proper subpath of of $G^{\textnormal{large}}_Q\cap C$. Let $H$ be a graph obtained from $G$ by contracting $x_k, x_{k+2}$ into $x_{k+1}$ and deleting the resulting parallel edges. Let $x^{\dagger}$ and $C^{\dagger}$ respectively be the new vertex and the facial cycle of $H$ obtained from $C$ by the contraction. Since $G$ is short-inseparable and $|V(G_y)|>3$, we have $|V(C)|\geq 5$, so $C^{\dagger}$ is indeed a cycle. In $H$, we have $N(y)\cap V(C^{\dagger})=\{x^{\dagger}\}$. By Claim \ref{AllVertSame3ListSubCL}, $L(x_k)=L(x_{k+2})$. Let $L^{\dagger}$ be a list-assignment for $H$, where $L^{\dagger}(x^{\dagger})=L(x_k)=L(x_{k+2})$, and otherwise $L^{\dagger}=L$. It is immediate that $\mathcal{K}^{\dagger}:=[\Sigma, H, C^{\dagger}, L^{\dagger}]$ is still a uniquely 4-determined collar. Since at most one vertex of $Q\cap C$ lies in $\{x_k, x_{k+2}\}$, we may regard $Q$ as a subgraph of $H$. Note that $Q$ is still a maximal $w$-enclosure in $\mathcal{K}^{\dagger}$. In particular, $H^{\textnormal{small}}_{Q^{\dagger}}=G^{\textnormal{small}}_Q$. 

\vspace*{-8mm}
\begin{addmargin}[2em]{0em}
\begin{subclaim}\label{HNotSSFSmGSub} $H$ is  not short-inseparable. \end{subclaim}

\begin{claimproof} Suppose $H$ is short-inseparable. Thus, $\mathcal{K}^{\dagger}, Q$, and $uw$ satisfy all the conditions in statement of Theorem \ref{StrengthendVerManColRes}. Since $|V(H)|<|V(G)|$, there is a $(Q, uw, \mathcal{K}^{\dagger})$-pair $(S, \phi)$. In particular, $\textnormal{dom}(\phi)\cap V(H^{\textnormal{large}}_Q\cap C^{\dagger})=V(H^{\textnormal{large}}_Q\cap C^{\dagger})\setminus\textnormal{Sh}^2(C^{\dagger}, \mathcal{K}^{\dagger})$. Note that $G$ contains no generalized chord $R$ of $C$ of length at most two such that $x_{k+1}\in V(G^{\textnormal{small}}_R\setminus R)$, except for $R=x_kyx_{k+2}$. Thus, $x^{\dagger}\not\in\textnormal{Sh}^2(C^{\dagger}, \mathcal{K}^{\dagger})$, so $x^{\dagger}\in\textnormal{dom}(\phi)$. Let $a:=\phi(x^{\dagger})$. We now define a partial $L$-coloring $\psi$ of $V(G)$, where $\textnormal{dom}(\psi)=(\textnormal{dom}(\phi)\setminus\{x^{\dagger}\})\cup\{x_k, x_{k+2}\}$, and, in particular, $\psi(x_k)=\psi(x_{k+2})=a$, and $\psi(v)=\phi(v)$ for each $v\in\textnormal{dom}(\phi)\setminus\{x^{\dagger}\}$. This is permissible as the vertices of $G^{\textnormal{large}}_Q\cap C$ have the same 3-list. As $x_kx_{k+2}\not\in E(G)$, $\psi$ is indeed a proper $L$-coloring of its domain. Let $T:=(S\setminus\{x^{\dagger}\})\cup\{x_k, x_{k+1}, x_{k+2}\}$. We now produce a contradiction to our assumption on $Q$ by showing that $(T, \psi)$ is a $(Q, uw, \mathcal{K})$-pair. It is immediate that the inertness condition is satisfied, since $x_{k+1}$ has only three neighbors are two of them use the same color. Likewise, $V(C^{\dagger})\subseteq S$, so $V(C)\subseteq T$. For each $z\in V(G\setminus T)$, we have $L_{\psi}(z)=L_{\phi}(z)$, and $V(G^{\textnormal{large}}_Q)\cap\textnormal{Sh}^2(C, \mathcal{K})=(V(H)\cap\textnormal{Sh}^2(C^{\dagger}, \mathcal{K}^{\dagger}))\cup\{x_{k+1}\}$, so we are done. \end{claimproof}\end{addmargin}

Note that $\textnormal{ew}(H)>4$, or else $H$ contains a noncontractible cycle of length at most four which has a vertex of $C^{\dagger}$, which is false, as $\mathcal{K}^{\dagger}$ is uniquely 4-determined.  Thus, $H$ contains a separating cycle of length at most four, and this cycle contains $x^{\dagger}$. Let $\mathcal{R}$ denote the set of generalized chords $R$ of $C$ such that $3\leq |E(R)|\leq 4$ and $R$ has endpoints $x_k, x_{k+2}$. Note that $\mathcal{R}\neq\varnothing$ and, for each $R\in\mathcal{R}$, we have $G^{\textnormal{small}}_R\subseteq G^{\textnormal{large}}_Q$. For any two $R, R'\in\mathcal{R}$, we have either $ G^{\textnormal{small}}_R\subseteq G^{\textnormal{small}}_{R'}$ or vice-versa, so there is a unique maximal $R^m\in\mathcal{R}$. There is a separating cycle $D\subseteq H$ with $3\leq |V(D)|\leq 4$, where $D$ is obtained from $R^m$ by the contraction of $x_kx_{k+1}x_{k+2}$. Let $U^m$ be the unique open component of $\Sigma\setminus D$ containing $y$ and let $H^*$ be the embedding obtained from $H$ by deleting from $H$ all of the vertices of $V(H)\cap U^m$. Let $\mathcal{K}^*:=[\Sigma, H^*, C^{\dagger}, L^{\dagger}]$. Now, $U^m$ is a disc and every vertex in $U^m$ has a list of size at least five, so $\mathcal{K}^*$ is uniquely 4-determined. Furthermore, by the maximality of $R^m$, $H^*$ is short-inseparable. 

It is straightforward to check that all the other conditions in Theorem \ref{StrengthendVerManColRes} are still satisfied by $Q, uw$, and $\mathcal{K}^*$. In particular, $Q$ is a maximal $w$-enclosure in $\mathcal{K}^*$. As $|V(H^*)|<|V(G)|$, there is a $(Q, uw, \mathcal{K}^*)$-pair $(S, \phi)$. As $x^{\dagger}$ is a vertex of $(H^*)^{\textnormal{large}}_Q\setminus\textnormal{Sh}^2(C^{\dagger}, \mathcal{K}^*)$, we have $x^{\dagger}\in\textnormal{dom}(\phi)$. Let $a:=\phi(x^{\dagger})$. Now let $T:=(S\setminus\{x^{\dagger}\})\cup\{x_k, x_{k+1}, x_{k+2}\}\cup V(G^{\textnormal{small}}_{R^m}\setminus R^m)$ and define a partial $L$-coloring $\psi$ of $V(G)$, where $\textnormal{dom}(\psi)=(\textnormal{dom}(\phi)\setminus\{x^{\dagger}\})\cup\{x_k, x_{k+2}\}$, and, in particular, $\psi(x_k)=\psi(x_{k+2})=a$, and $\psi(v)=\phi(v)$ for each $v\in\textnormal{dom}(\phi)\setminus\{x^{\dagger}\}$. We now show that $(T, \psi)$ is a $(Q, uw, \mathcal{K})$-pair, which contradicts our assumption on $Q$. The only thing we need to check that is not analogous to the construction in Subclaim \ref{HNotSSFSmGSub} is the inertness condition. Note that $\{x_k, x_{k+2}\}\subseteq\textnormal{dom}(\psi)\cap V(G^{\textnormal{small}}_{R^m})\subseteq V(R^m)$, so it suffices to show that, for any $L$-coloring $\tau$ of $V(R^m)$, if $\tau(x_k)=\tau(x_{k+2})$, then $\tau$ extends to an $L$-coloring of $V(G^{\textnormal{small}}_{R^m})$. Suppose we have such a $\tau$. Note that $|L_{\tau}(x_{k+1})|\geq 2$ and $|L_{\tau}(y)|\geq (|V(R^m)|+1)-|N(y)\cap\textnormal{dom}(\tau)|$, so, by Theorem \ref{BohmePaper5CycleCorList}, $\tau$ extends to $L$-color $V(G^{\textnormal{small}}_{R^m})$. This proves Claim \ref{LoneVertSpanPathOddLenCl}. \end{claimproof}

Note that $Q$ is a proper generalized chord of $C$, or else, by Claim \ref{LoneVertSpanPathOddLenCl}, $G$ has parallel edges. Let $x_1v^0, x_nv^1$ denote the terminal edges of $Q$.  Furthermore, we let $C_Q$ be the facial cycle $(G^{\textnormal{small}}_Q\cap C)+Q$ of $G^{\textnormal{small}}_Q$. In the remainder of the proof of Theorem \ref{StrengthendVerManColRes}, whenever we use the notation from Definition \ref{EndNotationColor}, it is with the understanding that $G^{\textnormal{small}}_Q$ is being regarded as a planar embedding outer cycle $C_Q$.  Given an $L$-coloring $\phi$ of $\{x_1, x_n\}$, we say that $\phi$ is \emph{parity-respecting} if either $n=3$ and $\phi(x_1)=\phi(x_n)$, or $n\in\{2,4\}$ and $\phi(x_1)\neq \phi(x_n)$.

\begin{claim}\label{SigmaSameColX1XNCL} 
Both of the following hold.
\begin{enumerate}[label=\arabic*)]
\itemsep-0.1em
\item All the vertices of $\{x_1, \cdots, x_n\}$ have the same 3-list. Furthermore, for any parity-respecting $\phi$, $\{x_1, \cdots, x_n\}$ is inert and $\phi$ does not extend to a $\phi'\in\textnormal{Crown}(C_Q, G^{\textnormal{small}}_Q)$ with $d(w, \textnormal{dom}(\phi'))\leq 1$; AND
\item There is no chord of $Q$ in $G$, except possibly $x_1x_2$ if $n=2$. In particular, there is no chord of $Q$ in $G^{\textnormal{small}}_Q$.
\end{enumerate}
 \end{claim}

\begin{claimproof}  Suppose not all the vertices of $\{x_1, \cdots, x_n\}$ have the same 3-list. By Claim \ref{AllVertSame3ListSubCL}, $n=2$. Without loss of generality, there is a $c\in L(x_1)$ with $|L(x_2)\setminus\{c\}|\geq 3$. Now, there is a $\pi\in\textnormal{Crown}(Q, G^{\textnormal{small}}_Q)$ with $d(w, \textnormal{dom}(\pi))\leq 1$. This follows from Theorem \ref{MainHolepunchPaperResulThm} if $Q$ is non-degenerate, and otherwise it follows from Theorem \ref{ModifiedRes5ChordCaseDegen}. Note that $\pi$ is a proper $L$-coloring of its domain in $G$, even though $G^{\textnormal{small}}_Q$ is not induced in $G$, and, by B) of Theorem \ref{GenThmTargetConnector}, $\pi$ extends to a $(Q, uw)$-target, contradicting Claim \ref{IfTargetFailsInertFails}. Thus, all the vertices of $\{x_1, \cdots, x_n\}$ have the same 3-list. Now let $\phi$ be as in the statement of 1). Since $x_1, \cdots, x_n$ have the same 3-list, the set $\{x_1, \cdots, x_n\}$ is $(L, \phi)$-inert in $G$. Suppose $\phi$ extends to a $\phi'$ as in 1). Again by B) of Theorem \ref{GenThmTargetConnector}, $\phi'$ extends to a $(Q, uw)$-target, and we again contradict Claim \ref{IfTargetFailsInertFails}. This proves 1). Now we prove 2). Suppose there is a such a chord $e$ of $Q$. Now, $e\in E(G^{\textnormal{small}}_Q)$ by Claim \ref{EitherInducedOrAtMostOnePD}. If $w$ is non-degenerate, then $e$ does not have $w$ as an endpoint, and $N(w)\cap D_1(C)\subseteq V(Q\setminus C)$, which is false by definition. Thus, $w$ is degenerate and every vertex of $Q\setminus C$ is adjacent to a vertex of $G^{\textnormal{small}}_Q\cap C$, so there is a chord of $Q$ in $G^{\textnormal{small}}_Q$, each endpoint of which is incident to a terminal edge of $Q$. We then have $n=4$. By Claim \ref{LoneVertSpanPathOddLenCl}, $\textnormal{Sp}(C)|=1$, and the lone vertex of $\textnormal{Sp}(C)$ is adjacent to each of $x_1,x_n$, contradicting short-inseparability. \end{claimproof}

We now let $T_Q$ be the set of vertices $z$ in $G^{\textnormal{large}}_Q\setminus (Q+uw)$ such that $z$ has at least three neighbors in $Q+uw$. We now use Theorem \ref{RainbowNonEqualEndpointColorThm} to rule out the possibility that $w$ is degenerate.

\begin{claim}\label{CompressWDegenCase1} $w$ is non-degenerate. \end{claim}

\begin{claimproof} Suppose $w$ is degenerate. Thus, we let $Q:=x_1v^0y^0y^1v^1x_n$, where $N(w)\cap D_1(C)$ is a nonempty subset of $\{y^0, y^1\}$. Let $\mathcal{G}$ denote the rainbow $(G^{\textnormal{small}}_Q, C_Q, Q, L)$.

 \vspace*{-8mm}
\begin{addmargin}[2em]{0em}
\begin{subclaim}\label{ExistGObstruc} There is a $\mathcal{G}$-obstruction $x^*\in V(C_Q)\setminus\{x_1, x_n\}$. Furthermore, $x^*$ is adjacent to all four vertices of $\mathring{Q}$ and, for any $z\in V(G^{\textnormal{large}}_Q)\setminus\textnormal{Sp}(C)$ with a neighbor in $Q$, the graph $G[N(z)\cap V(Q)]$ is a path of length at most one \end{subclaim}

\begin{claimproof} Suppose there is no $\mathcal{G}$-obstruction. As $L(x_1)=L(x_n)$, it follows from Theorem \ref{RainbowNonEqualEndpointColorThm} that there is a $\phi\in\textnormal{Crown}(Q, G^{\textnormal{small}}_Q)$ where $d(\textnormal{dom}(\phi), w)=1$ and the restriction of $\phi$ to $\{x_1, x_n\}$ is parity-respecting, contradicting Claim \ref{SigmaSameColX1XNCL}. Thus, there is such an $x^*$, and, by 2) of Claim \ref{SigmaSameColX1XNCL}, $x^*\not\in\{x_1, x_n\}$. If $x^*$ is adjacent to all four vertices of $\mathring{Q}$, then, since $G$ is $K_{2,3}$-free, it follows that that any $z\in V(G^{\textnormal{large}}_Q)\setminus\textnormal{Sp}(C)$ with a neighbor in $Q$ satisfies the specified conditions in the statement of Subclaim \ref{ExistGObstruc}, so it suffices to show that $x^*$ is adjacent to all four vertices of $\mathring{Q}$. Suppose not. Suppose without loss of generality that $x_n, v^0, y^0\in N(x^*)$ and let $H:=G^{\textnormal{small}}_{x^1v^0x^*}$. Note that $v^1\not\in N(x^*)$, and $y^1$ is adjacent to at least one of $x^*, x_n$.  Since $G$ is short-inseparable, we have $V(G^{\textnormal{small}}_Q)=V(H\cup Q)$.

\begin{center}\begin{tikzpicture}
\begin{scope}[xscale=-1]
 \draw (0,3) arc(90: 270:8cm and 3cm);
\end{scope}

\node[shape=circle, fill=black, inner sep=0pt, minimum size=0.25cm] (X1) at (6, 1.97) {};

\node[shape=circle, draw=black, inner sep=0pt, minimum size=0.55cm] (Y*) at (6, 1.1) {\small $v^0$};

\node[shape=circle, draw=black, inner sep=0pt, minimum size=0.55cm] (Zmid) at (6, 0.4) {\small $y^0$};

\node[shape=circle, draw=black, inner sep=0pt, minimum size=0.55cm] (Zmid+) at (6, -0.3) {\small $y^1$};

\node[shape=circle, draw=black, inner sep=0pt, minimum size=0.55cm] (Y') at (6, -0.985) {\small $v^1$};

\node[shape=circle, fill=black, inner sep=0pt, minimum size=0.25cm] (XN) at (6, -1.97) {};

\draw[-, line width=1.8pt, color=red] (X1) to (Y*) to (Zmid) to (Zmid+) to (Y') to (XN);

\node[shape=circle, draw=white, inner sep=0pt, minimum size=0.25cm] (X1L) at (6, 2.3) {\small $x_1$};
\node[shape=circle, draw=white, inner sep=0pt, minimum size=0.25cm] (XNL) at (6, -2.3) {\small $x_3$};
\node[shape=circle, draw=black, inner sep=0pt, minimum size=0.55cm] (Y0) at (0, 0) {\small $y$};

\draw[-] (XN) to (Y0) to (X1);

\node[shape=circle, fill=black, inner sep=0pt, minimum size=0.25cm] (Xnew) at (-1.73, 0) {};
\node[shape=circle, draw=white, inner sep=0pt, minimum size=0.55cm] (XNewlab) at (-2.2, 0) {\small $x_2$};

\draw[-] (0, 3) to [out=190, in=170] (0, -3);

\node[shape=circle, fill=black, inner sep=0pt, minimum size=0.25cm] (X*) at (7, -1.45) {};
\node[shape=circle, draw=white, inner sep=0pt, minimum size=0.55cm] (X*lab) at (7.5, -1.45) {\small $x^*$};

\draw[-] (X*) to [out=100, in=-30] (Y*);
\draw[-] (X*) to [out=100, in=-30] (Zmid);
\draw[-] (Y0) to (Xnew);

\end{tikzpicture}
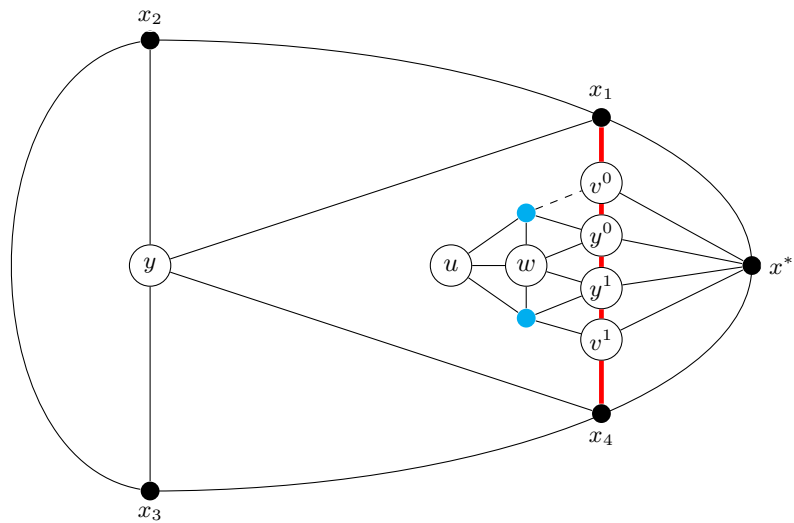
\captionof{figure}{The ellipse enclosing the diagram represents the cycle $C$}\label{FinEnterCase2}\end{center}

Since $v^1x^*\not\in E(G)$, it is straightforward to check by applying Theorem \ref{SumTo4For2PathColorEnds} applied to $H$ that there is an element of $\textnormal{Crown}(Q, G^{\textnormal{small}}_Q)$ which has domain $V(Q+x^*)\setminus\{v^0, v^1\}$ and uses different colors on $x_1, x_n$. Thus, by Claim \ref{SigmaSameColX1XNCL}, $n=3$, so $\textnormal{Sp}(C)=\{y\}$ for some vertex $y$. This is illustrated in Figure \ref{FinEnterCase2}. As $G$ is short-inseperable, $yv^0\not\in E(G)$ and the arc in Figure \ref{FinEnterCase2} from $x_1$ to $x^*$ is actually a path of length at least two. Now fix an arbitrary $L$-coloring $\phi$ of $\{x_1, x_n\}$ with $\phi(x_1)=\phi(x_n)$ and let $k\in\{0,1\}$, where $k=1$ if $y^1\in N(w)$ and otherwise $k=0$. Now we just leave $v^1, y^{1-k}$ uncolored. As any $L$-coloring of $\{x_1, x^*\}$ extends to $L$-color $H$, it is straightforward to check that $\phi$ extends to an $L$-coloring $\tau$ of $(V(Q+x^*)\cup\{w, u\})\setminus\{v^1, y^{1-k}\}$ such that each $T_Q\cup\{v^1, y^{1-k}\}$ has an $L_{\tau}$-list of size at least three and $V(G^{\textnormal{small}}_Q)\setminus\{v^1, y^{1-k}\}$ is $(L, \tau)$-inert in $G$. Thus, $(V(G_Q)\setminus\{v^1, y^{1-k}\}, \tau)$ is a $(Q, uw)$-pair, contradicting our assumption on $Q$. This proves Subclaim \ref{ExistGObstruc}.  \end{claimproof}\end{addmargin}

Let $x^*$ be as in Subclaim \ref{ExistGObstruc}. We now have the following.

 \vspace*{-8mm}
\begin{addmargin}[2em]{0em}
\begin{subclaim}\label{LeaveOneVertUnColCL} Let $k\in\{0,1\}$ and let $\tau$ be an $L$-coloring of $V(Q+x^*)\setminus\{v^k\}$, where each vertex of $\{v^k\}\cup\textnormal{Sp}(C)$ has an $L_{\tau}$-list of size at least three and the restriction of $\tau$ to $\{x_1, x_n\}$ is parity-respecting. Then $V(G^{\textnormal{small}}_Q-v^k)$ is not $(L, \tau)$-inert in $G$. \end{subclaim}

\begin{claimproof} Say $k=0$ for the sake of definiteness. Suppose $V(G^{\textnormal{small}}_Q-v^0)$ is $(L, \tau)$-inert in $G$. Let $S:=V(C\cup (G^{\textnormal{small}}_Q+uw))\setminus\{v^0\}$. Note that $S$ is topologically reachable from $C$ and, for any extension of $\tau$ to an $L$-coloring $\tau'$ of $\textnormal{dom}(\tau)\cup\{u, w\}$, the set $S$ is $(L, \tau')$-inert in $G$ by our choice of coloring of $\{x_1, x_n\}$. Thus, there is a vertex of $T_Q\setminus\textnormal{Sp}(C)$ with an $L_{\tau'}$ list of size $<3$, or else $(S, \tau')$ is a $(Q, uw)$-pair, contradicting our assumption on $Q$. Subclaim \ref{ExistGObstruc} implies $\tau$ extends to an $L$-coloring of $\textnormal{dom}(\tau)\cup\{w, u\}$ which violates the constraint above, unless the structure in Figure \ref{FinCaseChordWrap221} occurs (or the analogous structure with $n=2,3$ instead of $n=4$), where the elements of $T_Q\setminus\textnormal{Sp}(C)$ are indicated in blue, the dashed edge is not necessarily present, and the unique vertex of $T_Q$ adjacent to $v^1$ is adjacent to a subpath of $Q+uw$ which has length three and endpoints $u, v^1$. 

\begin{center}\begin{tikzpicture}
\begin{scope}[xscale=-1]
 \draw (0,3) arc(90: 270:8cm and 3cm);
\end{scope}

\node[shape=circle, fill=black, inner sep=0pt, minimum size=0.25cm] (X1) at (6, 1.97) {};

\node[shape=circle, draw=black, inner sep=0pt, minimum size=0.55cm] (Y*) at (6, 1.1) {\small $v^0$};

\node[shape=circle, draw=black, inner sep=0pt, minimum size=0.55cm] (Zmid) at (6, 0.4) {\small $y^0$};

\node[shape=circle, draw=black, inner sep=0pt, minimum size=0.55cm] (Zmid+) at (6, -0.3) {\small $y^1$};

\node[shape=circle, draw=black, inner sep=0pt, minimum size=0.55cm] (Y') at (6, -0.985) {\small $v^1$};

\node[shape=circle, fill=black, inner sep=0pt, minimum size=0.25cm] (XN) at (6, -1.97) {};

\draw[-, line width=1.8pt, color=red] (X1) to (Y*) to (Zmid) to (Zmid+) to (Y') to (XN);

\node[shape=circle, draw=white, inner sep=0pt, minimum size=0.25cm] (X1L) at (6, 2.3) {\small $x_1$};
\node[shape=circle, draw=white, inner sep=0pt, minimum size=0.25cm] (XNL) at (6, -2.3) {\small $x_4$};
\node[shape=circle, draw=black, inner sep=0pt, minimum size=0.55cm] (Y0) at (0, 0) {\small $y$};

\draw[-] (XN) to (Y0) to (X1);

\node[shape=circle, fill=black, inner sep=0pt, minimum size=0.25cm] (X2) at (0, 3) {};

\node[shape=circle, draw=white, inner sep=0pt, minimum size=0.25cm] (X2L) at (0, 3.3) {\small $x_2$};
\node[shape=circle, draw=white, inner sep=0pt, minimum size=0.25cm] (X2L) at (0, -3.3) {\small $x_3$};
\node[shape=circle, fill=black, inner sep=0pt, minimum size=0.25cm] (X3) at (0, -3) {};

\draw[-] (X2) to (Y0) to (X3);

\draw[-] (X2) to [out=190, in=170] (X3);

\node[shape=circle, draw=black, inner sep=0pt, minimum size=0.55cm] (W) at (5, 0) {$w$};
\node[shape=circle, draw=black, inner sep=0pt, minimum size=0.55cm] (U) at (4, 0) {$u$};

\draw[-] (U) to (W) to (Zmid);
\draw[-] (W) to (Zmid+);

\node[shape=circle, fill=cyan, inner sep=0pt, minimum size=0.25cm] (ZT) at (5, -0.7) {};
\node[shape=circle, fill=cyan, inner sep=0pt, minimum size=0.25cm] (ZT+) at (5, 0.7) {};

\draw[-] (ZT) to (U);
\draw[-] (ZT) to (W);
\draw[-] (ZT) to (Zmid+);
\draw[-] (ZT) to (Y');

\draw[-] (ZT+) to (U);
\draw[-] (ZT+) to (W);
\draw[-] (ZT+) to (Zmid);
\draw[-, dashed] (ZT+) to (Y*);

\node[shape=circle, fill=black, inner sep=0pt, minimum size=0.25cm] (X*) at (8, 0) {};
\node[shape=circle, draw=white, inner sep=0pt, minimum size=0.55cm] (X*lab) at (8.4, 0) {\small $x^*$};

\draw[-] (X*) to (Y*);
\draw[-] (X*) to (Zmid);
\draw[-] (X*) to (Zmid+);
\draw[-] (X*) to (Y');

\end{tikzpicture}\captionof{figure}{The ellipse enclosing the diagram represents the cycle $C$}\label{FinCaseChordWrap221}\end{center}

In particular, $w$ is adjacent to both endpoints of the middle edge of $Q$ and $w$ has precisely five neighbors. Now, there is an $L$-coloring $\pi$ of $\left(\textnormal{dom}(\tau)\cup\{u\}\right)\setminus\{y^1\}$ with $|L_{\pi}(w)|\geq 4$. Note that $|L_{\pi}(y^1)|\geq 2$ and, since $N(y^1)\subseteq\{w\}\cup\textnormal{dom}(\tau)$, it follows that $S$ is $(L, \pi)$-inert in $G$. As $w, y^1, v^0$ remain uncolored, each vertex of $T_Q$ has at most two neighbors in $\textnormal{dom}(\tau)$, so $(S, \pi)$ is a $(Q, uw)$-pair, contradicting our assumption $Q$. \end{claimproof}\end{addmargin}

Now, given an $e=xv\in\{x_1v^0, x_nv^1\}$, where $x\in\{x_1, x_n\}$, we say that $e$ is \emph{good} if both $xx^*\not\in E(G)$ and $v$ is adjacent to no vertex of $\textnormal{Sp}(C)$. Since $G$ is short-inseparable, at least one terminal edge of $Q$ is good, say $e=x_1v^0$ for the sake of definiteness. Now fix an arbitrary parity-preserving $L$-coloring $\phi$ of $\{x_1, x_n\}$. By applying Theorem \ref{SumTo4For2PathColorEnds} to $G^{\textnormal{small}}_{x_nv^1x^*}$ and using both the fact that $x^*x_1\not\in E(G)$ and the fact that any $L$-coloring of $\{x_1, x^*\}$ extends to $L$-color $G^{\textnormal{small}}_{x^*v^0x_1}$, we get that $\phi$ extends to an $L$-coloring $\tau$ of $V(Q+x^*)\setminus\{v^1\}$ such that $|L_{\tau}(v^1)|\geq 3$ and $V(G^{\textnormal{small}}_Q-v^1)$ is $(L, \tau)$-inert in $G$. Since $v^0$ has no neighbors in $\textnormal{Sp}(C)$, we contradict Subclaim \ref{LeaveOneVertUnColCL}.  This proves Claim \ref{CompressWDegenCase1}. \end{claimproof}

Since $w$ is non-degenerate, $Q$ is a proper 4-chord of $C$ with midpoint $w$, so $Q=x_1v^0wv^1x_n$. Now, it follows from the maximality of $Q$, together with short-inseparability, that, for any $z\in T_Q\setminus\textnormal{Sp}(C)$, we have $z\in D_2(C)$ and $G[N(z)\cap V(Q+uw)]$ is a path of length two with $uw$ as a terminal edge. We now rule out the possibility that $n=2$.

\begin{claim}\label{SeconInter|E|OddLen} $|E(G^{\textnormal{small}}_Q\cap C)|\neq 2$. \end{claim}

\begin{claimproof} Suppose  $|E(G^{\textnormal{small}}_Q\cap C)|=2$. Thus, $G^{\textnormal{large}}_Q\cap C=x_1x_2x_3$. By Claim \ref{InterMResAtMostXSpanC}, there is a lone vertex $y$ such that $\textnormal{Sp}(C)=\{y\}$ and $G_y-y=x_1x_2x_3$. By Claim \ref{EitherInducedOrAtMostOnePD}, $G^{\textnormal{small}}_Q$ is an induced subgraph of $G$. Note that $G^{\textnormal{small}}_Q\cap C$ is a path of length at least three, or else there is a cycle of length at most four separating $x_2$ from $u$. In particular, $|E(C)|\geq 5$. Since $G$ is short-inseparable, we note that $y$ is adjacent to at most one of $v^0, v^1$, so we suppose without loss of generality that $v^0\not\in N(y)$.  We now set $A:=(V(G^{\textnormal{small}}_Q\cup C)\cup\{u, y\})\setminus\{v^0, v^1\}$. 

\vspace*{-8mm}
\begin{addmargin}[2em]{0em}
\begin{subclaim}\label{TqSet1} $|T_Q\setminus\{y\}|=2$. \end{subclaim}

\begin{claimproof} By Claim \ref{AllVertSame3ListSubCL}, $L(x_1)=L(x_3)$. As $x_1x_3\not\in E(G)$ and $G^{\textnormal{small}}_Q$ is induced, it follows from Lemma \ref{PartialPathColoringExtCL0} that there is an $L$-coloring $\psi$ of $V(G^{\textnormal{small}}_Q)$ with $\psi(x_1)=\psi(x_3)$. Possibly $v^1\in N(y)$, but, in any case, we have $|L_{\psi}(y)|\geq 3$, and $\{w_2\}$ is $(L, \psi)$-inert in $G$. Now suppose $|T_Q\setminus\{y\}|\neq 2$. Now, $|L_{\psi}(u)|\geq 4$, and furthermore, $|T_Q\setminus\{y\}|<2$ and $\psi$ extends to an $L$-coloring $\psi^*$ of $\textnormal{dom}(\psi)\cup\{u\}$ such that each vertex of $T_Q\setminus\{y\}$ has an $L_{\psi^*}$-list of size at least three. As $uy\not\in E(G)$, each vertex of $T_Q$ has an $L_{\psi^*}$-list of size at least three. Thus, $(A\cup\{v^0, v^1\}, \psi^*)$ is a $(Q, uw)$-pair, contradicting our assumption. \end{claimproof}\end{addmargin}

It follows from Subclaim \ref{TqSet1} that there exist vertices $z^0, z^1\in V(G^{\textnormal{large}}_Q)\cap D_2(C)$, where, for each $k=0,1$, we have $N(z_k)\cap V(Q+uw)=\{u, w, v^k\}$. In particular, $N(w)\subseteq V(G^{\textnormal{small}}_Q)\cup\{u, z^0, z^1\}$. By Theorem \ref{MainHolepunchPaperResulThm}, there is a $\sigma\in\textnormal{Crown}(Q, G)$. Now, $1\leq |L_{\sigma}(x_2)|\leq 2$, so there is a $c\in L_{\sigma}(y)$ with $|L_{\sigma}(x_2)\setminus\{c\}|\geq 1$.

\vspace*{-8mm}
\begin{addmargin}[2em]{0em}
\begin{subclaim}\label{YAdV1R} $N(y)\cap\{v^0, v^1\}=\{v^1\}$ and $N(y)\cap\{z^0, z^1\}=\{z^1\}$. \end{subclaim}

\begin{claimproof} Suppose $N(y)\cap\{v^0, v^1\}\neq\{v^1\}$. Thus, $v^0, v^1\not\in N(y)$. Now, there is an extension of $\sigma$ to an $L$-coloring $\tau$ of $\textnormal{dom}(\sigma)\cup\{y, u\}$ such that $\tau(y)=c$. Each of $v^0, v^1, z^0, v^1$ has an $L_{\tau}$-list of size at least three. By our choice of color for $y$, $A$ is $(L, \tau)$-inert in $G$. Thus, $(A, \tau)$ is a $(Q, uw)$-pair, contradicting our assumption, so $N(y)\cap\{v^0, v^1\}=\{v^1\}$. Note that $z^0\not\in N(y)$ by short-inseparability, so we just need to show that $z^1\in N(y)$. Suppose not. We extend $\sigma$ to an $L$-coloring $\pi$ of $\textnormal{dom}(\sigma)\cup\{y, v^1, u\}$ with $\pi(y)=c$ and $|L_{\pi}(z^1)|\geq 3$. Now, $(A\cup\{v^1\}, \pi)$ is a $(Q, uw)$-pair, contradicting our assumption.  \end{claimproof}\end{addmargin}

\begin{center}\begin{tikzpicture}
\begin{scope}[xscale=-1]
 \draw (0,3) arc(90: 270:8cm and 3cm);
\end{scope}
\node[shape=circle, draw=black, inner sep=0pt, minimum size=0.55cm] (W) at (5, 0) {\small $w$};
\node[shape=circle, draw=black, inner sep=0pt, minimum size=0.55cm] (U) at (3.5, 0) {\small $u$};
\node[shape=circle, draw=white, inner sep=0pt, minimum size=0.55cm] (GSm) at (6.5, 0) {$G^{\textnormal{small}}_Q$};
\node[shape=circle, fill=black, inner sep=0pt, minimum size=0.25cm] (X1) at (6, 1.97) {};
\node[shape=circle, fill=black, inner sep=0pt, minimum size=0.25cm] (XN) at (6, -1.97) {};
\node[shape=circle, draw=black, inner sep=0pt, minimum size=0.55cm] (Z) at (5.67, 1.31) {\small $v^0$};
\node[shape=circle, draw=black, inner sep=0pt, minimum size=0.55cm] (Z') at (5.67, -1.31) {\small $v^1$};
\draw[-, line width=1.8pt, color=red] (X1) to (Z) to (W) to (Z') to (XN);
\node[shape=circle, draw=white, inner sep=0pt, minimum size=0.25cm] (X1L) at (6, 2.3) {\small $x_1$};
\node[shape=circle, draw=white, inner sep=0pt, minimum size=0.25cm] (XNL) at (6, -2.3) {\small $x_3$};
\node[shape=circle, draw=black, inner sep=0pt, minimum size=0.55cm] (Y0) at (0,0) {\small $y$};

\draw[-] (XN) to (Y0) to (X1);
\draw[-] (U) to (W);

\node[shape=circle, fill=black, inner sep=0pt, minimum size=0.25cm] (Xmid) at (-2, 0) {};
\node[shape=circle, draw=white, inner sep=0pt, minimum size=0.25cm] (X1L) at (-2.5, 0) {\small $x_2$};

\draw[-] (Y0) to (Xmid);
\draw[-] (Xmid) to [out=65, in=180]  (0,3);
\draw[-] (Xmid) to [out=-65, in=180]  (0,-3);

\node[shape=circle, draw=black, inner sep=0pt, minimum size=0.55cm] (Znew0) at (4.45, 0.6) {\small $z^0$};
\node[shape=circle, draw=black, inner sep=0pt, minimum size=0.55cm] (Znew1) at (4.45, -0.6) {\small $z^1$};

\draw[-] (Y0) to (Z');
\draw[-] (Y0) to (Znew1);
\draw[-] (U) to (Znew1) to (Z');
\draw[-] (U) to (Znew0) to (Z);
\draw[-] (Znew0) to (W) to (Znew1);

\end{tikzpicture}
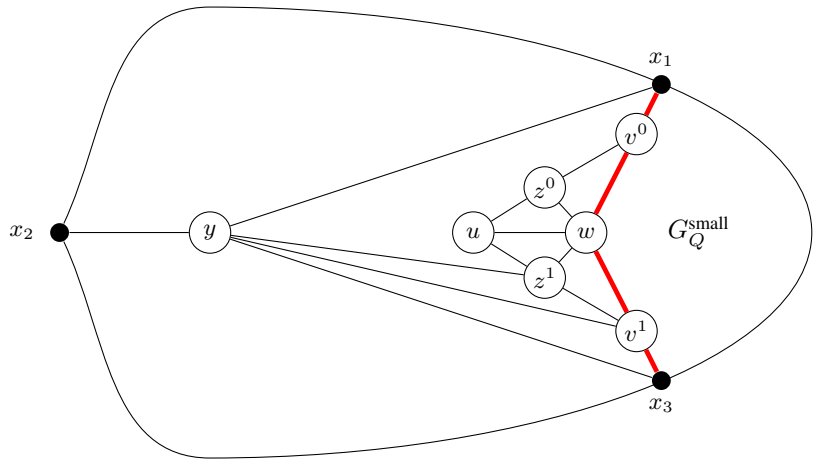
\captionof{figure}{The outer closed curve represents the cycle $C$}\label{CaseN=3PathLen2Fg}\end{center}

Since $y$ is adjacent to $z^1$, we have $N(v^1)\subseteq V(G^{\textnormal{small}}_Q)\cup\{z^1, y\}$ and we have the structure indicated in Figure \ref{CaseN=3PathLen2Fg}. Now we construct a $(Q, uw)$-pair by leaving $v^0, v^1$ uncolored but including $v^1$ in our deletion set. Note that $\sigma$ extends to an $L$-coloring $\tau$ of $\textnormal{dom}(\sigma)\cup\{u, y\}$ with $|L_{\tau}(z^1)|\geq 3$ and $\tau(y)=c$.  Now, $|L_{\tau}(v^1)|\geq 2$ and all but one neighbor of $v^1$ outside of $G^{\textnormal{small}}_Q$ is colored. This fact, together with our choice of $c$, implies that $A\cup\{v^1\}$ is $(L, \tau)$-inert in $G$, so $(A\cup\{v^1\}, \tau)$ is a $(Q, uw)$-pair, contradicting our assumption. This proves Claim \ref{SeconInter|E|OddLen}. \end{claimproof}

\begin{claim}\label{NonDegenCaseNoHalfTargetCL5M} 
\textcolor{white}{aaaaaaaaaaaaaaaaa}
\begin{enumerate}[label=\arabic*)]
\itemsep-0.1em
\item Let $v^*\in\{v^0, v^1\}$ and $\tau$ be a partial $L$-coloring of $V(G^{\textnormal{small}}_Q-v^*)$ whose domain contains $V(Q-v^*)$, where $\tau(x_1)\neq\tau(x_n)$ and each vertex of $\{v^*\}\cup\textnormal{Sp}(C)$ has an $L_{\tau}$-list of size at least three. Then $V(G^{\textnormal{small}}_Q-v^*)$ is not $(L, \tau)$-inert in $G$; AND
\item Let $x\tilde{v}$ be a terminal edge of $Q$, where $x\in\{x_1, x_n\}$ and $\tilde{v}\in\{v^0, v^1\}$, and there is a chord of $C_Q$ incident to $\tilde{v}$. Then $\textnormal{Sp}(C)|=1$ and the lone vertex of $\textnormal{Sp}(C)$ is adjacent to $\tilde{v}$.  
\end{enumerate} \end{claim}

\begin{claimproof} Suppose $V(G^{\textnormal{small}}_Q-v^*)$ is $(L, \tau)$-inert in $G$. As $w$ is non-degenerate and one of $v^0, v^1$ remains uncolored, it follows that $\tau$ extends to an $L$-coloring $\tau^*$ of $\textnormal{dom}(\tau)\cup\{u\}$ in which each vertex of $\{v^*\}\cup T_Q$ has an $L_{\tau}$-list of size at least three. Let $S:=V(G^{\textnormal{small}}_Q-v^*)\cup\{u\}\cup\{x_1, \cdots, x_n\}$. Since $\tau(x_1)\neq\tau(x_n)$, $S$ is $(L, \tau^*)$-inert in $G$ and $(S, \tau^*)$ is a $(Q, uw)$-pair, contradicting our assumption. This proves 1). Now we prove 2). Without loss of generality, let $\tilde{v}=v^1$. Suppose 2) does not hold. Thus, there is a chord $e$ of $C_Q$ incident to $v^1$, and $v^1$ has no neighbor in $\textnormal{Sp}(C)$. Let $x^*$ be the other endpoint of $e$. By 2) of Claim \ref{SigmaSameColX1XNCL}, $x^*$ is an internal vertex of the path $G^{\textnormal{small}}_Q\cap C$. Let $K=G^{\textnormal{small}}_{x^*v^1x_n}$. The case where $n=4$ (and thus $\textnormal{Sp}(C)\neq\varnothing$) is illustrated in Figure \ref{ElipseFinCaseChordWrap}.

Let $Q^1:=(Q-x_n)+v^1x^*$ and $C_{Q^1}$ be the facial cycle $(G^{\textnormal{small}}_{Q^1}\cap C)+Q^1$ of $G^{\textnormal{small}}_{Q^1}$. Note that $Q^1$ is also a $w$-enclosure, albeit not maximal. Now it follows from Theorem \ref{MainHolepunchPaperResulThm} that there is a partial $L$-coloring $\sigma$ of $V(C_{Q^1})$, where every vertex of $Q^1\setminus\{v^0, v^1\}$ lies in $\textnormal{dom}(\sigma)$, and furthermore, each of $v^0, v^1$ has an $L_{\sigma}$-list of size at least three, and any extension of $\sigma$ to an $L$-coloring of $\textnormal{dom}(\sigma)\cup\{v^0, v^1\}$ extends to $L$-color all of $G^{\textnormal{small}}_{Q^1}$. As $G^{\textnormal{small}}_{Q^1}$ is induced in $G$, $\sigma$ is a proper $L$-coloring of its domain in $G$. Let $a=\sigma(x_1)$ and $b=\sigma(x^*)$. 

\vspace*{-8mm}
\begin{addmargin}[2em]{0em}
\begin{subclaim}\label{ExtSigtoTauSubCL} There is an $L$-coloring $\psi$ of $K$ with $\psi(x^*)=b$, where $\psi(v^1)\in L_{\sigma}(v^1)$ and $\psi(x_n)\neq a$.  \end{subclaim}

\begin{claimproof} This is straightforward to check using Theorem \ref{thomassen5ChooseThm}. In particular, if $a\in L_{\sigma}(v^1)$ then we first color $v^1$ with $a$ and then apply Theorem \ref{thomassen5ChooseThm}. Otherwise, since $|L_{\sigma}(y'')\setminus (L(x_n)\setminus\{a\})|\geq 1$, we use a color $c$ of $L_{\sigma}(v^1)\setminus (L(x_n)\setminus\{a\})$ on $v^1$, replace $a$ with $c$ in $L(x_n)$, and then apply Theorem \ref{thomassen5ChooseThm}. \end{claimproof}\end{addmargin}

By Subclaim \ref{ExtSigtoTauSubCL}, $\sigma$ extends to an $L$-coloring $\tau$ of $\textnormal{dom}(\sigma)\cup V(K)\cup V(Q-v^0)$ with $\tau(x_1)\neq\tau(x_n)$. Now, $V(G^{\textnormal{small}}_Q-v^0)$ is $(L, \tau)$-inert in $G$ by our choice of $\sigma$. As there are no chords of $C_Q$, except possibly $x_1x_2$, we have $|L_{\tau}(v^0)|\geq 3$. Each vertex of $\textnormal{Sp}(C)$ has an $L_{\tau}$-list of size at least three, contradicting 1). \end{claimproof}

\begin{center}\begin{tikzpicture}
\begin{scope}[xscale=-1]
 \draw (0,3) arc(90: 270:8cm and 3cm);
\end{scope}
\node[shape=circle, draw=black, inner sep=0pt, minimum size=0.55cm] (W) at (5, 0) {\small $w$};
\node[shape=circle, draw=black, inner sep=0pt, minimum size=0.55cm] (U) at (3.5, 0) {\small $u$};
\node[shape=circle, draw=white, inner sep=0pt, minimum size=0.55cm] (GSm) at (6.5, 0) {$G^{\textnormal{small}}_Q$};
\node[shape=circle, fill=black, inner sep=0pt, minimum size=0.25cm] (X1) at (6, 1.97) {};
\node[shape=circle, fill=black, inner sep=0pt, minimum size=0.25cm] (XN) at (6, -1.97) {};
\node[shape=circle, draw=black, inner sep=0pt, minimum size=0.55cm] (Z) at (5.5, 0.985) {\small $v^0$};
\node[shape=circle, draw=black, inner sep=0pt, minimum size=0.55cm] (Z') at (5.5, -0.985) {\small $v^1$};
\draw[-, line width=1.8pt, color=red] (X1) to (Z) to (W) to (Z') to (XN);
\node[shape=circle, draw=white, inner sep=0pt, minimum size=0.25cm] (X1L) at (6, 2.3) {\small $x_1$};
\node[shape=circle, draw=white, inner sep=0pt, minimum size=0.25cm] (XNL) at (6, -2.3) {\small $x_4$};
\node[shape=circle, draw=black, inner sep=0pt, minimum size=0.55cm] (Y0) at (0, 0) {\small $y$};

\draw[-] (XN) to (Y0) to (X1);
\draw[-] (U) to (W);

\node[shape=circle, fill=black, inner sep=0pt, minimum size=0.25cm] (X2) at (0, 3) {};

\node[shape=circle, draw=white, inner sep=0pt, minimum size=0.25cm] (X2L) at (0, 3.3) {\small $x_2$};
\node[shape=circle, draw=white, inner sep=0pt, minimum size=0.25cm] (X2L) at (0, -3.3) {\small $x_3$};
\node[shape=circle, fill=black, inner sep=0pt, minimum size=0.25cm] (X3) at (0, -3) {};

\draw[-] (X2) to (Y0) to (X3);

\draw[-] (X2) to [out=190, in=170] (X3);

\node[shape=circle, draw=black, fill=white, inner sep=0pt, minimum size=0.55cm] (X') at (7.2, -1.3) {\small $x^*$};
\draw[-] (Z') to (X');
\node[shape=circle, draw=white, inner sep=0pt, minimum size=0.55cm] (Y*) at (6.3, -1.5) {$K$};

\end{tikzpicture}
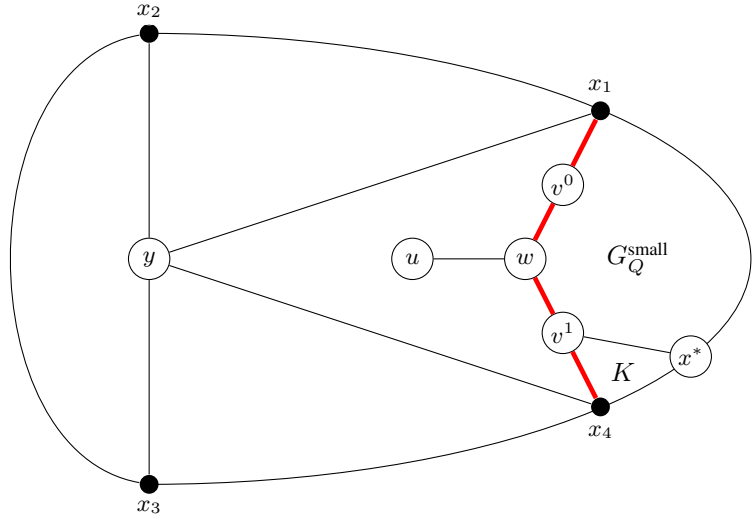
\captionof{figure}{The ellipse enclosing the diagram represents the cycle $C$}\label{ElipseFinCaseChordWrap}\end{center}

\begin{claim}\label{AnyLColQMinWExtAllSmall} Any $L$-coloring of $Q-w$ extends to $L$-color all of $V(G^{\textnormal{small}}_Q)$. \end{claim}

\begin{claimproof} Let $x'$ be the unique neighbor of $v^0$ on the path $G^{\textnormal{small}}_Q\cap C$ which is farthest from $x_1$. Possibly $x'=x_1$. Analogously, we let $x''$ be the unique neighbor of $v^1$ on the path $G^{\textnormal{small}}_Q\cap C$ which is farthest from $x_n$. We define a graph $H'$, where $H':=x_1v^0$ if $x'=x_1$ and otherwise $H':=G^{\textnormal{small}}_{x_1v^0x'}$. Define $H''$ analogously. Let $\sigma$ be an $L$-coloring of $V(Q-w)$, and suppose that $\sigma$ does not extend to an $L$-coloring of $G^{\textnormal{small}}_Q$. 

\vspace*{-8mm}
\begin{addmargin}[2em]{0em}
\begin{subclaim}\label{H'H''Close} $d(H', H'')\leq 1$ and at least one of $x', x''$ is an internal vertex of $G^{\textnormal{small}}_Q\cap C$. \end{subclaim}

\begin{claimproof} Suppose $d(H', H'')>1$. Thus, by Theorem \ref{thomassen5ChooseThm} applied to each of $H'$ and $H''$, $\sigma$ extends to an $L$-coloring $\tau$ of $V(H'\cup H'')$. Note that $|L_{\tau}(w)|\geq 3$, and it follows from Lemma \ref{PartialPathColoringExtCL0} applied to $G^{\textnormal{small}}_{x'v^0wv^1x''}$ that $\tau$ extends to an $L$-coloring of $G^{\textnormal{small}}_Q$, contradicting our assumption.  Thus, $d(H', H'')\leq 1$. Now suppose that neither of $x', x''$ is an internal vertex of $G^{\textnormal{small}}_Q\cap C$. Now, $G^{\textnormal{small}}_Q\cap C$ is a path of length one, so  $n=4$ and $x_1yx_n$ is a triangle separating $u$ from an internal vertex of $x_1\cdots x_n$, contradicting short-inseparability \end{claimproof}\end{addmargin}

Applying Subclaim \ref{H'H''Close}, we suppose without loss of generality that $x'$ is an internal vertex of $G^{\textnormal{small}}_Q\cap C$. By 2) of Claim \ref{NonDegenCaseNoHalfTargetCL5M}, $yv^0\in E(G)$. If $x''\neq x_n$, then, again by 2) of Claim \ref{NonDegenCaseNoHalfTargetCL5M}, we have $yv^1\in E(G)$ as well, contradicting short-inseparability. Thus, $v^1=x_n$ and we again contradict short-inseparability. \end{claimproof}

\begin{claim}\label{TQMinYObOP} $|T_Q\setminus\textnormal{Sp}(C)|=2$. \end{claim}

\begin{claimproof} Suppose not. Thus, $|T_Q\setminus\textnormal{Sp}(C)|\leq 1$. Since there is no chord of $Q$, except possibly $x_1x_2$ there is an $L$-coloring $\phi$ of $Q-w$ such that $\phi(x_1)\neq\phi(x_n)$, where each vertex of $\textnormal{Sp}(C)$ has an $L_{\phi}$-list of size at least three. By Claim \ref{AnyLColQMinWExtAllSmall}, $\phi$ extends to $L$-color $V(G^{\textnormal{small}}_Q)$. Since $|T_Q\setminus\textnormal{Sp}(C)|\leq 1$, it follows that $\phi$ extends to an $L$-coloring $\tau$ of $V(G^{\textnormal{small}}_Q+uw)$ such that each vertex of $T_Q\setminus\textnormal{Sp}(C)$ has an $L_{\tau}$-list of size at least three. Let $S:=V(G^{\textnormal{small}}_Q\cup C)\cup\{w\}$. Now, $\{x_1, \cdots, x_n\}$ is $(L, \tau)$-inert in $G$, so $S$ is $(L, \tau)$ inert in $G$. As $V(G^{\textnormal{large}}_Q\cap C)\cap\textnormal{dom}(\tau)=\{x_1, x_n\}=V(G^{\textnormal{large}}_Q\cap C)\setminus\textnormal{Sh}^2(C)$, the pair $(S, \tau)$ is a $(Q, uw)$-pair, contradicting our assumption on $Q$. \end{claimproof}

Applying Claim \ref{TQMinYObOP}, we let $T_Q\setminus\textnormal{Sp}(C)=\{z^0, z^1\}$, where, for each $k=0,1$, $z^k$ is adjacent to $u, w, v^k$. 

\begin{claim}\label{y'Andy''NoCommNbrw} $N(v^0)\cap N(v^1)=\{w\}$. Furthermore, if there exist $\tilde{v}\in\{v^0, v^1\}$ and $w^*\in V(G^{\textnormal{small}}_Q\setminus Q)$, where $G[N(w^*)\cap V(Q)]$ is a path of length two which does not contain $\tilde{v}$, then $|\textnormal{Sp}(C)|=1$ and the lone vertex of $\textnormal{Sp}(C)$ is adjacent to $y^*$. \end{claim}

\begin{claimproof} Suppose $N(v^0)\cap N(v^1)=\{w\}$. Thus, there is a $w^*\in V(G^{\textnormal{small}}_Q\setminus Q)$ with $N(v^0)\cap N(v^1)=\{w, w^*\}$, and $N(w)$ consists of the cycle $uz^0v^0w^*v^1z^1$. Note that $\{u, y', y''\}$ is an independent set, so there is an $L$-coloring $\phi$ of $V(Q-w)\cup\{u\}$ with $\phi(x_1)\neq\phi(x_n)$, where $|L_{\phi}(w)|\geq 4$. By Claim \ref{AnyLColQMinWExtAllSmall}, $\phi$ extends to $L$-color $V(G^{\textnormal{small}}_Q)\cup\{u\}$, so $\phi$ extends to an $L$-coloring $\tau$ of $V(G^{\textnormal{small}}_Q-w)\cup\{u\}$. Let $S:=V(G^{\textnormal{small}}_Q\cup C)\cup\{w\}$. Now, $|L_{\tau}(w)|\geq 3$ and thus $\{w\}$ is $(L, \tau)$-inert in $G$. Since $w$ is uncolored, each vertex of $T_Q$ has an $L_{\tau}$-list of size at least three. Thus, as in Claim \ref{TQMinYObOP}, $(S, \tau)$ is a $(Q, uw)$-pair, contradicting our assumption on $Q$. Thus, $N(v^0)\cap N(v^1)=\{w\}$. Now we prove the second part of the claim. Suppose  $\tilde{v}=v^1$ for the sake of definiteness. Thus, $G[N(w^*)\cap V(Q)]=wv^1x_n$. It suffices to show that there is a vertex of $\textnormal{Sp}(C)$ adajcent to $v^1$. Suppose not. Thus, there is an $L$-coloring $\phi$ of $\{x_1, x, x_n\}$ such that $\phi(x_1)\neq\phi(x_n)$ and $|L_{\phi}(v^0)|\geq 4$. Now it follows from Lemma \ref{PartialPathColoringExtCL0} that $\phi$ extends to an $L$-coloring $\tau$ of $V(G^{\textnormal{small}}_Q-v^0)$, and, by our choice of $\phi$, we have $|L_{\tau}(y')|\geq 3$. Since $v^0$ is not colored, each vertex of $\textnormal{Sp}(C)$ has an $L_{\tau}$-list of size at least three. Since $\tau$ already colors all of $V(G^{\textnormal{small}}_Q-v^0)$, we contradict 1) of Claim \ref{NonDegenCaseNoHalfTargetCL5M}.  \end{claimproof}

Now , if $\textnormal{Sp}(C)$ is nonempty, then the lone vertex of $\textnormal{Sp}(C)$ is adjacent to at most one of $v^0, v^1$, or else there is a separating 4-cycle, so suppose without loss of generality that $v^1$ has no neighbor in $\textnormal{Sp}(C)$. Let $A$ be the set of $w^*\in V(G^{\textnormal{small}}_Q\setminus Q)$ such that $w^*$ has at least three neighbors on $Q$. By Claim \ref{y'Andy''NoCommNbrw}, $N(v^0)\cap N(v^1)=\{w\}$, and furthermore, $|A|\leq 1$ and, if there is a $w^*\in A$, then $G[N(w^*)\cap V(Q)]=wv^1x_n$. Thus, there is an $L$-coloring $\phi$ of $\{x_1, w, x_n\}$ with $\phi(x_1)\neq\phi(x_n)$, where each vertex of $A$ has an $L_{\phi}$-list of size at least four. It follows from Lemma \ref{PartialPathColoringExtCL0} applied to $G^{\textnormal{small}}_Q$ that any extension of $\phi$ to an $L$-coloring of $V(Q)$ extends to $L$-color all of $G^{\textnormal{small}}_Q$. Thus, $\phi\in\textnormal{Crown}(Q, G^{\textnormal{small}}_Q)$, contradicting Claim \ref{SigmaSameColX1XNCL}. This proves Theorem \ref{StrengthendVerManColRes}.  \end{proof}

\section{Coloring and Deleting Paths}\label{IntermResSecPathDel}

In addition to Theorem \ref{StrengthendVerManColRes}, another ingredient we need for the proofs of Theorems \ref{FaceConnectionMainResult} and \ref{SingleFaceConnRes} is a result about coloring and deleting paths.

\begin{defn}\label{DefFilamentPathStrucAx}\emph{Given a graph $G$ embedded on a surface $\Sigma$ and a list-assignment $L$ for $G$, a \emph{filament} is a tuple $(P, T, T', \mathbf{f})$, where}
\begin{enumerate}[label=\emph{\arabic*)}]
\itemsep-0.1em
\item \emph{$P\subseteq G$ is a shortest path between its endpoints and $T, T'$ are disjoint, nonempty terminal subpaths of $P$}; AND
\item\emph{$P\setminus (T\cup T')$ is a path of length at least 30 and each vertex of distance at most one from $P\setminus (T\cup T')$ has an $L$-list of size at least five}; AND
\item\emph{Every facial subgraph of $G$ which contains a vertex of distance at most one from $P\setminus (T\cup T')$ is a triangle, and the pair $(\mathbf{f}, \textnormal{dom}(\mathbf{f}))$ is an $L$-reduction.}
\end{enumerate} \end{defn}

The key result we need about coloring and deleting paths is Theorem \ref{MainResultColorAndDeletePaths} below.

\begin{theorem}\label{MainResultColorAndDeletePaths}  Let $\Sigma$ be a surface and $G$ be a short-inseparable embedding on $\Sigma$, where $|V(G)|>5$. Let $L$ be a list-assignment for $G$. Then, for any filament $(P, T, T', \mathbf{f})$, there exists a subgraph $H$ of $G$ and such that
\begin{enumerate}[label=\arabic*)] 
\itemsep-0.1em
\item $P\subseteq H$ and $V(H\setminus P)\subseteq B_1(P\setminus (T\cup T'))$; AND
\item $d(H\setminus P, T\cup T')\geq 3$ and there is an extension of $\mathbf{f}$ to a partial $L$-coloring $\tau$ of $V(H)$ such that $(V(H), \tau)$ is an $L$-reduction. 
\end{enumerate} 
\end{theorem}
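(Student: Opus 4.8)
The plan is to build $H$ by coloring and deleting a "collar" of vertices around the interior path $P_0 := P \setminus (T \cup T')$, proceeding greedily along $P_0$ from one end to the other while maintaining the invariant that the colored vertices near the current frontier form an $L$-reduction and that the uncolored vertices adjacent to the frontier still have lists of size at least three. Set $P_0 = v_0 v_1 \cdots v_t$ with $t \ge 30$, where $v_0, v_t$ are the last vertices of $T, T'$ respectively (or the first interior vertices — I would fix the bookkeeping so that $v_0$ and $v_t$ are exactly the vertices where the "size-5 neighborhood" condition kicks in). Because $G$ is short-inseparable and every facial subgraph meeting $B_1(P_0)$ is a triangle, the local structure around each edge $v_iv_{i+1}$ is a "fan": the vertices of $D_1(P_0)$ between consecutive contact points form paths, and since $P$ is a shortest path there are no chords of $P$ skipping more than one vertex and no vertex of $D_1(P_0)$ adjacent to three or more consecutive $v_i$'s in a way that would create a short separating cycle. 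This is exactly the kind of configuration handled by the link-coloring machinery (Theorems \ref{MainLinkingResultAlongFacialCycle}, \ref{LinkPlusOneMoreVertx}) and the $2$-path extension results (Theorems \ref{thomassen5ChooseThm}, \ref{SumTo4For2PathColorEnds}); the path $P_0$ plays the role that a subpath of a facial cycle plays in Section \ref{MainLinkSec}.

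The key steps, in order: (1) First I would establish the local structure lemma — near $P_0$, $G$ looks like a "ladder/triangulated strip," so that for each $i$ the set of vertices of $D_1(P_0)$ squeezed between $v_i$ and $v_{i+2}$ is controlled, and there are at most boundedly many "problematic" indices (analogous to the "even $P$-peaks" and the "at most three exceptions" in Theorem \ref{LinkPlusOneMoreVertx}). (2) Next, process $P_0$ in order: start by using $\mathbf{f}$ as the initial partial coloring and, using a version of Theorem \ref{MainLinkingResultAlongFacialCycle} / Theorem \ref{SumTo4For2PathColorEnds} applied to $G^{\textnormal{small}}$ of each $2$-chord $v_i w v_{i+2}$, extend the coloring one "rung" at a time, at each stage coloring $v_i$ and some of the vertices of $D_1(P_0)$ directly beneath it, and placing the remaining deep vertices into an inert deletion set via Observation \ref{CombineInert}. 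Since $T, T'$ have length $\ge 1$ and $t \ge 30 \gg 3$, there is enough room to absorb the boundedly many exceptional rungs (by leaving a vertex uncolored and deleting it inertly, exactly as in Claim \ref{ForAnyCOneFailure}) and still keep $d(H \setminus P, T \cup T') \ge 3$: I would only ever color/delete vertices of $D_1(P_0)$ strictly in the interior, never within distance $2$ of $v_0$ or $v_t$. (3) Finally, take $H$ to be $P$ together with all the vertices of $D_1(P_0)$ that got colored or deleted, and $\tau$ the union of $\mathbf{f}$ with all the rung-colorings; verify via Observation \ref{UnionConsistentObs} (consistency of the family of $L$-reductions, using that consecutive rungs are handled by disjoint or adjacent color sets and far-apart rungs have disjoint deletion sets) that $(V(H), \tau)$ is an $L$-reduction and that every vertex of $D_1(V(H))$ — which by construction is either in $D_1(P_0) \setminus V(H)$ or in $D_2(P_0)$, hence has an $L$-list of size $\ge 5$ — retains an $L_\tau$-list of size $\ge 3$.

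The main obstacle I expect is the bookkeeping at the two ends $T$ and $T'$: the distance-$3$ requirement $d(H \setminus P, T \cup T') \ge 3$ forbids touching the first and last few rungs, but the vertices $\mathbf{f}$ already colors (which live in $\textnormal{dom}(\mathbf{f})$, possibly near $T, T'$) might conflict with what the greedy sweep wants to do, and a vertex of $D_1(P_0)$ near $v_0$ could have a small $L_{\mathbf{f}}$-list that we are not allowed to "fix" by coloring it. The resolution is to observe that such a vertex is not in $D_1(V(H))$ at all (it is neither in $P$ nor adjacent to any colored/deleted interior vertex once we stay $\ge 3$ away), so the list-size condition in item 2) of the theorem simply does not constrain it; one only needs that the sweep can be *started* — i.e. that there is at least one valid coloring of the first interior rung extending $\mathbf{f}$ — which follows from Theorem \ref{thomassen5ChooseThm} applied to the relevant planar piece since those deep vertices have $5$-lists. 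The secondary obstacle is making precise the claim that only $O(1)$ rungs are "exceptional," which I would reduce to Theorem \ref{LinkPlusOneMoreVertx} (applied along $P_0$ after cutting at chords) together with the observation that $30$ comfortably exceeds the constant $3$ appearing there plus the buffer needed at each end.
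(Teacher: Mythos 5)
There is a genuine gap, and it sits exactly where your proposal leans on borrowed machinery. Theorems \ref{MainLinkingResultAlongFacialCycle} and \ref{LinkPlusOneMoreVertx} are statements about subpaths of the facial cycle $C$ of a uniquely $k$-determined collar: they need the $G^{\textnormal{small}}_Q$/$G^{\textnormal{large}}_Q$ partitioning, the sets $\textnormal{Sh}^k(P)$, $k$-consistency, and vertices of $C$ carrying 3-lists. The path $P_0=P\setminus(T\cup T')$ in Theorem \ref{MainResultColorAndDeletePaths} is an interior shortest path with 5-lists on everything within distance one; there is no collar, no facial cycle, and no analogue of the ``broken wheel / even peak'' dichotomy that drives the ``at most three exceptions'' conclusion of Theorem \ref{LinkPlusOneMoreVertx}. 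So your central structural claim --- that only $O(1)$ rungs along $P_0$ are problematic, with the rest forming a tame triangulated strip --- is not something you can reduce to those results, and it is not proved anywhere in your sketch. It is also essentially the entire difficulty of the theorem: the obstructions are not confined to boundedly many indices. A vertex $w\in\textnormal{Mid}(P\mid p)$ (a 5-list vertex adjacent to a length-two subpath of $P$) can occur at nearly every interior vertex, and the second-order vertices adjacent to such a $w$ together with parts of $P$ (the sets $A$, $B$, $A^*$, $B^{\textnormal{in}}$, $B^{\textnormal{out}}$ of Sections \ref{BadFilPathLen4Sub}--\ref{InterSub1SecPathLen2Del}) create interacting constraints whose resolution (color $w$, leave a path vertex uncolored but inert, or force a repeated color on the two outer neighbors) depends on structure further down the path. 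A left-to-right greedy sweep has no invariant offered that guarantees it never gets stuck, and ``absorb the exceptional rungs as in Claim \ref{ForAnyCOneFailure}'' is again an appeal to the collar setting.

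For comparison, the paper does not construct $H$ by a sweep at all. It argues by contradiction: it defines $P$-gaps, and shows (Lemmas \ref{BadFil4PathLemmMain} and \ref{PathLenLemmaQGapMainDist2}, proved by exhibiting explicit extensions of $\mathbf{f}$ via Lemmas \ref{ExtfromAvoidQ'toAvoidQ} and \ref{QSubPPTT'} and the $K_4$/$K_{2,3}$-freeness facts of Lemma \ref{ShortPathSecSFacts}) that a bad filament forces every short subpath far from $T\cup T'$ to contain gaps in a constrained pattern; Section \ref{MainProofSubSecThm1PathCol} then shows no gap/non-gap pattern along a path of length at least $30$ can satisfy all these constraints simultaneously (no three consecutive gaps, at least one gap per edge, the length-six midpoint condition), yielding the contradiction. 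If you want to pursue your constructive route, you would need to prove from scratch an interior-path analogue of the local extension lemmas --- in effect redoing the case analysis of Sections \ref{BadFilPathLen4Sub}--\ref{InterSub1SecPathLen2Del} --- and also justify why the sweep's choices at one rung never poison a later rung; the distance-3 buffer at $T$ and $T'$, which you handle correctly in spirit, is the easy part.
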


To make the proof of Theorem \ref{MainResultColorAndDeletePaths} easier to follow, it is useful to make explicit which vertices we are coloring and which vertices we are deleting but not coloring. 

\begin{defn}\label{AvoidNotationDef}\emph{Let $G$ be a graph, let $L$ be a list-assignment for $V(G)$, let $H$ be a subgraph of $G$, and let $U\subseteq V(H)$. We let $\textnormal{Red}_{G,L}(H\mid U)$ denote the set of $L$-colorings $\phi$ of $V(H\setminus U)$ such that $(\phi, V(H))$ is an $L$-reduction.} \end{defn}

Note that $\textnormal{Red}(H\mid U)$ only depends on $V(H)$, not on $H$. We usually drop the subscripts $G, L$ from the notation above if these are clear from the context. In the special case where $X=\varnothing$, we drop the $X$ from the notation above and write $\textnormal{Red}(H)$. In the case where $U=\{w\}$ for some $w$, we write $\textnormal{Red}(H\mid U)=\textnormal{Red}(H\mid w)$. To prove Theorem \ref{MainResultColorAndDeletePaths}, we first prove some intermediate results. We first introduce the following notation.

\begin{defn}\label{ListNotPathDelEndP}\emph{Let $G$ be a graph and let path $P\subseteq G$. We define the following.}
\begin{enumerate}[label=\emph{\arabic*)}] 
\itemsep-0.1em
\item\emph{Given an $x\in V(\mathring{P})$, we say that $x$ is a \emph{$P$-gap} if there does not exist a $y\in D_1(P)$ such that $|L(y)|\geq 5$ and $G[N(y)\cap V(P)]$ is a path of length two with midpoint $x$.}
\item\emph{Given and a $q\in V(P)$, we let $\textnormal{Mid}(P\mid q)$ denote the set of $v\in D_1(P)$ such that $|L(v)|\geq 5$ and $G[N(v)\cap V(P)]$ is a path of length two whose midpoint is $q$.}
\item\emph{We set $\textnormal{Mid}(P):=\bigcup_{q\in V(P)}\textnormal{Mid}(P\mid q)$. For any $w\in\textnormal{Mid}(P)$, we let $P^w$ be the path obtained from $P$ by replacing the midpoint of $G[N(w)\cap V(P)]$ with $w$.}
\end{enumerate}
\end{defn}

\begin{defn} \emph{Given $G, \Sigma$ and $L$ as in the statement of Theorem \ref{MainResultColorAndDeletePaths}, we say that a filament $(P, T, T', \mathbf{f})$ is \emph{bad} if there does not exist a subgraph $H$ of $G$ satisfying 1)-2) of Theorem \ref{MainResultColorAndDeletePaths}.} \end{defn}

The proof of Theorem \ref{MainResultColorAndDeletePaths} makes up Sections \ref{IntermResSecPathDel}-\ref{MainProofSubSecThm1PathCol}, and is organized as follows. Given $G, \Sigma, L$ as in the statement of Theorem \ref{MainResultColorAndDeletePaths}, we first prove that, given a bad filament $(P, T, T', \mathbf{f})$, and a subpath $Q$ of $P$ of length two with $d(Q, T\cup T')\geq 5$, at least one endpoint of $Q$ is a $P$-gap. This result is Lemma \ref{PathLenLemmaQGapMainDist2} and is the main ingredient in the proof of Theorem \ref{MainResultColorAndDeletePaths}. In Section \ref{BadFilPathLen4Sub}, we prove an intermediate result that we need in order to prove Lemma \ref{PathLenLemmaQGapMainDist2}, which is Lemma \ref{BadFil4PathLemmMain}. Finally, in Section \ref{MainProofSubSecThm1PathCol}, we apply Lemma \ref{PathLenLemmaQGapMainDist2} to prove that Theorem \ref{MainResultColorAndDeletePaths} holds. Throughout Sections \ref{IntermResSecPathDel}-\ref{MainProofSubSecThm1PathCol}, we repeatedly rely on the observation that, given $G, \Sigma, L$ be as in Theorem \ref{MainResultColorAndDeletePaths} and a path $P$ in $G$, $G$ has no copies of $K_{2,3}$ or $K_4$, and, in particular, for each $q\in V(P)$, $\textnormal{Mid}(P\mid q)$ has size at most one. 

\begin{lemma}\label{ExtfromAvoidQ'toAvoidQ} Let $G, \Sigma, L$ be as in the statement of Theorem \ref{MainResultColorAndDeletePaths}. Let $Q\subseteq G$ be a shortest path between its endpoints and let $Q'$ be a terminal subpath of $Q$. Suppose further that every vertex of $Q\setminus Q'$ has an $L$-list of size at least five. Then any element of $\textnormal{Red}(Q')$ extends to an element of $\textnormal{Red}(Q)$, and, in particular, if $Q'$ has length at most two, then any $L$-coloring of of $Q'$ extends to an element of $\textnormal{Red}(Q)$.  \end{lemma}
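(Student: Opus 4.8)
The plan is to induct on the length of the terminal subpath $Q'$, shrinking it one vertex at a time toward $Q$. The base case is $|E(Q')| \le 2$, where the second assertion of the lemma is exactly what we need to establish: any $L$-coloring $\phi$ of $Q'$ (a path on at most three vertices) must be shown to extend to an element of $\textnormal{Red}(Q)$, i.e.\ to a partial $L$-coloring $\tau$ of $V(Q)$ with $(V(Q),\tau)$ an $L$-reduction. So the whole argument really reduces to proving this base case, and then the inductive step is essentially trivial: if $Q'$ has length $\ell \ge 3$ with an endpoint $q$ on the $Q \setminus Q'$ side, let $Q''$ be $Q'$ with $q$ removed (still a terminal subpath of $Q$, still with every vertex of $Q \setminus Q''$ having a $5$-list since $q$ is now interior to $Q \setminus Q''$ and $q \notin Q'$ wait — actually $q \in Q'$; rather, the vertices of $Q \setminus Q''$ are those of $Q \setminus Q'$ together with $q$, and $q$ has a $5$-list because... hmm). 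Let me restate: I would instead induct by \emph{growing} $Q'$ toward $Q$, peeling vertices off $Q \setminus Q'$ one at a time. So suppose $Q' \ne Q$ and let $q$ be the vertex of $Q\setminus Q'$ adjacent to an endpoint $p$ of $Q'$; set $Q^+ := Q'+q$, a longer terminal subpath of $Q$ with $Q \setminus Q^+$ still having all $5$-lists. By induction it suffices to show every element of $\textnormal{Red}(Q')$ extends to an element of $\textnormal{Red}(Q^+)$.

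So the heart of the matter is the following single step: given $\phi \in \textnormal{Red}(Q')$ and a vertex $q$ adjacent to an endpoint $p$ of $Q'$ with $|L(q)| \ge 5$ and $q \notin V(Q')$, extend $\phi$ to $\textnormal{Red}(Q^+)$ where $Q^+ = Q' + q$. First I would color $q$: since $q$ has at most two neighbors on $Q^+$ (namely $p$, and possibly the other neighbor of $q$ along $Q$, but that vertex is not yet in $Q^+$), in fact $q$ has exactly one neighbor $p$ in $V(Q^+) = V(Q') \cup \{q\}$ that could be colored, so $L_\phi(q)$ has size at least $4$; pick any color $c \in L_\phi(q)$ and set $\tau := \phi \cup \{q \mapsto c\}$ with domain $V(Q') \cup \{q\} = V(Q^+)$. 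The key point is that $(V(Q^+), \tau)$ is an $L$-reduction: we need $V(Q^+)$ to be $(L,\tau)$-inert in $G$ and every vertex of $D_1(V(Q^+))$ with a $5$-list to have $L_\tau$-list of size at least $3$. Inertness of $V(Q^+)$: since $\tau$ is a total $L$-coloring of $V(Q^+)$ (no vertex of $Q^+$ is left uncolored), $(V(Q^+), \tau)$ being an $L$-reduction just requires that $V(Q^+)$ be $(L, \tau)$-inert, which for a fully-colored set means: every extension of $\tau$ whose domain contains $D_1(\varnothing) = \varnothing$... here I need to be careful with the definition — $S \setminus \textnormal{dom}(\phi) = \varnothing$ when $S \subseteq \textnormal{dom}(\phi)$, so inertness is automatic. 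Hence $(V(Q^+),\tau)$ is an $L$-reduction provided the list-size condition on $D_1$ holds.

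For the list-size condition: a vertex $v \in D_1(V(Q^+)) = D_1(\{q\} \cup V(Q'))$ with $|L(v)| \ge 5$ needs $|L_\tau(v)| \ge 3$. If $v$ has no neighbor in $\{q\}$, then $L_\tau(v) = L_\phi(v)$ and we are done because $\phi \in \textnormal{Red}(Q')$ already guarantees $|L_\phi(v)| \ge 3$ (noting $D_1(V(Q')) \supseteq D_1(V(Q^+)) \cap (V(G) \setminus N(q))$, up to the vertices only reachable through $q$). If $v \in N(q)$, then $v$ loses at most the one color $c$ beyond what it lost to $\phi$; so I need $v$ to have had an $L_\phi$-list of size at least $4$, OR $v$ to not be a "new" $D_1$ vertex. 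The subtle case is a vertex $v$ adjacent to $q$ but not to $V(Q')$ — then $v \notin D_1(V(Q'))$ and $\phi$ gives no control, but $v$ has at most ... as many as two neighbors in $V(Q')\cup\{q\}$? No: $v$'s neighbors in $V(Q^+)$ could include $q$ and possibly endpoints of $Q'$; if $v$ is adjacent only to $q$, then $|L_\tau(v)| = |L(v)| - 1 \ge 4 \ge 3$. If $v$ is adjacent to $q$ and to exactly one vertex of $V(Q')$ (necessarily the endpoint $p$, since $Q$ is a shortest path and $q,p$ are consecutive on it — any common neighbor structure is limited by $K_{2,3}$-freeness and shortest-path geometry), then $v \in D_1(V(Q'))$ already, so $|L_\phi(v)| \ge 3$, and $v$ gains $\phi(p)$ back? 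No — $v$ loses $\phi(p)$ under $\phi$ and additionally $c$ under $\tau$; I'd need $|L_\phi(v)| \ge 4$. The main obstacle is therefore controlling this last case: a vertex of distance one from $Q'$ that is also adjacent to the newly-added vertex $q$. I would resolve it by observing that the shortest-path property of $Q$ together with $G$ being $K_{2,3}$-free and $K_4$-free (available since $G$ is short-inseparable, as used repeatedly in the paper) forces such a $v$ to be adjacent to at most one of $p$ and $q$ — because a common neighbor of two consecutive vertices $p,q$ of a geodesic, together with the edge $pq$, plus another shared neighbor, would create a forbidden small separating structure or violate the geodesic property; and if $v$ is adjacent to $q$ only, the bound $|L(v)|-1 \ge 4$ suffices, while if adjacent to $p$ only, $v \notin D_1(V(Q^+)) \cap N(q)$-trouble and the old bound from $\phi$ applies unchanged since $L_\tau(v) = L_\phi(v)$. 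Carefully enumerating these adjacency patterns — and invoking $K_{2,3}$/$K_4$-freeness and the geodesic condition to rule out the bad ones — is the one genuinely non-routine part; everything else is bookkeeping against Definition~\ref{ListLSvRemove} and the definition of $L$-reduction.

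\textbf{Caveat on the base case.} One more point: in the base case $|E(Q')| \le 2$ I should double-check that an \emph{arbitrary} $L$-coloring $\phi$ of $Q'$ (not merely one in $\textnormal{Red}(Q')$) is already an element of $\textnormal{Red}(Q')$ — i.e.\ that $(V(Q'), \phi)$ is an $L$-reduction. Since $\phi$ is total on $V(Q')$, inertness is automatic as above; and a vertex $v \in D_1(V(Q'))$ with $|L(v)| \ge 5$ has at most... here the geodesic length-$\le 2$ path $Q'$: a vertex of $D_1(Q')$ can be adjacent to at most $3$ vertices of $Q'$, but if $|E(Q')| \le 2$ and $v$ is adjacent to all three vertices of $Q'$ we'd have a $K_{2,3}$ (with the two path-edges) or, if adjacent to the two endpoints of a length-$2$ path, still fine — $v$ loses at most $3$ colors, leaving $\ge 2$, which is not enough! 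So actually the base case already requires the same $K_{2,3}$-freeness argument: a vertex adjacent to all of $Q'$ when $|E(Q')|=2$ is excluded because $Q'$ plus $v$ would be $K_{2,3}$-free-violating only if... a length-$2$ path $xyz$ plus a vertex $v$ adjacent to $x,y,z$ is exactly $K_{2,3}$ minus nothing? $\{y,v\}$ vs $\{x,z\}$ plus — no, that's $K_{2,2}$ plus the edges $vy$; it contains $K_{2,3}$? Vertices $v, y$ on one side and $x$, $z$ on the other gives $K_{2,2}$; we also have... it does not obviously contain $K_{2,3}$. I would instead note $v$ has at most $2$ neighbors on a geodesic of length $2$ because three neighbors would make $d(x,z) \le 2$ via $v$, consistent, but would create the $4$-cycle $xvzy$... which might be separating — and $G$ short-inseparable forbids separating $4$-cycles. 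Pinning down exactly this is the crux; I expect it to go through cleanly using short-inseparability, but it is the step to write out with care.
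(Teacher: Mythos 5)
Your induction skeleton---growing $Q'$ toward $Q$ one vertex at a time and checking the $D_1$ list condition, with inertness automatic because the set is fully colored---is the same as the paper's. But the execution of the single extension step has a genuine gap, and it sits exactly where the real content of the lemma lies. When you add the new vertex $q=p_{j+1}$ adjacent to the endpoint $p=p_j$ of the current path, a vertex $v$ with a $5$-list may be adjacent to $q$ \emph{and to two} vertices of the already-colored path, namely $p_{j-1}$ and $p_j$: the shortest-path property only confines $N(v)\cap V(Q)$ to a window of three consecutive vertices, not to a single vertex. Such a $v$ is precisely the (unique, by $K_{2,3}$-freeness) vertex of $\textnormal{Mid}(Q\mid p_j)$, and such vertices are in no way forbidden---in the filament setting every face near the path is a triangle, so consecutive path vertices routinely have a common neighbor with a $5$-list; these $\textnormal{Mid}$ vertices are the central objects of Sections \ref{IntermResSecPathDel}--\ref{MainProofSubSecThm1PathCol}. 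For this $v$ you only know $|L_\phi(v)|\geq 3$, so ``pick any color $c\in L_\phi(q)$'' can cut its list to $2$ and destroy the reduction. The paper's proof is organized entirely around this case: it chooses the color of $p_{j+1}$ so that the vertex $w\in\textnormal{Mid}(Q\mid p_j)$ keeps a list of size at least three, which is possible because $|L_\psi(p_{j+1})|\geq 4$ while $|L_\psi(w)|\geq 3$ (so if $|L_\psi(w)|=3$ one can pick $c\notin L_\psi(w)$). Your proposal never isolates this configuration, so the step as written fails.

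Compounding this, the case you do single out as the obstacle---$v$ adjacent to both $p$ and $q$ and to nothing else on the path---is harmless, and your proposed fix for it is false. Such a $v$ has only one colored neighbor under $\phi$, hence $|L_\phi(v)|\geq 4$, and it survives the loss of one further color; no structural exclusion is needed. The claim you invoke instead, that consecutive vertices of a geodesic cannot have a common neighbor with a $5$-list because of $K_{2,3}$/$K_4$-freeness or short-inseparability, is simply not true (a common neighbor of an edge creates none of those configurations), and accepting it would contradict the existence of the $\textnormal{Mid}$ vertices used throughout Lemmas \ref{BadFil4PathLemmMain} and \ref{PathLenLemmaQGapMainDist2}. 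On your closing caveat: the worry about the base case (whether an arbitrary $L$-coloring of a length-two geodesic lies in $\textnormal{Red}(Q')$, given a possible $5$-list vertex adjacent to all three of its vertices) is a legitimate one to raise, but you leave it unresolved; the paper's proof disposes of it with a one-line assertion rather than an argument, so here you have flagged a point the paper glosses over rather than supplied the missing step.
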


\begin{proof} Let $Q:=p_1\cdots p_{\ell}$ and $Q':=p_1\cdots p_{\ell}$ for some integers $1\leq\ell\leq r$. Let $\phi\in\textnormal{Red}(Q')$, and, for each $j=\ell, \cdots, r$, let $B_j$ be the set of extensions of $\phi$ to an element of $\textnormal{Red}(p_1Qp_j)$. We claim that $B_j\neq\varnothing$ for all $j=\ell, \cdots, r$. We show this by induction on $j$. This holds if $j=\ell$ since $B_{\ell}=\{\phi\}$. If $r=\ell$, then we are done, so let $j\in\{\ell, \cdots, r-1\}$ and let $\psi\in B_j$. As $Q$ is induced, $|L_{\psi}(p_{j+1})|\geq 4$. If $\textnormal{Mid}(Q\mid p_j)=\varnothing$, then any extension of $\psi$ to an $L$-coloring of $V(p_1Pp_{j+1})$ lies in $B_{j+1}$, so we are done in that case. Now suppose that $|\textnormal{Mid}(Q\mid p_j)|=1$ and let $w$ be the unique vertex of $\textnormal{Mid}(Q\mid p_j)$. As $|L(w)|\geq 5$, we have $|L_{\psi}(w)|\geq 3$, so there is an extension of $\psi$ to an $L$-coloring $\psi^*$ of $p_1Qp_{j+1}$ such that $|L_{\psi^*}(w)|\geq 3$, and thus $\psi^*\in B_{j+1}$. We conclude that $B_{j+1}\neq\varnothing$, as desired.  Note that, if $Q'$ has length two, then any $L$-coloring of $Q'$ is an element of $\textnormal{Red}(Q')$. \end{proof}

We use the following observation repeatedly, and it is immediate from the fact that $G$ is $K_4$-free and $P$ is a shortest path between its endpoints.

\begin{lemma}\label{ShortPathSecSFacts}
Let $G, \Sigma$ be as in the statement of Theorem \ref{MainResultColorAndDeletePaths} and $P$ be a shortest path between its endpoints. Then,
\begin{enumerate}[label=S\arabic*)]
\itemsep=-0.1em
\item Any two distinct vertices of $\textnormal{Mid}(P)$ are of distance at least two apart; AND
\item Let $p\in V(P)$ and let $w\in\textnormal{Mid}(P\mid p)$. If there is a $v\in N(w)\cap V(G\setminus P)$ with least two neighbors in $P$, then $G[N(v)\cap V(P)]$ is an edge which intersects the 2-path $G[N(w)\cap V(P)]$ precisely on a common endpoint, and, letting $qu$ be this edge and letting $S=(V(P)\setminus N(v))\cup (\textnormal{Mid}(P)\setminus\{w\})$, we have $d(v, S)\geq 2$. 
\end{enumerate}
\end{lemma}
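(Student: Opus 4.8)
The plan is to reduce both assertions entirely to two facts already in hand: that $G$ contains no $K_{2,3}$ and no $K_4$, and that, since $P$ is a shortest path between its endpoints, a walk in $G$ of length $\ell$ between two vertices $p_a,p_b$ of $P$ forces $|a-b|\le \ell$. Write $P=p_0p_1\cdots p_n$; a vertex $w\in\textnormal{Mid}(P\mid p_i)$ is, by definition, adjacent to exactly the three consecutive vertices $p_{i-1},p_i,p_{i+1}$ (the only length-two subpath of $P$ with midpoint $p_i$, and any further $P$-neighbour of $w$ would destroy the property that $G[N(w)\cap V(P)]$ is a path of length two). For S1, suppose two distinct vertices $w\in\textnormal{Mid}(P\mid p_i)$ and $w'\in\textnormal{Mid}(P\mid p_j)$ have $d(w,w')\le 1$; as $w\neq w'$ this means $ww'\in E(G)$. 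If $i=j$, then $\{w,w'\}$ and $\{p_{i-1},p_i,p_{i+1}\}$ are the two sides of a $K_{2,3}$, a contradiction; so $i\neq j$, say $i<j$. The walk $p_{i-1}\,w\,w'\,p_{j+1}$ has length $3$, so $(j+1)-(i-1)\le 3$, forcing $j=i+1$; but then $\{w,w',p_i,p_{i+1}\}$ induces $K_4$, again a contradiction. This proves S1.

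For S2, fix $w\in\textnormal{Mid}(P\mid p_i)$ and $v\in N(w)\setminus V(P)$ with $|N(v)\cap V(P)|\ge 2$. Two preliminary bounds: (i) any two neighbours $p_a,p_b$ of $v$ on $P$ satisfy $|a-b|\le 2$, via the $2$-walk $p_a\,v\,p_b$; and (ii) every neighbour $p_k$ of $v$ on $P$ satisfies $|k-(i-1)|\le 3$ and $|k-(i+1)|\le 3$, via the $3$-walks $p_{i-1}\,w\,v\,p_k$ and $p_{i+1}\,w\,v\,p_k$, so $k\in\{i-2,i-1,i,i+1,i+2\}$. Next I claim $v\not\sim p_i$ and that $v$ is not adjacent to both $p_{i-1}$ and $p_{i+1}$. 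Indeed, if $v\sim p_i$, then by (i)--(ii) its second $P$-neighbour is one of $p_{i\pm1},p_{i\pm2}$: the cases $p_{i\pm1}$ give a $K_4$ on $\{v,w,p_i,p_{i\pm1}\}$, while the case $p_{i+2}$ gives a $K_{2,3}$ with sides $\{v,p_{i+1}\}$ and $\{p_i,p_{i+2},w\}$ (and symmetrically for $p_{i-2}$); and if $v\sim p_{i-1}$ and $v\sim p_{i+1}$, then $\{v,p_i\}$ and $\{p_{i-1},p_{i+1},w\}$ form a $K_{2,3}$. Combining these facts with (i) and (ii), $N(v)\cap V(P)$ is a set of at least two vertices contained in $\{p_{i-2},p_{i-1},p_{i+1},p_{i+2}\}$, of pairwise $P$-distance $\le 2$, containing at most one of $p_{i-1},p_{i+1}$; the only such sets are $\{p_{i-2},p_{i-1}\}$ and $\{p_{i+1},p_{i+2}\}$. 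Either way $G[N(v)\cap V(P)]$ is an edge $qu$ of $P$ whose intersection with the $2$-path $p_{i-1}p_ip_{i+1}=G[N(w)\cap V(P)]$ is the single vertex $p_{i-1}$ or $p_{i+1}$, i.e.\ an endpoint of that $2$-path; this is the first part of S2.

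It remains to verify $d(v,S)\ge 2$ for $S=(V(P)\setminus N(v))\cup(\textnormal{Mid}(P)\setminus\{w\})$. Since $v\notin V(P)$, and since $G[N(v)\cap V(P)]$ is an edge rather than a path of length two, $v\notin\textnormal{Mid}(P)$; hence $v\notin S$. As $N(v)\cap(V(P)\setminus N(v))=\varnothing$ trivially, it suffices to show $v$ has no neighbour in $\textnormal{Mid}(P)$ other than $w$. If $v\sim w'\in\textnormal{Mid}(P\mid p_k)\setminus\{w\}$, then applying the conclusion of the previous paragraph with $w'$ in place of $w$ gives $N(v)\cap V(P)\in\{\{p_{k-2},p_{k-1}\},\{p_{k+1},p_{k+2}\}\}$; comparing with the value already determined, either $k=i$ --- so $w,w'\in\textnormal{Mid}(P\mid p_i)$, forcing $w=w'$ since $|\textnormal{Mid}(P\mid p_i)|\le 1$ (no $K_{2,3}$), a contradiction --- or $|k-i|=3$, in which case the $4$-walk running from a far endpoint of $w$'s triple through $w,v,w'$ to the far endpoint of $w'$'s triple (for instance $p_{i+1}\,w\,v\,w'\,p_{i-4}$ when $k=i-3$) has length $4$ yet joins two vertices of $P$ at $P$-distance $5$, a contradiction. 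Hence $N(v)\cap S=\varnothing$ and $d(v,S)\ge 2$, completing S2.

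I expect the only genuinely fiddly point to be the middle of S2 --- pinning down which $K_{2,3}$ to exhibit (always with one of $p_i,p_{i\pm1}$, never $v$, as a degree-three side) when $v$ tries to touch the interior of $w$'s triple --- while everything else is bookkeeping with walk lengths and the shortest-path property.
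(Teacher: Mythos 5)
Your proposal is correct, and it follows exactly the route the paper intends: the paper states this lemma without proof as "immediate" from the absence of $K_{2,3}$ and $K_4$ together with the shortest-path property, which is precisely the case analysis you carry out in detail (including the walk-length bounds and the $|\textnormal{Mid}(P\mid q)|\le 1$ observation the paper records just before the lemma).
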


The following is also immediate from two applications of Lemma \ref{ExtfromAvoidQ'toAvoidQ}.

\begin{lemma}\label{QSubPPTT'} Let $G, \Sigma, L$ be as in the statement of Theorem \ref{MainResultColorAndDeletePaths} and $(P, T, T', \mathbf{f})$ be a filament. Given a subpath $Q$ of $P\setminus (T\cup T')$ with $|E(Q)|\geq 2$,  $\mathbf{f}$ extends to an element of $\textnormal{Red}(P\setminus\mathring{Q})$. \end{lemma}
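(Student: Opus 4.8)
This is Lemma~\ref{QSubPPTT'}, and the excerpt tells us it follows from two applications of Lemma~\ref{ExtfromAvoidQ'toAvoidQ}. So the approach is: decompose $P\setminus\mathring{Q}$ into two subpaths, each of which is a terminal subpath of a longer shortest path whose ``extra'' vertices all have $5$-lists, apply Lemma~\ref{ExtfromAvoidQ'toAvoidQ} twice, and glue. First I would set up notation: write $P=p_0p_1\cdots p_n$, write $Q=p_ap_{a+1}\cdots p_b$ (so $2\le b-a$, and $Q$ lies entirely in $P\setminus(T\cup T')$), and note $P\setminus\mathring{Q}$ has two connected components, namely $P_L:=p_0Pp_a$ and $P_R:=p_bPp_n$ (one of these could be trivial only if $Q$ reaches an endpoint of $P$, but since $Q\subseteq P\setminus(T\cup T')$ and $T,T'$ are nonempty terminal subpaths, both $P_L$ and $P_R$ contain at least the relevant terminal segment $T$ or $T'$, hence are nonempty). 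The coloring $\mathbf{f}$ is a partial $L$-coloring with $(\mathbf{f},\operatorname{dom}(\mathbf{f}))$ an $L$-reduction; I need to massage it so its domain is contained in $V(P_L)\cup V(P_R)$, or at least handle it componentwise.

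**Key steps.** (1) Apply Lemma~\ref{ExtfromAvoidQ'toAvoidQ} with the ambient shortest path taken to be $P_L\cup Q$ — more precisely: I want to extend a reduction on $P_L$ to a reduction on $P_L$ together with, say, the first two vertices of $Q$; but that's not quite the shape of Lemma~\ref{ExtfromAvoidQ'toAvoidQ}, which extends $\textnormal{Red}(Q')$ to $\textnormal{Red}(Q)$ for $Q'$ a \emph{terminal} subpath of a shortest path $Q$ all of whose non-$Q'$ vertices have $5$-lists. The clean move: take the full path $P$ itself, its terminal subpath $P_L=p_0Pp_a$, and observe every vertex of $P\setminus P_L$ — which includes $\mathring{Q}$ and $P_R\setminus\{p_b\}$ and $p_b$ — wait, that's not all $5$-lists (the $T'$ end need not be). So instead I would go the other direction and build from the middle outward is wrong; better: use Lemma~\ref{ExtfromAvoidQ'toAvoidQ} twice, once for each side, in a way where the ``extension path'' is short. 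Concretely: Step~(1): by Condition~3) of a filament, $(\mathbf{f},\operatorname{dom}(\mathbf{f}))$ is an $L$-reduction; I'd like to say $\mathbf{f}$ restricted to $V(P_L)$ and to $V(P_R)$ are each reductions of the relevant subpaths — but $\operatorname{dom}(\mathbf f)$ need not be contained in $V(P)$ at all. Re-reading, I think the intended reading is that $\mathbf f$ is a coloring whose domain is disjoint from (a neighborhood of) $P\setminus(T\cup T')$, so in particular $\operatorname{dom}(\mathbf f)$ meets $P$ only inside $T\cup T'\subseteq P_L\cup P_R$; then $\mathbf f\restriction T$ and $\mathbf f\restriction T'$ land in the two components. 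Step~(2): On the left, apply Lemma~\ref{ExtfromAvoidQ'toAvoidQ} to the shortest path $P_L$ with terminal subpath $T$ (every vertex of $P_L\setminus T$ has a $5$-list since $P_L\setminus T\subseteq P\setminus(T\cup T')$ has all $5$-lists by Condition~2)): this extends $\mathbf f\restriction T\in\textnormal{Red}(T)$ (an $L$-coloring of a short path is a reduction) to $\phi_L\in\textnormal{Red}(P_L)$. Step~(3): symmetrically get $\phi_R\in\textnormal{Red}(P_R)$. Step~(4): the union $\mathbf f\cup\phi_L\cup\phi_R$ — one must check it is a well-defined $L$-coloring and that the inertness combines; here $P_L$ and $P_R$ are at distance $\ge|E(\mathring Q)|\ge 1$, and in fact since $P$ is a shortest path and $|E(Q)|\ge2$, $p_{a-1}$-side and $p_{b+1}$-side vertices are at distance $\ge 2$ wait we need $\operatorname{dist}(V(P_L),V(P_R))\ge 2$; since $Q$ has length $\ge 2$ its interior is nonempty, and as $P$ is a shortest path between its endpoints, $d(p_a,p_b)=b-a\ge2$, and more generally no edge or common neighbor links $P_L$ to $P_R$ through the interior of $Q$ — this uses $K_{2,3}$-freeness / shortest-path-ness exactly as in Lemma~\ref{ShortPathSecSFacts}. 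Then invoke Observation~\ref{CombineInert} (or the consistency machinery) to conclude the union is a reduction of $V(P_L)\cup V(P_R)=V(P\setminus\mathring Q)$, i.e.\ it lies in $\textnormal{Red}(P\setminus\mathring Q)$.

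**Main obstacle.** The genuine content is purely bookkeeping: making precise that $\mathbf f$ interacts correctly with the two halves (domain placement and no cross edges), and verifying $d(V(P_L),V(P_R))\ge 2$ so that the two reductions combine via Observation~\ref{CombineInert} without any vertex of $D_1$ seeing colored vertices from both sides in a conflicting way. There is no coloring difficulty — Lemma~\ref{ExtfromAvoidQ'toAvoidQ} does all the work on each side — so I expect the proof to be two or three sentences: ``Let $P_L,P_R$ be the two components of $P\setminus\mathring Q$; since $T\subseteq P_L$, $T'\subseteq P_R$ and all of $(P_L\setminus T)\cup(P_R\setminus T')\subseteq P\setminus(T\cup T')$ has $5$-lists, Lemma~\ref{ExtfromAvoidQ'toAvoidQ} extends $\mathbf f\restriction(T\cup \operatorname{dom}\mathbf f)$ to $\phi_L\in\textnormal{Red}(P_L)$ and $\phi_R\in\textnormal{Red}(P_R)$; as $|E(Q)|\ge2$ and $P$ is shortest, $d(V(P_L),V(P_R))\ge 2$, so by Observation~\ref{CombineInert} the union is a reduction, proving $\mathbf f$ extends into $\textnormal{Red}(P\setminus\mathring Q)$.'' The only subtlety worth a careful sentence is why the distance between the two halves is at least $2$: if some $v$ were adjacent to a vertex of $P_L$ and a vertex of $P_R$, then together with a subpath of $Q$ it would create a cycle shorter than $Q$ through $p_a$ and $p_b$ (or a chord), contradicting that $P$ is a shortest path; I would spell this out.
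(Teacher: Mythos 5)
Your proposal is correct and takes essentially the same route as the paper, which dispatches Lemma \ref{QSubPPTT'} as ``immediate from two applications of Lemma \ref{ExtfromAvoidQ'toAvoidQ}'': one application to each component of $P\setminus\mathring{Q}$ (each a shortest path containing $T$ resp. $T'$, all of whose remaining vertices have $5$-lists), followed by gluing, using that the two components are at distance at least $|E(Q)|\geq 2$ because $P$ is a shortest path. The residual bookkeeping you flag (placement of $\operatorname{dom}(\mathbf{f})$ and the $D_1$-condition at vertices seeing both sides) is exactly the part the paper itself leaves implicit, and your treatment of it is consistent with how filaments are actually used later in the paper.
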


\section{Dealing with paths of length four}\label{BadFilPathLen4Sub}

Section \ref{BadFilPathLen4Sub} consists of the following result. 

\begin{lemma}\label{BadFil4PathLemmMain} Let $\mathbf{F}=(P, T, T', \mathbf{f})$ be a bad filament, let $Q$ be a subpath of $P\setminus (T\cup T')$ with $d(Q, T\cup T')\geq 4$ and let $p$ be the midpoint of $Q$. Then at least one vertex of $Q-p$ is a $P$-gap. \end{lemma}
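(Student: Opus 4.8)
The plan is to argue by contradiction: suppose $\mathbf{F}=(P,T,T',\mathbf{f})$ is a bad filament with a subpath $Q=q_0q_1q_2\subseteq P\setminus(T\cup T')$, $d(Q,T\cup T')\ge 4$, midpoint $p=q_1$, where \emph{neither} $q_0$ nor $q_2$ is a $P$-gap. By Definition \ref{ListNotPathDelEndP}, not being a $P$-gap means there exist vertices $w_0\in\textnormal{Mid}(P\mid q_0)$ and $w_2\in\textnormal{Mid}(P\mid q_2)$, i.e. $w_i\notin V(P)$ has $|L(w_i)|\ge 5$ and $G[N(w_i)\cap V(P)]$ is a $2$-path centered at $q_i$. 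Write the two $2$-paths as $a_0q_0b_0$ and $a_2q_2b_2$ (subpaths of $P$); since $d(Q,T\cup T')\ge 4$, all the vertices $a_0,b_0,q_0,q_1,q_2,a_2,b_2$ together with their neighbors lie in the ``list-$5$'' region $B_1(P\setminus(T\cup T'))$ where every face other than is a triangle. The goal is to produce a subgraph $H$ witnessing that $\mathbf{F}$ is \emph{not} bad, contradicting our assumption. The natural candidate is $H:=P\cup\{w_0,w_2\}$ (or, if the midpoints overlap in one vertex, handle that degenerate case separately — note $b_0$ and $a_2$ could coincide only if $Q$ were shorter, so with midpoint strictly between $T$ and $T'$ and $d(Q,T\cup T')\ge 4$, the two $2$-paths are at distance $\ge 1$ and by S1) of Lemma \ref{ShortPathSecSFacts} we have $d(w_0,w_2)\ge 2$).

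The core of the argument is to build the required coloring $\tau\in\textnormal{Red}(H)$ extending $\mathbf{f}$. First apply Lemma \ref{QSubPPTT'} (with the subpath $Q$ of length two) to extend $\mathbf{f}$ to an element of $\textnormal{Red}(P\setminus\mathring Q)$; equivalently, first color everything along $P$ up to $q_0$ and from $q_2$ onward, leaving $q_1$ uncolored, using repeated applications of Lemma \ref{ExtfromAvoidQ'toAvoidQ} so that all vertices of $\textnormal{Mid}(P)$ away from $q_0,q_1,q_2$ — and in particular $w_0,w_2$ are \emph{not} among them since $w_0\in\textnormal{Mid}(P\mid q_0)$, $w_2\in\textnormal{Mid}(P\mid q_2)$ — keep $L_\phi$-lists of size $\ge 3$. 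Now I must extend this coloring $\phi$ to color $q_0,q_1,q_2,w_0,w_2$ so that (i) $\{q_0,q_1,q_2,w_0,w_2\}$ together with the already-colored part forms an $L$-reduction, which here means: every vertex of $D_1(H)$ outside $\textnormal{dom}(\tau)$ ends up with an $L_\tau$-list of size $\ge 3$, \emph{and} the uncolored-but-deleted part of $H$ is inert. Since $|E(Q)|=2$, in fact $H\setminus P=\{w_0,w_2\}$ and we want to \emph{color} $w_0,w_2$ (they have lists of size $\ge 5$, plenty of room), so the inertness condition is automatic and only the $D_1$-list condition remains. The key point: a vertex $v\in D_1(P)\setminus H$ can only be endangered if it has $\ge 3$ neighbors in $\textnormal{dom}(\tau)$; by the triangulation hypothesis and the $K_{2,3}$-free, $K_4$-free structure (and S2) of Lemma \ref{ShortPathSecSFacts}), such a $v$ has a very restricted neighborhood pattern — essentially $v$ is adjacent to a short subpath of $P$ plus possibly one of $w_0,w_2$. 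One checks that because $q_1$ sits between the two $2$-paths, any $v$ adjacent to three colored vertices of $H$ is adjacent to a $2$-path in $P$ avoiding at least one of $q_0,q_2$; we then color $q_0,q_1,q_2,w_0,w_2$ greedily, and whenever there is such a dangerous $v$ we simply choose the color on the midpoint of $v$'s $2$-path to coincide with the color used on one of its other two neighbors, killing at most one deletion at $v$ — this is exactly the ``reuse a color'' trick used repeatedly in the proof of Theorem \ref{StrengthendVerManColRes}.

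The main obstacle I anticipate is the bookkeeping when several such dangerous neighbors $v$ compete for the same midpoint vertex ($q_0$, $q_1$, or $q_2$), or when $w_0$ (resp. $w_2$) itself has an outside neighbor $v$ with two neighbors on $P$ — this is precisely the situation controlled by S2) of Lemma \ref{ShortPathSecSFacts}, which tells us $G[N(v)\cap V(P)]$ is an \emph{edge} meeting the $2$-path $G[N(w_0)\cap V(P)]$ in exactly one endpoint, and $d(v,S)\ge 2$ for $S=(V(P)\setminus N(v))\cup(\textnormal{Mid}(P)\setminus\{w_0\})$, so $v$ is effectively isolated from the rest of our coloring decisions and can be handled independently. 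The remaining worry is that coloring $w_0$ reduces $L(v)$ for such a $v$: but then $v$ still has $|L(v)|\ge 5$, loses at most the colors of $w_0$ and two consecutive $P$-vertices, leaving $\ge 2$, and since $v$ has no \emph{other} neighbors in $\textnormal{dom}(\tau)$ by the distance bound, $v$ can be put into the deletion set with an inert $1$-element extension — or, more carefully, we simply avoid this by choosing the color of the $P$-endpoint shared with $v$'s edge to match either $w_0$'s color or an already-fixed $P$-color. Assembling these local adjustments, $\tau$ extends to all of $H$ with every vertex of $D_1(H)$ retaining an $L_\tau$-list of size $\ge 3$, so $(V(H),\tau)$ is an $L$-reduction with $P\subseteq H$, $V(H\setminus P)=\{w_0,w_2\}\subseteq B_1(P\setminus(T\cup T'))$, and $d(H\setminus P,T\cup T')=d(\{w_0,w_2\},T\cup T')\ge d(Q,T\cup T')-1\ge 3$. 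This witnesses that $\mathbf{F}$ is not bad, the desired contradiction.
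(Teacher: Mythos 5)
Your proposal does not prove the statement the paper proves, and the argument it sketches has a genuine gap even on its own terms. First, the lemma is about a subpath $Q$ of length \emph{four}: the section is titled ``Dealing with paths of length four,'' the paper's proof opens with $Q=p_1p_2p_3p_4p_5$ and, negating the conclusion, obtains four witnesses $w_1,w_2,w_4,w_5\in\textnormal{Mid}(P)$, and the later applications (e.g.\ inside the proof of Lemma \ref{PathLenLemmaQGapMainDist2}, where it is used to conclude that $p_4$ is a $P$-gap once $p_0,p_1,p_3$ are not) rely on exactly this length-four form. You have read $Q$ as a length-two path $q_0q_1q_2$ and are therefore attempting to prove, directly and with the weaker hypothesis $d(Q,T\cup T')\geq 4$, what is essentially Lemma \ref{PathLenLemmaQGapMainDist2} -- a statement the paper obtains only with $d\geq 5$, only after all of Section \ref{InterSub1SecPathLen2Del}, and only by using the present lemma as an ingredient. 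If your two-paragraph argument worked, both this lemma and Section \ref{InterSub1SecPathLen2Del} would be redundant, which should already be a warning sign.

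Second, the decisive step is asserted, not proved. Your plan is to color $P\setminus\{q_1\}$ via Lemma \ref{QSubPPTT'}, then color $q_1,w_0,w_2$ ``greedily,'' reusing colors to protect any vertex of $D_1(H)$ with three colored neighbors. But several such vertices can impose incompatible demands at once: for instance a vertex adjacent to $w_0,q_1,q_2$ (the paper's $B^{\textnormal{in}}$ type, with $q_2$ already colored before $w_0,w_2$ were considered), a vertex adjacent to $q_{-2},q_{-1},w_0$ ($B^{\textnormal{out}}$ type, with both $P$-neighbors already colored), and a vertex adjacent to $w_0,q_1,w_2$ ($A^*$ type) may all constrain the single color of $w_0$ in different directions, and the colors of $q_0,q_2$ were fixed earlier without any knowledge of these constraints. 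The paper's proofs of both this lemma and Lemma \ref{PathLenLemmaQGapMainDist2} resolve exactly these conflicts by long case analyses in which one often cannot color all of $H$: one leaves chosen vertices uncolored and verifies inertness through $\textnormal{Red}(H\mid U)$ (using that $G[N(v)]$ is a $5$- or $6$-cycle), adds the obstructing vertex itself to $H$, or removes one of the $w$'s from $H$. Your sketch never resolves a single such conflict, and your fallback of ``putting $v$ into the deletion set'' is inconsistent with the reduction $(V(H),\tau)$ you defined, since $v\notin V(H)$; enlarging $H$ to contain $v$ re-opens both the $D_1$-list check and the inertness check that the paper carries out explicitly. So the proposal, as written, neither matches the statement nor closes the essential combinatorial step.
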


\begin{proof} Let $Q=p_1p_2p_3p_4p_5$ of $P\setminus (T\cup T')$. Suppose toward a contradiction that no vertex of $Q-p_3$ is a $P$-gap. There exist distinct $w_1, w_2, w_4, w_5\in D_1(P)$ and $p_0, p_6\in V(P\setminus Q)$, where $p_0$ is the unique neighbor of $p_1$ on $P\setminus Q$, and $p_6$ is the unique neighbor of $p_5$ on $P\setminus Q$, and, for each $k\in\{1,2,4,5\}$, $G[N(w_k)\cap V(P)]=p_{k-1}p_kp_{k+1}$. We now define the following. 

\begin{enumerate}[label=\emph{\arabic*)}] 
\itemsep-0.1em
\item\emph{Let $X=\textnormal{Mid}(P\mid p_3)\cup\{w_2, w_4\}$ and let $G_X$ be the subgraph of $G$ induced by $V(P)\cup X$.}
\item\emph{Let $Y\subseteq\{p_1, p_5\}$, where $p_1\in Y$ if and only if the three vertices $p_0, p_1, w_2$ have a common neighbor, and, likewise, $p_5\in Y$ if and only if the three vertices $w_4, p_5, p_6$ have a common neighbor.}
\item\emph{Let $A$ be the set of $y\in V(G)\setminus(V(P)\cup X)$ such that $y$ has at least one neighbor in $\{p_3\}\cup X$ and at least three neighbors in $V(P)\cup X$. Let $A_Y$ be the set of $v\in V(G\setminus P)$ such that $N(v)$ contains either $\{p_0, p_1, w_2\}$ or $\{w_4, p_5, p_6\}$.}
\end{enumerate}

Note that $A_Y\subseteq A$, and each vertex of $Y$ has precisely five neighbors. To avoid repeatedly referencing Lemma \ref{QSubPPTT'}, we  introduce the following notation for the remainder of the proof of Lemma \ref{BadFil4PathLemmMain}: Given indices $0\leq i\leq j\leq 6$, we fix an extension of $\mathbf{f}$ to an element $\psi^{ij}$ of $\textnormal{Red}(P\setminus p_i\cdots p_j)$. In the case where $i=j$, we write $\psi^i$ in place of $\psi^{ii}$. 

\begin{claim}\label{ExtToTauP+XMinY} $|Y|\leq 1$. \end{claim}

\begin{claimproof} Suppose not. Thus, there exist vertices $v,v'$, where $A_Y=\{v, v'\}$, and, in particular, $v$ is adjacent to $p_0, p_1, w_4$, and $v'$ is adjacent to $w_4, p_5, p_6$. Now, $\psi^{15}$ extends to an $L$-coloring $\sigma$ of $\textnormal{dom}(\psi)\cup\{p_2, p_4\}$, where $|L_{\sigma}(p_1)|\geq 4$ and $|L_{\sigma}(p_5)|\geq 4$. Let $H$ be the subgraph of $G$ induced by $V(G_X)\setminus\{w_2, w_4\}$ and $U=\{p_1, p_5\}$. Now, $\sigma$ extends to an $L$-coloring $\tau$ of $V(H)\setminus U$, and $\tau\in\textnormal{Red}(H\mid U)$, contradicting our assumption on $\mathbf{F}$. \end{claimproof}

\begin{claim}\label{YNbrInPAddToXW2W4} If $A\setminus A_Y\neq\varnothing$, then one of the following holds. 
\begin{enumerate}[label=\alph*)] 
\itemsep-0.1em
\item $|A\setminus A_Y|=1$ and $N(y)\cap (V(P)\cup X)=\{w_2, p_3, w_4\}$ for the lone $y\in A\setminus A_Y$; OR
\item $|A\setminus A_Y|=2$ and $\textnormal{Mid}(P\mid p_3)=\varnothing$, and, in particular, there exist $y, y'$, where $A\setminus A_Y=\{y, y'\}$, and $N(y)\cap (V(P)\cup X)=\{w_2, p_3, p_4\}$ and $N(y')\cap (V(P)\cup X)=\{p_2, p_3, w_4\}$.
\end{enumerate}
\end{claim}

\begin{claimproof} Suppose not. Thus, $A\setminus A_Y\neq\varnothing$ and neither a) nor b) hold. Note that there is a $y\in A\setminus A_Y$ with $|N(y)\cap\{w_2, w_4\}|<2$, or else a) holds. It follows that each vertex of $A\setminus A_Y$ is adjacent to precisely one of $w_2, w_4$, and sice $A\setminus A_Y\neq\varnothing$, we get $\textnormal{Mid}(P\mid p_3)=\varnothing$. Furthermore, $|A\setminus A_Y|\leq 2$. If $|A\setminus A_Y|=2$, then b) of Claim \ref{YNbrInPAddToXW2W4} is satisfied, so we get $A\setminus A_Y=\{y\}$. We suppose without loss of generality that $N(y)\cap (V(P)\cup\{w_2, w_4\})=\{w_2, p_3, p_4\}$. 

\vspace*{-8mm}
\begin{addmargin}[2em]{0em}
\begin{subclaim}\label{SubCLForYSize1}  $Y=\{p_5\}$. \end{subclaim}

\begin{claimproof} we first show that $|Y|=1$. Suppose not. Thus, by Claim \ref{ExtToTauP+XMinY}, $Y=\varnothing$, so $A_Y=\varnothing$. Now, $\psi^3$ extends to an $L$-coloring $\tau$ of $V(G_X+y)$, and, since $\mathbf{F}$ is a bad filament, $\tau\not\in\textnormal{Red}(G_X+y)$, so there is a $z\in D_1(G_X+y)$ with $|L(z)|\geq 5$ and $|L_{\tau}(z)|<3$. There is an extension of $\psi$ to an $L$-coloring $\pi$ of $V(P)\cup\{y\}$, where $|L_{\pi}(y)|\geq 2$. Then $\pi\in\textnormal{Red}(G_X+y\mid w_2)$, contradicting our assumption that $\mathbf{F}$ is a bad filament. Thus, $|Y|=1$. Now suppose that $Y=\{p_1\}$. Thus, there is a $v\in V(G\setminus P)$ with $p_0, p_1, w_2\in N(v)$. There is a $c\in L_{\psi}(p_2)$ with $|L_{\psi}(p_1)\setminus\{c\}|\geq 4$. Now let $\tau$ be an arbitrary extension of $\psi^{45}$ to an $L$-coloring of $V(P-p_1)\cup\{w_4\}$. Then $\tau\in\textnormal{Red}(G_X-w_2\mid p_1)$, again contradicting our assumption that $\mathbf{F}$ is a bad filament. \end{claimproof}\end{addmargin}

As $Y=\{p_5\}$, there is a $v'\in V(G\setminus P)$ adjacent to $w_4, p_5, p_6$. Set $H:=(G_X+y)-w_4$. Then $\psi^{12}$ extends to an $L$-coloring $\tau$ of $V(H)\setminus\{p_5\}$ with $\tau(p_4)=d$, and $\tau\in\textnormal{Red}(H\mid p_5)$, contradicting our assumption on $\mathbf{F}$. \end{claimproof}

\begin{claim}\label{IfYEmptyA=2CL} If $Y=\varnothing$, then $|A|=2$. \end{claim}

\begin{claimproof} Suppose that $Y=\varnothing$ and $|A|\neq 2$. Thus, $A=A\setminus A_Y$ and $|A|\leq 1$. If $A=\varnothing$, then, letting $tau$ be an arbitrary extension of $\psi^3$ to an $L$-coloring of $V(G_X)$, we have $\tau\in\textnormal{Red}(G_X)$, contradicting our assumption on $\mathbf{F}$. Thus, $|A|=1$. Let $y$ be the lone vertex of $A$. By Claim \ref{YNbrInPAddToXW2W4}, $N(y)\cap (V(P)\cup X)=\{w_2, p_3, w_4\}$. Now, if $\textnormal{Mid}(P\mid p_3)=\varnothing$, then $\psi^3$ extends to an $L$-coloring $\tau$ of $V(P-p_3)\cup\{w_2, p_3, w_4\}$ with $|L_{\tau}(y)|\geq 3$, and $\tau\in\textnormal{Red}(G_X)$, contradicting our assumption on $\mathbf{F}$.  Thus, $\textnormal{Mid}(P\mid p_3)\neq \varnothing$. Let $w_3$ be the lone vertex of $\textnormal{Mid}(P\mid p_3)$. Note that $G[N(p_3)]$ is a 6-cycle, and $\psi^3$ extends to an $L$-coloring $\tau$ of $V(P-p_3)\cup\{w_2, w_3, w_4\}$, where $|L_{\tau}(p_3)|\geq 2$. Let $U=\{p_3\}$. Then $\tau\in\textnormal{Red}(G_X\mid U)$, again contradicting our assumption on $\mathbf{F}$.  \end{claimproof}

\begin{claim}\label{YNeqVarnothingSubCL} $|Y|=1$. \end{claim}

\begin{claimproof} Suppose not. By Claim \ref{ExtToTauP+XMinY}, $Y=\varnothing$. By Claim \ref{IfYEmptyA=2CL}, $|A|=2$. By Claim \ref{YNbrInPAddToXW2W4}, $\textnormal{Mid}(P\mid p_3)=\varnothing$, and there exist distinct vertices $y, y'$, where $A=\{y, y'\}$, and $N(y)\cap (V(P)\cup\{w_2, w_4\})=\{w_2, p_3, p_4\}$ and $N(y')\cap (V(P)\cup\{w_2, w_4\})=\{p_2, p_3, w_4\}$. We now fix $\psi=\psi^3$. 

\vspace*{-8mm}
\begin{addmargin}[2em]{0em}
\begin{subclaim}\label{AvoidThreeColSetsSimSubCL} $|L_{\psi}(p_3)|=3$ and $L_{\psi}(p_3)=L_{\psi}(w_2)=L_{\psi}(w_4)$. \end{subclaim}

\begin{claimproof} Suppose not. Thus, there exist a $w\in\{w_2, w_4\}$ and a $c\in L_{\psi}(w)$, where $|L_{\psi}(p_3)\setminus\{c\}|\geq 3$, say $w=w_2$ without loss of generality, and let $H:=G[V(P)\cup\{w_2, w_4, y'\}]$. Now, $\psi$ extends to an $L$-coloring $\tau$ of $V(P-p_3)\cup\{w_2, w_4, y'\}$, where $L_{\tau}(p_3)|\geq 2$. Let $U=\{p_3\}$. Since $\mathbf{F}$ is a bad filament, $\tau\not\in\textnormal{Red}(H\mid U)$, so there is a vertex $z$ of $D_1(H)$ with $|L(z)|\geq 5$ and $|L_{\tau}(z)|<3$. Note that $z$ is unique, and $N(z)\cap\textnormal{dom}(\tau)=\{y', w_4, p_5\}$. Furthermore, $G[N(w_4)]$ is a 5-cycle. We now uncolor $w_4$. Note that $\psi$ extends to an $L$-coloring $\pi$ of $\textnormal{dom}(\psi)\cup\{w_2, y'\}$, where $|L_{\pi}(w_4)|\geq 3$ and $\pi(w_2)=c$. Let $H^*:=H-w_4$ and $U^*:=\{p_3, w_4\}$. Then $\pi\in\textnormal{Red}(H^*\mid U^*)$, contradicting our assumption on $\mathbf{F}$. \end{claimproof}\end{addmargin}

It follows from Subclaim \ref{AvoidThreeColSetsSimSubCL} that $\psi$ extends to an $L$-coloring $\tau$ of $V(P-p_3)\cup\{y, y'\}$, where $|L_{\tau}(p_3)|\geq 3$. We set $H:=G[V(P)\cup\{y, y'\}]$. Then $\tau\in\textnormal{Red}(H\mid p_3)$, contradicting our assumption on $\mathbf{F}$. \end{claimproof}

Applying Claim \ref{YNeqVarnothingSubCL}, we suppose without loss of generality that $Y=\{p_1\}$. Thus, there is a $v\in V(G)$ which is adjacent to all three of $p_0, p_1, w_2$, and $N(v)\cap (V(P)\cup\textnormal{Mid}(P))=\{p_0, p_1, w_2\}$. We now set $\psi=\psi^{13}$.

\begin{claim}\label{AMinusAYSetSize2} $|A\setminus A_Y|=2$. \end{claim}

\begin{claimproof} Suppose not. Thus, by Claim \ref{YNbrInPAddToXW2W4}, $|A\setminus A_Y|\leq 1$, and, for any $y\in A\setminus A_Y$, we have $N(y)\cap (V(P)\cup X)=\{w_2, p_3, w_4\}$. Since $|L_{\psi}(p_1)|\geq 4$ and $|L_{\psi}(p_2)|\geq 5$, there is a $c\in L_{\psi}(y_2)$ with $|L_{\psi}(p_1)\setminus\{c\}|\geq 4$. Now, if $\textnormal{Mid}(P\mid p_3)=\varnothing$, then letting $\tau$ be an extension of $\psi$ to an $L$-coloring of $V(P-p_1)$ using $c$ on $p_2$, and letting $U=\{p_3\}$, we get $\tau\in\textnormal{Red}(P\mid U)$, contradicting our assumption on $\mathbf{F}$.  Thus, $\textnormal{Mid}(P\mid p_3)\neq\varnothing$. Let $w_3$ be the lone vertex of $\textnormal{Mid}(P\mid p_3)$. If $A\setminus A_Y=\varnothing$, then, letting $\pi$ be any extension of $\psi$ to an $L$-coloring of $\textnormal{dom}(\psi)\cup\{p_2, p_3\}\cup X$ using $c$ on $p_2$, we get $\pi\in\textnormal{Red}(G_X\mid p_1)$. Thus, $|A\setminus A_Y|=1$. Let $y$ be the lone vertex of $A\setminus A_Y$.  Note that $G[N(p_3)]$ is a 6-cycle, and $\psi$ extends to an $L$-coloring $\tau$ of $\textnormal{dom}(\psi)\cup\{p_2, w_2, w_3, w_4\}$, where $|L_{\tau}(p_2)=c$ and $|L_{\tau}(p_3)|\geq 2$. Letting $U=\{p_1, p_3\}$, we have $\tau\in\textnormal{Red}(G_X\mid U)$, again contradicting our assumption on $\mathbf{F}$. \end{claimproof}

By Claim \ref{AMinusAYSetSize2}, $|A\setminus A_Y|=2$, so $\textnormal{Mid}(P\mid p_3)=\varnothing$, and furthermore, there exist vertices $y, y'$, where $A\setminus A_Y=\{y, y'\}$, and $N(y)\cap V(G_X) =\{w_2, p_3, p_4\}$ and $N(y')\cap V(G_X)=\{p_2, p_3, w_4\}$. We have $|L_{\psi}(p_1)|\geq 4$ and $|L_{\psi}(w_2)|\geq 5$. Thus, there is a $c\in L_{\psi}(w_2)$ with $|L_{\psi}(p_1)\setminus\{c\}|\geq 4$. Let $\pi$ be an extension of $\psi$ to an $L$-coloring of $\textnormal{dom}(\psi)\cup\{w_2\}$, where $\pi(w_2)=c$. There is an extension of $\pi$ to an $L$-coloring $\tau$ of $\textnormal{dom}(\pi)\cup\{w_2, w_4, y\}$, where $|L_{\tau}(p_3)|\geq 2$. Then $\tau\in\textnormal{Red}(G_X+y\mid U^*)$, again contradicting our assumption on $\mathbf{F}$. completes the proof of Lemma \ref{BadFil4PathLemmMain}. \end{proof}

\section{Dealing with paths of length two}\label{InterSub1SecPathLen2Del}

With Lemma \ref{BadFil4PathLemmMain} in hand, we prove Lemma \ref{PathLenLemmaQGapMainDist2} below, whose proof makes up all of Section \ref{InterSub1SecPathLen2Del}. 

\begin{lemma}\label{PathLenLemmaQGapMainDist2} Let $\mathbf{F}=(P, T, T', \mathbf{f})$ be a bad filament and let $Q$ be a subpath of $P\setminus (T\cup T')$ of length two, where $d(Q, T\cup T')\geq 5$. Then at least one endpoint of $Q$ is a $P$-gap. \end{lemma}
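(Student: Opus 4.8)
The plan is to argue by contradiction. Assume $\mathbf{F}$ is a bad filament, that $Q=q_1q_2q_3$ has midpoint $q_2$ and $d(Q,T\cup T')\ge 5$, and that \emph{neither} endpoint $q_1,q_3$ is a $P$-gap; I will derive a contradiction with the badness of $\mathbf{F}$. Under this assumption $\textnormal{Mid}(P\mid q_1)$ and $\textnormal{Mid}(P\mid q_3)$ are nonempty, and since $G$ contains no $K_{2,3}$ each is a singleton, say $\{w_1\}$ and $\{w_3\}$. I would fix the labelling of $P$ so that $P$ reads $\cdots q_0q_1q_2q_3q_4\cdots$, whence $G[N(w_i)\cap V(P)]$ is the $2$-path $q_{i-1}q_iq_{i+1}$, $|L(w_1)|,|L(w_3)|\ge 5$, and (by S1) of Lemma \ref{ShortPathSecSFacts}) $d(w_1,w_3)\ge 2$, so in particular $w_1w_3\notin E(G)$. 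I also record the unique vertex $w_2\in\textnormal{Mid}(P\mid q_2)$, if it exists. Note that $q_1,q_3\in V(\mathring P)$ and lie in $P\setminus(T\cup T')$, so the statement to be contradicted is meaningful.

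The first real step is to invoke Lemma \ref{BadFil4PathLemmMain}. Because $d(Q,T\cup T')\ge 5$, every vertex of the $4$-path $R:=q_0q_1q_2q_3q_4$ is at distance $\ge 4$ from $T\cup T'$; hence $R$ is a subpath of $P\setminus(T\cup T')$ with $d(R,T\cup T')\ge 4$, and Lemma \ref{BadFil4PathLemmMain} applies to $R$ with midpoint $q_2$: at least one of $q_0,q_1,q_3,q_4$ is a $P$-gap. Since $q_1,q_3$ are not, one of $q_0,q_4$ is; as reversing $P$ interchanges $q_0\leftrightarrow q_4$, $q_1\leftrightarrow q_3$, $w_1\leftrightarrow w_3$ and carries bad filaments to bad filaments, I may assume $q_0$ is a $P$-gap, i.e. $\textnormal{Mid}(P\mid q_0)=\varnothing$. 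This is the only place where the bound $d(Q,T\cup T')\ge 5$ is used.

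It then remains to contradict badness from this configuration. Set $X:=\{w_1,w_3\}\cup\textnormal{Mid}(P\mid q_2)$ and $G_X:=G[V(P)\cup X]$. Since $w_1,w_3$ (and $w_2$, if present) lie in $B_1(Q)\subseteq B_1(P\setminus(T\cup T'))$ and $d(X,T\cup T')\ge 4$, any extension of $\mathbf{f}$ to a partial $L$-coloring $\tau$ of a vertex set $V(H)$ with $V(P)\cup X\subseteq V(H)\subseteq V(P)\cup B_1(P\setminus(T\cup T'))$, $d(V(H)\setminus V(P),T\cup T')\ge 3$, and $(V(H),\tau)$ an $L$-reduction would contradict $\mathbf{F}$ being bad; so the whole task reduces to producing such $H$ and $\tau$. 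Lemmas \ref{QSubPPTT'} and \ref{ExtfromAvoidQ'toAvoidQ} let me extend $\mathbf{f}$, once and for all, to an element of $\textnormal{Red}$ of $P$ minus a short window around $\{q_0,\dots,q_4\}$, so I only have to color — or leave uncolored but inert — the vertices $q_0,\dots,q_4$, the vertices of $X$, and the bounded number of vertices outside $V(G_X)$ having three or more neighbours in $V(P)\cup X$ (this set $A$ plays the role of the set $A$ in the proof of Lemma \ref{BadFil4PathLemmMain}), together with the boundary vertices analogous to the set $Y$ there, namely common neighbours of triples such as $\{q_{-1},q_0,w_1\}$ or $\{w_3,q_4,q_5\}$.

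From here the argument follows the pattern of the proof of Lemma \ref{BadFil4PathLemmMain}: using $K_{2,3}$- and $K_4$-freeness to bound $|A|$, $|\textnormal{Mid}(P\mid q_2)|$, and the boundary set, and using the gap hypothesis $\textnormal{Mid}(P\mid q_0)=\varnothing$ to restrict the admissible configurations around $q_0$, I would run a finite case analysis — on whether $q_2$ carries a peak, and on the sizes and adjacency patterns of $A$ and the boundary set — and in each case extend $\mathbf{f}$ to the required $\tau$. This is done either by coloring all of $V(G_X)$ together with the finitely many vertices of $A$ (using Lemma \ref{PartialPathColoringExtCL0} or Theorem \ref{thomassen5ChooseThm} for the local extensions near $q_1,q_2,q_3$, and assigning equal colors to two suitably nonadjacent neighbours of an $A$-vertex to keep its $L_\tau$-list of size $\ge 3$), or by deliberately leaving one of $w_1,w_3,q_2$ uncolored and putting it into $V(H)$, which is harmless precisely when that vertex is forced to a size-$2$ list but all of its remaining uncolored neighbours lie outside $V(H)$ and hence get colored by the ambient coloring in the definition of inertness. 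In every case the resulting $(V(H),\tau)$ contradicts the badness of $\mathbf{F}$, which completes the proof. I expect the main obstacle to be exactly this last step: organizing the case analysis around the vertices with three or more neighbours in $V(G_X)$ and deciding, configuration by configuration, which of $w_1,w_2,w_3$ to color and which to leave uncolored, all while keeping the precoloring $\mathbf{f}$ on the tails of $P$ intact.
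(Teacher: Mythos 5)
Your setup matches the paper's: you assume neither endpoint of $Q$ is a $P$-gap, introduce $w_1,w_3$, the set $X=\{w_1,w_3\}\cup\textnormal{Mid}(P\mid q_2)$, the graph $G_X$, the set $A$ of outside vertices with three or more neighbours in $V(P)\cup X$, and the boundary vertices, and you correctly observe that producing any subgraph $H$ with an extension of $\mathbf{f}$ to an $L$-reduction $(V(H),\tau)$ satisfying the distance constraints contradicts badness. Your opening use of Lemma \ref{BadFil4PathLemmMain} to get that one of $q_0,q_4$ is a $P$-gap (and the reversal symmetry to assume it is $q_0$) is legitimate, although the paper does not make this reduction up front; it instead proves structural claims (e.g.\ its Claim about $B^{\textnormal{in}}\cap N(w_3)$ when $p_4$ is a gap) without fixing which end carries the gap, and only invokes Lemma \ref{BadFil4PathLemmMain} inside later claims.

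The genuine gap is that everything after the setup is asserted rather than proved. The entire content of this lemma is the case analysis you defer: the paper spends several pages establishing, claim by claim, exactly which configurations can occur and how to color them — it introduces $B^{\textnormal{out}}$-avoiding colorings, proves that $\textnormal{Mid}(P\mid p_2)\neq\varnothing$ (so the ``does $q_2$ carry a peak'' branch is not symmetric: one side is impossible, and ruling it out is itself a multi-subclaim argument), proves $|A\setminus(A^*\cup B)|=2$, and in each failure scenario extracts forced local structure (e.g.\ that $G[N(p_1)]$ or $G[N(w_1)]$ is a specific $5$-cycle, or $G[N(p_2)]$ a $6$-cycle) which dictates precisely which vertex among $w_1,w_2,w_3,p_1,p_2,p_3$ to leave uncolored and which color choice keeps the uncolored set inert and all $D_1(H)$-lists of size at least three. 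Your proposal says ``in each case extend $\mathbf{f}$ to the required $\tau$,'' but that is exactly the statement being proved; a priori some configuration might admit no such reduction (the lemma would then be false), so each configuration must actually be exhibited and discharged. Without carrying out that analysis — or at least identifying the key intermediate claims and how the gap hypothesis at $q_0$ is exploited to kill the hard configurations — the argument does not establish the lemma; it only restates the strategy shared with Lemma \ref{BadFil4PathLemmMain}.
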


\begin{proof} Let $Q=p_1p_2p_3$. Suppose that neither $p_1$ nor $p_3$ is a $P$-gap. Now, there exist $p_0, p_4\in V(P)$ and vertices $w_1, w_3\in\textnormal{Mid}(P)$, where $p_0$ is the unique neighbor of $p_1$ in $P-p_2$, and $p_4$ is the unique neighbor of $p_3$ in $P-p_2$, and $\{w_k\}=\textnormal{Mid}(P\mid p_k)$ for each $k=1,3$. We now define the following.

\begin{enumerate}[label=\emph{\arabic*)}]
\itemsep-0.1em
\item We let $X=\{w_1, w_3\}\cup\textnormal{Mid}(P\mid p_2)$.
\item We let $A$ be the set of $y\in V(G)\setminus (V(P)\cup X)$ such that $y$ has at least one neighbor in $\{p_2\}\cup X$ and at least three neighbors in $V(P)\cup X$. Furthemore, we let $B$ be the set of $y\in A$ such that $|N(y)\cap\{w_1, w_3\}|=1$.
\item We let $G_X=G[V(P)\cup X]$. Furthermore, we let $q_-$ be the lone neighbor of $p_0$ in $P-p_1$ and let $q_+$ be the lone neighbor of $p_4$ in $P-p_3$, so that $q_-Pq_+$ is a subpath of $P$ of length six.
\item We let $B=B^{\textnormal{in}}\cup B^{\textnormal{out}}$, where $B^{\textnormal{in}}$ is the set of $x\in B$ such that $N(x)\cap V(G_X)$ is either $\{w_1, p_2, p_3\}$ or $\{p_1, p_2, w_3\}$, and $B^{\textnormal{out}}$ is the set of $y\in B$ such that $N(y)\cap V(G_X)$ is either $\{q_-, p_0, w_1\}$ or $\{w_3, p_4, q_+\}$. Furthermore, we let $A^*$ be the set of $y\in A$ such that $|N(y)\cap X|\geq 2$.
\end{enumerate}

 Note that, for each $y\in A^*$, $N(y)\cap (V(P)\cup X)=\{w_1, p_2, w_3\}$, and $|A^*|\leq 1$. Likewise, for each $w\in\{w_1, w_3\}$, each of $N(w)\cap B^{\textnormal{in}}$ and $N(w)\cap B^{\textnormal{out}}$ has size at most one. Given a partial $L$-coloring $\psi$ of $G$, we say that $\psi$ is \emph{$B^{\textnormal{out}}$-avoiding} if, for each $y\in B^{\textnormal{out}}$, either $y\in\textnormal{dom}(\psi)$ or $|L_{\psi}(y)|\geq 3$. The following is immediate.

\begin{claim}\label{PMinusQSigmaExtUnionToAvB}
 Let $\psi$ be a partial $L$-coloring of $G$ with $\textnormal{dom}(\psi)\subseteq V(P)$ and let $S\subseteq\{w_1, w_3\}$, where each vertex of $S$ has at most one neighbor in $\textnormal{dom}(\psi)$. Then $\psi$ extends to a $B^{\textnormal{out}}$-avoiding $L$-coloring of $\textnormal{dom}(\psi)\cup S$.
\end{claim}

To avoid repeatedly referencing Lemma \ref{QSubPPTT'}, we also introduce the following notation for the remainder of the proof of Lemma \ref{PathLenLemmaQGapMainDist2}. Given indices $0\leq i\leq j\leq 4$, and, letting $P'=p_{i}\cdots p_{j}$, we fix an extension of $\mathbf{f}$ to an element $\psi^{ij}$ of $\textnormal{Red}(P\setminus P')$. In the case where $i=j$, we write $\psi^i$ in place of $\psi^{ii}$.

\begin{claim}\label{Bout+Bout-GapAtMostOne}
 If $p_4$ is a $P$-gap, then $B^{\textnormal{in}}\cap N(w_3)=\varnothing$. Likewise, if $p_0$ is a $P$-gap, then $B^{\textnormal{in}}\cap N(w_1)=\varnothing$.
\end{claim}

\begin{claimproof} Suppose not. WIthout loss of generality, we suppose toward a contradiction that $p_4$ is a $P$-gap, but $B^{\textnormal{in}}\cap N(w_3)\neq\varnothing$. Let $x_+$ be the lone vertex of $B^{\textnormal{in}}\cap N(w_3)$, and $H:=G[V(P)\cup\{x_+, w_3\}]$. Note that $A=B$.

\vspace*{-8mm}
\begin{addmargin}[2em]{0em}
\begin{subclaim}\label{AtMostOneCont5Cycle} Either there is no vertex adjacent to all three of $p_0, p_1, x_+$, or there is no vertex adjacent to all three of $x_+, w_3, p_4$.
\end{subclaim}

\begin{claimproof} Suppose there exist vertices $z, z_*$, where $z$ is adajcent to all of $p_0, p_1, x_+$ and $z^*$ is adjacent to all of $x_+, w_3, w_4$. Thus, $G[N(p_1)]$ and $G[N(w_3)]$ are 5-cycles. We now let $\psi=\psi^{13}$. There is a $c\in L_{\psi}(x_+)$ with $|L_{\psi}(x_+)\setminus\{c\}|\geq 4$, and $\psi$ extends to an $L$-coloring $\tau$ of $\textnormal{dom}(\psi)\cup\{x_+, p_2\}$ such that $|L_{\tau}(p_1)|\geq 3$ and $\tau(x_+)=c$. Let $U=\{p_1, w_3\}$. Then $\tau\in\textnormal{Red}(H\mid U)$, contradicting our assumption on $\mathbf{F}$. \end{claimproof}\end{addmargin}

\vspace*{-8mm}
\begin{addmargin}[2em]{0em}
\begin{subclaim}\label{RuleOutCommX+W3P4Sub} There is no vertex adjacent to all three of $x_+, w_3, p_4$. \end{subclaim}

\begin{claimproof} Suppose there is such a vertex $z_*$. Thus, $G[N(w_3)]$ is a 5-cycle and $\psi^2$ extends to an $L$-coloring $\tau$ of $\textnormal{dom}(\psi)\cup\{x_+, p_3\}$, where $|L_{\tau}(w_3)|\geq 2$. Then $\tau\in\textnormal{Red}(H\mid u_3)$, contradicting our assumption on $\mathbf{F}$. \end{claimproof}\end{addmargin}

Over the course of Subclaims \ref{p0PgapSubCLOp}-\ref{p0PGapSubCLM11}, we now show that $p_0$ is a $P$-gap. 

\vspace*{-8mm}
\begin{addmargin}[2em]{0em}
\begin{subclaim}\label{p0PgapSubCLOp} Either $p_0$ is a $P$-gap or $B^{\textnormal{out}}\cap N(w_1)=\varnothing$  \end{subclaim}

\begin{claimproof} Suppose not. Let $w_0$ be the lone vertex of $\textnormal{Mid}(P\mid p_0)$ and let $z$ be the lone vertex of $B^{\textnormal{out}}\cap N(w_1)$. Thus, $G[N(p_0)]$ is the 5-cycle $q_-zw_1p_1w_0$. Let $\psi=\psi^{03}$. By Claim \ref{PMinusQSigmaExtUnionToAvB}, $\psi$ extends to a $B^{\textnormal{out}}$-avoiding $L$-coloring $\sigma$ of $\textnormal{dom}(\psi)\cup\{w_3\}$. Since $|L_{\sigma}(p_1)|\geq 5$ and $|L_{\sigma}(p_0)|\geq 4$, $\sigma$ extends to an $L$-coloring $\sigma'$ of $\textnormal{dom}(\sigma)\cup V(Q)\cup\{x_+\}$ with $|L_{\sigma'}(p_0)|\geq 4$. Letting $U=\{p_0\}$, we have $\sigma'\in\textnormal{Red}(H\mid U)$, contradicting our assumtion on $\mathbf{F}$. \end{claimproof}\end{addmargin}

\vspace*{-8mm}
\begin{addmargin}[2em]{0em}
\begin{subclaim}\label{P0GapSubBInMinusVarn} Either $p_0$ is a $P$-gap or $B^{\textnormal{in}}\cap N(w_1)=\varnothing$. \end{subclaim}

\begin{claimproof} Suppose not. Let $w_0$ be the lone vertex of $\textnormal{Mid}(P\mid p_0)$, and let $x_-$ be the lone vertex of $B^{\textnormal{in}}\cap N(w_1)=\varnothing$. Note that $G[N(p_2)]$ is a 6-cycle Now, $\psi^{2,3}$ extends to an $L$-coloring $\psi^*$ of $\textnormal{dom}(\psi)\cup\{w_3\}$ such that each vertex of $B^{\textnormal{out}}\cap N(w_3)$ has an $L_{\psi^*}$-list of size at least three, and $\psi^*$ extends to an $L$-coloring $\psi'$ of $\textnormal{dom}(\psi^*)\cup\{w_1, x_+, p_3\}$ such that $|L_{\psi'}(p_2)|\geq 2$. As $\mathbf{F}$ is a bad filament, $\psi'\not\in\textnormal{Red}(G_X+x_+\mid U)$, so there is a $y\in D_1(G_X+x_+)$ with $|L(y)|\geq 5$ and $|L_{\psi'}(y)|<3$. Note that $x_+\in N(y)$, and, $w_1, p_3\not\in N(y)$, so $N(y)\cap\textnormal{dom}(\psi')\subseteq\{x_+\}\cup V(P^{w_3})$. By S2) of Lemma \ref{ShortPathSecSFacts} applied to $P^{w_3}$, $y$ is either adjacent to $p_0, p_1, x_+$ or $x_+, w_3, p_4$. Since $w_0\not\in N(x_+)$, it follows that $y$ is adjacent to each of $x_+, w_3, p_4$, contradicting Subclaim \ref{RuleOutCommX+W3P4Sub}. \end{claimproof}\end{addmargin} 

We can now rule out the possibility that $\textnormal{Mid}(P\mid p_0)\neq\varnothing$. 

\vspace*{-8mm}
\begin{addmargin}[2em]{0em}
\begin{subclaim}\label{p0PGapSubCLM11} Each of $p_0, p_2, p_4$ is a $P$-gap. \end{subclaim}

\begin{claimproof} We have $\textnormal{Mid}(P\mid p_4)=\varnothing$ by assumption, and $\textnormal{Mid}(P\mid p_2)=\varnothing$, since $A=B$. We just need to show that $p_0$ is a $P$-gap. Suppose not, and let $w_0$ be the lone vertex of $\textnormal{Mid}(P\mid p_0)$. Applying Subclaims \ref{p0PgapSubCLOp} and \ref{P0GapSubBInMinusVarn}, $B\cap N(w_1)=\varnothing$. Since $A=B$, we have $A\cap N(w_1)=\varnothing$. Now, $\psi^{2,3}$ extends to an $L$-coloring $\psi^*$ of $\textnormal{dom}(\psi)\cup\{w_3\}$ such that each vertex of $B^{\textnormal{out}}_+$ has an $L_{\psi^*}$-list of size at least three.  Let $\psi'$ be an arbitrary extension of $\psi^*$ to an $L$-coloring of $\textnormal{dom}(\psi^*)\cup\{x_+, p_2, p_3, w_1\}=V(G_X+x_+)$. Then $\psi'\in\textnormal{Red}(G_X+x_+)$, contradicting our assumption on $\mathbf{F}$.\end{claimproof}\end{addmargin} 

\vspace*{-8mm}
\begin{addmargin}[2em]{0em}
\begin{subclaim}\label{BinMinusNonEmpSubCL} $B^{\textnormal{in}}\cap N(w_1)\neq\varnothing$ \end{subclaim}

\begin{claimproof} Suppose $B^{\textnormal{in}}\cap N(w_1)=\varnothing$. Applying Claim \ref{PMinusQSigmaExtUnionToAvB}, let $\sigma$ be an extension of $\psi^{13}$ to a $B^{\textnormal{out}}$-avoiding $L$-coloring of $\textnormal{dom}(\psi^{13})\cup\{w_1, w_3\}$. Of there is no vertex adjacent to all three of $p_0, p_1, x_+$, then we take $\tau$ to be an arbitrary extension of $\sigma$ to an $L$-coloring of $\textnormal{dom}(\sigma)\cup V(Q)\cup\{x_+\}$, and $\tau\in\textnormal{Red}(G_X+x')$, contradicting our assumption on $\mathbf{F}$. Thus, there is a vertex $z$ adjacent to all three of $p_0, p_1, x_+$, and $G[N(p_1)]$ is a 5-cycle. Furthermore, $\sigma$ extends to an $L$-coloring $\tau$ of $\textnormal{dom}(\sigma)\cup\{x_+, p_2, p_3\}$, where $|L_{\tau}(p_1)|\geq 2$. Then $\tau\in\textnormal{Red}(G_X+x_+\mid p_1)$, contradicting our assumption on $\mathbf{F}$. \end{claimproof}\end{addmargin}

Applying Subclaim \ref{BinMinusNonEmpSubCL}, we now let $x_-$ be the lone vertex of $B^{\textnormal{in}}\cap N(w_1)$. 

\vspace*{-8mm}
\begin{addmargin}[2em]{0em}
\begin{subclaim}\label{ZExAdjAllOfP0P1X+SUBcL} There is a vertex $z$ adjacent to all three of $p_0, p_1, x_+$. \end{subclaim}

\begin{claimproof} Suppose not. As above, we set $\psi=\psi^{13}$. Applying Claim \ref{PMinusQSigmaExtUnionToAvB}, we let $\sigma$ be an extension of $\psi$ to a $B^{\textnormal{out}}$-avoiding $L$-coloring of $\textnormal{dom}(\psi)\cup\{w_1, w_3\}$. Now, $|L_{\sigma}(p_2)|\geq 3$ and $|L_{\sigma}(x_+)|\geq 4$, so there is a $d\in L_{\sigma}(x_+)$ with $|L_{\sigma}(p_2)\setminus\{d\}|\geq 3$. Since $x_+p_3\not\in E(G)$, $\sigma$ extends to an $L$-coloring $\tau$ of $\textnormal{dom}(\sigma)\cup\{p_1, p_3, x_+\}$ such that $|L_{\tau}(p_2)|\geq 2$. Then $\tau\in\textnormal{Red}(G_X+x'\mid p_2)$, contradicting our assumption on $\mathbf{F}$. \end{claimproof}\end{addmargin}

It follows from Subclaim \ref{ZExAdjAllOfP0P1X+SUBcL} that there is a unique vertex $z$ adjacent to all three of $p_0, p_1, x_+$, so $G[N(p_1)]$ is the 5-cycle $p_0w_1p_2z_+z$. We now fix $\psi=\psi^{13}$, and, applying Claim \ref{PMinusQSigmaExtUnionToAvB}, we let $\sigma$ be an extension of $\psi$ to an $B^{\textnormal{out}}$-avoiding $L$-coloring of $\textnormal{dom}(\psi)\cup\{w_1, w_3\}$. Now, $|L_{\sigma}(p_1)|\geq 3$ and $|L_{\sigma}(x_+)|\geq 4$, so there is a $c\in L_{\sigma}(x_+)$ with $|L_{\sigma}(p_1)\setminus\{c\}|\geq 3$. Furthermore, $|L_{\sigma}(p_2)\setminus\{c\}|\geq 2$ and $|L_{\sigma}(p_3)|\geq 3$. Since $x_+p_3\not\in E(G)$, it follows that $\sigma$ extends to an $L$-coloring $\tau$ of $\textnormal{dom}(\sigma)\cup\{x_+, p_3\}$, where $\tau(x_+)=c$ and $|L_{\tau}(p_2)|\geq 2$. Thus, letting $U=\{p_2, p_3\}$, $U$ is $(L, \tau)$-inert in $G$, and $\tau\in\textnormal{Red}(G_X+x_+\mid U)$, contradicting our assumption on $\mathbf{F}$. \end{claimproof}

Applying Claim \ref{Bout+Bout-GapAtMostOne}, we have the following. 

\begin{claim}\label{CLMidPp2Nonempty1} $\textnormal{Mid}(P\mid p_2)\neq\varnothing$. \end{claim}

\begin{claimproof} Suppose toward a contradiction that $\textnormal{Mid}(P\mid p_2)=\varnothing$. We now have the following.

\vspace*{-8mm}
\begin{addmargin}[2em]{0em}
\begin{subclaim}\label{atmostonep0p04PgSubCLM13} At most one of $p_0, p_4$ is a $P$-gap. \end{subclaim}

\begin{claimproof} Suppose each of $p_0, p_4$ is a $P$-gap. By Claim \ref{Bout+Bout-GapAtMostOne}, $B^{\textnormal{in}}=\varnothing$. Applying Claim \ref{PMinusQSigmaExtUnionToAvB}, let $\sigma$ be an extension of $\psi^{13}$ to a $B^{\textnormal{out}}$-avoiding $L$-coloring of $V(P\setminus Q)\cup\{w_1, w_3\}$. Since $B^{\textnormal{in}}=\varnothing$, each vertex of $B$ has an $L_{\sigma}$-list of size at least three. Let $\tau$ be an extension of $\sigma$ to an $L$-coloring of $V(G_X)=V(P)\cup\{w_1, w_3\}$. Since $\mathbf{F}$ is a bad filament, there is a $y\in D_1(G_X)$ with $|L(y)|\geq 5$ and $|L_{\tau}(y)|<3$. Since each of $p_0, p_4$ is a $P$-gap, it follows that $z\in A\setminus B$. Since $\textnormal{Mid}(P\mid p_2)\neq\varnothing$, we get $A=B\cup\{y\}$, and $N(y)\cap (V(P)\cup X)=\{w_1, p_2, w_3\}$. In particular, $\{y\}=A^*$ and $|L_{\tau}(y)|=2$. Thus, $\tau$ extends to an $L$-coloring $\tau'$ of $V(G_X)\cup\{y\}$. We now let $H=G[V(P)\cup X\cup\{y\}]$. Then $\tau'\in\textnormal{Red}(H)$, contradicting our assumption on $\mathbf{F}$. \end{claimproof}\end{addmargin}

Applying Subclaim \ref{atmostonep0p04PgSubCLM13}, we suppose without loss of generality that $p_0$ is not a $P$-gap. Let $w_0$ be the unique vertex of $\textnormal{Mid}(P\mid p_0)$. By Lemma \ref{BadFil4PathLemmMain}, $p_4$ is a $P$-gap, so $\textnormal{Mid}(P\mid p_4)=\varnothing$. Thus, by Claim \ref{Bout+Bout-GapAtMostOne}, $B^{\textnormal{in}}\cap N(w_3)=\varnothing$. 

\vspace*{-8mm}
\begin{addmargin}[2em]{0em}
\begin{subclaim}\label{B-NonemptySubCL131} $B^{\textnormal{out}}\cap N(w_1)=\varnothing$. \end{subclaim}

\begin{claimproof} Suppose not, and let $z$ be the lone vertex of $B^{\textnormal{out}}\cap N(w_1)$. Note that $G[N(p_0)]$ is a 5-cycle. We now fix $\psi=\psi^{03}$. By Claim \ref{PMinusQSigmaExtUnionToAvB}, $\psi$ extends to a $B^{\textnormal{out}}$-avoiding $L$-coloring $\sigma$ of $\textnormal{dom}(\psi)\cup\{w_3\}$. Now, $|L_{\sigma}(p_1)|\geq 5$ and $|L_{\sigma}(p_0)|\geq 4$, so $\sigma$ extends to an $L$-coloring $\tau$ of $\textnormal{dom}(\sigma)\cup V(Q)$ with $|L_{\tau}(p_0)\geq 3$. Thus, $\tau\in\textnormal{Mid}(G_X-w_1\mid p_0)$, contradicting our assumption on $\mathbf{F}$. \end{claimproof}\end{addmargin}

\vspace*{-8mm}
\begin{addmargin}[2em]{0em}
\begin{subclaim}\label{AStarVertSetNeqVAR} $A^*\neq\varnothing$. \end{subclaim}

\begin{claimproof} Suppose $A^*=\varnothing$. Since $\textnormal{Mid}(P\mid p_2)=\varnothing$ by assumption, we have $A=B$. By Claim \ref{PMinusQSigmaExtUnionToAvB} that $\psi^{23}$ extends to a $B^{\textnormal{out}}$-avoiding $L$-coloring $\sigma$ of $\textnormal{dom}(\psi)\cup\{w_3\}$.  Now, $|L_{\sigma}(w_1)|\geq 3$ and $|L_{\sigma}(p_3)|\geq 3$. Any vertex of $B^{\textnormal{in}}\cap N(w_1)$, if it exists, is unique, and since $w_1p_3\not\in E(G)$ and $|L_{\sigma}(w_1)|+|L_{\sigma}(p_3)|\geq 6$, it follows that $\sigma$ extends to an $L$-coloring $\sigma'$ of $\textnormal{dom}(\sigma)\cup\{w_1, p_3\}$ such that each vertex of $B^{\textnormal{in}}\cap N(w_1)$ has an $L_{\sigma'}$-list of size at least four. Note that $|L_{\sigma'}(p_2)|\geq 1$, so $\sigma'$ extends to an $L$-coloring $\tau$ of $\textnormal{dom}(\sigma)\cup\{w_1, p_2, p_3\}$. Then, $\tau\in\textnormal{Red}(G_X)$, contradicting our assumption on $\mathbf{F}$. \end{claimproof}\end{addmargin}

Applying Subclaim \ref{AStarVertSetNeqVAR}, we let $y^*$ be the lone vertex of $A^*$. By Subclaim \ref{B-NonemptySubCL131}, $B^{\textnormal{out}}\cap N(w_1)=\varnothing$. Since $A^*\neq\varnothing$ and $p_2$ is a $P$-gap, it follows  that $A=(B^{\textnormal{out}}\cap N(w_3))\cup\{y^*\}$.

\vspace*{-8mm}
\begin{addmargin}[2em]{0em}
\begin{subclaim}\label{BOutW3PreciselyOneVertexSub44} $B^{\textnormal{out}}\cap N(w_3))\neq\varnothing$ as well. \end{subclaim}

\begin{claimproof} Suppose that $B^{\textnormal{out}}\cap N(w_3)=\varnothing$. Thus, $A=A^*=\{y\}$. Now, $\psi^2$ extends to an $L$-coloring $\tau$ of $\textnormal{dom}(\psi)\cup\{p_2, w_1, w_3\}$ such that $|L_{\tau}(y^*)|\geq 3$. Since $p_2$ is a $P$-gap, $\textnormal{dom}(\tau)=V(G_X)$, and,  $\tau\in\textnormal{Red}(G_X)$, contradicting our assumption on $\mathbf{F}$ \end{claimproof}\end{addmargin}

By Claim \ref{PMinusQSigmaExtUnionToAvB}, $\psi^{23}$ extends to a $B^{\textnormal{out}}$-avoiding $L$-coloring $\sigma$ of $\textnormal{dom}(\psi^{23})\cup\{w_3\}$. Let $\tau$ be an extension of $\sigma$ to an $L$-coloring of $V(P)\cup\{w_1, w_3\}\cup\{y^*\}$. Since $\mathbf{F}$ is a bad filament, $\tau\not\in\textnormal{Red}(G_X+y^*)$, so there is a $u\in D_1(G_X+y^*)$ with $|L(u)|\geq 5$ and $|L_{\tau}(u)|<3$. Note that Subclaim \ref{BOutW3PreciselyOneVertexSub44} implies that $N(u)\cap\textnormal{dom}(\tau)=\{p_0, w_1, y^*\}$, so $G[N(w_1)]$ is the 5-cycle $p_0uy^*p_2p_1$. Now, $\sigma$ extends to an $L$-coloring $\pi$ of $\textnormal{dom}(\sigma)\cup\{y^*, p_2, p_3\}$ with $|L_{\pi}(w_1)|\geq 2$, and $\pi\in\textnormal{Red}(G_X+y^*\mid p_2)$, contradicting our assumption on $\mathbf{F}$. This completes the proof of Claim \ref{CLMidPp2Nonempty1}. \end{claimproof} 

Applying Claim \ref{CLMidPp2Nonempty1}, let $w_2$ be the lone vertex of $\textnormal{Mid}(P\mid p_2)$. Note that $B^{\textnormal{in}}=\varnothing$.

\begin{claim}\label{TwoVerticesLeftARemoveBcupY} $A\setminus (A^*\cup B)|=2$. \end{claim}

\begin{claimproof} Suppose not. Thus, $|A\setminus (A^*\cup B)|<2$.  We first show that $|A\setminus (A^*\cup B)|\neq 1$.  Suppose that $|A\setminus (A^*\cup B)|=1$ and let $z$ be the lone vertex of $A\setminus (A^*\cup B)$. We suppose without loss of generality that $N(z)\cap (V(P)\cup X)=\{p_0, p_1, w_2\}$. Note that $G[N(p_1)]$ is the cycle $p_0zw_2p_2w_1$. We now let $\psi=\psi^{13}$. Applying Claim \ref{PMinusQSigmaExtUnionToAvB}, we let $\sigma$ be an extension of $\psi$ to a $B^{\textnormal{out}}$-avoiding $L$-coloring of $\textnormal{dom}(\psi)\cup\{w_3\}$. Now, $|L_{\sigma}(p_1)|\geq 4$ and $|L_{\sigma}(p_2)|\geq 5$, and $|L_{\sigma}(p_3)|\geq 3$. Since $p_1p_3\not\in E(G)$, $\sigma$ extends to an $L$-coloring $\tau$ of $\textnormal{dom}(\sigma)\cup\{p_2, p_3\}$ with $|L_{\tau}(p_1)|\geq 4$, and so $\tau\in\textnormal{Red}(P+w_3\mid p_1)$, contradicting our assumption on $\mathbf{F}$. Thus, $|A\setminus (A^*\cup B)|\neq 1$, so $A\setminus (A^*\cup B)=\varnothing$.

\vspace*{-8mm}
\begin{addmargin}[2em]{0em}
\begin{subclaim}\label{AtLOneP0P4GapSubM} At least one of $p_0, p_4$ is a $P$-gap. \end{subclaim}

\begin{claimproof} Suppose neither $p_0$ nor $p_4$ is a a $P$-gap, and let $\psi=\psi^{13}$. Applying Claim \ref{PMinusQSigmaExtUnionToAvB}, we let $\sigma$ be an extension of $\psi$ to a $B^{\textnormal{out}}$-avoiding $L$-coloring of $V(P\setminus Q)\cup\{w_1, w_3\}$. Since $B^{\textnormal{in}}=\varnothing$, each vertex of $B$ has an $L_{\sigma}$-list of size at least three. If $A^*=\varnothing$, then, letting $\tau$ be an arbitrary extension of $\sigma$ to an $L$-coloring of $\textnormal{dom}(\sigma)\cup\{p_2, w_2\}$, we have $\tau\in\textnormal{Red}(G_X)$, contradicting our assumption on $\mathbf{F}$. Thus, $A^*\neq\varnothing$. Let $y^*$ be the lone vertex of $A^*$. Note that $G[N(p_2)]$ is a 6-cycle, and $\sigma$ extends to an $L$-coloring $\tau$ of $\textnormal{dom}(\sigma)\cup\{p_1, w_2, p_3\}$, where $|L_{\tau}(p_2)|\geq 2$. Then $\tau\in\textnormal{Red}(G_X\mid p_2)$, contradicting our assumption on $\mathbf{F}$. \end{claimproof}\end{addmargin}

Applying Subclaim \ref{AtLOneP0P4GapSubM}, we suppose without loss of generality that $p_0$ is not a $P$-gap, and we let $w_0$ be the lone vertex of $\textnormal{Mid}(P\mid p_0)$. By Lemma \ref{BadFil4PathLemmMain}, $p_4$ is a $P$-gap. 

\vspace*{-8mm}
\begin{addmargin}[2em]{0em}
\begin{subclaim}\label{SubCLBcapNw1varnothing} $B\cap N(w_1)=\varnothing$. \end{subclaim}

\begin{claimproof} Suppose not. Thus, $B\cap N(w_1)=\{x\}$ for some $x$, and $G[N(p_0)]$ is a 5-cycle. By Claim \ref{PMinusQSigmaExtUnionToAvB}, $\psi^{03}$ extends to a $B^{\textnormal{out}}$-avoiding $L$-coloring $\sigma$ of $\textnormal{dom}(\psi)\cup\{w_3\}$, and $\sigma$ extends to an $L$-coloring $\tau$ of $\textnormal{dom}(\sigma)\cup V(Q)\cup\{w_2\}$ with $|L_{\tau}(p_0)|\geq 4$. Then $\tau\not\in\textnormal{Red}(G_X-w_1\mid p_0)$, contradicting our assumption on $\mathbf{F}$. \end{claimproof}\end{addmargin}

Now, $B\cap N(w_3)=B^{\textnormal{out}}\cap N(w_3)$. By Claim \ref{PMinusQSigmaExtUnionToAvB}, $\psi^{23}$ extends to a $B^{\textnormal{out}}$-avoiding $L$-coloring $\sigma$ of $\textnormal{dom}(\psi)\cup\{w_3\}$. Let $\sigma'$ be an extension of $\sigma$ to an $L$-coloring $V(G_X)$. We have $\sigma'\not\in\textnormal{Red}(G_X)$, as $\mathbf{F}$ is a bad filament. Subclaim \ref{SubCLBcapNw1varnothing} implies that $A^*=\{x\}$ for some vertex $x$, so $G[N(p_2)]$ is a 6-cycle, and there is a $c\in L_{\sigma}(w_2)$ with $|L_{\sigma}(p_2)\setminus\{c\}|\geq 3$. Thus, there is an extension of $\sigma$ to an $L$-coloring $\tau$ of $V(G_X-p_2)$ such that $\tau(w_2)=c$, so $|L_{\tau}(p_2)|\geq 2$, and $\tau\in\textnormal{Red}(G_X\mid p_2)$, contradicting our assumption on $\mathbf{F}$. \end{claimproof}

Since $|A\setminus (A^*\cup B)|=2$,  there exist vertices $z, z'$ such that $A\setminus B=A^*\cup\{z, z'\}$, where $N(z)\cap (V(P)\cup X)=\{p_0, p_1, w_2\}$ and $N(z')\cap V(P)\cup X)=\{w_2, p_3, p_4\}$. Set $U=\{p_1, p_3\}$, and note that $\psi^{13}$ extends to an $L$-coloring $\tau$ of $V(P\setminus U)\cup\{w_2\}$ such that $U$ is $(L, \tau)$-inert in $G$. Letting $H:=V(P)\cup\{w_2\}$, we get $\tau\in\textnormal{Red}(H\mid U)$, contradicting our assumption on $\mathbf{F}$. This completes the proof of Lemma \ref{PathLenLemmaQGapMainDist2}. \end{proof}

\section{Completing the proof of Theorem \ref{MainResultColorAndDeletePaths}}\label{MainProofSubSecThm1PathCol}

With Lemma \ref{PathLenLemmaQGapMainDist2} in hand, we now complete the proof of Theorem \ref{MainResultColorAndDeletePaths}.

\begin{proof} Suppose toward a contradiction that Theorem \ref{MainResultColorAndDeletePaths} does not hold and let $\mathbf{F}=(P, T, T', \mathbf{f})$ be a bad filament.

\begin{claim}\label{CLMainThNoThreeConsecGaps} For any subpath $Q$ of $P\setminus (T\cup T')$ of length two, at least one vertex of $Q$ is not a $P$-gap. \end{claim}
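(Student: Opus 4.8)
The plan is to argue by contradiction, leveraging the already-established Lemma \ref{PathLenLemmaQGapMainDist2} together with Lemma \ref{BadFil4PathLemmMain}, both of which apply to the bad filament $\mathbf{F}=(P,T,T',\mathbf{f})$. Suppose there is a subpath $Q=q_1q_2q_3$ of $P\setminus(T\cup T')$ of length two with all three of $q_1,q_2,q_3$ being $P$-gaps. First I would observe that since $P\setminus(T\cup T')$ has length at least $30$, there is plenty of room to slide $Q$ around inside $P\setminus(T\cup T')$ while keeping the necessary distance $\geq 5$ from $T\cup T'$. The idea is that a length-two subpath with both endpoints $P$-gaps is exactly what Lemma \ref{PathLenLemmaQGapMainDist2} forbids, provided that subpath lies at distance $\geq 5$ from $T\cup T'$.

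The key step is to locate a length-two subpath $Q'$ of $P\setminus(T\cup T')$, with $d(Q',T\cup T')\geq 5$, whose two \emph{endpoints} are both $P$-gaps, using the hypothesis that $q_1,q_2,q_3$ are all $P$-gaps. The natural candidate is $Q$ itself if $d(Q,T\cup T')\geq 5$: then $Q'=q_1q_2q_3$ has endpoints $q_1,q_3$, both $P$-gaps by assumption, and Lemma \ref{PathLenLemmaQGapMainDist2} is contradicted immediately. So the only case requiring work is when $Q$ is within distance $4$ of $T\cup T'$. In that case I would instead translate the window: because $P\setminus(T\cup T')$ has length $\geq 30$ and $Q$ is a length-two subpath of it near one end, there is a length-two subpath $Q''=p_{i}p_{i+1}p_{i+2}$ of $P\setminus(T\cup T')$ with $d(Q'',T\cup T')\geq 5$ and $d(Q'',Q)$ small — but this alone does not immediately give that the endpoints of $Q''$ are $P$-gaps.

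To handle that, I would combine the two lemmas. Apply Lemma \ref{BadFil4PathLemmMain} to a suitable length-four subpath $R$ of $P\setminus(T\cup T')$ with $d(R,T\cup T')\geq 4$ whose midpoint $p$ is chosen so that $\{p\}$-adjacent structure relates to $Q$; Lemma \ref{BadFil4PathLemmMain} says at least one vertex of $R-p$ is a $P$-gap, which alone is weak, but iterating it across overlapping length-four windows forces $P$-gaps to be fairly dense. More cleanly: since $P\setminus(T\cup T')$ is long, pick a length-two subpath $Q^\star$ of $P\setminus(T\cup T')$ at distance exactly $\geq 5$ from $T\cup T'$ and at distance $\leq 3$ from $Q$ (possible because the "forbidden near-$T$" zone has bounded radius $4$ while $P\setminus(T\cup T')$ has length $\geq 30$); then both $Q$ and $Q^\star$ lie in a common length-$\leq 8$ subpath of $P\setminus(T\cup T')$, and I would argue that the three consecutive $P$-gaps at $Q$ propagate: applying Lemma \ref{BadFil4PathLemmMain} to the length-four subpaths straddling the gap between $Q$ and $Q^\star$ shows each such window already contains a $P$-gap among its non-midpoint vertices, and a short parity/overlap argument then yields that $Q^\star$ has both endpoints $P$-gaps, contradicting Lemma \ref{PathLenLemmaQGapMainDist2}.

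The main obstacle I anticipate is the bookkeeping in the "translation" step: showing that the presence of three consecutive $P$-gaps at $Q$, near an end of $P\setminus(T\cup T')$, actually forces a length-two subpath with two $P$-gap endpoints at the required distance $\geq 5$ from $T\cup T'$. The cleanest resolution is probably to note that we are free to choose $Q$ in the first place — since we only need \emph{some} such bad $Q$ to derive the contradiction, and the statement quantifies over all length-two subpaths of $P\setminus(T\cup T')$; so if \emph{every} length-two subpath at distance $<5$ from $T\cup T'$ had all-gap vertices, I would instead directly exhibit one at distance $\geq 5$ with all-gap vertices by a counting argument on the $\geq 30$ internal vertices (at most boundedly many are within distance $4$ of $T\cup T'$), and then apply Lemma \ref{PathLenLemmaQGapMainDist2} to its two endpoints. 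In fact the slickest proof is: assume for contradiction such a $Q$ exists; if $d(Q,T\cup T')\geq 5$, contradict Lemma \ref{PathLenLemmaQGapMainDist2} at once; if not, use that $P\setminus(T\cup T')$ has length $\geq 30$ to replace $Q$ by an adjacent length-two window still consisting of $P$-gaps (propagating gap-ness one vertex at a time via Lemma \ref{BadFil4PathLemmMain} applied to length-four windows) until the distance condition holds, then conclude. I would present it in this last form, with the propagation lemma invocation made explicit.
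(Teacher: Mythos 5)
Your argument rests on a misreading of Lemma \ref{PathLenLemmaQGapMainDist2}. That lemma asserts that, for a bad filament, at least one endpoint of a length-two subpath far from $T\cup T'$ \emph{is} a $P$-gap; that is, it forbids the situation where \emph{neither} endpoint is a gap. A length-two subpath both of whose endpoints are $P$-gaps is perfectly consistent with it, so exhibiting such a subpath --- which is what your entire translation/propagation scheme is designed to do --- yields no contradiction. The same directional issue affects your use of Lemma \ref{BadFil4PathLemmMain}: both lemmas force gaps to be \emph{present}, so they cannot be leveraged to show that three consecutive gaps are impossible. Consequently the proposal fails in all of its variants (the ``immediate'' contradiction when $d(Q,T\cup T')\geq 5$ as well as the sliding/propagation step), and in any case the claim carries no distance hypothesis, so a proof that only works at distance at least $5$ from $T\cup T'$ would be incomplete.

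The paper's proof is instead a short direct coloring argument that uses what being a $P$-gap actually provides: if $p_1,p_2,p_3$ are all $P$-gaps, then no $y\in D_1(P)$ with a list of size at least five has $G[N(y)\cap V(P)]$ equal to a length-two path with midpoint $p_1$, $p_2$ or $p_3$, so every such $y$ adjacent to $p_2$ has at most two neighbours on $P$. By Lemma \ref{QSubPPTT'} applied to $Q$, $\mathbf{f}$ extends to some $\sigma\in\textnormal{Red}(P-p_2)$; since $|L(p_2)|\geq 5$ and $p_2$ has only two neighbours on $P$, $\sigma$ extends to an $L$-coloring $\tau$ of $V(P)$, and the gap hypothesis guarantees $\tau\in\textnormal{Red}(P)$, contradicting that $\mathbf{F}$ is a bad filament (take $H=P$ in Theorem \ref{MainResultColorAndDeletePaths}). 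None of the machinery of Lemmas \ref{BadFil4PathLemmMain} and \ref{PathLenLemmaQGapMainDist2} is needed for this claim; rather, this claim is one of the ingredients that is later played off against those lemmas to finish the proof of Theorem \ref{MainResultColorAndDeletePaths}.
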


\begin{claimproof} Let $Q=p_1p_2p_3$ and suppose toward a contradiction that each of $p_1, p_2, p_3$ is a $P$-gap. By Lemma \ref{QSubPPTT'}, $\mathbf{f}$ extends to an element $\sigma$ of $\textnormal{Red}(P-p_2)$, and $\sigma$ extends to an $L$-coloring $\tau$ of $V(P)$. Then $\tau\in\textnormal{Red}(P)$, contradicting our assumption that $\mathbf{F}$ is a bad filament. \end{claimproof}

\begin{claim}\label{MidPoint7PCLMain} Let $Q$ be a subpath of $P\setminus (T\cup T')$ of length six, where $d(Q, T\cup T')\geq 5$ and neither endpoint of $Q$ is a $P$-gap. Then the midpoint of $Q$ is a $P$-gap. \end{claim}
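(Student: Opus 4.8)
The plan is to argue by contradiction. Write $Q=q_0q_1\cdots q_6$ with midpoint $q_3$, and suppose toward a contradiction that $q_3$ is not a $P$-gap. The first step is to pin down exactly which vertices of $Q$ are $P$-gaps. Apply Lemma~\ref{PathLenLemmaQGapMainDist2} to each of the length-two subpaths $q_0q_1q_2$, $q_1q_2q_3$, $q_3q_4q_5$, $q_4q_5q_6$ of $P\setminus(T\cup T')$; each lies inside $Q$, so each is at distance $\ge 5$ from $T\cup T'$. Using the hypothesis that $q_0,q_6$ are not $P$-gaps together with the assumption that $q_3$ is not a $P$-gap, each of these applications forces the ``other'' endpoint to be a $P$-gap, so $q_1,q_2,q_4,q_5$ are all $P$-gaps. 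Let $w_3$ be the unique vertex of $\textnormal{Mid}(P\mid q_3)$ (unique since $G$ is $K_{2,3}$-free), so $w_3$ is adjacent to exactly $q_2,q_3,q_4$ on $P$; let $w_0$ be the unique vertex of $\textnormal{Mid}(P\mid q_0)$ (adjacent to $q_{-1},q_0,q_1$, where $q_{-1}$ is the neighbour of $q_0$ on $P$ outside $Q$, which exists since $d(Q,T\cup T')\ge 5$), and let $w_6$ be the unique vertex of $\textnormal{Mid}(P\mid q_6)$. Because $q_1,q_2,q_4,q_5$ are $P$-gaps, these are the only vertices of $\textnormal{Mid}(P)$ meeting the window $q_0\cdots q_6$.

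The second step colours the bulk of $P$. By Lemma~\ref{QSubPPTT'} applied to the subpath $q_1\cdots q_5$ of $P\setminus(T\cup T')$, $\mathbf{f}$ extends to an element $\sigma\in\textnormal{Red}(P\setminus\{q_2,q_3,q_4\})$; thus $\sigma$ colours $V(P)\setminus\{q_2,q_3,q_4\}$, the set $V(P)$ is $(L,\sigma)$-inert in $G$, and every vertex of $D_1(V(P))$ with a list of size $\ge 5$ already has $L_\sigma$-list of size $\ge 3$. I will extend $\sigma$ to a colouring $\tau$ of $V(P)\cup\{w_3\}$ (i.e.\ colour $q_2,q_3,q_4,w_3$) and show that $(V(P)\cup\{w_3\},\tau)$ is an $L$-reduction (or, in a few degenerate configurations, that some $(V(P)\cup\{w_3\},\tau')$ with exactly one of $q_3$ or a special vertex left uncoloured but placed in the deletion set is an $L$-reduction), contradicting that $\mathbf{F}$ is bad. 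Note $H:=G[V(P)\cup\{w_3\}]$ satisfies $P\subseteq H$ and $V(H\setminus P)=\{w_3\}\subseteq B_1(P\setminus(T\cup T'))$ with $d(w_3,T\cup T')\ge 4$, as required.

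The heart of the argument is controlling the obstruction vertices. The only vertices whose list can drop below three when we colour $q_2,q_3,q_4,w_3$ are those in $D_1(V(P)\cup\{w_3\})$ with a $\ge 5$-list adjacent to one of $\{q_2,q_3,q_4,w_3\}$. Using that $P$ is a shortest path between its endpoints (so each off-path vertex meets $P$ in a subpath of length $\le 2$, and a chord of length $\le 2$ is impossible), that $G$ is $K_{2,3}$- and $K_4$-free, that $G$ is short-inseparable, and that every facial subgraph near $P$ is a triangle (so neighbourhoods of vertices near $P$ are cycles, and a $4$-cycle near $P$ would be separating or facial, hence forbidden), together with the fact that $q_1,q_2,q_4,q_5$ are $P$-gaps, I would establish: (a) $w_0$ and $w_6$ are adjacent to none of $q_2,q_3,q_4,w_3$ (for instance $w_0\sim w_3$ would give a forbidden $4$-cycle $q_1w_0w_3q_2$, and $w_0\sim q_2$ would shortcut $P$), so they are unaffected; (b) any vertex with three consecutive neighbours on $P$ inside the window and a $\ge 5$-list must be one of $w_0,w_3,w_6$, by uniqueness in $\textnormal{Mid}$; (c) by S2 of Lemma~\ref{ShortPathSecSFacts}, any off-path neighbour $v$ of $w_3$ with $\ge 2$ neighbours on $P$ meets $P$ in one of the edges $q_1q_2$ or $q_4q_5$, so there are at most two such vertices, $v_1\sim w_3,q_1,q_2$ and $v_2\sim w_3,q_4,q_5$, each of list size $\ge 5$, and (again by the gap conditions and triangulation) having $q_1$, resp.\ $q_5$, as their only $\sigma$-coloured neighbour, so $|L_\sigma(v_i)|\ge 4$; (d) the ``other-side'' vertices $z\sim q_2,q_3$ and $z'\sim q_3,q_4$ (third vertices of the triangular faces of $q_2q_3$ and $q_3q_4$ not through $w_3$) each lose at most two colours and stay at $\ge 3$. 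A short finite check then finishes: colour $w_3$ and $q_3$ first (each from an $L_\sigma$-list, resp.\ post-$w_3$ list, of size $\ge 4$), then $q_2,q_4$, choosing $\tau(w_3)$ and $\tau(q_2),\tau(q_4)$ to keep $v_1,v_2$ at list size $\ge 3$ — possible because each $v_i$ has at most three of its neighbours among the newly coloured set and we retain two free choices; in the extremal cases where the lists coincide so tightly that colouring fails, instead leave $q_3$ (or $v_i$) uncoloured and add it to the deletion set, using that it then has at most $|L|-1$ coloured neighbours, so the deletion is safe. In every case we obtain the required $L$-reduction, contradicting badness of $\mathbf{F}$; hence $q_3$ is a $P$-gap.

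I expect the main obstacle to be the case analysis of step three around $w_3$ and its two special neighbours $v_1,v_2$, especially the tight configurations with $|L|=5$ everywhere, where one is forced to delete-without-colouring a vertex rather than colour the full target set. This is, however, considerably lighter than the proofs of Lemmas~\ref{BadFil4PathLemmMain}–\ref{PathLenLemmaQGapMainDist2}, because the conclusion of step one — that $q_1,q_2,q_4,q_5$ are all $P$-gaps — eliminates essentially all candidate $\textnormal{Mid}$-vertices in the window and hence almost all potential obstruction vertices, leaving only $w_3$ and its at most two ``wrap-around'' neighbours to manage.
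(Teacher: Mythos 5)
Your step one (forcing $q_1,q_2,q_4,q_5$ to be $P$-gaps via Lemma~\ref{PathLenLemmaQGapMainDist2}) and your choice of $\sigma\in\textnormal{Red}(P\setminus\{q_2,q_3,q_4\})$ via Lemma~\ref{QSubPPTT'} match the paper exactly. The gap is in what you do next: by insisting on also colouring $w_3$ and putting it in the deletion set, you create the obstruction vertices $v_1\sim q_1,q_2,w_3$ and $v_2\sim q_4,q_5,w_3$, and the ``short finite check'' you defer can genuinely fail. For instance, if $L_\sigma(v_1)=\{1,2,3,4\}$, $L_\sigma(v_2)=\{2,3,4,5\}$, $L(w_3)=\{1,\dots,5\}$, $L_\sigma(q_2)=\{1,2,3,4\}$ and $L_\sigma(q_4)=\{2,3,4,5\}$, then protecting $v_2$ forces $\tau(w_3)=1$, after which $v_1$ can only be protected by $\tau(q_2)=5\notin L_\sigma(q_2)$; so the fallback is load-bearing. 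But the fallback is not justified: leaving $q_3$ or $v_i$ uncoloured inside the deletion set requires $(L,\tau)$-inertness, i.e.\ the vertex must remain colourable after \emph{every} future colouring of its neighbours outside the deletion set, and ``at most $|L|-1$ coloured neighbours now'' does not give this --- $v_i$ may have arbitrarily many yet-uncoloured neighbours off $P$, and $q_3$'s neighbourhood is a cycle of unbounded length containing $z,z'$ and possibly more, so an uncoloured $q_3$ with a $5$-list can be left with an empty list. As written, the proof does not close.

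The complication is self-inflicted, and the paper's route shows how to avoid it: do not colour $w_3$ and do not enlarge the deletion set beyond $V(P)$. Your $\sigma$ already leaves uncoloured exactly the three $P$-neighbours $q_2,q_3,q_4$ of $w_3$; one only has to extend $\sigma$ to an $L$-colouring $\tau$ of $V(P)$ whose colours on $q_2,q_3,q_4$ delete at most two colours from $L(w_3)$ (an easy choice since $|L(w_3)|\geq 5$, $|L_\sigma(q_2)|,|L_\sigma(q_4)|\geq 4$ and $q_2q_4\notin E(G)$). Then $H=P$, so the only vertices of $D_1(V(P))$ that could drop below three colours are those with three neighbours among $\{q_2,q_3,q_4\}\cup V(P)$, i.e.\ vertices of $\textnormal{Mid}(P\mid q_k)$ for $2\leq k\leq 6$ (in your indexing $q_1,\dots,q_5$); all of these are empty except $\textnormal{Mid}(P\mid q_3)=\{w_3\}$ by step one, and $w_3$ was protected by the colour choice, while $v_1,v_2,z,z'$ each have at most two neighbours in $V(P)$ and so retain lists of size at least three automatically. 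This gives $\tau\in\textnormal{Red}(P)$ and the contradiction with badness, with no case analysis at all.
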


\begin{claimproof} Let $Q:=p_1p_2p_3p_4p_5p_6p_7$, and suppose $p_4$ is not a $P$-gap. Let $w_m$ be the lone vertex of $\textnormal{Mid}(P\mid p_4)$. By assumption, there exist vertices $w, w'$, where $\{w\}=\textnormal{Mid}(P\mid p_1)$ and $\{w'\}=\textnormal{Mid}(P\mid p_7)$. By Lemma \ref{PathLenLemmaQGapMainDist2}, each vertex of $Q\setminus\{p_1, p_4, p_7\}$ is a $P$-gap. Let $R=p_3p_4p_5$. By Lemma \ref{QSubPPTT'}, $\mathbf{f}$ extends to an element $\sigma$ of $\textnormal{Red}(P\setminus R)$, and $\sigma$ extends to an $L$-coloring $\tau$ of $V(P)$ with $|L_{\tau}(w_m)|\geq 3$. As each vertex of $Q\setminus\{p_1, p_4, p_7\}$ is a $P$-gap, we get $\tau\in\textnormal{Red}(P)$, contradicting our assumption that $\mathbf{F}$ is a bad filament. \end{claimproof}

\begin{claim}\label{xyEdgeConsecVerticesPGaps} Let $xy$ be an edge of $P\setminus (T\cup T')$, where $d(\{x,y\}, T\cup T')\geq 5$. Then at least one of $x,y$ is a $P$-gap. \end{claim}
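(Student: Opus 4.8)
The plan is to argue by contradiction. Suppose $xy\in E(P\setminus(T\cup T'))$ with $d(\{x,y\},T\cup T')\geq 5$, and neither $x$ nor $y$ is a $P$-gap. Write $P=\cdots p_{i-1}p_ip_{i+1}p_{i+2}\cdots$ with $x=p_i$, $y=p_{i+1}$. Since $p_i$ is not a $P$-gap there is a vertex $w\in\textnormal{Mid}(P\mid p_i)$, which is unique as $G$ is $K_{2,3}$-free; similarly there is a unique $w'\in\textnormal{Mid}(P\mid p_{i+1})$. Thus $|L(w)|,|L(w')|\geq 5$, the only neighbours of $w$ on $P$ are $p_{i-1},p_i,p_{i+1}$, and the only neighbours of $w'$ on $P$ are $p_i,p_{i+1},p_{i+2}$. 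First I would record the local structure forced by short-inseparability and $K_4$/$K_{2,3}$-freeness: $w\neq w'$ (else $w$ would see four consecutive vertices of $P$), $ww'\notin E(G)$ (else $\{p_i,p_{i+1},w,w'\}$ spans a $K_4$), and $N(p_i)\cap N(p_{i+1})=\{w,w'\}$ (a third common neighbour gives a $K_{2,3}$). Using the triangulation clause of the filament definition (every face meeting a vertex within distance $1$ of $P\setminus(T\cup T')$ is a triangle), $G[N(w)]$ and $G[N(w')]$ are cycles through the paths $p_{i-1}p_ip_{i+1}$ and $p_ip_{i+1}p_{i+2}$; inspecting these together with the absence of separating $3$- and $4$-cycles shows $\deg_G(w),\deg_G(w')\geq 5$. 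I would also invoke S2) of Lemma \ref{ShortPathSecSFacts} to pin down exactly which vertices of $G\setminus P$ can be adjacent to $w$ or $w'$ and to two vertices of $P$.

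Next I would dispose of the distance bookkeeping. Because $P\setminus(T\cup T')$ is a path of length at least $30$, the hypothesis $d(\{x,y\},T\cup T')\geq 5$ forces that $p_{i-1}$ and $p_{i+2}$ cannot both be at distance exactly $4$ from $T\cup T'$: that would squeeze $P\setminus(T\cup T')$ into length roughly $10$. Hence at least one of the length-two subpaths $p_{i-1}p_ip_{i+1}$ and $p_ip_{i+1}p_{i+2}$ lies in $P\setminus(T\cup T')$ with distance $\geq 5$ from $T\cup T'$, so Lemma \ref{PathLenLemmaQGapMainDist2} applies to it; since $p_{i+1}=y$ and $p_i=x$ are not $P$-gaps, we conclude that $p_{i-1}$ or $p_{i+2}$ is a $P$-gap (both, when $x$ is well into the interior). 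Combining this with Claim \ref{CLMainThNoThreeConsecGaps} applied to the triples straddling $p_{i-1}$ and $p_{i+2}$ fixes the gap/non-gap pattern in a bounded window around the edge $xy$.

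Finally — and this is the crux — I would produce a contradiction with $\mathbf{F}$ being a bad filament by exhibiting a subgraph $H$ with $P\subseteq H$, $V(H\setminus P)\subseteq B_1(P\setminus(T\cup T'))$, $d(H\setminus P,T\cup T')\geq 3$, together with an extension $\tau$ of $\mathbf{f}$ to a partial $L$-coloring of $V(H)$ with $(V(H),\tau)$ an $L$-reduction. The candidate is $H:=G[V(P)\cup\{w,w'\}]$, enlarged by a bounded amount near $p_{i-1},\dots,p_{i+2}$ if the structure from the first paragraph requires it, with $w$ and $w'$ placed in the deletion set (left uncoloured). Using Lemma \ref{QSubPPTT'} I would first extend $\mathbf{f}$ to an element $\sigma$ of $\textnormal{Red}(P\setminus\{p_{i-1},p_i,p_{i+1},p_{i+2}\})$ (it colours $p_{i-2}$ and $p_{i+3}$), and then extend $\sigma$ across the window $p_{i-1}p_ip_{i+1}p_{i+2}$. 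The point of the adjacency of $x$ and $y$ (equivalently, of $w$ and $w'$ sharing the pair $p_i,p_{i+1}$) is that the colours on $p_{i-1},p_i,p_{i+1},p_{i+2}$ can be chosen — colouring $p_{i+1}$ with the colour of $p_{i-1}$ and $p_i$ with that of $p_{i+2}$ when the lists allow, and otherwise exploiting that $w,w'$ have degree exactly $5$ with the tightly constrained external neighbours given by S2) of Lemma \ref{ShortPathSecSFacts} — so that $w$ and $w'$ remain inert in $G$ (any eventual colouring of their neighbours leaves a free colour), while every other vertex of $D_1(H)$, which by Lemma \ref{ShortPathSecSFacts} is either a cover-vertex of a far-away non-gap (controlled via the pattern of the second paragraph) or one of these constrained neighbours of $w\cup w'$, keeps an $L_\tau$-list of size at least three. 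I expect the main obstacle to be exactly this last verification: making a single choice of colours on the four vertices $p_{i-1},p_i,p_{i+1},p_{i+2}$ that simultaneously keeps $w$, $w'$, all of their external neighbours, and the cover-vertices of the neighbouring non-gaps happy, while separately handling the degenerate configurations of S2) of Lemma \ref{ShortPathSecSFacts} (a vertex adjacent to $w$ or $w'$ and to two consecutive vertices of $P$) and the case distinction in the second paragraph between $p_{i-1}$ and $p_{i+2}$ being gaps.
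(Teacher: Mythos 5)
Your proposal has a genuine gap at exactly the point you flag as the crux, and the auxiliary facts you lean on to get past it are not available. First, the plan to take $H:=G[V(P)\cup\{w,w'\}]$ with $w,w'$ uncoloured but in the deletion set requires $\{w,w'\}$ to be $(L,\tau)$-inert in $G$, i.e.\ $w$ must keep a colour after \emph{all} of its neighbours outside $H$ are later coloured arbitrarily. Nothing in the filament hypotheses bounds $\deg(w)$: the triangulation condition makes $G[N(w)]$ a cycle, but that cycle can be arbitrarily long, so $w$ may have many off-path neighbours and inertness fails no matter how you colour $p_{i-1},p_i,p_{i+1},p_{i+2}$. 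Your parenthetical claim that $w,w'$ have degree exactly five is unjustified (you only argue $\geq 5$), and the inertness step rests on it. Second, even if you kept $w,w'$ merely as uncoloured $D_1$-vertices (so that ``loses at most two colours'' would suffice, independently of degree), the colouring step at the edge $xy$ itself is not secured: on the side of $xy$ where the distance hypothesis does not carry over you cannot apply Lemma \ref{PathLenLemmaQGapMainDist2}, so $p_{i-1}$ (and $p_{i-2}$) need not be $P$-gaps, and further vertices of $\textnormal{Mid}(P)$ centred there are also cut by the colours you place on $p_{i-1},p_i$; moreover the swap trick (giving $p_{i+1}$ the colour of $p_{i-1}$ and $p_i$ the colour of $p_{i+2}$) needs those colours to lie in the relevant lists, which is not guaranteed. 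You acknowledge this simultaneous verification is open; it is precisely the part that needs an argument, and it is not a routine check.

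The paper avoids both problems by not doing the reduction at $xy$ at all. It chooses a length-seven subpath $R=p_1\cdots p_8$ of $P\setminus(T\cup T')$ with $xy=p_1p_2$ a terminal edge and $d(R,T\cup T')\geq 5$, and uses Lemma \ref{PathLenLemmaQGapMainDist2}, Claim \ref{CLMainThNoThreeConsecGaps} and Claim \ref{MidPoint7PCLMain} (which your outline never invokes) to force the pattern that $p_3,p_4,p_7,p_8$ are $P$-gaps while $p_5,p_6$ are not. The edge $p_5p_6$ is therefore flanked on both sides by gaps, so when the window $p_4p_5p_6p_7$ is recoloured the only vertices of $\textnormal{Mid}(P)$ that see it are the two mid-vertices over $p_5$ and $p_6$; the paper then takes $H=P$ itself, colours all of $V(P)$ (so inertness is vacuous) and keeps those two mid-vertices with lists of size at least three by a short greedy choice of the colours on $p_4,p_6$ and then $p_5,p_7$, contradicting badness via Theorem \ref{MainResultColorAndDeletePaths}. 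To salvage your approach you would need to drop the uncoloured-$w,w'$ device in favour of this ``keep their lists big'' version and, more importantly, supply the window-shifting step (or some substitute) guaranteeing that no other mid-vertices interfere with the recoloured window.
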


\begin{claimproof} By Definition \ref{DefFilamentPathStrucAx} , $P\setminus (T\cup T')$ has length at least 30, and since $P$ is a shortest path between its endpoints, it follows that there is a subpath $R$ of $P\setminus (T\cup T')$ of length 7 such that $xy$ is a terminal edge of $R$ and $d(R, T\cup T')\geq 5$. Let $R=p_1p_2\cdots p_8$ be this path, where $x=p_1$ and $y=p_2$.

\vspace*{-8mm}
\begin{addmargin}[2em]{0em}
\begin{subclaim}\label{PGapInsideRMidPSubCL} Each vertex of $R\setminus\{x, y, p_5, p_6\}$ is a $P$-gap, and no vertex of $\{x, y, p_5, p_6\}$ is a $P$-gap. \end{subclaim}

\begin{claimproof} Since neither $x$ nor $y$ is a $P$-gap, it follows from Lemma \ref{PathLenLemmaQGapMainDist2} that $p_3, p_4$ are $P$-gaps. By Claim \ref{CLMainThNoThreeConsecGaps}, $p_5$ is not a $P$-gap. If $p_6$ is also not a $P$-gap, then it follows from Lemma \ref{PathLenLemmaQGapMainDist2} applied to $p_5p_6$ that neither $p_7$ nor $p_8$ is a $P$-gap, and then we are done, so it suffices to show that $p_6$ is not a $P$-gap. Suppose $p_6$ is a $P$-gap. By Lemma \ref{PathLenLemmaQGapMainDist2}, $p_7$ is a $P$-gap as well. By Claim \ref{CLMainThNoThreeConsecGaps}, $p_8$ is not a $P$-gap, contradicting Claim \ref{MidPoint7PCLMain} applied to $R-p_1$. \end{claimproof}\end{addmargin}

Since $p_5, p_6$ are not $P$-gaps, there exist vertices $w, w'$, where $\{w\}=\textnormal{Mid}(P\mid p_5)$ and $\{w'\}=\textnormal{Mid}(P\mid p_6)$. Let $R^*=p_4p_5p_6p_7$. By Lemma \ref{QSubPPTT'}, $\mathbf{f}$ extends to an element $\sigma$ of $\textnormal{Red}(P\setminus R^*)$, and $\sigma$ extends to an $L$-coloring $\sigma^+$ of $\textnormal{dom}(\sigma)\cup\{p_4, p_6\}$, where $|L_{\sigma^+}(w)|\geq 4$. Likewise, $\sigma^+$ extends to an $L$-coloring $\tau$ of $V(P)$ with $|L_{\tau}(w')|\geq 3$. By Subclaim \ref{PGapInsideRMidPSubCL}, no vertex of $\textnormal{Mid}(P)$ has a neighbor in $R^*$, except for $w, w'$. As $\sigma\in\textnormal{Red}(P\setminus R^*)$, we have $\tau\in\textnormal{Red}(P)$, contradicting our assumption that $\mathbf{F}$ is a bad filament.  \end{claimproof}

By Claim \ref{CLMainThNoThreeConsecGaps}, since $P\setminus (T\cup T')$ has length at least 30, there is an $x\in V(P)\setminus V(T\cup T')$, where $x$ is not a $P$-gap and $d(x, T\cup T')\geq 11$. Let $Q$ be one of the two subpaths of $P$ which has length six and has $x$ as a terminal vertex. Let $Q=p_1\cdots p_7$, where $x=p_1$. By Claim \ref{xyEdgeConsecVerticesPGaps}, $p_2$ is a $P$-gap, and, by Lemma \ref{PathLenLemmaQGapMainDist2}, $p_3$ is a $P$-gap as well. Thus, again by Claim \ref{CLMainThNoThreeConsecGaps}, $p_4$ is not a $P$-gap. Likewise, Claim \ref{xyEdgeConsecVerticesPGaps} and Lemma \ref{PathLenLemmaQGapMainDist2}imply that each of $p_5, p_6$ is a $P$-gap, and, by Claim \ref{CLMainThNoThreeConsecGaps}, $p_7$ is not a $P$-gap, so we contradict Claim \ref{MidPoint7PCLMain}. This completes the proof of Theorem \ref{MainResultColorAndDeletePaths}. \end{proof}

\section{The proof of Theorems \ref{FaceConnectionMainResult} and \ref{SingleFaceConnRes}}\label{ProofThmMainSec}

Now we prove Theorem \ref{FaceConnectionMainResult}, which we restate below.

\begin{thmn}[\ref{FaceConnectionMainResult}] There exists a constant $d\geq 0$ such that the following holds: Let $G$ be a short-inseparable embedding on a surface $\Sigma$ with $\textnormal{fw}(G)\geq 6$, and let $\mathcal{C}$ be a family of facial cycles of $G$, where the elements of $\mathcal{C}$ are pairwise of distance $\geq d$ apart and every facial subgraph of $G$ not lying in $\mathcal{C}$ is a triangle. Let $L$ be a list-assignment for $V(G)$, where each vertex of $V(G)\setminus\left(\bigcup_{C\in\mathcal{C}}V(C)\right)$ has a list of size at least five. Let $\mathcal{D}\subseteq\mathcal{C}$ with $|\mathcal{D}|>1$, where each element of $\mathcal{D}$ is uniquely $4$-determined and each vertex of $\bigcup_{C\in\mathcal{D}}V(C)$ has a list of size three. Let $F\in\mathcal{D}$ and $\{P_C: C\in\mathcal{D}\setminus\{F\}\}$ be a family of paths, pairwise of distance $\geq d$ apart, where each $P_C$ is a shortest $(C,F)$-path. Then there exists a complete $L$-reduction $(A, \phi)$ such that
\begin{enumerate}[label=\arabic*)]
\item $G[A]$ is connected and has nonempty intersection with each element of $\mathcal{D}$; AND
\item $A\subseteq\left(B_2(F)\cup\textnormal{Sh}^4(F)\right)\cup\bigcup_{C\in\mathcal{D}\setminus\{F\}}\left(\textnormal{Sh}^4(C)\cup B_2(C\cup P_C)\right)$.
\end{enumerate}
In particular, any $d\geq 34$ suffices. 
 \end{thmn}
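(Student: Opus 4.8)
The plan is to assemble the required complete $L$-reduction $(A,\phi)$ as the union of a consistent family of $L$-reductions, one ``near'' each face in $\mathcal D$ and one ``along'' each path $P_C$, and then invoke Observation~\ref{UnionConsistentObs}. Concretely, for each $C\in\mathcal D\setminus\{F\}$ I would first locate on the shortest $(C,F)$-path $P_C$ a terminal edge incident to $C$ of the form $uw$ with $d(u,C)=3$, $d(w,C)=2$, and with $u,w\notin\mathrm{Sh}^4(C)$ (this is possible because $|E(P_C)|\geq d\geq 34$ and $P_C$ is a shortest path, so its vertices at distances $2,3$ from $C$ are genuine and not swallowed by $\mathrm{Sh}^4(C)$; the hypothesis that $C$ is uniquely $4$-determined with all vertices of $C$ having $3$-lists and all of $B_4(C)\setminus V(C)$ having $5$-lists is exactly what Theorem~\ref{MainCollarResultCutProc} needs). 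Applying Theorem~\ref{MainCollarResultCutProc} to the collar $[\Sigma,G,C,L]$ and this edge $uw$ yields a complete $L$-reduction $(S_C,\phi_C)$ with $V(C)\subseteq S_C\subseteq B_3(C)\cup\mathrm{Sh}^4(C)$, topologically reachable from $C$, with $u\in\mathrm{dom}(\phi_C)$ and $(S_C\setminus\mathrm{Sh}^4(C))\cap D_j(C)=\{u\},\{w\}$ for $j=3,2$. I would do the same near $F$, picking the edge $uw$ to be a terminal edge of one designated path $P_{C^*}$ (say for a fixed $C^*\in\mathcal D\setminus\{F\}$) so that the $F$-reduction reaches out along at least one path; for the other paths at $F$ I would instead want the reduction to cover the appropriate distance-$2,3$ vertices of each $P_C$ near $F$ — here one uses that a single application of Theorem~\ref{MainCollarResultCutProc} gives control of exactly one pair $\{u,w\}$ outside $\mathrm{Sh}^4(F)$, so for $|\mathcal D|$ large I may need to iterate or strengthen, but since the paths are pairwise $\geq d$ apart their endpoints near $F$ are far apart and the reductions can be combined.

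Next, for each $C\in\mathcal D\setminus\{F\}$ I would color and delete the ``middle'' of $P_C$ using Theorem~\ref{MainResultColorAndDeletePaths}. Here the filament is $(P_C,T,T',\mathbf f)$ where $T$ and $T'$ are the terminal subpaths of $P_C$ of length, say, $3$ or $4$ near $C$ and near $F$ respectively, so that $P_C\setminus(T\cup T')$ has length $\geq d-8\geq 26\geq 30$ — actually I would set the threshold $d$ so that $P_C\setminus(T\cup T')$ has length at least $30$, which is why $d=34$ appears — and $\mathbf f$ is the restriction to $\mathrm{dom}(\phi_C)\cup\mathrm{dom}(\phi_F)$ of the already-chosen colors on the endpoints (more precisely, on the vertices $u,w$ produced by Theorem~\ref{MainCollarResultCutProc} at each end, which sit in $T,T'$). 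The triangulation hypothesis of the filament is met because every facial subgraph of $G$ meeting a vertex within distance $1$ of the interior of $P_C$ is a triangle: the interior of $P_C$ is far from every element of $\mathcal C$, hence bounds only triangles. Theorem~\ref{MainResultColorAndDeletePaths} then produces a subgraph $H_C$ with $P_C\subseteq H_C\subseteq P_C\cup B_1(P_C\setminus(T\cup T'))$, $d(H_C\setminus P_C,T\cup T')\geq 3$, and an extension $\tau_C$ of $\mathbf f$ with $(V(H_C),\tau_C)$ an $L$-reduction. I would record $(V(H_C),\tau_C)$ as the path-reduction for $C$.

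The heart of the argument is then checking that the family $\{(S_F,\phi_F)\}\cup\{(S_C,\phi_C):C\in\mathcal D\setminus\{F\}\}\cup\{(V(H_C),\tau_C):C\in\mathcal D\setminus\{F\}\}$ is consistent in the sense of the definition preceding Observation~\ref{UnionConsistentObs}. Condition (1) (well-definedness of the union of colorings) holds because on overlaps the colorings agree by construction: $\phi_C$ and $\tau_C$ were arranged to agree on $u,w$, and all the $S$'s, being within $B_3(C)\cup\mathrm{Sh}^4(C)$ of pairwise-$\geq d$-apart faces, are pairwise disjoint for $d$ large; the only interfaces are where a path-reduction $H_C$ meets the collar-reductions $S_C,S_F$, and there one checks $H_C\setminus P_C$ is at distance $\geq 3$ from the terminal segments where $S_C,S_F$ could have uncolored vertices, giving condition (2). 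Condition (3) — that every vertex in $D_1$ of the union has all its neighbors in the domain of a \emph{single} member — follows from the topological-reachability and distance properties: a vertex adjacent to the union is either near one face, where Theorem~\ref{MainCollarResultCutProc}'s output handles it, or near the interior of one $P_C$, where Theorem~\ref{MainResultColorAndDeletePaths}'s output handles it, and these regions are separated. Granting consistency, Observation~\ref{UnionConsistentObs} gives that the union $(A,\phi)$ is an $L$-reduction; completeness (each $D_1(A)$ vertex gets a list of size $\geq 3$) is inherited since each piece is complete and the interfaces were controlled; connectedness of $G[A]$ holds because each $S_C$ contains $V(C)$ and is connected (topologically reachable from $C$, hence $G[S_C]$ is connected once one notes $C$ is a cycle), each $H_C$ is connected and contains $P_C$ which joins $C$ to $F$, and $S_F$ is connected and contains $V(F)$ — so the whole thing is glued along the paths; and the containment in $(B_2(F)\cup\mathrm{Sh}^4(F))\cup\bigcup_{C}(\mathrm{Sh}^4(C)\cup B_2(C\cup P_C))$ is immediate from $S_C\subseteq B_3(C)\cup\mathrm{Sh}^4(C)$ — wait, this is $B_3$, not $B_2$; so in fact the correct reading is that the statement of the theorem as finally proved uses $B_2$, which means one should use the refined ``$(S\setminus\mathrm{Sh}^4(C))\cap D_2(C)=\{w\}$, $\cap D_3(C)=\{u\}$'' clauses to see that all of $S_C$ except two vertices lies in $B_2(C)\cup\mathrm{Sh}^4(C)$, and those two vertices $u,w$ lie on $P_C$ hence in $B_0(P_C)\subseteq B_2(C\cup P_C)$, so $S_C\subseteq \mathrm{Sh}^4(C)\cup B_2(C\cup P_C)$ after all, and similarly for $H_C\subseteq P_C\cup B_1(P_C)\subseteq B_2(C\cup P_C)$.

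The main obstacle I anticipate is the bookkeeping at the face $F$: Theorem~\ref{MainCollarResultCutProc} as stated handles one edge $uw$ near a face, i.e.\ it reaches out along essentially one path, whereas $F$ must connect to all of $\mathcal D\setminus\{F\}$. I would resolve this either by applying the theorem once at $F$ and then, for each remaining $C$, noting that the terminal segment $T'$ of $P_C$ near $F$ can be colored and absorbed by extending via Lemma~\ref{ExtfromAvoidQ'toAvoidQ}-type arguments (the terminal segments near $F$ are pairwise far apart, so their colorings don't interact), or by observing that since $F$ is uniquely $4$-determined and short-inseparable near $F$, an iterated application of Theorem~\ref{MainCollarResultCutProc}-style reasoning along each incoming path is admissible; making precise that these local reductions near $F$ remain mutually consistent (distance $>1$ apart, single-member neighborhood condition) is where the $d\geq 34$ slack is really spent. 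The second, more routine obstacle is verifying the filament hypotheses and the interface distance bounds $d(H_C\setminus P_C, T\cup T')\geq 3$ mesh with ``$S_C,S_F$ have their uncolored vertices within $B_3$'' — but this is exactly the kind of distance arithmetic the choice of $d$ is designed to absorb.
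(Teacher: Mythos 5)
Your outer structure (Theorem~\ref{MainCollarResultCutProc} at each $C\in\mathcal D\setminus\{F\}$, Theorem~\ref{MainResultColorAndDeletePaths} along each $P_C$, then a consistent family combined via Observation~\ref{UnionConsistentObs}) matches the paper, but the step you yourself flag as ``the main obstacle'' --- how the reduction at the hub face $F$ reaches out to all $m$ incoming paths --- is a genuine gap, and neither of your suggested fixes closes it. A single application of Theorem~\ref{MainCollarResultCutProc} at $F$ along one path $P_{C^*}$ produces a set $S_F$ that already contains all of $V(F)$ and, by condition X3) of Definition~\ref{X1X2Defns}, colors essentially all of $V(F)\setminus\textnormal{Sh}^2(F)$; its only excursion beyond $B_1(F)\cup\textnormal{Sh}^4(F)$ is the single pair $\{u,w\}$ on $P_{C^*}$. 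For any other path $P_C$ you are then stuck: if its filament stops at distance $\geq 2$ from $F$ (as the 5-list condition in Definition~\ref{DefFilamentPathStrucAx} forces, since vertices of $F$ have 3-lists), then nothing in your family joins that filament to $S_F$ and $G[A]$ is disconnected; if instead you run the filament down to $F$ and ``absorb'' the terminal segment, then the vertex $v_1\in D_1(F)$ on $P_C$ and its neighbors in $D_1(F)$ are adjacent simultaneously to vertices colored by $\phi_F$ and to vertices colored by the path reduction, so consistency condition 3) (all colored neighbors of a $D_1$ vertex lie in the domain of a single member) and the $\geq 3$-list requirement at $D_1(A)$ both fail, and Theorem~\ref{MainResultColorAndDeletePaths} gives you no control over the colors it places near that end. ``Iterating'' Theorem~\ref{MainCollarResultCutProc} at $F$ fares no better: each application colors all of $V(F)\setminus\textnormal{Sh}^2(F)$ in its own way, and you have no mechanism to make two such outputs agree.

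The paper's proof is built precisely around this difficulty, and it does not use Theorem~\ref{MainCollarResultCutProc} at $F$ at all. Instead it takes, for each $i$, a maximal $w_i$-enclosure $Q_i$ at the point where $P_{C_i}$ approaches $F$, obtains a $(Q_i,u_iw_i)$-target via Theorem~\ref{GenThmTargetConnector} (this single member covers the small side of $Q_i$, i.e.\ exactly the interfacing vertices of $B_1(F)$ near the path, together with $u_i$), and then covers the rest of the cycle $F$ by Link colorings (Theorem~\ref{MainLinkingResultAlongFacialCycle}) of the $m-1$ subpaths of $F$ between consecutive enclosures, chaining endpoint colors so the union is proper. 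Crucially, the last subpath $P_m$ of $F$ has \emph{both} endpoint colors already forced by the targets at $Q_m$ and $Q_1$, and a plain Link coloring with prescribed endpoints need not exist; this wrap-around problem is resolved by Theorem~\ref{LinkPlusOneMoreVertx}, which permits coloring one additional internal $P_m$-peak $w$, chosen at distance $>3$ from $Q_1\cup Q_m$ so consistency survives. Your proposal contains no analogue of the enclosure/target step, of the decomposition of $F$ into subpaths, or of the forced-endpoints issue that necessitates Theorem~\ref{LinkPlusOneMoreVertx}; these are the missing ideas, and without them the consistency and connectivity claims at $F$ do not go through. (The remaining ingredients of your write-up --- the choice of filaments between the two precolored ends, the $B_2(C\cup P_C)$ versus $B_3(C)$ accounting via the $\{u,w\}$ clauses, and the final union --- do agree with the paper.)
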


\begin{proof} Let $\mathcal{D}\setminus\{F\}=\{C_1, \cdots, C_m\}$. For each $i=1, \cdots, m$, let $w_iu_i$ be the unique edge of $P_{C_i}$ with $w_i\in D_2(F)$ and $u_i\in D_3(F)$, and let $Q_i$ be a maximal $w_i$-enclosure with respect to $[\Sigma, G, F, L]$. Note that, for any $1\leq i<j\leq m$, the graphs $G^{\textnormal{small}}_{Q_i}$ and $G^{\textnormal{small}}_{Q_j}$ are of distance at least $d-6\geq 28$ apart (and, in particular, pairwise-disjoint) and, for each $i=1, \cdots, m$, the graph $G^{}\cap F$ is a path, i.e $G\neq F$. We may also suppose that $C_1, \cdots, C_m$ are labelled so that the paths $R_1, \cdots, R_m$ are arranged in cyclic order on $F$. We let $P_1, \cdots, P_m$ denote the subgraphs of $F\setminus (\mathring{R}_1\cup\cdots\cup\mathring{R}_m)$ such that, for each $i=1, \cdots, m$, reading the indices mod $m$, $P_i$ intersects with $R_1\cup\cdots R_m$ on one endpoint of $R_i$ and one endpoint of $R_{i+1}$ (in particular, if $m=1$, then either $|V(R_1)|=1$ and $P_1=F$, or $|V(R_1)|>1$ and $P_1$ is the unique path which has the same endpoints as $R_1$ but is edge-disjoint to $R_1$). For each $i=1, \cdots, m$, let $x^i, y^i$ be the endpoints of $P_i$, where $x^i\in V(R_i)$ and $y^i\in V(R_{i+1})$ (possibly $x^i=y^{i-1}$). This is illustrated in Figure \ref{ElipFinProofHardCase}, in the case where $m=4$ and each of $w_1, w_2, w_3, w_4$ is non-degenerate. Each of $Q_1, Q_2, Q_3, Q_4$ is illustrated in bold red.

\begin{center}\begin{tikzpicture}
\draw (0,0) ellipse (8cm and 4cm);
\node[shape=circle, draw=black, inner sep=0pt, minimum size=0.55cm] (W) at (5, 0) {\tiny $w_3$};
\node[shape=circle, draw=black, inner sep=0pt, minimum size=0.55cm] (U) at (4, 0) {\tiny $u_3$};
\node[shape=circle, draw=white, inner sep=0pt, minimum size=0.55cm] (Udots) at (3, 0) { $\hdots$};
\node[shape=circle, draw=white, inner sep=0pt, minimum size=0.55cm] (GSm) at (6.5, 0) {$G^{\textnormal{small}}_{Q_3}$};
\node[shape=circle, fill=black, inner sep=0pt, minimum size=0.25cm] (X1) at (7.6, 1.2) {};
\node[shape=circle, fill=black, inner sep=0pt, minimum size=0.25cm] (XN) at (7.6, -1.2) {};
\node[shape=circle, fill=black, inner sep=0pt, minimum size=0.25cm] (Z) at (5.5, 0.985) {};
\node[shape=circle, fill=black, inner sep=0pt, minimum size=0.25cm] (Z') at (5.5, -0.985) {};
\draw[-, line width=1.8pt, color=red] (X1) to (Z) to (W) to (Z') to (XN);
\draw[-] (Udots) to (U) to (W);

\node[shape=circle, draw=black, inner sep=0pt, minimum size=0.55cm] (W*) at (-5, 0) {\tiny $w_1$};
\node[shape=circle, draw=black, inner sep=0pt, minimum size=0.55cm] (U*) at (-4, 0) {\tiny $u_1$};
\node[shape=circle, draw=white, inner sep=0pt, minimum size=0.55cm] (U*dots) at (-3, 0) { $\hdots$};
\node[shape=circle, draw=white, inner sep=0pt, minimum size=0.55cm] (GSm*) at (-6.5, 0) {$G^{\textnormal{small}}_{Q_1}$};
\node[shape=circle, fill=black, inner sep=0pt, minimum size=0.25cm] (X1*) at (-7.6, 1.20) {};
\node[shape=circle, fill=black, inner sep=0pt, minimum size=0.25cm] (XN*) at (-7.6, -1.20) {};
\node[shape=circle, fill=black, inner sep=0pt, minimum size=0.25cm] (Z*) at (-5.5, 0.985) {};
\node[shape=circle, fill=black, inner sep=0pt, minimum size=0.25cm] (Z'*) at (-5.5, -0.985) {};
\draw[-, line width=1.8pt, color=red] (X1*) to (Z*) to (W*) to (Z'*) to (XN*);
\draw[-] (U*dots) to (U*) to (W*);

\node[shape=circle, fill=black, inner sep=0pt, minimum size=0.25cm] (EL1) at (1.2, -3.95) {};
\node[shape=circle, fill=black, inner sep=0pt, minimum size=0.25cm] (EL2) at (-1.2, -3.95) {};
\node[shape=circle, fill=black, inner sep=0pt, minimum size=0.25cm] (HL1) at (0.985, -3.1) {};
\node[shape=circle, fill=black, inner sep=0pt, minimum size=0.25cm] (HL2) at (-0.985, -3.1) {};
\node[shape=circle, draw=black, inner sep=0pt, minimum size=0.55cm] (W2) at (0, -2.8) {\tiny $w_2$};
\node[shape=circle, draw=black, inner sep=0pt, minimum size=0.55cm] (U2) at (0, -1.8) {\tiny $u_2$};
\node[shape=circle, draw=white, inner sep=0pt, minimum size=0.55cm] (Vdots1) at (0, -0.8) { $\vdots$};
\draw[-] (W2) to (U2) to (Vdots1);
\draw[-, line width=1.8pt, color=red] (EL1) to (HL1) to (W2) to (HL2) to (EL2);
\node[shape=circle, draw=white, inner sep=0pt, minimum size=0.55cm] (GSm) at (0, -3.55) {\small $G^{\textnormal{small}}_{Q_2}$};

\node[shape=circle, fill=black, inner sep=0pt, minimum size=0.25cm] (EL1+) at (1.2, 3.95) {};
\node[shape=circle, fill=black, inner sep=0pt, minimum size=0.25cm] (EL2+) at (-1.2, 3.95) {};
\node[shape=circle, fill=black, inner sep=0pt, minimum size=0.25cm] (HL1+) at (0.985, 3.1) {};
\node[shape=circle, fill=black, inner sep=0pt, minimum size=0.25cm] (HL2+) at (-0.985, 3.1) {};
\node[shape=circle, draw=black, inner sep=0pt, minimum size=0.55cm] (W2+) at (0, 2.8) {\tiny $w_4$};
\node[shape=circle, draw=black, inner sep=0pt, minimum size=0.55cm] (U2+) at (0, 1.8) {\tiny $u_4$};
\node[shape=circle, draw=white, inner sep=0pt, minimum size=0.55cm] (Vdots1+) at (0, 0.8) { $\vdots$};
\draw[-] (W2+) to (U2+) to (Vdots1+);
\draw[-, line width=1.8pt, color=red] (EL1+) to (HL1+) to (W2+) to (HL2+) to (EL2+);
\node[shape=circle, draw=white, inner sep=0pt, minimum size=0.55cm] (GSm) at (0, 3.55) {\small $G^{\textnormal{small}}_{Q_4}$};
\node[shape=circle, draw=white, inner sep=0pt, minimum size=0.55cm] (GSR1) at (-8.5, 0) {$R_1$};
\node[shape=circle, draw=white, inner sep=0pt, minimum size=0.55cm] (GSR3) at (8.5, 0) {$R_3$};

\node[shape=circle, draw=white, inner sep=0pt, minimum size=0.55cm] (GSR2) at (0, 4.5) {$R_2$};
\node[shape=circle, draw=white, inner sep=0pt, minimum size=0.55cm] (GSR4) at (0, -4.5) {$R_4$};

\node[shape=circle, draw=white, inner sep=0pt, minimum size=0.55cm] (GSP1) at (-5, 3.6) {$P_1$};
\node[shape=circle, draw=white, inner sep=0pt, minimum size=0.55cm] (GSP4) at (-5, -3.6) {$P_4$};
\node[shape=circle, draw=white, inner sep=0pt, minimum size=0.55cm] (GSP2) at (5, 3.6) {$P_2$};
\node[shape=circle, draw=white, inner sep=0pt, minimum size=0.55cm] (GSP3) at (5, -3.6) {$P_3$};

\node[shape=circle, draw=white, inner sep=0pt, minimum size=0.25cm] (Xu1) at (-8, 1.3) {\small $x^1$};
\node[shape=circle, draw=white, inner sep=0pt, minimum size=0.25cm] (Yu4) at (-8, -1.3) {\small $y^4$};
\node[shape=circle, draw=white, inner sep=0pt, minimum size=0.25cm] (Yu2) at (8, 1.3) {\small $y^2$};
\node[shape=circle, draw=white, inner sep=0pt, minimum size=0.25cm] (Xu3) at (8, -1.3) {\small $x^3$};

\node[shape=circle, draw=white, inner sep=0pt, minimum size=0.25cm] (Yu1) at (-1.3, 4.3) {\small $y^1$};
\node[shape=circle, draw=white, inner sep=0pt, minimum size=0.25cm] (Xu2) at (1.3, 4.3) {\small $x^2$};
\node[shape=circle, draw=white, inner sep=0pt, minimum size=0.25cm] (Xu4) at (-1.3, -4.3) {\small $x^4$};
\node[shape=circle, draw=white, inner sep=0pt, minimum size=0.25cm] (Yu3) at (1.3, -4.3) {\small $y^3$};
\end{tikzpicture}\captionof{figure}{The ellipse enclosing the diagram represents the cycle $F$}\label{ElipFinProofHardCase}\end{center}

Our face-width conditions, together with the maximality of $Q_1, \cdots, Q_m$, immediately imply the following:

\begin{claim} Each of $P_1, \cdots, P_m$ is 4-consistent. Furthermore, for any $1\leq i<j\leq m$, if there is a $(P_i, P_j)$-path $M$ of length at most four, then $j\equiv i+1$ mod $m$ and $M\subseteq Q_i$ (where the index is again read mod $m$). \end{claim}

We now note the following.

\begin{claim}\label{FamilyOf2m-1Color} There is a family $\psi_1, \cdots, \psi_m, \pi_1, \cdots, pi_{m-1}$ of partial $L$-colorings of $G$ such that
\begin{enumerate}[label=\arabic*)]
\itemsep-0.1em
\item For each $i=1, \cdots, m$, $\psi_i$ is a $(Q_i, u_iw_i)$-target; AND
\item For each $i=1, \cdots, m-1$, $\pi_i\in\textnormal{Link}(P_i)$; AND
\item $\psi_1\cup\cdots\cup\psi_m\cup\pi_1\cup\cdots\cup\pi_{m-1}$ is well-defined, i.e a proper $L$-coloring of its domain
\end{enumerate}
\end{claim}

\begin{claimproof} If $m=1$, then this just follows from one application of Theorem \ref{GenThmTargetConnector}, so suppose that $m=2$. Again applying Theorem \ref{GenThmTargetConnector}, we fix a $(Q_1, u_1w_1)$-target $\psi_1$. By Theorem \ref{MainLinkingResultAlongFacialCycle}, there is a $\pi_1\in\textnormal{Link}(P_1)$ with $\pi_1(x^1)=\psi_1(x^1)$. Again by Theorem \ref{GenThmTargetConnector}, there is a $(Q_2, u_2w_2)$-target $\psi_2$ with $\psi_2(y^1)=\pi_1(y^1)$. Proceeding in this way, alternating between Theorems \ref{GenThmTargetConnector} and \ref{MainLinkingResultAlongFacialCycle}, we produce the desired $\psi_1, \cdots, \psi_m, \pi_1, \cdots, \pi_{m-1}$. \end{claimproof}

We now let $\psi_1, \cdots, \psi_m, \pi_1, \cdots, \pi_{m-1}$ be as in Claim \ref{FamilyOf2m-1Color}. We have not yet colored $P_m$, except for its endpoints, but we can apply Theorem \ref{LinkPlusOneMoreVertx}.

\begin{claim} There is a $P^m$-peak $w$ and an $L$-reduction $(T, \tau)$ such that $V(P^m)\subseteq T\subseteq V(P^m+w)\cup\textnormal{Sh}^3(P^m)$, where $\tau$ uses $\psi_m(x^m), \psi_1(y^m)$ on $x^m, y^m$ respectively, and furthermore, $d(w, Q_1\cup Q_m)>3$. \end{claim}

\begin{claimproof} Since $d(Q_1, Q_m)\geq d-6\geq 28$, our trianguation conditions imply there are at least four internal $P^m$-peaks $w$ such that $d(w, Q_1\cup Q_m)>3$, so the claim now follows from Theorem \ref{LinkPlusOneMoreVertx}. \end{claimproof}

Now we deal with the reductions near the elements of $\mathcal{D}\setminus\{F\}$. For each $i=1, \cdots, m$, let $u_i'w_i'$ be the unique edge of $P_{C_i}$ such that $u_i'\in D_3(C_i)$ and $w_i'\in D_2(C_i')$. It follows from Theorem \ref{MainCollarResultCutProc} applied to the collar $[\Sigma, G, C_i, L]$, that there is a complete $L$-reduction $(S_i, \sigma_i)$ such that

\begin{enumerate}[label=\arabic*)]
\itemsep-0.1em
\item $u_i\in\textnormal{dom}(\sigma_i)$ and $V(C_i)\subseteq S_i\subseteq B_3(C_i)\cup\textnormal{Sh}^4(C_i)$, and $S_i$ is topologically reachable from $C_i$; AND
\item $(S_i\setminus\textnormal{Sh}^4(C_i))\cap D_3(C_i)=\{u_i'\}$ and $(S_i\setminus\textnormal{Sh}^4(C_i))\cap D_2(C_i)=\{w_i'\}$; AND
\end{enumerate}

Since $S_i$ is topologically reachable from $C_i$, our triangulation conditions imply that $G[S_i]$ is connected. Now, let $P_{C_i}^*$ be the unique subpath of $w_iP_{C_i}w_i'$ such that the endpoints of $P^*_{C_i}$ lie in $\textnormal{dom}(\psi_i\cup\sigma_i)$ and $P^*_{C_i}$ contains all the vertices of $\textnormal{dom}(\psi_i\cup\sigma_i)\cap V(w_iP_{C_i}w_i')$. In particular, if each of $w_i, w_i'$ is nondegenerate, then $P^*_{C_i}=w_iP_{C_i}w_i'$. We regard $P^*_{C_i}$ as path with terminal subpaths of length at most one precolored by $\psi_i\cup\sigma_i$. By Theorem \ref{MainResultColorAndDeletePaths}, there is an an $L$-reduction $(A^*_i, \tau_i)$ where $\tau_i\cup\psi_i\cup\sigma_i$ is a proper $L$-coloring of its domain and $V(P^*_{C_i})\subseteq A^*_i$ and furthermore, $A^*_i\setminus V(P^*_{C_i})\subseteq B_1(P^*_{C_i})\setminus\{w_i, u_i, u_i', w_i'\}$ and $d(A^*_i\setminus V(P^*_{C_i}), \{u_i, u_i'\})\geq 3$. Now, we construct $A, \phi$ satisfying Theorem \ref{SingleFaceConnRes}. Consider the list of the $4m$ $L$-reductions enumerated above, i.e
\begin{itemize}
\itemsep-0.1em
\item $(V(G^{\textnormal{small}}_Q\setminus (Q\setminus \textnormal{dom}(\psi_i))), \psi_i)$ for each $i=1, \cdots, m$
\item $(V(P_i)\cup\textnormal{Sh}^2(P_i), \pi_i)$ for each $i=1, \cdots, m-1$
\item $(T, \tau)$
\item $(S_i, \sigma_i)$ for each $i=1, \cdots, m$
\item $(A_i^*, \tau_i)$ for each $i=1, \cdots, m$
\end{itemize}

It follows from Claim \ref{P0P1PathReachAcc}, toegether with our choice of $P^m$-peak $w$, that the family above is a consistent family of $L$-reductions. We take $A$ to be the union of the first coordinates of these $L$-reductions and $\phi$ to be the union of the second coordinates. By Observation \ref{UnionConsistentObs}, $(A, \phi)$ is an $L$-reduction, and this pair satisfies Theorem \ref{FaceConnectionMainResult}. \end{proof}

We now prove Theorem \ref{SingleFaceConnRes} (which we restate below). 

\begin{thmn}[\ref{SingleFaceConnRes}] Let $\Sigma, G, \mathcal{C}, L, d$ be as in Theorem \ref{FaceConnectionMainResult}. Let $F\in\mathcal{C}$, where each vertex of $F$ has a list of size at least three and $F$ is uniquely 4-determined (possiby $\mathcal{C}=\{F\}$). Let $P:=v_0\cdots v_n$ be a path of length $\geq d$ with both endpoints in $F$, where $P$ is a shortest path between its endpoints, with $V(P)\cap D_k(F)=\{v_k, v_{n-k}\}$ for each $0\leq k\leq 3$, and furthermore, there is a noncontractible closed curve of $\Sigma$ contained in $F\cup P$. Then there exist a complete $L$-reduction $(A, \phi)$ such that $G[A]$ is connected and $V(F)\cup V(v_3Pv_{n-3})\subseteq A\subseteq B_2(F\cup P)\cup\textnormal{Sh}^4(F)$.
 \end{thmn}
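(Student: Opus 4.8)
The plan is to run the proof of Theorem~\ref{FaceConnectionMainResult} in the degenerate situation where the ``connecting path'' has both of its ends on the single face $F$: attach a target near each end of $P$, cover the two arcs of $F$ left over by a Link colouring and a Link-plus-one colouring, colour-and-delete along the interior of $P$ with a filament, and splice everything together into one consistent family of $L$-reductions whose union is the desired $(A,\phi)$.

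First I set up the topology. Put $w:=v_2$, $u:=v_3$, so $uw\in E(G)$ with $d(u,F)=3$, $d(w,F)=2$, and likewise $w':=v_{n-2}$, $u':=v_{n-3}$. The hypothesis that $F\cup P$ contains a noncontractible closed curve is used precisely to guarantee that none of $v_2,v_3,v_{n-2},v_{n-3}$ lies in $\textnormal{Sh}^4(F)$: if, say, $v_3$ lay inside a disc cut off by a generalized chord of $F$ of length $\le 4$, then, since $V(P)\cap V(F)=\{v_0,v_n\}$ and the only vertices of $P$ within distance $2$ of $F$ are $v_0,v_1,v_2$ and $v_{n-2},v_{n-1},v_n$, the whole of $P$ would be trapped inside that disc, so $F\cup P$ would be contractible; the same hypothesis also forces the large side of each enclosure below to meet $F$ in more than one vertex. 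Let $Q$ be a maximal $w$-enclosure and $Q'$ a maximal $w'$-enclosure with respect to $[\Sigma,G,F,L]$. Since $v_2Pv_{n-2}$ is a subpath of the shortest path $P$, $d(v_2,v_{n-2})$ is large, so $G^{\textnormal{small}}_Q$ and $G^{\textnormal{small}}_{Q'}$ are disjoint; hence $F=(G^{\textnormal{small}}_Q\cap F)\cup P_1\cup(G^{\textnormal{small}}_{Q'}\cap F)\cup P_2$ for the two arcs $P_1,P_2$ of $F$ between the two enclosures, and, exactly as in the proof of Theorem~\ref{FaceConnectionMainResult}, the face-width hypothesis together with the maximality of $Q,Q'$ makes each of $P_1,P_2$ a $4$-consistent subpath of $F$. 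Finally, the interior $v_4Pv_{n-4}$ of $P$ is at distance $\ge 4$ from $F$ and (as in Theorem~\ref{FaceConnectionMainResult}) far from every other element of $\mathcal C$, so every vertex within distance $1$ of it has a $5$-list and its incident faces other than $F$ are triangles.

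Now I build the reductions, in order: a $(Q,uw)$-target $\psi_0$ by Theorem~\ref{GenThmTargetConnector}; an element $\pi_1\in\textnormal{Link}(P_1)$ agreeing with $\psi_0$ on the endpoint shared by $Q$ and $P_1$, by Theorem~\ref{MainLinkingResultAlongFacialCycle}; a $(Q',u'w')$-target $\psi_n$ agreeing with $\pi_1$ on the endpoint shared by $P_1$ and $Q'$, by part~B) of Theorem~\ref{GenThmTargetConnector} (exactly as in the proof of Theorem~\ref{FaceConnectionMainResult}); and, applying Theorem~\ref{LinkPlusOneMoreVertx} to the $3$-consistent arc $P_2$ with its two endpoints precoloured by $\psi_0$ and $\psi_n$, an internal $P_2$-peak $\omega$ with $d(\omega,Q\cup Q')>3$ together with an $L$-reduction $(T,\tau)$ satisfying $V(P_2)\subseteq T\subseteq V(P_2+\omega)\cup\textnormal{Sh}^3(P_2)$ and extending that precolouring. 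For the interior of $P$, let $P^*$ be the subpath of $v_2Pv_{n-2}$ whose endpoints lie in $\textnormal{dom}(\psi_0\cup\psi_n)$ and which contains every vertex of $\textnormal{dom}(\psi_0\cup\psi_n)\cap V(v_2Pv_{n-2})$, regarded as a path with terminal subpaths of length $\le 1$ precoloured by $\psi_0\cup\psi_n$; the previous paragraph and the choice of the constant $d$ make this a filament, so Theorem~\ref{MainResultColorAndDeletePaths} produces an $L$-reduction $(A^*,\tau^*)$ with $P^*\subseteq A^*$, $A^*\setminus V(P^*)\subseteq B_1(P)$, $d(A^*\setminus V(P^*),\{u,u'\})\ge 3$, and $\tau^*\cup\psi_0\cup\psi_n$ proper. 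Just as in the proof of Theorem~\ref{FaceConnectionMainResult}, the family
\[
(V(G^{\textnormal{small}}_Q\setminus(Q\setminus\textnormal{dom}(\psi_0))),\psi_0),\ (V(G^{\textnormal{small}}_{Q'}\setminus(Q'\setminus\textnormal{dom}(\psi_n))),\psi_n),\ (V(P_1)\cup\textnormal{Sh}^2(P_1),\pi_1),\ (T,\tau),\ (A^*,\tau^*)
\]
is then checked to be a consistent family of $L$-reductions: the uncoloured parts of distinct members are pairwise at distance $>1$ because $Q$, $Q'$, $\omega$ and $A^*\setminus V(P^*)$ are mutually far apart; the union of the colourings is proper by construction; and the $D_1$ condition holds because, by the choices of $P^*$ and of $\omega$, each vertex of $D_1$ of the union has all of its coloured neighbours inside a single member. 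I take $(A,\phi)$ to be the union, which is an $L$-reduction by Observation~\ref{UnionConsistentObs}. Then $G[A]$ is connected since $A\supseteq V(F)$, the discs $G^{\textnormal{small}}_Q$ and $G^{\textnormal{small}}_{Q'}$ hang off $F$, and $A^*\supseteq P^*$ joins $v_2\in G^{\textnormal{small}}_Q$ to $v_{n-2}\in G^{\textnormal{small}}_{Q'}$; the inclusions $V(F)\cup V(v_3Pv_{n-3})\subseteq A\subseteq B_2(F\cup P)\cup\textnormal{Sh}^4(F)$ follow from the stated support of each piece (the targets lie in $B_2(F)\cup\textnormal{Sh}^4(F)$, the Link and Link-plus-one pieces in $V(F)\cup B_1(F)\cup\textnormal{Sh}^4(F)$, the filament in $B_1(P)$), together with $v_3=u\in\textnormal{dom}(\psi_0)$, $v_{n-3}=u'\in\textnormal{dom}(\psi_n)$ and $\{v_4,\dots,v_{n-4}\}\subseteq V(P^*)\subseteq A^*$; and $(A,\phi)$ is complete because every vertex of $D_1(A)$ lies off $F$ and hence has a $5$-list.

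The main obstacle is the same as in Theorem~\ref{FaceConnectionMainResult}: it is topological bookkeeping rather than colouring. The delicate step is to extract from the single hypothesis ``$F\cup P$ contains a noncontractible closed curve'' everything the argument needs — that $v_2,v_3,v_{n-2},v_{n-3}\notin\textnormal{Sh}^4(F)$, that the two maximal enclosures have disjoint small sides, and that $F$ breaks into exactly two $4$-consistent arcs — and then to verify that the assembled $G[A]$ is connected. A secondary, purely quantitative, point is to confirm that the portion of $P$ not covered by the two targets is long enough to be the body of a filament (length $\ge 30$), which is what drives the lower bound on $d$, and, relatedly, that the interior of $P$ stays far from the faces in $\mathcal C\setminus\{F\}$.
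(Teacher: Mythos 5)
Your proposal is correct and follows essentially the same route as the paper's proof: maximal enclosures at $v_2$ and $v_{n-2}$ via Theorem \ref{GenThmTargetConnector}, a $\textnormal{Link}$ coloring of one arc of $F$ via Theorem \ref{MainLinkingResultAlongFacialCycle}, the Link-plus-one-vertex result (Theorem \ref{LinkPlusOneMoreVertx}) on the other arc with a peak far from both enclosures, the filament machinery (Theorem \ref{MainResultColorAndDeletePaths}) on the trimmed interior $P^*$ of $P$, and assembly of these into a consistent family whose union is the desired complete $L$-reduction by Observation \ref{UnionConsistentObs}. The only difference is that you spell out more explicitly how the noncontractible curve in $F\cup P$ yields $v_2,v_3,v_{n-2},v_{n-3}\notin\textnormal{Sh}^4(F)$ and the $4$-consistency of the two arcs, which the paper leaves more implicit.
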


\begin{proof} We essentially follow the argument from the proof of Theorem \ref{FaceConnectionMainResult}. Letting $\mathcal{K}:=[\Sigma, G, F, L]$, we let $Q$ be a maximal $v_2$-enclosure and $Q'$ be a maximal $v_{n-2}$-enclosure. Likewise, we set $R:=G^{\textnormal{small}}_{Q}\cap F$ and $R':=G^{\textnormal{small}}_{Q'}\cap F$. Using our conditions on $P$, we immediately get that $v_3Pv_{n-3}$ is disjoint to $G^{\textnormal{small}}_{Q}\cup G^{\textnormal{small}}_{Q'}$. Furthermore, each of $R$ and $R'$  is a path i.e  $R\neq F$ and $R'\neq F$. Let $P^0, P^1$ be the two paths of $C\setminus (\mathring{R}\cup\mathring{R}')$ which each intersect with $R$ on precisely one endpoint of $R$ and with $R'$ on precisely one endpoint of $R'$. Let $x^0, y^0$ be the endpoints of $P^0$ and $x^1, y^1$ be the endpoints of $P^1$, where $x^0, x^1$ are the endpoints of $R$ and $y^0, y^1$ are the endpoints of $R'$, as illustrated in Figure \ref{ElipseSecondForFinProof}, where $Q$ and $Q'$ are denoted in red bold (for simplicity, Figure \ref{ElipseSecondForFinProof} illustrates the case where each of $v_2, v_{n-2}$ are non-degenerate). 

\begin{center}\begin{tikzpicture}
\draw (0,0) ellipse (8cm and 3cm);
\node[shape=circle, draw=black, inner sep=0pt, minimum size=0.55cm] (W) at (5, 0) {\tiny $v_{n-2}$};
\node[shape=circle, draw=black, inner sep=0pt, minimum size=0.55cm] (U) at (3.5, 0) {\tiny $v_{n-3}$};
\node[shape=circle, draw=white, inner sep=0pt, minimum size=0.55cm] (GSm) at (6.5, 0) {$G^{\textnormal{small}}_{Q'}$};
\node[shape=circle, fill=black, inner sep=0pt, minimum size=0.25cm] (X1) at (6, 1.97) {};
\node[shape=circle, fill=black, inner sep=0pt, minimum size=0.25cm] (XN) at (6, -1.97) {};
\node[shape=circle, fill=black, inner sep=0pt, minimum size=0.25cm] (Z) at (5.5, 0.985) {};
\node[shape=circle, fill=black, inner sep=0pt, minimum size=0.25cm] (Z') at (5.5, -0.985) {};
\draw[-, line width=1.8pt, color=red] (X1) to (Z) to (W) to (Z') to (XN);
\draw[-] (U) to (W);

\node[shape=circle, draw=black, inner sep=0pt, minimum size=0.55cm] (W*) at (-5, 0) {\tiny $v_2$};
\node[shape=circle, draw=black, inner sep=0pt, minimum size=0.55cm] (U*) at (-3.5, 0) {\tiny $v_3$};
\node[shape=circle, draw=white, inner sep=0pt, minimum size=0.55cm] (GSm*) at (-6.5, 0) {$G^{\textnormal{small}}_{Q}$};
\node[shape=circle, fill=black, inner sep=0pt, minimum size=0.25cm] (X1*) at (-6, 1.97) {};
\node[shape=circle, fill=black, inner sep=0pt, minimum size=0.25cm] (XN*) at (-6, -1.97) {};
\node[shape=circle, fill=black, inner sep=0pt, minimum size=0.25cm] (Z*) at (-5.5, 0.985) {};
\node[shape=circle, fill=black, inner sep=0pt, minimum size=0.25cm] (Z'*) at (-5.5, -0.985) {};
\draw[-, line width=1.8pt, color=red] (X1*) to (Z*) to (W*) to (Z'*) to (XN*);
\draw[-] (U*) to (W*);

\node[shape=circle, draw=white, inner sep=0pt, minimum size=0.55cm] (Udot) at (0, 0) {$\cdots$};
\node[shape=circle, draw=black, inner sep=0pt, minimum size=0.55cm] (UdotMin) at (1, 0) {};
\node[shape=circle, draw=black, inner sep=0pt, minimum size=0.55cm] (UdotMax) at (-1, 0) {};
\draw[-] (U) to (UdotMin) to (Udot) to (UdotMax) to (U*);

\node[shape=circle, draw=white, inner sep=0pt, minimum size=0.55cm] (GSR-) at (-8.5, 0) {$R$};
\node[shape=circle, draw=white, inner sep=0pt, minimum size=0.55cm] (GSR) at (8.5, 0) {$R'$};
\node[shape=circle, draw=white, inner sep=0pt, minimum size=0.55cm] (P0) at (0, 3.3) {$P^0$};
\node[shape=circle, draw=white, inner sep=0pt, minimum size=0.55cm] (P1) at (0, -3.3) {$P^1$};

\node[shape=circle, draw=white, inner sep=0pt, minimum size=0.25cm] (X1L) at (6, 2.3) {\small $y^0$};
\node[shape=circle, draw=white, inner sep=0pt, minimum size=0.25cm] (X2L) at (6, -2.4) {\small $y^1$};

\node[shape=circle, draw=white, inner sep=0pt, minimum size=0.25cm] (X3L) at (-6, 2.3) {\small $x^0$};
\node[shape=circle, draw=white, inner sep=0pt, minimum size=0.25cm] (X4L) at (-6, -2.4) {\small $x^1$};

\end{tikzpicture}
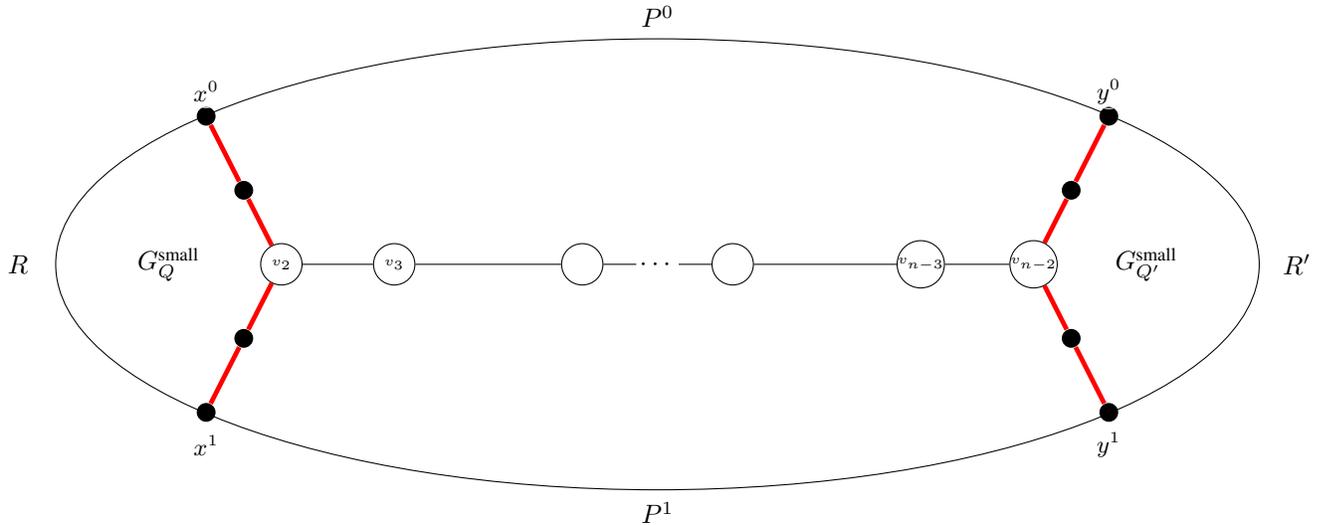
\captionof{figure}{The ellipse enclosing the diagram represents the cycle $F$}\label{ElipseSecondForFinProof}\end{center}

As $P$ is a shortest path between its endpoints, $d(x^0, y^0)\geq |E(P)|-6$, and likewise, $d(x^1, y^1)\geq |E(P)|-6$. 

\begin{claim}\label{P0P1PathReachAcc}  Each of $P^0, P^1$ is 4-consistent and any $(P^0, P^1)$-path of length at most four is contained in $G^{\textnormal{small}}_{Q}\cup G^{\textnormal{small}}_{Q'}$. \end{claim}

\begin{claimproof} Since there is a noncontractible closed curve of $\Sigma$ contained in $F\cup P$, it follows that each of $P^0, P^1$ is 4-consistent. Suppose there is a $(P^0, P^1)$-path of length at most four which is not contained in $G^{\textnormal{small}}_{Q}\cup G^{\textnormal{small}}_{Q'}$. Since $V(P)\cap D_k(F)=\{v_k, v_{n-k}\}$ for each $0\leq k\leq 3$, the maximality of $Q, Q'$ implies that there is a noncontractible cycle of $G$ contained in at most five facial subgraphs of $G$, contradicting our assumption that $\textnormal{fw}(G)\geq 6$. \end{claimproof}

Applying Theorem \ref{GenThmTargetConnector}, we now fix a $(Q, v_3v_2)$-target $\psi$. By Theorem \ref{MainLinkingResultAlongFacialCycle}, there is a $\pi\in\textnormal{Link}(P^1)$ with $\pi(x^1)=\psi(x^1)$. Again by Theorem \ref{GenThmTargetConnector}, there is a $(Q', v_{n-3}v_{n-2})$-target $\psi'$ with $\psi'(y^1)=\pi(y^1)$, so $\psi\cup\pi\cup\psi'$ is a proper $L$-coloring of its domain. Let $c=\psi(x^0)$ and $d:=\psi'(y^0)$. 

\begin{claim}\label{EdgeXyInSh2} There is a $P^0$-peak $w$ and an $L$-reduction $(T, \tau)$ such that $V(P^0)\subseteq T\subseteq V(P^0+w)\cup\textnormal{Sh}^3(P^0)$, where $\tau$ uses $c,d$ on $x^0, y^0$ respectively, and furthermore, $d(w, Q\cup Q')>3$. 
 \end{claim}

\begin{claimproof} Since $d(Q, Q')\geq |E(P)|-6\geq 28$, there are at least four internal $P^0$-peaks $w$ such that $d(w, Q\cup Q')>3$, so the claim now follows from Theorem \ref{LinkPlusOneMoreVertx}. \end{claimproof}

Now, let $P^*$ be the unique subpath of $v_2Pv_{n-2}$ such that the endpoints of $P^*$ lie in $\textnormal{dom}(\psi\cup\psi')$ and $P^*$ contains all the vertices of $\textnormal{dom}(\psi\cup\psi')\cap V(v_2Pv_{n-2})$. In particular, if each of $v_2, v_{n-2}$ is nondegenerate, then $P^*=v_2Pv_{n-2}$. We regard $P^*$ as path with terminal subpaths of length at most one precolored by $\psi\cup\psi'$. By Theorem \ref{MainResultColorAndDeletePaths}, there is an an $L$-reduction $(A^*, \sigma)$ where $\sigma\cup\psi\cup\psi'$ is a proper $L$-coloring of its domain and $V(P^*)\subseteq A^*$ and furthermore, $A^*\setminus V(P^*)\subseteq B_1(v_4Pv_{n-4})$ and $d(A^*\setminus V(P^*), \{v_3, v_{n-3}\})\geq 3$. Now, we construct $A, \phi$ satisfying Theorem \ref{SingleFaceConnRes}. Consider the list of the six $L$-reductions enumerated above, i.e
\begin{itemize}
\itemsep-0.1em
\item $(V(G^{\textnormal{small}}_Q\setminus (Q\setminus \textnormal{dom}(\psi))), \psi)$ and $(V(G^{\textnormal{small}}_{Q'}\setminus (Q'\setminus \textnormal{dom}(\psi'))), \psi')$
\item $(V(P_1)\cup\textnormal{Sh}^2(P^1), \pi)$
\item $(T, \tau)$ and $(A^*, \sigma)$
\end{itemize}

It follows from Claim \ref{P0P1PathReachAcc}, toegether with our choice of $P^0$-peak $w$, that the family above is a consistent family of $L$-reductions. We take $A$ to be the union of the first coordinates of these $L$-reductions and $\phi$ to be the union of the second coordinates. By Observation \ref{UnionConsistentObs}, $(A, \phi)$ is an $L$-reduction, and this pair satisfies Theorem \ref{SingleFaceConnRes}. \end{proof}

\end{document}